\DeclareSymbolFontAlphabet{\mathbb}{AMSb}
\DeclareSymbolFontAlphabet{\mathbbl}{bbold}
\tikzset{zlevel/.style={%
    execute at begin scope={\pgfonlayer{#1}},
    execute at end scope={\endpgfonlayer}
  }}
\tikzset{khdiff/.style={%
    black!30,opacity=1,->
  }}
\tikzset{knot/.style={%
    black,thick,preaction={draw,white,line width=3pt}    
  }}
\tikzset{resol/.style={%
    black,thick
  }}
\tikzset{linelabel/.style={%
    outer sep=0,inner sep=1pt,fill=white,fill opacity=1,draw opacity=1
  }}
\tikzset{preservemap/.style={%
    ->,black,opacity=1
  }}
\tikzset{dropmap/.style={%
    ->,black!30,opacity=1
  }}
\definecolor{darkblue}{rgb}{0,0,0.4} 
\numberwithin{equation}{section}
\newtheorem{lem}{Lemma}[section]               
\newtheorem{theorem}[lem]{Theorem}
\newtheorem{lemma}[lem]{Lemma}
\newtheorem{corollary}[lem]{Corollary}               
\newtheorem{proposition}[lem]{Proposition}
\theoremstyle{definition}     
\newtheorem{question}{Question}
\newtheorem{definition}[lem]{Definition}
\theoremstyle{remark}
\newtheorem{remark}[lem]{Remark}
\numberwithin{figure}{section}
\numberwithin{table}{section}
\newcommand{\R}{\mathbb{R}}
\newcommand{\QQ}{\mathbb{Q}}
\newcommand{\FF}{\mathbb{F}}
\newcommand{\smbullet}{{\vcenter{\hbox{\tikz{\fill[black] (0,0) circle (0.05);}}}}}
\newcommand{\wh}{\widehat}
\newcommand{\wt}{\widetilde}
\newcommand{\del}{\partial}
\renewcommand{\emptyset}{\varnothing}
\newcommand{\interior}{\mathring}
\newcommand{\from}{\colon}
\newcommand{\too}{\longrightarrow}
\newcommand{\lra}{\longrightarrow}
\renewcommand{\th}{^{\text{th}}}
\newcommand{\SpinC}{\text{Spin}^{\text{C}}}
\newcommand{\HF}{\mathit{HF}}
\DeclareMathOperator{\nbd}{nbd}
\DeclareMathOperator{\Id}{Id}
\DeclareMathOperator{\Hom}{Hom}
\newcommand{\Kh}{\mathit{Kh}}
\newcommand{\gr}{\mathrm{gr}}
\newcommand{\BNcx}{\mathcal{C}}
\newcommand{\BNh}{\mathcal{H}}
\newcommand{\red}{\mathrm{red}}
\newcommand{\Ring}{\mathrm{R}}
\newcommand{\co}{\colon}
\newcommand{\bdy}{\partial}
\newcommand{\RR}{\R}
\newcommand{\DD}{\mathbb{D}}
\newcommand{\ZZ}{\mathbb{Z}}
\DeclareMathOperator{\cokernel}{coker}
\newcommand{\RP}{\mathbb{R}\mathrm{P}}
\newcommand{\mathcenter}[1]{\vcenter{\hbox{$#1$}}}
\newcommand*{\defeq}{\mathrel{\vcenter{\baselineskip0.5ex \lineskiplimit0pt
                     \hbox{\scriptsize.}\hbox{\scriptsize.}}}%
                     =}
\newcommand{\lab}[1]{$\scriptstyle #1$}
\newcommand{\MixedInvt}[1]{\Phi_{#1}}
\begin{document}


\title{A mixed invariant of nonorientable surfaces in equivariant Khovanov homology}

\author{Robert Lipshitz}
\thanks{\texttt{RL was supported by NSF Grant DMS-1810893.}}
\email{\href{mailto:lipshitz@uoregon.edu}{lipshitz@uoregon.edu}}
\address{Department of Mathematics, University of Oregon, Eugene, OR 97403}

\author{Sucharit Sarkar}
\thanks{\texttt{SS was supported by NSF Grant DMS-1905717.}}
\email{\href{mailto:sucharit@math.ucla.edu}{sucharit@math.ucla.edu}}
\address{Department of Mathematics, University of California, Los Angeles, CA 90095}


\keywords{}

\date{\today}

\begin{abstract}
  We construct a mixed invariant of nonorientable surfaces from the
  Lee and Bar-Natan deformations of Khovanov homology and use it to
  distinguish pairs of surfaces bounded by the same knot, including
  some exotic examples.
\end{abstract}
\maketitle
\vspace{-1cm}


\tableofcontents


\section{Introduction}\label{sec:intro}
Ozsv\'ath-Szab\'o's Heegaard Floer homology associates
$\ZZ[U]$-modules $\HF^-(Y,\mathfrak{s})$ and $\HF^+(Y,\mathfrak{s})$
to a closed, connected, oriented $3$-manifold $Y$ and $\SpinC$-structure
$\mathfrak{s}$, and $\ZZ[U]$-module homomorphisms
$F^\pm(W,\mathfrak{t})\co
\HF^\pm(Y_1,\mathfrak{s}_1)\to\HF^\pm(Y_2,\mathfrak{s}_2)$ to a
$\SpinC$-cobordism $(W,\mathfrak{t})\co (Y_1,\mathfrak{s}_1)\to
(Y_2,\mathfrak{s}_2)$ \cite{OSz-hf-3manifolds,OSz-hf-4manifolds}. For
a closed $4$-manifold $W$ with $b_2^+>0$, viewed as a cobordism from
$S^3$ to itself by deleting two balls, the maps
$\HF^\pm(W,\mathfrak{t})$ vanish. Using the proof of vanishing,
Ozsv\'ath-Szab\'o define a Heegaard Floer-theoretic analogue of the
Seiberg-Witten invariant, the \emph{Heegaard Floer mixed invariant},
which is a map $\HF^-(Y_1,\mathfrak{s}_1)\to\HF^+(Y_2,\mathfrak{s}_2)$
associated to a $\SpinC$-cobordism with $b_2^+$ sufficiently large.

The goal of this paper is to give an analogous construction in
Khovanov homology, for smoothly embedded surfaces in $[0,1]\times S^3$
with crosscap number at least $3$.

Following Finashin-Kreck-Viro~\cite{FKV-top-bulletin}, call a pair of smoothly embedded
surfaces $F,F'\subset [0,1]\times S^3$ \emph{exotic} if there is a
self-homeomorphism of $[0,1]\times S^3$ which is the identity on
$\{0,1\}\times S^3$ and takes $F$ to $F'$, but there is no such
self-diffeomorphism of $[0,1]\times S^3$. (See also
Lemma~\ref{lem:MWW} and Section~\ref{sec:exotic}.) At the time of writing, we believe no
pairs of exotic closed, orientable surfaces in $[0,1]\times S^3$ are
known. There are, however, exotic nonorientable surfaces in
$[0,1]\times S^3$, as first
shown by Finashin-Kreck-Viro using results from Donaldson
theory~\cite{FKV-top-knot-surf}. Recently, examples of exotic
orientable cobordisms in $[0,1]\times S^3$ have also appeared, in work
of Juh\'asz-Miller-Zemke~\cite{MJZ-hf-exotic},
Hayden~\cite{Hay-kh-disks}, and
Hayden-Kjuchukova-Krishna-Miller-Powell-Sunukjian~\cite{HKKMPS-kh-Brun},
which use Heegaard Floer homology to distinguish them, and
Hayden-Sundberg~\cite{HS-kh-exotic}, which uses Khovanov
homology.

Many question about exotic pairs of surfaces remain open. For example,
Baykur-Sunukjian introduced stabilization operations for surfaces, and
showed that all known examples of exotic pairs of closed surfaces
become diffeomorphic after a single
stabilization~\cite{BS-top-stabilizations}; it is not known if this
holds in general. Building on these ideas, one can study the
\emph{total stabilization distance} between two surfaces $F$, $F'$,
the minimum number of stabilizations or destabilizations needed to
turn one into the other~\cite{Miy-86-stab,MP-top-stab}, or the
\emph{max stabilization distance}, the minimum over sequences
$F=F_0,F_1,\dots,F_n=F'$, where $F_i$ and $F_{i+1}$ are related by a
stabilization, destabilization, or taking the connected sum with a
knotted $2$-sphere, of $\max\{|g(F_1)-g(F)|,\dots,|g(F_n)-g(F)|\}$
(where $g$ denotes the genus)~\cite{JZ-hf-stab}. (See
also~\cite[p. 6]{Melvin-top-thesis}.) Another notion is the
\emph{generalized total stabilization distance}, which is defined the
same way as the total stabilization distance except that if two
surfaces differ by taking the connected sum with a $2$-sphere then
they are declared to be at distance $0$, so the generalized total
stabilization distance, like the max stabilization distance, focuses
on global, rather than local, knotting~\cite{MP-top-stab}. By using
the Alexander module, Miyazaki shows there are pairs of embedded
spheres in $S^4$ with arbitrarily high total stabilization
distance~\cite{Miy-86-stab}, and Miller-Powell show there are pairs of
embedded disks with arbitrarily high generalized total stabilization
distance~\cite{MP-top-stab}. Juh\'asz-Zemke use Heegaard Floer
homology to give pairs of disks with max stabilization distance at
least $3$~\cite{JZ-hf-stab}. The analogous questions for exotic pairs
are open.

A key strategy for distinguishing knotted closed surfaces in $S^4$ has
been to apply gauge theory, like the Heegaard Floer mixed invariant,
to their branched double covers.
Ozsv\'ath-Szab\'o showed that the Heegaard Floer homology of the
branched double cover of a knot $K$ is closely related to the Khovanov
homology of $K$~\cite{OSz-hf-branched}. So, it seems natural to look for an analogue of
the Heegaard Floer mixed invariant in Khovanov homology

In this paper, we give one such analogue. Khovanov homology admits a
family of deformations~\cite{Kho-kh-Frobenius}; we will focus on two
particular ones, the Lee deformation~\cite{Lee-kh-endomorphism} and
the Bar-Natan deformation~\cite{Bar-kh-tangle-cob}. Rasmussen showed
that the map of Lee homologies associated to a nonorientable
cobordism vanishes~\cite{Ras-kh-slice}. Viewing these deformations as
modules over polynomial algebras, analogous to the Heegaard Floer
invariant $\HF^-$, Rasmussen's result says that the map on the
analogue of $\HF^\infty$ associated to a nonorientable cobordism
vanishes. Using this, and a notion of admissible cuts analogous to
Heegaard Floer theory, we formulate a Khovanov mixed invariant
$\MixedInvt{F}$ of a surface $F$ with crosscap number $\geq 3$ in the
Lee and Bar-Natan deformations of Khovanov homology. Note that $F$
having crosscap number $\geq 3$ has no implication for $b_2^+$,
so the Khovanov mixed invariant is defined
for some surfaces $F$ where the Heegaard Floer mixed invariant of
$\Sigma(F)$ is not.

Verifying that the mixed invariant is well-defined (up to sign) has two steps:
observing that the map on (deformed) Khovanov homology associated to a
nonorientable cobordism is well-defined (up to sign), and verifying
independence of the choice of admissible cut. The proof of the first
statement is a straightforward extension of the literature~\cite{Jac-kh-cobordisms,Kho-kh-cobordism,Bar-kh-tangle-cob,MWW-kh-blobs}; see
Section~\ref{sec:behave}.
(Unlike the orientable case, the sign
ambiguity here is essential; see Remark~\ref{rem:Klein-TQFT}.)
To prove independence of the admissible cut, we use arguments
involving the one-sided curve complex of a nonorientable surface; see
Section~\ref{sec:cuts}.

It turns out that, unlike the Heegaard Floer mixed invariant, this
Khovanov mixed invariant does not distinguish closed, connected
surfaces (Section~\ref{sec:closed}), though the proof of this fact is somewhat
intricate. (We do not know if the mixed invariant distinguishes some
closed, disconnected surfaces, and in particular have not generalized
Gujral-Levine's results~\cite{LG-kh-split} to this setting.) On the
other hand, both the mixed invariant and the map on Khovanov homology
associated to a nonorientable cobordism do distinguish pairs of
nonorientable surfaces with common boundary. Indeed, this was essentially
already shown in computations of Sundberg-Swann~\cite{SS-kh-surf}:
combined with the functoriality result mentioned above, their
computations show the following.

\begin{theorem}\label{thm:intro}
  There is a pair of connected surfaces $F,F'$ with boundary on $3_1\# m(3_1)$
  with crosscap number $3$ and normal Euler number $-6$ which are not
  isotopic, and do not become isotopic after taking the connected sum
  with any knotted $2$-sphere. Further, $F$ is not obtained
  from a connected surface $F''$ by attaching a $1$-handle, or by taking the
  connected sum with a standard $\RP^2$ or $\overline{\RP}^2$.
\end{theorem}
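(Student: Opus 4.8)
The plan is to deduce all of the assertions from the Sundberg--Swann computation~\cite{SS-kh-surf} together with the functoriality set up in \Section{behave}. Write $K\defeq 3_1\#m(3_1)$. Sundberg and Swann produce a pair of connected nonorientable surfaces $F,F'\subset[0,1]\times S^3$ with $\partial F=\partial F'=K$, crosscap number $3$, and normal Euler number $-6$, and they compute the maps these surfaces induce on a deformation of Khovanov homology. The first step is to restate their output in the language used here: since $F$ and $F'$ are cobordisms from $\emptyset$ to $K$, each determines a class in the Bar--Natan homology $\BNh(K)$, a module over $\ZZ[h]$ (the image of $1$ under the induced map), and the Sundberg--Swann computation shows that these two classes differ even up to sign, and moreover that the class of $F$ is \emph{not} contained in $(2,h)\BNh(K)$ --- roughly, it is already nonzero in Khovanov homology over $\FF_2$. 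Granting this, the functoriality statement of \Section{behave} --- that the map induced on the Lee or Bar--Natan deformation of Khovanov homology by a possibly nonorientable cobordism is well-defined up to an overall sign --- shows at once that $F$ and $F'$ are not isotopic rel boundary; the same input gives $\MixedInvt{F}\neq\pm\MixedInvt{F'}$.

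Next I would establish the lemma that connect-summing a surface $G\subset[0,1]\times S^3$ with a knotted $2$-sphere $S$ does not change its induced map on (deformed) Khovanov homology. Up to isotopy, $G\#S$ is obtained from $G$ by replacing a small trivially embedded $2$-disk by the knotted disk $D=S\setminus(\text{open ball})$; locally this replaces a trivial disk filling of a small circle by $D$, so the induced map is post-composed with $(\mathrm{id}\otimes\Phi_D)\circ\Delta$, where $\Phi_D$ is the map induced by the knotted disk. But $\Phi_D$ is determined by the evaluations of the two closed surfaces obtained by capping that circle off, undotted and dotted, and in the Khovanov-type TQFTs the evaluation of a closed surface component depends only on its diffeomorphism type --- a consequence of the local neck-cutting and sphere relations, which hold already for embedded cobordisms. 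Thus $\Phi_D$ agrees with the evaluation of the trivial disk, and the counit axiom makes $(\mathrm{id}\otimes\Phi_D)\circ\Delta$ the identity; the induced map is unchanged. Combined with the first paragraph, $F\#S\not\simeq F'\#S'$ for all knotted $2$-spheres $S,S'$, and in particular $F\#S\not\simeq F'$.

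It remains to show that $F$ is not obtained from a connected surface $F''$ by attaching a $1$-handle, nor is of the form $F''\#\RP^2$ or $F''\#\overline{\RP}^2$. (An Euler characteristic count forces $F''$ to be a M\"obius band in the first case and a surface of crosscap number $2$ in the other two; the argument rules out all isotopy classes of such $F''$ simultaneously.) Each of these operations is local and changes the induced map by post-composition with a fixed operator on the relevant circle's module. Attaching a $1$-handle can be evaluated by neck-cutting the handle: by the lemma just proved the resulting ``fingers'' are trivial however the handle is embedded, so the operator is multiplication by the handle element $2X-h$, whence the class of $F$ would lie in $(2X-h)\BNh(K)\subseteq(2,h)\BNh(K)$. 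Connect-summing with a standard $\RP^2$ or $\overline{\RP}^2$ instead replaces a trivial disk filling by a M\"obius-band one, post-composing the induced map with $(\mathrm{id}\otimes\Phi_M)\circ\Delta$, where $\Phi_M$ is the corresponding map from the Bar--Natan homology of the unknot to $\ZZ[h]$; and $\Phi_M$ has vanishing leading term --- its source and target lie in gradings of opposite parity while $\chi(M)=0$ --- so it takes values in $h\ZZ[h]$, and then the class of $F$ would lie in $h\BNh(K)\subseteq(2,h)\BNh(K)$. In every case the class of a stabilized surface is contained in $(2,h)\BNh(K)$, contradicting the last point of the first paragraph.

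The main obstacle I foresee is the third paragraph --- verifying carefully that each stabilizing operation pushes the induced class into $(2,h)\BNh(K)$. This is delicate because the Bar--Natan theory is functorial only as a \emph{filtered} theory, so a ``negative-degree'' map need not vanish but merely has lower filtration order; one must also pin down the handle and crosscap operators precisely, including the framings of the crosscaps, and treat all, possibly knotted or linked, embeddings of the attaching handle uniformly, which is again where the $2$-knot lemma is used. The complementary input, that the class of $F$ is not in $(2,h)\BNh(K)$, is read off from~\cite{SS-kh-surf}. Alternatively one can route the whole argument through the mixed invariant, using that $\MixedInvt{F}$ is nonzero while the mixed invariant of any stabilized surface vanishes --- the latter because the stabilizing operator factors through a nonorientable piece, on which the map on the analogue of $\HF^\infty$ (Lee homology) vanishes by~\cite{Ras-kh-slice}.
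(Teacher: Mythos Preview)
Your overall architecture is right and matches the paper's: use the Sundberg--Swann computation to see that $\wh{\BNh}(F)\neq 0$ while $\wh{\BNh}(F')=0$ (the surfaces are $C\circ\Sigma_R$ and $C\circ\Sigma_L$, built from the two slice disks for $9_{46}$ composed with a crosscap-number-$3$ cobordism $C$ to $3_1\#m(3_1)$), then use functoriality and the behavior under local modifications.  Where you diverge is in the third paragraph, and there is a genuine gap.

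The paper does not argue that the stabilizing operations push the class into the ideal $(2,h)$; it proves the stronger fact that they give the \emph{zero} map (Theorem~\ref{thm:vanishing}).  For crosscap stabilization this is Lemma~\ref{lem:cc-stab-vanish}: in the $\RP^2$ movie $b,RI,s,RI,d$, the composite $s\circ RI\circ b$ already vanishes, so any map factoring through it is zero.  Your parity argument for the M\"obius-band map is not correct as stated: the quantum grading shift is $\chi-3e/2=\mp 3$, not $\chi=0$, so source/target parity alone does not kill the leading term.  (A grading argument \emph{does} work if you use the homological grading: the shift is $-e/2=\pm 1$, while both $\BNh^-(U)$ and $\BNh^-(\emptyset)$ live in $\gr_h=0$; equivalently, use $\gr_\gamma=\gr_q-3\gr_h$.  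Either way the conclusion is that the map is zero, not merely divisible by $h$.)

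For the $1$-handle case you stop at ``multiplication by $2X-h$,'' hence landing in $(2,h)$.  But an Euler-characteristic count forces the connected $F''$ to be a M\"obius band, hence nonorientable, and the paper's Corollary~\ref{cor:star-0} then shows the map is identically zero: a star on a nonorientable component can be traded (via Proposition~\ref{prop:neck-cut}) for a local Klein bottle $=\RP^2\#\overline{\RP}^2$, and Lemma~\ref{lem:cc-stab-vanish} kills that.  This is both sharper and simpler than your route, and it avoids having to verify that the Sundberg--Swann class survives to $\FF_2$-Khovanov homology --- a check you assert but do not carry out.  Your connected-sum-with-$S^2$ argument is essentially the paper's (Proposition~\ref{prop:cyl-sum}, Lemma~\ref{lem:closed-surf}), though note that ``closed-surface invariants depend only on diffeomorphism type'' is not a purely local consequence of neck-cutting: it needs the topological input that embedded surfaces become standard after stabilization.
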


Theorem~\ref{thm:intro} is proved in Section~\ref{sec:SunSwann}. The second half of the theorem uses the behavior of $\MixedInvt{F}$
under various local modifications to the surface, which are summarized
in Theorem~\ref{thm:vanishing}.

Hayden-Sundberg's examples of exotic pairs of slice disks
distinguished by Khovanov homology can be enhanced to give exotic
pairs of nonorientable surfaces distinguished by Khovanov
homology. In particular, we have:
\begin{theorem}\label{thm:intro-2}
  There is an exotic pair of surfaces with boundary $12^n_{309}$, crosscap
  number 3, and normal Euler number $-6$.
\end{theorem}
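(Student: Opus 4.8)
The plan is to attach three crosscaps near the boundary to Hayden--Sundberg's exotic slice disks, arranging that the ambient homeomorphism relating the disks survives while the Khovanov cobordism map continues to separate the two surfaces. Identify $B^4\cong[0,1]\times S^3$ by pushing one boundary sphere into the interior, and let $D,D'\subset B^4$ be the Hayden--Sundberg disks with $\partial D=\partial D'=12^n_{309}$ \cite{HS-kh-exotic}: they are topologically isotopic rel boundary, but the induced elements $\KhFunc(D)(1),\KhFunc(D')(1)\in\Kh(12^n_{309})$ differ up to sign. (If the Hayden--Sundberg disks instead bound a knot $J$ with $12^n_{309}=J\#J'$, first band-connect-sum both disks with a fixed surface in $B^4$ bounding $J'$; this changes neither the topological isotopy class nor the distinctness of the Khovanov elements, so we may assume the boundary is $12^n_{309}$.)

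Fix a point $p$ on $12^n_{309}\subset\{1\}\times S^3$ lying in a collar on which $D$ and $D'$ agree, and let $M\subset B^4$ be a standard, smoothly unknotted M\"obius band with $\partial M$ an unknot and normal Euler number $-2$. Put $F\defeq D\natural M\natural M\natural M$ and $F'\defeq D'\natural M\natural M\natural M$, with all three boundary connected sums performed inside a fixed small ball $B$ around $p$. Since $\partial M$ is unknotted, $\partial F=\partial F'=12^n_{309}$; an Euler-characteristic count gives $\chi(F)=\chi(F')=1-3=-2$, so $F$ and $F'$ are connected, nonorientable, of crosscap number $3$; and the normal Euler numbers add to $0+3(-2)=-6$. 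For the topological statement, the rel-boundary topological isotopy from $D$ to $D'$ may be taken supported away from a collar of $\{0,1\}\times S^3$, hence away from $B$, so the resulting ambient homeomorphism of $[0,1]\times S^3$ --- the identity on $\{0,1\}\times S^3$ and, after a further isotopy, on $B$ --- carries $F$ to $F'$. Thus $F$ and $F'$ are homeomorphic rel boundary.

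To see that $F$ and $F'$ are not diffeomorphic rel boundary --- and hence, with the previous paragraph, that they form an exotic pair (Lemma~\ref{lem:MWW}) --- it suffices to show $\KhFunc(F)(1)\ne\pm\KhFunc(F')(1)$ in $\Kh(12^n_{309})$. Regarded as cobordisms $\emptyset\to 12^n_{309}$, we have $F=c\circ D$ and $F'=c\circ D'$, where $c\co 12^n_{309}\to 12^n_{309}$ performs, near $p$, the three crosscap moves coming from the three copies of $M$ and is the identity elsewhere. By functoriality of the Khovanov cobordism map for nonorientable cobordisms, valid up to an overall sign (Section~\ref{sec:behave}), it is enough that $\KhFunc(c)$ not identify $\KhFunc(D)(1)$ with $\pm\KhFunc(D')(1)$. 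The map $c$ is a composite of three elementary moves, each the birth of a small unknot near $p$ followed by a half-twisted band move; one expects each such move to induce a split injection on the relevant graded summand of $\Kh(12^n_{309})$, and the remaining point --- equivalently, recomputing $\KhFunc(F)(1)$ and $\KhFunc(F')(1)$ --- is a direct calculation with the (small) Khovanov complexes, in the style of Sundberg--Swann \cite{SS-kh-surf}.

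The main obstacle is exactly this last calculation: appending the three crosscap moves could a priori collapse the difference between $\KhFunc(D)(1)$ and $\KhFunc(D')(1)$, so one must either redo the Hayden--Sundberg computation with the crosscap moves included or establish injectivity of those moves on the two classes. One must also track the sign ambiguity carefully, since --- unlike the orientable case --- it is genuine here (Remark~\ref{rem:Klein-TQFT}) and cannot be removed.
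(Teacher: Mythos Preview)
Your approach has a fatal flaw: boundary-connect-summing with standard M\"obius bands kills the Khovanov cobordism map. Your surface $F=D\natural M\natural M\natural M$ is isotopic to a crosscap stabilization $D\#\RP^2\#\RP^2\#\RP^2$ (push the bands into the interior), and Lemma~\ref{lem:cc-stab-vanish} shows that $\wh{\BNh}(F)=0$. Concretely, the composite $s\circ RI\circ b$ in your description of $c$ is exactly the composite proved to vanish in that lemma, so $\wh{\BNh}(c)=0$ and hence $\wh{\BNh}(F)=\wh{\BNh}(F')=0$. Your hope that ``each such move induces a split injection'' is therefore not just unverified but false, and the computation you defer to the last paragraph cannot succeed.

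There is also a secondary confusion: the Hayden--Sundberg disks bound a knot $J$ which is \emph{not} $12^n_{309}$, and $12^n_{309}$ is not a connected sum containing $J$, so your parenthetical fix does not apply.

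The paper's argument avoids both problems by going in the other direction. Rather than postcomposing the disks with crosscap additions (which kills the map), it \emph{precomposes} with a specific nonorientable cobordism $C\co 12^n_{309}\to J$ of crosscap number $3$ and normal Euler number $-6$, built from three explicit saddles on a diagram of $12^n_{309}$. The point is that this $C$ is chosen so that a concrete cycle $\psi\in\wh{\BNh}(12^n_{309})$ maps to Hayden--Sundberg's distinguishing class $\phi\in\wh{\BNh}(J)$; then $\wh{\BNh}(D\circ C)(\psi)=\wh{\BNh}(D)(\phi)=1$ while $\wh{\BNh}(D'\circ C)(\psi)=0$. The moral is that to preserve the Khovanov information you must use a nonorientable cobordism tailored to the class $\phi$, not generic local crosscaps.
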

Theorem~\ref{thm:intro-2} is proved in Section~\ref{sec:exotic}. As far as we know, this is the first gauge theory-free proof that
there are pairs of exotic nonorientable surfaces.

This paper is organized as follows. We review the Lee and Bar-Natan
deformations of Khovanov homology in Section~\ref{sec:BN-background},
in an algebraic framework parallel to Heegaard Floer
homology. Section~\ref{sec:behave} shows that these deformed Khovanov
complexes are functorial with respect to nonorientable
cobordisms. For convenience later, we also allow our cobordisms to be
decorated with stars (following the notation
of~\cite{KhRo-kh-Frob2}). Section~\ref{sec:cuts} formulates the notion
of admissible cuts, and shows that for surfaces with crosscap number
$\geq 3$ all admissible cuts are equivalent in a suitable
sense. Section~\ref{sec:mixed} defines the mixed invariant and proves
it is well-defined. Section~\ref{sec:properties} gives basic
properties of the maps associated to nonorientable cobordisms and the
mixed invariant, and Section~\ref{sec:comps} gives some computations
and applications of these invariants, including proving
Theorems~\ref{thm:intro} and~\ref{thm:intro-2}, and concludes with some questions.

\subsection*{Acknowledgments} We thank Ian Zemke for helpful
comments on the first draft of this paper, and the referee for further
comments and corrections.

\section{Background on the Lee and Bar-Natan deformations}\label{sec:BN-background}

Khovanov homology has two well-known deformations, the Lee
deformation~\cite{Lee-kh-endomorphism} and the Bar-Natan
deformation~\cite{Bar-kh-tangle-cob}. The two theories are similar,
although they have some essential differences as well. Most of the
constructions and results of this paper work for either of the two
theories, so we will use the same notations for both the
theories. When the two theories diverge, we will explicitly mention
that in the text.

Fix a commutative ring $\Ring$ with unit. All chain complexes and
modules will be defined over $\Ring$, though we will often suppress
$\Ring$ from the notation. If we are using the Lee theory, we assume
$2$ is a unit in $\Ring$.

Fix an oriented link diagram $L$ with $N$ crossings, $N_+$ of which
are positive and $N_-$ of which are negative.  Consider the Kauffman
cube of resolutions of $L$. A \emph{Khovanov generator} $y$ is a choice of
vertex $v$ and a labeling $y(Z)\in\{1,X\}$ of
each circle $Z$ in the $v$-resolution. Denoting
the homological, quantum bigrading by $(\gr_h,\gr_q)$, a
Khovanov generator $y$ lying over a vertex $v\in\{0,1\}^N$ has
bigrading
\begin{align*}
  \gr_h(y)&=-N_-+|v|\\
  \gr_q(y)&=N_+-2N_-+|v|+\#\{Z\mid y(Z)=1\}-\#\{Z\mid y(Z)=X\}.
\end{align*}

The deformed Khovanov complex $\BNcx^-(L)$ is freely generated by these
generators over a polynomial algebra over $\Ring$, and is obtained by
feeding the cube of resolutions into a Frobenius algebra over that
polynomial algebra.
\begin{enumerate}
\item In the Lee theory, the polynomial algebra is $\Ring[T]$ with $T$
  in bigrading $(0,-4)$, and the Frobenius algebra is
  $\Ring[T,X]/(X^2=T)$, with co-multiplication given by
  \begin{align*}
    \Delta(1)&=1\otimes X + X\otimes 1 &  \Delta(X)&=X\otimes X + T 1\otimes 1
  \end{align*}
  and counit $\epsilon\co \Ring[T,X]/(X^2=T)\to \Ring[T]$ given by
  $\epsilon(1)=0$, $\epsilon(X)=1$.
\item In the Bar-Natan theory, the polynomial algebra is $\Ring[H]$
  with $H$ in bigrading $(0,-2)$, and the Frobenius algebra is
  $\Ring[H,X]/(X^2=XH)$, with co-multiplication given by
  \begin{align*}
    \Delta(1)&=1\otimes X + X\otimes 1 - H 1\otimes 1 &  \Delta(X)&=X\otimes X
  \end{align*}
  and counit $\epsilon\co \Ring[H,X]/(X^2=XH)\to \Ring[H]$ given by
  $\epsilon(1)=0$, $\epsilon(X)=1$.
\end{enumerate}
(In Khovanov's paper~\cite{Kho-kh-Frobenius}, these are denoted
$\mathcal{F}_7$ and $\mathcal{F}_3$, respectively.)
In either theory, the differential increases the bigrading by $(1,0)$.
To keep the notation the same, let $\Ring[U]$ denote the polynomial
algebra for either theory. That is, when discussing the Lee theory we
take $U=T$ (in bigrading $(0,-4)$), and when discussing the Bar-Natan
theory we take $U=H$ (in bigrading $(0,-2)$). In either case, the
original non-deformed Khovanov complex is obtained by setting $U=0$;
in analogy with Heegaard Floer homology, we will denote the
non-deformed complex $\wh\BNcx(L)=\BNcx^-(L)/\{U=0\}$. The homology
$\wh\BNh(L)$ of $\wh\BNcx(L)$ is ordinary Khovanov homology, often
denoted $\Kh(L)$.

Continuing the analogy with Heegaard Floer homology, let 
\begin{align*}
  \BNcx^\infty(L)&=U^{-1}\BNcx^-(L)\\
  \BNcx^+(L)&=\BNcx^\infty(L)/\BNcx^-(L),
\end{align*}
where the notation $U^{-1}$ denotes localization or, equivalently,
tensoring over $\Ring[U]$ with $\Ring[U^{-1},U]$.
(These conventions are not exactly parallel to Heegaard Floer
homology~\cite[Section 4.1]{OSz-hf-3manifolds}.)  Let $\BNh^-(L)$, $\BNh^\infty(L)$, and $\BNh^+(L)$
be the homologies of $\BNcx^-(L)$, $\BNcx^\infty(L)$, and
$\BNcx^+(L)$. See Figure~\ref{fig:compute} for an
example of $\BNcx^\infty(L)$ and its subcomplex $\BNcx^-(L)$ and
quotient complex $\BNcx^+(L)$ (up to quasi-isomorphism), and a
comparison with the usual formulation of the Lee deformation.

There are short exact sequences
\begin{equation}\label{eq:minfp}
  \begin{gathered}
   \begin{tikzpicture}[xscale=.9, >=To] 
      \node at (.65,0) (tl0) {$0$};
      \node at (2,0) (Cm) {$\BNcx^-(L)$};
      \node at (4,0) (Ci) {$\BNcx^\infty(L)$};
      \node at (6,0) (Cpt) {$\BNcx^+(L)$};
      \node at (7.35,0) (tr0) {$0\phantom{.}$};
      \draw[->] (tl0) to (Cm);
      \draw[->] (Cm) to node[above=-.05]{\lab{\iota}} (Ci);
      \draw[->] (Ci) to node[above=-.05]{\lab{\pi}} (Cpt);
      \draw[->] (Cpt) to (tr0);
    \end{tikzpicture}\\[-.75em]
   \begin{tikzpicture}[xscale=.9, >=To] 
      \node at (.65,0) (tl0) {$0$};
      \node at (2,0) (Cm) {$\BNcx^-(L)$};
      \node at (4,0) (Ci) {$\BNcx^-(L)$};
      \node at (6,0) (Cpt) {$\wh{\BNcx}(L)$};
      \node at (7.35,0) (tr0) {$0\phantom{.}$};
      \draw[->] (tl0) to (Cm);
      \draw[->] (Cm) to node[above=-.05]{\lab{U}} (Ci);
      \draw[->] (Ci) to node[above=-.05]{\lab{\pi}} (Cpt);
      \draw[->] (Cpt) to (tr0);
    \end{tikzpicture}\\[-.75em]
   \begin{tikzpicture}[xscale=.9, >=To] 
      \node at (.65,0) (tl0) {$0$};
      \node at (2,0) (Cm) {$\wh{\BNcx}(L)$};
      \node at (4,0) (Ci) {$\BNcx^+(L)$};
      \node at (6,0) (Cpt) {$\BNcx^+(L)$};
      \node at (7.35,0) (tr0) {$0.$};
      \draw[->] (tl0) to (Cm);
      \draw[->] (Cm) to node[above=-.05]{\lab{\iota}} (Ci);
      \draw[->] (Ci) to node[above=-.05]{\lab{U}} (Cpt);
      \draw[->] (Cpt) to (tr0);
    \end{tikzpicture}
  \end{gathered}
\end{equation}
and corresponding long exact sequences
\begin{equation}\label{eq:les}
  \begin{gathered}
   \begin{tikzpicture}[xscale=1, >=To] 
      \node at (0.5,0) (n1) {$\cdots$};
      \node at (2,0) (n2) {$\BNh^-(L)$};
      \node at (4,0) (n3) {$\BNh^\infty(L)$};
      \node at (6,0) (n4) {$\BNh^+(L)$};
      \node at (8,0) (n5) {$\BNh^-(L)$};
      \node at (9.55,0) (n6) {$\cdots,$};
      \draw[->] (n1) to  (n2);
      \draw[->] (n2) to node[above=-.05]{\lab{\iota_*}} (n3);
      \draw[->] (n3) to node[above=-.05]{\lab{\pi_*}} (n4);
      \draw[->] (n4) to  node[above=-.05]{\lab{\bdy}} (n5);
      \draw[->] (n5) to (n6);
    \end{tikzpicture}\\[-.75em]
   \begin{tikzpicture}[xscale=1, >=To] 
      \node at (0.5,0) (n1) {$\cdots$};
      \node at (2,0) (n2) {$\BNh^-(L)$};
      \node at (4,0) (n3) {$\BNh^-(L)$};
      \node at (6,0) (n4) {$\wh{\BNh}(L)$};
      \node at (8,0) (n5) {$\BNh^-(L)$};
      \node at (9.55,0) (n6) {$\cdots,$};
      \draw[->] (n1) to  (n2);
      \draw[->] (n2) to node[above=-.05]{\lab{U}} (n3);
      \draw[->] (n3) to node[above=-.05]{\lab{\pi_*}} (n4);
      \draw[->] (n4) to  node[above=-.05]{\lab{\bdy}} (n5);
      \draw[->] (n5) to (n6);
    \end{tikzpicture}\\[-.75em]
   \begin{tikzpicture}[xscale=1, >=To] 
      \node at (0.5,0) (n1) {$\cdots$};
      \node at (2,0) (n2) {$\wh{\BNh}(L)$};
      \node at (4,0) (n3) {$\BNh^+(L)$};
      \node at (6,0) (n4) {$\BNh^+(L)$};
      \node at (8,0) (n5) {$\wh{\BNh}(L)$};
      \node at (9.55,0) (n6) {$\cdots.$};
      \draw[->] (n1) to  (n2);
      \draw[->] (n2) to node[above=-.05]{\lab{\iota_*}} (n3);
      \draw[->] (n3) to node[above=-.05]{\lab{U}} (n4);
      \draw[->] (n4) to  node[above=-.05]{\lab{\bdy}} (n5);
      \draw[->] (n5) to (n6);
    \end{tikzpicture}\\[-.75em]
  \end{gathered}
\end{equation}
The homomorphisms $U$ decrease the bigrading by $(0,2)$ for the
Bar-Natan deformation and $(0,4)$ for the Lee deformation, the
homomorphism $\wh\BNh(L)\to\BNh^+(L)$ increases bigrading by $(0,2)$
for the Bar-Natan deformation and $(0,4)$ for the Lee deformation, the
connecting homomorphisms $\BNh^+(L)\to \BNh^-(L)$ and
$\BNh^+(L)\to \wh\BNh(L)$ increase the bigrading by $(1,0)$, the
connecting homomorphism $\wh\BNh(L)\to\BNh^-(L)$ increases bigrading
by $(1,2)$ for the Bar-Natan deformation and $(1,4)$ for the Lee
deformation, and the other maps preserve the bigrading.

Commutativity of the diagrams
  \[
   \mathcenter{\begin{tikzpicture}
      \node at (.75,0) (tl0) {$0$};
      \node at (2,0) (Cm) {$\BNcx^-(L)$};
      \node at (4,0) (Ci) {$\BNcx^\infty(L)$};
      \node at (6,0) (Cpt) {$\BNcx^+(L)$};
      \node at (7.25,0) (tr0) {$0$};
      \node at (0.75,-1) (bl0) {$0$};
      \node at (2,-1) (Ch) {$\wh\BNcx(L)$};
      \node at (4,-1) (Cpb1) {$\BNcx^+(L)$};
      \node at (6,-1) (Cpb2) {$\BNcx^+(L)$};
      \node at (7.25,-1) (br0) {$0$};
      \draw[->] (tl0) to (Cm);
      \draw[->] (Cm) to node[above]{\lab{\iota}} (Ci);
      \draw[->] (Ci) to node[above]{\lab{\pi}} (Cpt);
      \draw[->] (Cpt) to (tr0);
      \draw[->] (bl0) to (Ch);
      \draw[->] (Ch) to node[above]{\lab{\iota}} (Cpb1);
      \draw[->] (Cpb1) to node[above]{\lab{U}} (Cpb2);
      \draw[->] (Cpb2) to (br0);
      \draw[->] (Cm) to node[right]{\lab{\pi}} (Ch);
      \draw[->] (Ci) to node[right]{\lab{\pi\circ U^{-1}}} (Cpb1);
      \draw[->] (Cpt) to node[right]{\lab{\Id}} (Cpb2);
    \end{tikzpicture}}
   \text{\quad and\quad }
      \mathcenter{\begin{tikzpicture}
      \node at (.75,0) (tl0) {$0$};
      \node at (2,0) (Cm) {$\BNcx^-(L)$};
      \node at (4,0) (Ci) {$\BNcx^-(L)$};
      \node at (6,0) (Cpt) {$\wh{\BNcx}(L)$};
      \node at (7.25,0) (tr0) {$0$};
      \node at (0.75,-1) (bl0) {$0$};
      \node at (2,-1) (Ch) {$\BNcx^-(L)$};
      \node at (4,-1) (Cpb1) {$\BNcx^\infty(L)$};
      \node at (6,-1) (Cpb2) {$\BNcx^+(L)$};
      \node at (7.25,-1) (br0) {$0$};
      \draw[->] (tl0) to (Cm);
      \draw[->] (Cm) to node[above]{\lab{U}} (Ci);
      \draw[->] (Ci) to  node[above]{\lab{\pi}} (Cpt);
      \draw[->] (Cpt) to (tr0);
      \draw[->] (bl0) to (Ch);
      \draw[->] (Ch) to node[above]{\lab{\iota}} (Cpb1);
      \draw[->] (Cpb1) to node[above]{\lab{\pi}}(Cpb2);
      \draw[->] (Cpb2) to (br0);
      \draw[->] (Cm) to node[right]{\lab{\Id}} (Ch);
      \draw[->] (Ci) to node[right]{\lab{U^{-1}\circ\iota}} (Cpb1);
      \draw[->] (Cpt) to node[right]{\lab{\iota}} (Cpb2);
    \end{tikzpicture}}
  \]
  of short exact sequences and naturality of the snake lemma imply
  that the following diagrams commute:
\begin{equation}\label{eq:bdybdy-tri}
  \vcenter{\hbox{\begin{tikzpicture}[xscale=1.5]
        \node (p) at (0,0) {$\BNh^+(L)$};
        \node (h) at (2,0) {$\wh\BNh(L)$};
        \node (m) at (1,1) {$\BNh^-(L)$};
        \draw[->] (p) -- (h) node[midway,anchor=north] {\tiny $\bdy$};
        \draw[->] (p) -- (m) node[midway,anchor=south east] {\tiny $\bdy$};
        \draw[->] (m) -- (h) node[midway,anchor=south west] {\tiny $\pi_*$};
      \end{tikzpicture}}}
  \text{\quad\and\quad}
  \vcenter{\hbox{\begin{tikzpicture}[xscale=1.5]
        \node (p) at (0,0) {$\wh{\BNh}(L)$};
        \node (h) at (2,0) {$\BNh^-(L)$.};
        \node (m) at (1,1) {$\BNh^+(L)$};
        \draw[->] (p) -- (h) node[midway,anchor=north] {\tiny $\bdy$};
        \draw[->] (p) -- (m) node[midway,anchor=south east] {\tiny $\iota_*$};
        \draw[->] (m) -- (h) node[midway,anchor=south west] {\tiny $\bdy$};        
      \end{tikzpicture}}}
\end{equation}

For the empty link, $\BNh^-(\varnothing)\cong \Ring[U]$,
$\BNh^\infty(\varnothing)\cong \Ring[U^{-1},U]$,
$\BNh^+(\varnothing)\cong \Ring[U^{-1},U]/\Ring[U]$, and
$\wh\BNh(\varnothing)=\Ring$. More generally, for $\BNh^\infty$ we
have the following well-known result. (Recall that for the Lee
deformation we assume $2$ is invertible in $\Ring$.)

\begin{proposition}\label{prop:or-gens-part1}
  In the Bar-Natan theory, there is a canonical isomorphism
  \begin{equation}
    \BNh^\infty(L)\cong \bigoplus_{o\in o(L)}\Ring[H^{-1},H]
  \end{equation}
  where $o(L)$ is the set of orientations of $L$. In the Lee theory,
  after adding a formal square root of $T$, there is a canonical
  isomorphism
  \begin{equation}
    \BNh^\infty(L)\otimes_{\Ring[T]}\Ring[\sqrt{T}]\cong \bigoplus_{o\in o(L)}\Ring[T^{-\frac{1}{2}},T^{\frac{1}{2}}].
  \end{equation}
  In each case, the summand corresponding to an orientation $o$ is
  supported in homological grading $2\mathrm{lk}(L_o,L\setminus L_o)$,
  where $L_o$ is the sublink of $L$ consisting of components whose
  original orientations agree with $o$ and $\mathrm{lk}$ is the
  linking number.
\end{proposition}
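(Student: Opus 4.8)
The plan is to reduce both cases to the base-change argument of Lee~\cite{Lee-kh-endomorphism} and Bar-Natan~\cite{Bar-kh-tangle-cob}, carried out uniformly after localization. \emph{Step 1: diagonalize the Frobenius algebra.} Invert $U$, and in the Lee case also adjoin $\sqrt T$. In the Lee theory set $\mathbf a = X + \sqrt T$, $\mathbf b = X - \sqrt T$; in the Bar-Natan theory set $\mathbf a = X$, $\mathbf b = X - H$. In each case $\mathbf a - \mathbf b$ (namely $2\sqrt T$, resp.\ $H$) is a unit in the localized coefficient ring — for Lee this uses the standing hypothesis that $2$ is invertible — so $\{\mathbf a,\mathbf b\}$ is a basis of the localized Frobenius algebra $\enl A$. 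A direct computation from the (co)multiplication and counit formulas above gives $\mathbf a\mathbf b = \mathbf b\mathbf a = 0$, $\mathbf a^2 = \lambda_{\mathbf a}\mathbf a$, $\mathbf b^2 = \lambda_{\mathbf b}\mathbf b$ with $\lambda_{\mathbf a},\lambda_{\mathbf b}$ units, $\Delta(\mathbf a) = \mathbf a\otimes\mathbf a$, $\Delta(\mathbf b) = \mathbf b\otimes\mathbf b$, and $\epsilon(\mathbf a) = \epsilon(\mathbf b) = 1$; that is, $\enl A$ is a product of two rank-one (``invertible'') Frobenius algebras.

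\emph{Step 2: the induced basis and the monomial differential.} Feeding $\enl A$ into the cube of resolutions, the module at a resolution $v$ with circle set $\mathcal Z(v)$ becomes $\bigoplus_c \langle(v,c)\rangle$, summed over two-colourings $c\colon\mathcal Z(v)\to\{\mathbf a,\mathbf b\}$, so $\{(v,c)\}$ is a basis of $\BNcx^\infty(L)$ over the localized coefficients. By Step 1 every cube edge is monomial in this basis: a merge edge sends $(v,c)$ to a unit multiple of $(v',c')$ when the merged circles share a colour (inherited by the merged circle) and to $0$ otherwise, and a split edge sends $(v,c)$ to a unit multiple of the unique $(v',c')$ colouring both child circles like the parent. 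For an orientation $o$ of $L$, let $\mathfrak s_o = (v_o,c_o)$, where $v_o$ is the oriented resolution of $o$ and $c_o$ colours each circle $Z\subset v_o$ by $\mathbf a$ or $\mathbf b$ according to the parity of the number of circles of $v_o$ separating $Z$ from $\infty$, read off the side prescribed by $o$. A crossing-by-crossing check (the two circles merged or created by flipping a crossing of $v_o$ receive opposite colours) shows each $\mathfrak s_o$ is a cycle.

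\emph{Step 3: the $\mathfrak s_o$ span, and canonicity.} This is the combinatorial heart, and is where I expect the real work to lie. One equips the monomial cube complex of Step 2 with an acyclic matching — equivalently, one Gaussian-eliminates along the unit-valued edge maps — whose unmatched generators are exactly $\{\mathfrak s_o\mid o\in o(L)\}$: any $(v,c)$ other than some $\mathfrak s_o$ has a canonically determined partner against which to cancel, since colourings propagate uniquely across each crossing, and one checks these cancellations do not collide. After cancelling, one copy of the localized coefficient ring survives per orientation, with preferred generator $\mathfrak s_o$; this yields the asserted isomorphism (after $\otimes\,\Ring[\sqrt T]$ in the Lee case), and it is canonical precisely because the generators $\mathfrak s_o$ are.

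\emph{Step 4: the homological grading.} Comparing $v_o$ with the oriented resolution $v_{o_0}$ of the original orientation $o_0$, the two differ exactly at the crossings between $L_o$ and $L\setminus L_o$; at such a crossing the homological degree shifts by $+1$ if the crossing is positive and $-1$ if it is negative. Since $\gr_h(\mathfrak s_{o_0}) = -N_- + |v_{o_0}| = 0$ (the oriented resolution of $o_0$ has $|v_{o_0}| = N_-$), summing over these crossings gives $\gr_h(\mathfrak s_o) = 2\,\mathrm{lk}(L_o,L\setminus L_o)$, the claimed grading. Everything except Step 3 is bookkeeping; the acyclic matching of Step 3 is Lee's theorem for the Lee deformation and follows from the same argument for Bar-Natan, since Step 1 produces the same kind of split Frobenius algebra in both cases.
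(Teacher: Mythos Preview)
Your proposal is correct and follows essentially the same route as the paper: diagonalize the localized Frobenius algebra, observe that the cube becomes monomial in the new basis, identify the surviving generators with orientations via checkerboard colouring, and read off the homological grading. The only notable difference is in the packaging of Step~3: the paper phrases it as a direct-sum decomposition of $\BNcx^\infty(L)$ into equivalence classes of generators (two generators are equivalent if the cobordism between their resolutions has consistent labels on every component), observes each equivalence class is literally a tensor product of two-term identity complexes, and concludes that only singleton classes survive. This is cleaner than your acyclic-matching language because it sidesteps having to verify that cancellations ``do not collide'' --- the direct-sum decomposition makes acyclicity of each non-singleton piece immediate. Your Step~4 is more explicit than the paper's, which simply declares the grading statement ``straightforward.''
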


\begin{proof}
  The proof is well-known
  (see~\cite{Lee-kh-endomorphism,BNM-kh-degeneration,Tur-kh-diag}), so
  we merely sketch it. The
  Bar-Natan Frobenius algebra $\Ring[H^{-1},H][X]/(X^2=XH)$ has a
  basis $\{ A\defeq X,\ B\defeq H-X\}$ over $\Ring[H^{-1},H]$,
  which diagonalizes it:
  \begin{equation*}
    \begin{split}
      A^2=HA,\qquad B^2=HB,\qquad AB=0\\
      \Delta(A)=A\otimes A,\qquad \Delta(B)=-B\otimes B.
    \end{split}
  \end{equation*}
  For the Lee case, after adding a formal square root of $T$, the
  Frobenius algebra $\Ring[T^{-\frac{1}{2}},T^{\frac{1}{2}}][X]/(X^2=T)$ has a
  basis $\{ A\defeq \sqrt{T}+X,\ B\defeq \sqrt{T}-X\}$, which
  diagonalizes it:
  \begin{equation*}
    \begin{split}
      A^2=2\sqrt{T} A,\qquad B^2=2\sqrt{T} B,\qquad AB=0\\
      \Delta(A)=A\otimes A,\qquad \Delta(B)=-B\otimes B.
    \end{split}
  \end{equation*}
  In the Lee case, note that the homology of
  $\BNcx^\infty(L)\otimes_{\Ring[T]}\Ring[\sqrt{T}]$ is isomorphic to
  $\BNh^\infty(L)\otimes_{\Ring[T]}\Ring[\sqrt{T}]$, since
  $\Ring[\sqrt{T}]$ is free over $\Ring[T]$.

  For any vertex $v\in\{0,1\}^N$ in the cube of resolutions, let $L_v$
  be the corresponding complete resolution of the link diagram
  $L$. With respect to the above basis, the chain group
  $\BNcx^\infty(L)$ is freely generated (over $\Ring[H^{-1},H]$ in the
  Bar-Natan case or $\Ring[T^{-\frac{1}{2}},T^{\frac{1}{2}}]$ in the
  Lee case) by all possible labelings of the circles of $L_v$ by
  $\{A,B\}$, for all $v$. Call two such generators \emph{equivalent}
  if one can be obtained from the other by changing the resolutions at
  some crossings ($0$ to $1$ or $1$ to $0$) so that the
  circles have consistent labelings before and after the change. That
  is, given resolutions $L_v$ and $L_w$, there is a cobordism
  $\Sigma_{v,w}$ from $L_v$ to $L_w$ consisting of saddles at the
  crossings where $v$ and $w$ differ; a generator over $v$ and a
  generator over $w$ are equivalent if for each component $\Sigma$ of
  $\Sigma_{v,w}$, all circles in the boundary of $\Sigma$ have the
  same label.

  Since the basis $\{ A,B\}$ diagonalizes the Frobenius
  algebra, the complex $\BNcx^\infty(L)$ decomposes as a direct sum
  along equivalence classes. Moreover, in each equivalence class, the
  complex is isomorphic to the tensor product of some number of copies
  of the two-step complex
  $\Ring[H^{-1},H]\stackrel{\Id}{\too}\Ring[H^{-1},H]$ in the
  Bar-Natan case or
  $\Ring[T^{-\frac{1}{2}},T^{\frac{1}{2}}]\stackrel{\Id}{\too}
  \Ring[T^{-\frac{1}{2}},T^{\frac{1}{2}}]$ in the Lee case. These
  complexes are acyclic, unless the tensor product is over zero
  copies, that is, the equivalence class contains just a single
  element.  Therefore, the homology $\BNh^\infty(L)$ is generated by
  equivalence classes containing a single element, which are
  generators where every crossing connects two circles in the
  resolution with different labels.

  Such generators, in turn, are in canonical correspondence with
  orientations of $L$, as follows. For any resolution $L_v$, consider
  the checkerboard coloring of $\RR^2\setminus L_v$ where the
  unbounded region is colored white. For any generator over $v$,
  orient each circle in $L_v$ as the boundary of the black
  (respectively, white) region if it is labeled $A$ (respectively,
  $B$). This orientation of $L_v$ induces an orientation of $L$
  precisely for the above type of generators. The statement about the
  homological gradings is straightforward from the description of the
  generators above.
\end{proof}

\section{Behavior under (possibly nonorientable) cobordisms}\label{sec:behave}

We will study the maps induced on these deformed Khovanov complexes and
their homologies by a (possibly nonorientable) cobordism
$F\subset [0,1]\times S^3$ from an oriented link
$L_0\subset\{0\}\times S^3$ to an oriented link
$L_1\subset \{1\}\times S^3$. Throughout the paper, all link
cobordisms will be assumed to be products near the boundary.

Recall the definition of the
\emph{normal Euler number}. Pick Seifert surfaces for the $L_i$ and take a
transverse pushoff $F'$ of $F$ so that the pushoff of $L_i$ is in the
direction of its Seifert surface. (It follows from the Mayer-Vietoris
theorem applied to the decomposition
$S^3=\nbd(L_i)\cup (S^3\setminus L_i)$ that these pushoffs are
independent of the choice of Seifert surfaces.)
Then, the normal Euler number $e$ of $F$ is the signed count of
intersection points between $F$ and $F'$, where the signs come from
picking a local orientation of $F$ near each intersection point and
using the induced local orientation of $F'$. This number is
independent of the choice of pushoff. The normal Euler number is zero
for oriented cobordisms from $L_0$ to $L_1$
and is some even number in general.

We will consider compact link cobordisms decorated with finitely many
marked points which, to be consistent with Khovanov-Robert~\cite{KhRo-kh-Frob2}, we
will call \emph{stars}. So, 
an \emph{elementary cobordism} between link diagrams is one of the
following moves:
\begin{enumerate}[label=(EC-\arabic*)]
\item\label{item:EC1} A planar isotopy of the diagram. 
\item A Reidemeister move.
\item\label{item:EC3} A birth or death of an unknot disjoint from the
  rest of the diagram.
\item\label{item:EC4} No change to the link diagram but a choice of
  a distinguished point (star) in the interior of an arc of the diagram; 
  we interpret this as the identity cobordism with a single star in its
  interior, lying over this distinguished point.
\item\label{item:EC5} A planar saddle.
\item\label{item:EC6} The identity cobordism from a link $L$ to the
  same link with a different orientation on some components.
\end{enumerate}
Associated to each elementary cobordism is a map of the Khovanov
complexes. For Reidemeister moves, these are the maps from the
proof of invariance of these theories. Specifically, Bar-Natan
associates particular picture-world maps to each Reidemeister
move~\cite{Bar-kh-tangle-cob}, and feeding these pictures into the Lee
or Bar-Natan Frobenius algebra gives the map of deformed Khovanov complexes.
The map associated to a birth is the unit $1$ and associated to a death is
the counit $\epsilon$. The map associated to a saddle is obtained by
applying the corresponding multiplication $m$ or comultiplication $\Delta$ to each
resolution. The map associated to the identity cobordism with a star
on some arc $A$ multiplies the label of $A$, in each
resolution, by $2X$ for the Lee deformation and $2X-H$ for the
Bar-Natan deformation (compare~\cite[Formula
(16)]{KhRo-kh-Frob2}). This map depends only on the arc containing
the star, not the location of the star on that arc. The map associated
to the identity cobordism with inconsistent orientations is the
identity map.

Suppose $F$ is a (possibly nonorientable) cobordism from $L_0$ to $L_1$, with a
finite number of marked stars in the interior of $F$. If $F$ is represented by a
movie of elementary cobordisms, then there is an induced map
$\BNcx^-(F)\co \BNcx^-(L_0)\to\BNcx^-(L_1)$, obtained by composing the maps
associated to elementary cobordisms. This induces maps on all the four versions
$\BNcx^\bullet$ of the Khovanov complexes,
$\bullet\in\{+,-,\infty,\widehat{\ }\}$, as well as their homologies
$\BNh^\bullet$.

\begin{lemma}\label{lem:les-natural}
  The maps $\BNcx^\bullet(F)\co \BNcx^\bullet(L_0)\to \BNcx^\bullet(L_1)$,
  $\bullet\in\{+,-,\infty,\widehat{\ }\}$, induce maps
  $\BNh^\bullet\co \BNh(L_0)\to\BNh(L_1)$, and the long exact sequences from
  Formula~\eqref{eq:les} are natural with respect to these maps.
\end{lemma}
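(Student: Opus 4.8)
The plan is to verify that the chain maps $\BNcx^\bullet(F)$ are genuinely maps of short exact sequences, at the chain level, and then invoke naturality of the long exact sequence in homology associated to a short exact sequence of chain complexes. Concretely, I would first observe that the map $\BNcx^-(F)$ is $\Ring[U]$-linear: each elementary cobordism acts by an $\Ring[U]$-linear operation on the Frobenius-algebra side (unit, counit, multiplication, comultiplication, multiplication by $2X$ or $2X-H$, and the identity), and compositions of $\Ring[U]$-linear maps are $\Ring[U]$-linear. Since all three short exact sequences in Formula~\eqref{eq:minfp} are built out of $\Ring[U]$, the localization $U^{-1}$, and the quotient by $U$, an $\Ring[U]$-linear endomorphism-of-complexes statement automatically upgrades: $\BNcx^-(F)$ descends to $\wh\BNcx(F)$ on $\wh\BNcx(L_i)=\BNcx^-(L_i)/\{U=0\}$, induces $\BNcx^\infty(F)=U^{-1}\BNcx^-(F)$ on the localization, and hence induces $\BNcx^+(F)$ on the quotient $\BNcx^\infty/\BNcx^-$.

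Next I would record that these induced maps are compatible with the structure maps $\iota$, $\pi$, and the multiplication-by-$U$ maps appearing in the three short exact sequences — that is, that each of the three diagrams
\[
\begin{tikzpicture}[xscale=1.2, >=To]
  \node at (0,.8) (a1) {$0$}; \node at (1.4,.8) (a2) {$\BNcx^-(L_0)$};
  \node at (3.2,.8) (a3) {$\BNcx^\infty(L_0)$}; \node at (5,.8) (a4) {$\BNcx^+(L_0)$};
  \node at (6.4,.8) (a5) {$0$};
  \node at (0,0) (b1) {$0$}; \node at (1.4,0) (b2) {$\BNcx^-(L_1)$};
  \node at (3.2,0) (b3) {$\BNcx^\infty(L_1)$}; \node at (5,0) (b4) {$\BNcx^+(L_1)$};
  \node at (6.4,0) (b5) {$0$};
  \draw[->] (a1) to (a2); \draw[->] (a2) to node[above=-.05]{\lab{\iota}} (a3);
  \draw[->] (a3) to node[above=-.05]{\lab{\pi}} (a4); \draw[->] (a4) to (a5);
  \draw[->] (b1) to (b2); \draw[->] (b2) to node[below=-.05]{\lab{\iota}} (b3);
  \draw[->] (b3) to node[below=-.05]{\lab{\pi}} (b4); \draw[->] (b4) to (b5);
  \draw[->] (a2) to node[right]{\lab{\BNcx^-(F)}} (b2);
  \draw[->] (a3) to node[right]{\lab{\BNcx^\infty(F)}} (b3);
  \draw[->] (a4) to node[right]{\lab{\BNcx^+(F)}} (b4);
\end{tikzpicture}
\]
and its two analogues (from the second and third rows of Formula~\eqref{eq:minfp}) commute. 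But commutativity in each case is immediate from $\Ring[U]$-linearity together with the fact that $\iota$, $\pi$, and the $U$-maps are themselves $\Ring[U]$-linear and defined functorially in terms of the localization and quotient constructions — there is nothing to check beyond unwinding definitions. Having exhibited $\BNcx^\bullet(F)$ as a morphism of short exact sequences of chain complexes, I would then apply the standard naturality of the connecting homomorphism (naturality of the snake lemma), exactly as was already invoked in deriving the commuting triangles of Formula~\eqref{eq:bdybdy-tri}, to conclude that each of the three long exact sequences of Formula~\eqref{eq:les} commutes with the maps $\BNh^\bullet(F)$. This includes the statement that $\BNcx^\bullet(F)$ indeed induces well-defined maps $\BNh^\bullet(F)$ on homology, which follows from its being a chain map (a property it inherits from each elementary cobordism and from composition).

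The only genuine content, and the step I expect to require the most care, is the $\Ring[U]$-linearity of the elementary-cobordism maps — in particular checking that the star map (multiplication by $2X$ in the Lee case, $2X-H$ in the Bar-Natan case) and the Reidemeister-move maps coming from Bar-Natan's picture-world, once fed through the Frobenius algebra over $\Ring[U]$, really are $\Ring[U]$-module maps and not merely $\Ring$-linear. Since $U$ ($=T$ or $H$) is a central element of the relevant Frobenius algebra and all the structure operations (unit, counit, $m$, $\Delta$) are maps of $\Ring[U]$-modules by construction, this is routine but should be stated explicitly. Everything else — the descent to the four flavors $\bullet$ and the naturality of the long exact sequences — is then formal homological algebra, identical in spirit to the $\HF^\pm$ story it is modeled on.
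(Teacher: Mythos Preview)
Your proposal is correct and is precisely the content the paper has in mind: the paper's own proof is simply ``This is immediate from the definitions,'' and what you have written is a careful unpacking of that sentence. The only remark is that you have written considerably more than is needed---the $\Ring[U]$-linearity of the elementary cobordism maps really is tautological (all the Frobenius structure maps are $\Ring[U]$-algebra or $\Ring[U]$-module maps by construction), so the paper is justified in not spelling it out.
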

\begin{proof}
  This is immediate from the definitions.
\end{proof}

Assuming the link diagrams are oriented coherently before and after
the move, for planar isotopies and Reidemeister moves, the maps
preserve the bigrading, and for births and deaths, the maps preserve
$\gr_h$ and increase $\gr_q$ by $1$. The map associated to a star
preserves $\gr_h$ and decreases $\gr_q$ by $2$. The behavior of saddles is more
complicated.

\begin{lemma}\label{lem:bigrading-shift-saddle}
  Let $F$ be a planar saddle from an oriented link diagram $L_0$ to an
  oriented link diagram $L_1$, which is not necessarily orientable
  coherently with the orientations of $L_0$ and $L_1$. Let $e$ be its
  normal Euler number. Then, the map
  $\BNcx^-(F)\co \BNcx^-(L_0)\to\BNcx^-(L_1)$ decreases $\gr_h$ by
  $e/2$ and decreases $\gr_q$ by $1+3e/2$.
\end{lemma}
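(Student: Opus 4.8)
The plan is to split the grading shift into a contribution coming from the change in the grading normalizations $N_\pm$ (because $L_0$ and $L_1$ may carry orientations for which different crossings are positive) and a contribution coming from the intrinsic degree of the merge and comultiplication maps. A planar saddle is supported in a disk $\Delta$ disjoint from all crossings, so $L_0$ and $L_1$ have the same underlying $4$-valent graph, hence the same crossing set, the same number $N$ of crossings, and the same Kauffman cube $\{0,1\}^N$; moreover $\BNcx^-(F)$ carries a Khovanov generator lying over a vertex $v$ to a sum of generators lying over the \emph{same} vertex $v$, and on the $v$-resolution it is, up to sign, the multiplication $m$ (when the saddle merges two circles of that resolution) or the comultiplication $\Delta$ (when it splits one circle). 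For either deformation, both $m$ and $\Delta$ have homological degree $0$ and lower by $1$ the internal quantum grading, the one in which $1$, $X$, and $U$ have degrees $+1$, $-1$, and $-2$ (Bar-Natan) or $-4$ (Lee). Writing $w_i=N_+^i-N_-^i$ for the writhe of $L_i$, so $N_\pm^i=(N\pm w_i)/2$, and substituting into $\gr_h(y)=-N_-+|v|$ and $\gr_q(y)=N_+-2N_-+|v|+(\text{internal }q\text{-grading of }y)$, we find that $\BNcx^-(F)$ shifts $\gr_h$ by $N_-^0-N_-^1=\tfrac12(w_1-w_0)$ and shifts $\gr_q$ by $\bigl((N_+^1-2N_-^1)-(N_+^0-2N_-^0)\bigr)-1=\tfrac32(w_1-w_0)-1$. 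Thus the lemma is equivalent to the identity $w_1-w_0=-e$, i.e.\ that $N_-$ increases by exactly $e/2$ across the saddle.

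To prove $w_0-w_1=e$, first note that if $F$ is coherent with the orientations of $L_0$ and $L_1$ then $L_0$ and $L_1$ agree as oriented diagrams outside $\Delta$, so $w_0=w_1$, and $e=0$ since coherently oriented cobordisms have vanishing normal Euler number; both sides vanish. If $F$ is incoherent, then no orientation of $L_1$ can agree with that of $L_0$ throughout the complement of $\Delta$: tracing around components, one sees that the two links differ, as oriented diagrams, exactly by reversing one arc swept across the band, and hence (depending on whether the saddle merges or splits) there is a component $C$ of $L_0$, respectively of $L_1$, such that the oriented diagrams differ only by reversing $C$. Reversing $C$ flips the sign of precisely the crossings joining $C$ to its complement, so $w_0-w_1=2\textstyle\sum_c\mathrm{sign}(c)=4\,\mathrm{lk}(C,L\setminus C)$, the sum over those crossings. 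On the other hand, $F$ factors as an oriented (coherent) saddle together with an orientation-reversal cobordism (move (EC-6)) reversing $C$ --- on the $L_0$ end if the saddle merges, on the $L_1$ end if it splits; by additivity of the normal Euler number and its vanishing on the oriented piece, $e$ equals the normal Euler number of that orientation-reversal cobordism. The latter is the product $[0,1]\times L$ with trivial normal bundle, whose normal Euler number relative to the Seifert-framing boundary data equals $\sum_{C'}\bigl(f^{\mathrm{new}}(C')-f^{\mathrm{old}}(C')\bigr)$, where $f(C')=-\sum_{C''\neq C'}\mathrm{lk}(C',C'')$ is the Seifert framing of $C'$ in $L$; reversing $C$ changes $f(C)$ by $2\,\mathrm{lk}(C,L\setminus C)$ and changes each $f(C')$ with $C'\neq C$ by $2\,\mathrm{lk}(C,C')$, for a total of $4\,\mathrm{lk}(C,L\setminus C)=w_0-w_1$. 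Hence $e=w_0-w_1$, which is the desired identity.

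The grading bookkeeping of the first paragraph is routine; the content is the identity $e=w_0-w_1$, and within that the one genuinely delicate point is getting the signs and factors of two right in the normal Euler number of the incoherently attached band --- equivalently, of the orientation-reversal cobordism. This is why I would organize that step via the factorization into an oriented saddle and an (EC-6) move and the explicit Seifert-framing formula, rather than attempting a direct intersection count with the transverse pushoff.
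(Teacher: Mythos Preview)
Your proof is correct and the grading bookkeeping in your first paragraph is identical to the paper's. The difference lies entirely in how you establish the identity $e=w_0-w_1$.

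The paper (following Ozsv\'ath--Stipsicz--Szab\'o) argues directly and uniformly: the planar saddle has an obvious \emph{blackboard} pushoff (push perpendicular to the projection plane) that never meets the surface, so the intersection count with respect to blackboard framings at both ends is zero. One then corrects from blackboard to Seifert framing on each end; since the blackboard pushoff of $L_i$ has total linking $w(L_i)$ with $L_i$ while the Seifert pushoff has total linking $0$, this correction contributes $w(L_0)$ at the incoming end and $-w(L_1)$ at the outgoing end, giving $e=w_0-w_1$. No case analysis is needed.

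Your route instead factors the incoherent saddle as a coherent saddle composed with an (EC-6) orientation-reversal of a single component, then computes the normal Euler number of the latter as the total change in componentwise Seifert framings $f(C')=-\sum_{C''\neq C'}\mathrm{lk}(C',C'')$. This is a perfectly valid alternative, and your framing-change computation $4\,\mathrm{lk}(C,L\setminus C)$ is correct. One small inaccuracy: the phrase ``on the $L_0$ end if the saddle merges, on the $L_1$ end if it splits'' is not quite right---whether the reversed component lives in $L_0$ or $L_1$ depends on the specific orientations, not merely on merge versus split (e.g.\ a merge where the single resulting component of $L_1$ carries the ``wrong'' orientation requires reversing on the $L_1$ side). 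Fortunately this does not affect your argument: in every case one factors off a single-component reversal, and both the writhe change and the normal Euler number of that reversal equal $\pm 4\,\mathrm{lk}(C,L\setminus C)$ with matching sign.

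What each approach buys: the paper's blackboard argument is shorter, sign-robust, and treats all saddles at once. Your approach is more hands-on about the Seifert framings and makes the (EC-6) decomposition explicit, which dovetails nicely with how Corollary~\ref{cor:hq-gr-change} is organized; but it carries more sign bookkeeping, as you yourself flag.
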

\begin{proof}
  Ozsv\'ath-Stipsicz-Szab\'o show that the normal Euler number $e$ of
  the planar saddle $F$ is $w(L_0)-w(L_1)$, where
  $w(L_i)=N_+(L_i)-N_-(L_i)$ is the writhe of the link diagram
  $L_i$~\cite[Proof of Lemma 4.3]{OSS-hf-unoriented}. They write their
  proof only for knots but, as we sketch in the next paragraph, it
  works equally well for links.

  Fix any normal direction to the plane of projection of the link
  diagrams and consider a small pushoff of $L_i$ in this normal
  direction; call this the \emph{blackboard pushoff}. Since the total
  linking number of $L_i$ with its blackboard pushoff is the writhe
  $w(L_i)$ while the total linking number of $L_i$ with its Seifert
  pushoff is zero, the identity cobordism from $L_i$ to $L_i$ has a
  pushoff which intersects itself $w(L_i)$ times, so that the
  pushoff restricts to the Seifert pushoff at one end and the
  blackboard pushoff at the other. The planar saddle also has a
  (similarly defined) blackboard pushoff without any
  self-intersection, and it restricts to the blackboard pushoffs of
  $L_0$ and $L_1$ at the two ends. Putting these pieces together, we
  get a pushoff with $w(L_0)-w(L_1)$ self-intersections connecting the
  Seifert pushoffs of $L_0$ and $L_1$.

  Let $N$ be the total number of crossings in either $L_0$ or
  $L_1$. Recall that the complex $\BNcx^-(L_i)$ is obtained from the
  total complex of a cube-shaped diagram---call it $\BNcx'(L_i)$---by
  increasing $\gr_h$ by $-N_-(L_i)=(w(L_i)-N)/2$ and $\gr_q$ by
  $N_+(L_i)-2N_-(L_i)=(3w(L_i)-N)/2$. The saddle $F$ induces a map
  $\BNcx'(L_0) \to \BNcx'(L_1)$ that preserves the homological grading
  and decreases the quantum grading
  by $1$. Therefore, after the grading shifts, the map
  $\BNcx^-(F)\co \BNcx^-(L_0)\to\BNcx^-(L_1)$ decreases $\gr_h$ by
  $(w(L_0)-w(L_1))/2=e/2$ and decreases $\gr_q$ by
  $1+3(w(L_0)-w(L_1))/2=1+3e/2$.
\end{proof}

\begin{corollary}\label{cor:hq-gr-change}
  (Compare~\cite[Proposition 4.7]{Bal-kh-E1}) The map associated to a
  cobordism with Euler characteristic $\chi$, normal Euler number
  $e$, and $s$ stars decreases $\gr_h$ by $e/2$ and increases $\gr_q$ by
  $\chi-3e/2-2s$.
\end{corollary}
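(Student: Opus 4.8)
The plan is to reduce to the previously analyzed elementary cobordisms by writing an arbitrary cobordism $F$ as a movie of such moves and summing the grading shifts. First I would recall that a general cobordism can be represented by a sequence of elementary cobordisms \ref{item:EC1}--\ref{item:EC6}, and that the map $\BNcx^\bullet(F)$ is the composition of the maps associated to these pieces; since grading shifts add under composition, it suffices to compute $(\Delta\gr_h, \Delta\gr_q)$ for each elementary move and add. For moves \ref{item:EC1} (planar isotopy) and the Reidemeister moves, the preceding discussion records $(\Delta\gr_h,\Delta\gr_q)=(0,0)$; such moves contribute $\chi=0$, $e=0$, $s=0$, so the claimed formula gives $(0,0)$ as well. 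For a birth or death \ref{item:EC3}, the map preserves $\gr_h$ and increases $\gr_q$ by $1$; a birth or death has $\chi=+1$ (it is a disk), $e=0$, $s=0$, matching $\chi-3e/2-2s=1$. For a star \ref{item:EC4}, the map preserves $\gr_h$ and decreases $\gr_q$ by $2$; a star contributes $\chi=0$, $e=0$, $s=1$, matching $-2s=-2$. For a saddle \ref{item:EC5} with normal Euler number $e_0$, Lemma~\ref{lem:bigrading-shift-saddle} gives $\Delta\gr_h=-e_0/2$ and $\Delta\gr_q=-1-3e_0/2$; a saddle has $\chi=-1$, so $\chi-3e_0/2-2s=-1-3e_0/2$, again matching. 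Finally the orientation-change move \ref{item:EC6} induces the identity map, with $(\Delta\gr_h,\Delta\gr_q)=(0,0)$ and $\chi=0$, $e=0$, $s=0$.

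The remaining point is that the quantities on the right-hand side are additive over the movie: the Euler characteristic $\chi$ of $F$ is the sum of the Euler characteristics of the elementary pieces (each piece being glued along a product neighborhood of its boundary link, so the gluing copies contribute $\chi=0$), the number of stars $s$ is literally the number of \ref{item:EC4} moves, and the normal Euler number $e$ is additive under stacking of cobordisms (it is the self-intersection number of a pushoff, which adds when one concatenates pushoffs end to end, as in the proof of Lemma~\ref{lem:bigrading-shift-saddle}). Summing the per-move contributions then yields $\Delta\gr_h=-e/2$ and $\Delta\gr_q=\chi-3e/2-2s$ for the composite map $\BNcx^-(F)$, and hence for the induced maps on all four versions $\BNcx^\bullet$ and their homologies by Lemma~\ref{lem:les-natural}.

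The main obstacle to make rigorous is the additivity of the normal Euler number over a movie presentation: one must check that the blackboard-versus-Seifert pushoff bookkeeping used in Lemma~\ref{lem:bigrading-shift-saddle} is consistent across all the elementary pieces simultaneously, so that the self-intersection counts of the local pushoffs genuinely add up to $e(F)$. Concretely, one fixes the blackboard pushoff throughout the movie, observes that each elementary cobordism other than a saddle has a blackboard pushoff with no self-intersections (and restricting correctly at both ends), that a saddle \ref{item:EC5} contributes $w(L_0)-w(L_1)$ self-intersections, and that the total telescopes to $w$ of the initial diagram minus $w$ of the final diagram, which must then be corrected to the Seifert framing at the two ends exactly as in Lemma~\ref{lem:bigrading-shift-saddle}; the interior corrections cancel in pairs. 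Once this is in place, the corollary follows by the additivity argument above.
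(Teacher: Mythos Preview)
Your argument follows the same movie decomposition as the paper's proof and is correct for moves \ref{item:EC1}--\ref{item:EC5}, but there is a genuine error in your treatment of the orientation-change move \ref{item:EC6}. You assert that \ref{item:EC6} has $e=0$ and grading shift $(0,0)$; neither is true in general. The bigrading on $\BNcx^-(L)$ depends on the orientation of $L$ through the counts $N_\pm$ of positive and negative crossings, so when some components are reoriented these counts change and the identity map on generators does \emph{not} preserve the bigrading. Likewise, the normal Euler number of the product cobordism from $(L,o)$ to $(L,o')$ need not vanish: the Seifert pushoff used to define $e$ depends on the orientation, so the two endpoint framings differ and the product cobordism acquires a nonzero $e$. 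What must be checked (and what the paper checks) is that for \ref{item:EC6} the grading shift is exactly $(-e/2,-3e/2)$, matching the claimed formula since $\chi=s=0$; the verification is parallel to that of Lemma~\ref{lem:bigrading-shift-saddle} but easier, since only the writhe correction enters.

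A related omission: you never explicitly arrange that the intermediate diagrams are oriented coherently across moves \ref{item:EC1}--\ref{item:EC4}. The grading shifts you quote for those moves are stated in the paper only under that hypothesis. The paper fixes this by choosing orientations inductively and inserting an \ref{item:EC6} at the end if needed to reach the given orientation of $L_1$; once this is done, your per-move computations for \ref{item:EC1}--\ref{item:EC5} are valid, and the \ref{item:EC6} move absorbs any residual orientation discrepancy, which is precisely why its $e$ and grading shift cannot be assumed trivial.
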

\begin{proof}
  Consider a movie decomposition into elementary cobordisms
  \ref{item:EC1}--\ref{item:EC6}. Choose orientations of all the link
  diagrams that appear in the movie, so that link diagrams before and
  after each of the moves~\ref{item:EC1}--\ref{item:EC4} are oriented
  coherently. (We may choose the orientations inductively, starting
  with the given orientation of $L_0$, and by using a
  move~\ref{item:EC6} if necessary, we may ensure that the chosen
  orientation of $L_1$ agrees with the given orientation of $L_1$.)

  We now check that the statement holds for each of the elementary
  cobordisms.  The elementary
  cobordisms~\ref{item:EC1}--\ref{item:EC3} have $e=s=0$, and the
  associated maps increase the bigrading by
  $(0,\chi)=(-e/2,\chi-3e/2-2s)$. The elementary
  cobordism~\ref{item:EC4} has $e=\chi=0$ and $s=1$, and the
  associated map increases the bigrading by
  $(0,-2)=(-e/2,\chi-3e/2-2s)$. The elementary
  cobordism~\ref{item:EC5} has $\chi=-1$ and $s=0$, and by
  Lemma~\ref{lem:bigrading-shift-saddle}, the associated map increases
  the bigrading by $(-e/2,-1-3e/2)=(-e/2,\chi-3e/2-2s)$.  Finally, for
  cobordisms of type~\ref{item:EC6}, we have $\chi=s=0$, and the
  associated identity map increases the bigrading by
  $(-e/2,-3e/2)=(-e/2,\chi-3e/2-2s)$ as well. (The proof is similar
  to, but easier than, the proof of
  Lemma~\ref{lem:bigrading-shift-saddle}.)  Since $e$, $\chi$, and $s$
  are additive, the composition of these maps also increases the
  bigrading by $(-e/2,\chi-3e/2-2s)$.
\end{proof}

\begin{remark}
  The corollary suggests that another natural grading is
  $\gr_\gamma=\gr_q-3\gr_h$: the map associated to any cobordism
  (possibly nonorientable) increases $\gr_\gamma$ by the Euler
  characteristic of the cobordism minus twice the number of stars.
\end{remark}

We also recall a well-known result of
Rasmussen's~\cite{Ras-kh-slice}. Given a link $L$, let $o(L)$ be
the set of orientations of $L$. Similarly, given a cobordism $F$ from
$L_0$ to $L_1$, let $o(F)$ be the set of orientations of $F$. There
are restriction maps $o(L_0)\leftarrow o(F)\rightarrow o(L_1)$; that
is, $o(F)$ is a \emph{correspondence} from $o(L_0)$ to
$o(L_1)$. (Here, we choose orientation conventions so that if
$F=[0,1]\times L$ then $o(F)$ is the identity correspondence of
$o(L)$.)
\begin{proposition}\label{prop:or-gens-part2}
  Given a cobordism $F$ from $L_0$ to $L_1$, for the Bar-Natan
  and Lee theories, respectively, we have commutative
  diagrams
  \begin{align*}
    &\vcenter{\hbox{\begin{tikzpicture}[xscale=6,yscale=1.5]
          \node (a0) at (0,1) {$\BNh^\infty(L_0)$};
          \node (a1) at (1,1) {$\BNh^\infty(L_1)$};
          \node (b0) at (0,0) {$\displaystyle\bigoplus_{o\in o(L_0)}\Ring[H^{-1},H]$};
          \node (b1) at (1,0) {$\displaystyle\bigoplus_{o\in o(L_1)}\Ring[H^{-1},H]$};
          \draw[->] (a0) -- (a1) node[midway,anchor=south] {\tiny $\BNh^\infty(F)$};
          \draw[->] (b0) -- (b1) node[midway,anchor=south] {\tiny $F_*$};
          \draw[->] (a0) -- (b0) node[midway,anchor=west] {\tiny $\cong$};
          \draw[->] (a1) -- (b1) node[midway,anchor=east] {\tiny $\cong$};
        \end{tikzpicture}}}\\
    &\vcenter{\hbox{\begin{tikzpicture}[xscale=6,yscale=1.5]
          \node (a0) at (0,1) {$\BNh^\infty(L_0)\otimes_{\Ring[T]}\Ring[\sqrt{T}]$};
          \node (a1) at (1,1) {$\BNh^\infty(L_1)\otimes_{\Ring[T]}\Ring[\sqrt{T}]$};
          \node (b0) at (0,0) {$\displaystyle\bigoplus_{o\in o(L_0)}\Ring[T^{-\frac{1}{2}},T^{\frac{1}{2}}]$};
          \node (b1) at (1,0) {$\displaystyle\bigoplus_{o\in o(L_1)}\Ring[T^{-\frac{1}{2}},T^{\frac{1}{2}}]$};
          \draw[->] (a0) -- (a1) node[midway,anchor=south] {\tiny $\BNh^\infty(F)\otimes\Id$};
          \draw[->] (b0) -- (b1) node[midway,anchor=south] {\tiny $F_*$};
          \draw[->] (a0) -- (b0) node[midway,anchor=west] {\tiny $\cong$};
          \draw[->] (a1) -- (b1) node[midway,anchor=east] {\tiny $\cong$};
        \end{tikzpicture}}}
  \end{align*}
  where the vertical arrows are from
  Proposition~\ref{prop:or-gens-part1}, and the bottom arrow is some
  map $F_*$ that refines the correspondence
  $o(L_0)\leftarrow o(F)\rightarrow o(L_1)$. That is, for
  $i\in\{0,1\}$ and for any orientation $o_i\in o(L_i)$ and any
  generator $g_i$ of the $\Ring[H^{-1},H]$ (respectively,
  $\Ring[T^{-\frac{1}{2}},T^{\frac{1}{2}}]$) summand corresponding to
  $o_i$, the coefficient of $g_1$ in $F_*(g_0)$ is a sum
  \(  \sum_{o\in o(F),\\ \ o|_{L_i}=o_i} e_o, \) where
  each $e_o$ is a unit in $\Ring[H^{-1},H]$ (respectively,
  $\Ring[T^{-\frac{1}{2}},T^{\frac{1}{2}}]$). In particular, if $F$ is
  nonorientable, then in either theory, the map
  $\BNh^\infty(F)\from \BNh^\infty(L_0)\to\BNh^\infty(L_1)$ is zero.
\end{proposition}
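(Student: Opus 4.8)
The plan is to compute everything in the diagonalizing basis $\{A,B\}$ introduced in the proof of Proposition~\ref{prop:or-gens-part1}, and to reduce to elementary cobordisms. Recall from that proof that, over $\Ring[H^{-1},H]$ (resp. $\Ring[T^{-\frac12},T^{\frac12}]$), the complex $\BNcx^\infty(L)$ splits as a direct sum over equivalence classes of $\{A,B\}$-labelled resolutions of $L$, that every non-singleton class is acyclic, and that the singleton classes are in canonical bijection with $o(L)$ via the checkerboard-coloring rule; write $g_o$ for the cycle representing the class of $o\in o(L)$. Under the vertical isomorphisms of the proposition, $g_{o_i}$ is carried to the generator indexed by $o_i$, so the statement is equivalent to the assertion that, in the basis $\{g_o\}$, the map $\BNh^\infty(F)$ has matrix entries of the stated form.

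First I would write $F$ as a movie of elementary cobordisms \ref{item:EC1}--\ref{item:EC6} and apply Lemma~\ref{lem:les-natural}, reducing the claim to two things: (i) that $\BNh^\infty(F_0)$ has the asserted form when $F_0$ is a single elementary cobordism, with $o$ ranging over $o(F_0)$; and (ii) that the collection of maps of this form is closed under composition. For (ii): if $F_*$ and $G_*$ (for composable cobordisms $F,G$) have matrices whose $(o_0,o_1)$- and $(o_1,o_2)$-entries are sums of units indexed by $\{o\in o(F)\mid o|_{L_0}=o_0,\ o|_{L_1}=o_1\}$ and $\{o\in o(G)\mid o|_{L_1}=o_1,\ o|_{L_2}=o_2\}$, then the $(o_0,o_2)$-entry of $(G\circ F)_*$ is $\sum_{o} e_{o|_F}\, e_{o|_G}$, where $o$ runs over orientations of $G\circ F$ restricting to $o_0$ and $o_2$; this uses only that an orientation of a glued cobordism is precisely a compatible pair of orientations of the pieces, and that a product of units is a unit.

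For (i) I would use that every generating picture-world morphism acts on the $\{A,B\}$-labels of circles without mixing the two labels: the cup is $\tfrac{1}{H}(A+B)$ (resp. $\tfrac{1}{2\sqrt T}(A+B)$), the cap sends $A\mapsto 1$, $B\mapsto -1$, a merge saddle sends $A\otimes A\mapsto HA$, $B\otimes B\mapsto HB$, $A\otimes B\mapsto 0$ (resp. with $H$ replaced by $2\sqrt T$), a split saddle sends $A\mapsto A\otimes A$, $B\mapsto -B\otimes B$, and a star sends $A\mapsto HA$, $B\mapsto -HB$ (resp. $A\mapsto 2\sqrt T A$, $B\mapsto -2\sqrt T B$); Reidemeister maps are $\Ring$-linear combinations of composites of these (possibly with dots), which are again label-non-mixing. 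Hence any elementary cobordism map sends a labelled resolution of $L_0$ to a $\Ring$-linear combination of labelled resolutions of $L_1$ in which each coefficient is $0$ or $\pm H^{k}$ (resp. $\pm(2\sqrt T)^k$ up to sign), and each such coefficient is automatically a unit in the localized ring; passing to homology discards the non-singleton classes, so $g_o\mapsto \sum \pm(\text{unit})\,g_{o'}$. The remaining — and only nontrivial — step is to match the surviving target resolutions $r'$, and their signs, with the orientations of the elementary cobordism restricting to $o$: for births, deaths, planar isotopies and Reidemeister moves this is immediate (the cobordism is a disk or a product, and the checkerboard-coloring bijection is the obvious one), and for a planar saddle it is the observation that a merge or split along a pair of arcs of a checkerboard-coloured resolution is compatible with the "$A$ $=$ oriented as the boundary of the black region" rule exactly when the saddle admits a compatible orientation. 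This is precisely Rasmussen's bookkeeping from \cite{Ras-kh-slice}; the orientation-reversal cobordism \ref{item:EC6} is the identity map and is handled the same way but more easily (compare the last line of the proof of Corollary~\ref{cor:hq-gr-change}). I expect this saddle-versus-checkerboard matching to be the main obstacle, in the sense that it is the only place where a genuine geometric argument, rather than bookkeeping, is needed.

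Finally, the last sentence of the proposition follows formally: if $F$ is nonorientable then $o(F)=\varnothing$ (equivalently, no movie for $F$ admits a compatible system of orientations of its intermediate link diagrams, since such a system would assemble into an orientation of $F$), so in the composition formula from step (ii) every matrix entry of $\BNh^\infty(F)$ is an empty sum, and therefore $\BNh^\infty(F)=0$ in both theories.
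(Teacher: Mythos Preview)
Your approach is essentially the same as the paper's (and Rasmussen's): diagonalize, reduce to elementary cobordisms, and track orientations. The bookkeeping in parts (i) and (ii) is fine.

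There is, however, a genuine gap in your final paragraph, specific to the Lee theory. The commutative diagram in the statement of the proposition for the Lee case involves $\BNh^\infty(L_i)\otimes_{\Ring[T]}\Ring[\sqrt{T}]$, not $\BNh^\infty(L_i)$ itself. So when you argue that $o(F)=\varnothing$ forces every matrix entry of $F_*$ to be an empty sum, what you have actually shown is that $\BNh^\infty(F)\otimes\Id=0$, not that $\BNh^\infty(F)=0$. To finish, you need to observe that the natural map
\[
\BNh^\infty(L_1)\longrightarrow \BNh^\infty(L_1)\otimes_{\Ring[T]}\Ring[\sqrt{T}]
\]
is injective. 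This holds because $\Ring[\sqrt{T}]\cong\Ring[T]\oplus\sqrt{T}\,\Ring[T]$ is free over $\Ring[T]$, so the map above is inclusion into the first summand of $\BNh^\infty(L_1)\oplus\sqrt{T}\,\BNh^\infty(L_1)$. The paper makes exactly this point explicitly; for the Bar-Natan theory no such step is needed, since no base change was performed.
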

\begin{proof}
  For the first part, Rasmussen proved~\cite{Ras-kh-slice} the
  result for Lee's deformation with $\Ring=\QQ$ using a change of
  basis that diagonalized Lee's Frobenius algebra (after adding a
  square root of $T$). The change of basis from the proof of
  Proposition~\ref{prop:or-gens-part1} diagonalizes the Frobenius
  algebra, both for the Bar-Natan theory (over any ring $\Ring$) and
  the Lee theory (over any ring $R$ with $2$ invertible, and after
  adding a square root of $T$). Using this diagonalized basis,
  Rasmussen's proof goes through without any essential
  changes. (For an elementary star cobordism, the map is
  multiplication by $A-B$, which sends each orientation to itself with
  coefficient $\pm 1$, and hence fits into Rasmussen's
  framework.)

  The last assertion is automatic for the Bar-Natan theory. For the
  Lee theory, note that if the map
  $\BNh^\infty(F)\otimes\Id\from
  \BNh^\infty(L_0)\otimes_{\Ring[T]}\Ring[\sqrt{T}] \to
  \BNh^\infty(L_1)\otimes_{\Ring[T]}\Ring[\sqrt{T}]$ is zero, then the
  map $\BNh^\infty(F)\from \BNh^\infty(L_0) \to \BNh^\infty(L_1)$ must
  be zero as well. This follows from commutativity of the diagram
  \[
    \begin{tikzpicture}[xscale=6,yscale=1.5]
      \node (a0) at (0,0)   {$\BNh^\infty(L_0)\otimes_{\Ring[T]}\Ring[\sqrt{T}]$};
      \node (a1) at (1,0) {$\BNh^\infty(L_1)\otimes_{\Ring[T]}\Ring[\sqrt{T}]$};
      \node (b0) at (0,1) {$\BNh^\infty(L_0)$};
      \node (b1) at (1,1) {$\BNh^\infty(L_1)$};

      \draw[->] (a0)--(a1)  node[midway,anchor=south] {\tiny $\BNh^\infty(F)\otimes\Id$};
      \draw[->] (b0)--(b1) node[midway,anchor=south] {\tiny $\BNh^\infty(F)$};

      \draw[->] (b0)--(a0);
      \draw[->] (b1)--(a1);
    \end{tikzpicture}
  \]
  and noting that the rightmost vertical map
  \[
    \BNh^\infty(L_1)\to \BNh^\infty(L_1)\otimes_{\Ring[T]}\Ring[\sqrt{T}]\cong \BNh^\infty(L_1)\otimes_{\Ring[T]}\big(\Ring[T]\oplus\sqrt{T}\Ring[T]\big)\cong \BNh^\infty(L_1)\oplus \sqrt{T}\BNh^\infty(L_1)
  \]
  is the inclusion as the first factor, and therefore is injective.
\end{proof}

Finally, we confirm well-definedness of the cobordism maps. Before
stating the main result, we note some relations involving elementary
star cobordisms (cobordisms of type~\ref{item:EC4}).
\begin{lemma}\label{lem:star-commute}
  Up to sign, the map on $\BNcx^-$ associated to an elementary star
  cobordism commutes with the map associated to any elementary
  cobordism disjoint from the star, and commutes with planar isotopies
  in general in the obvious sense. If $p$ and $q$ are points on
  opposite sides of a crossing then the map associated to the
  elementary star cobordism at $p$ is chain homotopic to $-1$ times
  the map associated to the elementary star cobordism at $q$; in
  particular, these two maps also agree up to homotopy and sign.
\end{lemma}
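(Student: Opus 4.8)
The plan is to treat the two assertions in turn.

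\textbf{Commutation.} Write $\theta$ for the ``star element,'' $2X-H$ in the Bar-Natan theory and $2X$ in the Lee theory; it is central in the Frobenius algebra, and the map on $\BNcx^-$ associated to an elementary star cobordism on an arc $A$ is, in each resolution, multiplication by $\theta$ on the circle containing $A$, tensored with the identity on the remaining circles. Every other elementary cobordism map \ref{item:EC1}, \ref{item:EC3}--\ref{item:EC6}, as well as the Reidemeister maps, is assembled resolution by resolution from the Frobenius operations ($m$, $\Delta$, unit, counit), planar bookkeeping, and the fixed sign assignment on the cube, all supported away from $A$ when the cobordism is disjoint from the star. So I would check, operation by operation, that multiplication by $\theta$ on a marked circle commutes with each of these, following the marked circle through any merge or split: for a merge this is commutativity, $m\bigl((\theta\otimes\Id)(a\otimes b)\bigr)=\theta\,m(a\otimes b)$; for a split it is the Frobenius relation $\Delta(\theta a)=(\theta\otimes\Id)\Delta(a)$; and for a birth or death on another circle, for a second star (two multiplication operators), and for an orientation-change cobordism (which induces the identity) it is immediate. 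This gives commutation on the nose with \ref{item:EC1}, \ref{item:EC3}--\ref{item:EC6}, and, since a planar isotopy induces a canonical identification of cubes carrying the star operator on $A$ to the one on its image, with planar isotopies in the stated sense. The Reidemeister picture-world maps are canonical only up to the usual sign/homotopy ambiguity, which is the source of the ``up to sign''; granting that, the same argument applies, those maps being the identity away from the region of the move.

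\textbf{Moving a star past a crossing.} Let $p$ and $q$ lie on the two legs of a crossing $c$ that are joined through $c$, and let $D_p, D_q\co\BNcx^-(L)\to\BNcx^-(L)$ be the induced star maps. Resolving $c$ writes $\BNcx^-(L)=\Cone\bigl(\BNcx^-(L_{0})\xrightarrow{\pm s_c}\BNcx^-(L_{1})\bigr)$, where $L_i$ denotes $L$ with $c$ resolved to $i$ and $s_c$ is the $c$-saddle; the star cobordisms are disjoint from $c$, so $D_p$ and $D_q$ act diagonally on the cube and hence are diagonal with respect to this decomposition. Because $p$ and $q$ lie on opposite legs of $c$, in each resolution of the remaining crossings each smoothing of $c$ separates them onto its two local arcs, so one value at $c$ puts $p$ and $q$ on a single circle $Z$ and the other splits $Z$ into circles $Z_p\ni p$, $Z_q\ni q$, with $s_c$ correspondingly a merge $Z_p\sqcup Z_q\to Z$ or a split $Z\to Z_p\sqcup Z_q$ there. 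I would take the homotopy $H$ to be twice the reverse saddle at $c$: the off-diagonal map $\BNcx^-(L_1)\to\BNcx^-(L_0)$ given by $2$ times the opposite Frobenius operation ($\Delta$ where $s_c=m$, $m$ where $s_c=\Delta$), tensored with the identity.

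Then I would compute $dH+Hd$. Its component off the diagonal of the cone is $2$ times the (anti)commutator of the reverse saddle with the differentials at the other crossings, which vanishes by the usual (anti)commutativity of the square faces of the cube; its diagonal components are the round trips $2\,s_c^{\vee}s_c$ and $2\,s_cs_c^{\vee}$. Writing $\theta=m(\Delta(1))$ for the handle element — which coincides with the star element above — these are evaluated from the two Frobenius-algebra identities $m\Delta=\theta\cdot\Id$ and $2\,\Delta m=(\theta\otimes\Id)+(\Id\otimes\theta)$: on a circle $Z$ carrying both $p$ and $q$ the round trip is $2\,m\Delta$, i.e.\ multiplication by $2\theta$, and on the pair $Z_p, Z_q$ it is $2\,\Delta m=\theta\otimes\Id+\Id\otimes\theta$; in both cases this is exactly $D_p+D_q$ restricted to that summand. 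Both identities hold verbatim for the Lee and Bar-Natan algebras, and the factor $2$ is just multiplication by $2$ and needs no inverse. Hence $dH+Hd=D_p+D_q$, so $D_p\simeq-D_q$, and in particular $D_p$ and $D_q$ agree up to homotopy and sign.

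\textbf{Main obstacle.} The Frobenius-algebra computations are routine; the work is in the bookkeeping. The two delicate points are (i) confirming the local picture at $c$ — that in every resolution the two smoothings of $c$ relate the circle(s) through $p$ and $q$ by a single merge or split, which rests on planar tangles pairing the four legs of $c$ without crossings — and (ii) reconciling the signs: the cube's sign assignment, the mapping-cone conventions, and the sign of the reverse saddle, so that the homotopy is $+2$ rather than $\pm2$ times the reverse saddle. Point (ii) is exactly where the ``up to sign'' of the first assertion would propagate, and it is the reason the sharp form ``$D_p\simeq-D_q$'' requires care while ``$D_p\simeq\pm D_q$'' is immediate from the construction.
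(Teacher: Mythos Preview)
Your argument is correct. For the first assertion you unpack what the paper calls ``straightforward from the definitions,'' and for the second you give a direct construction of the chain homotopy. The paper's own proof is much shorter: it simply cites Hedden--Ni~\cite[Lemma~2.3]{HN-kh-detects} for the Lee case and Alishahi~\cite[Lemma~2.2]{Ali-kh-unknotting} for the Bar-Natan case. What you have written is essentially a reproof of those two lemmas: the homotopy you build---twice the reverse saddle at the chosen crossing---together with the identities $m\Delta=\theta\cdot\Id$ and $2\Delta m=\theta\otimes\Id+\Id\otimes\theta$ is exactly their content. Your observation that $\theta$ coincides with the handle element $m(\Delta(1))$ is the conceptual reason this works, and it is the same mechanism that drives Proposition~\ref{prop:neck-cut} later in the paper. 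The trade-off is clear: the paper's citation is brief but opaque, while your approach is self-contained and exposes the underlying Frobenius-algebra computation, at the cost of the sign bookkeeping you correctly flag as delicate. Both are valid; yours has the advantage of working uniformly for the two deformations in a single argument rather than invoking two separate references.
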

\begin{proof}
  The first statement is straightforward from the definitions. The
  second is immediate from a lemma of Hedden-Ni's \cite[Lemma
  2.3]{HN-kh-detects} in the Lee case and a lemma of
  Alishahi's~\cite[Lemma 2.2]{Ali-kh-unknotting} in the Bar-Natan
  case.
\end{proof}

Well-definedness of the cobordism maps is the following:
\begin{proposition}\label{prop:BNh-functorial}
  Let $F\subset [0,1]\times S^3$ be a (possibly nonorientable) cobordism from
  $L_0$ to $L_1$. For $\bullet\in\{+,-,\infty,\widehat{\ }\}$, the induced map
  $\BNcx^\bullet(F)\from\BNcx^\bullet(L_0)\to\BNcx^\bullet(L_1)$ in the homotopy
  category of complexes over $\Ring[U]$ is well-defined up to sign, and
  invariant under isotopy of $F$ in $[0,1]\times S^3$ rel.~boundary. In fact, if
  $\Phi\co [0,1]\times S^3\to [0,1]\times S^3$ is a diffeomorphism which is the
  identity near the boundary, then $\BNcx^\bullet(F)$ and
  $\BNcx^\bullet(\Phi(F))$ are chain homotopic.
\end{proposition}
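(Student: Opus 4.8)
The plan is to run the standard ``movie move'' proof of functoriality of Khovanov-type invariants, checking that the two features absent from the classical orientable story --- the orientation-reversal cobordisms~\ref{item:EC6} and the stars~\ref{item:EC4} --- introduce nothing essentially new. First I would fix a generic projection $[0,1]\times S^3\to[0,1]\times\RR^2$ compatible with the product structures near $\{0,1\}$, isotope $F$ rel boundary into generic position with respect to it, and read off a movie of elementary cobordisms~\ref{item:EC1}--\ref{item:EC6}: the stars of $F$ appear as instances of~\ref{item:EC4}, and, as in the proof of \Corollary{hq-gr-change}, one orients the successive diagrams inductively from the given orientation of $L_0$, inserting moves of type~\ref{item:EC6} wherever coherence fails so that the final diagram is $L_1$ with its given orientation. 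Two such movies presenting surfaces that are isotopic rel boundary and carry matching stars are related by a finite sequence of (a) the Carter--Saito movie moves, (b) transpositions of elementary cobordisms supported in disjoint parts of the diagram, and (c) the star-sliding relations of \Lemma{star-commute}; here (c) is the mild ``decorated'' upgrade of the movie move theorem, which follows from the undecorated statement together with \Lemma{star-commute}. So it suffices to show that $\BNcx^\bullet(F)$ is unchanged --- up to chain homotopy over $\Ring[U]$ and an overall sign --- under each of (a), (b), (c).

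Invariance under (b) is the obvious commutation of maps supported in disjoint regions, and under (c) it is \Lemma{star-commute}. For (a), the movie moves not involving~\ref{item:EC6} are exactly the relations checked in Bar-Natan's picture-world formalism~\cite{Bar-kh-tangle-cob} (following \cite{Jac-kh-cobordisms,Kho-kh-cobordism}): feeding Bar-Natan's local picture maps and the local homotopies witnessing each move into the Lee or Bar-Natan Frobenius algebra over $\Ring[U]$ produces the needed $\Ring[U]$-linear chain homotopies for $\BNcx^-$, and since these are compatible with $\iota$, $\pi$, and multiplication by $U$ (\Lemma{les-natural}) they descend to all four flavors $\bullet\in\{+,-,\infty,\widehat{\ }\}$. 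Each such homotopy identifies the two composites only up to sign, exactly as in the established orientable functoriality results; in the nonorientable setting this sign ambiguity is genuine and cannot be removed, cf.\ \Remark{Klein-TQFT}. The moves involving~\ref{item:EC6} hold on the nose, since the map attached to~\ref{item:EC6} is the identity map of the underlying complex over $\Ring[U]$ --- it only records which orientation is used to normalize the gradings --- and commutes strictly with every local picture map. This establishes that $\BNcx^\bullet(F)$ is well-defined up to sign and invariant under isotopy of $F$ rel boundary.

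For the final sentence, let $\Phi$ be a diffeomorphism of $[0,1]\times S^3$ equal to the identity near the boundary; by the isotopy invariance just proved, we may replace $\Phi$ by any representative of its class in $\pi_0\Diff_\partial([0,1]\times S^3)$. Restricting to a boundary collar identifies $\Diff_\partial([0,1]\times S^3)$ with $\Omega\Diff(S^3)$, which by Hatcher's resolution of the Smale conjecture is $\Omega\mathrm{O}(4)$; hence $\pi_0\Diff_\partial([0,1]\times S^3)\cong\pi_1\mathrm{O}(4)\cong\ZZ/2$, generated by the ``twist'' $\tau(x,t)=(\rho_t(x),t)$, where $\rho$ is a full rotation of $S^3=\RR^3\cup\{\infty\}$ about the axis of the chosen projection, constant near $t=0,1$. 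Then for any movie presenting $F$, the cobordism $\tau(F)$ is presented by the same sequence of elementary cobordisms with each intermediate diagram rigidly rotated in the projection plane through an angle growing continuously from $0$ to $2\pi$; since the elementary-cobordism maps are natural under planar homeomorphisms and a $2\pi$ rotation is the identity homeomorphism of the plane, the induced map $\BNcx^\bullet(\tau(F))$ equals $\BNcx^\bullet(F)$, with no residual sign. As every $\Phi$ as above is isotopic rel boundary to $\tau$ or to the identity, the proposition follows; compare \Lemma{MWW} and \cite{MWW-kh-blobs}.

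I expect the main obstacle to be the first half of step (a). Although the movie move verification ``is the same as'' the one in the literature, that literature is written for the undeformed theory (and, for Lee, frequently only over $\QQ$), so one must check that Bar-Natan's picture-level homotopies genuinely survive passage to the Lee and Bar-Natan Frobenius algebras over an arbitrary base ring $\Ring$ and in all four flavors --- this is precisely where the unavoidable sign ambiguity is pinned down. A secondary subtle point is the computation of $\pi_0\Diff_\partial([0,1]\times S^3)$ from Hatcher's theorem together with the verification that its generator acts by the identity (not merely up to sign), which is what the ``chain homotopic'' conclusion requires.
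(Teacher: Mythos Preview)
There is a genuine gap. Your claim that two movies for isotopic surfaces in $[0,1]\times S^3$ are related by Carter--Saito moves (plus far-commutation and star-sliding) is not correct as stated: the Carter--Saito theorem applies to surfaces in $[0,1]\times\RR^3$, and passing to $S^3$ requires an additional relation, the \emph{sweep-around move} of Morrison--Walker--Wedrich~\cite{MWW-kh-blobs}. This move compares the two movies obtained by sweeping a strand of a braid closure around the front versus around the back of the rest of the braid; these represent isotopic surfaces in $[0,1]\times S^3$ but movies not related by Carter--Saito moves in $[0,1]\times\RR^3$. The paper's proof devotes most of its length to verifying invariance under this move for the Lee and Bar-Natan deformations, adapting MWW's argument: one introduces an ``external'' grading recording how the crossings involving the moving strand are resolved, chooses the Reidemeister III maps so that they do not increase external grading (modifying one of Bar-Natan's standard maps by an explicit homotopy to achieve this), and then checks that the external-grading-preserving parts of the two sweeps agree on the nose via a natural identification of the external-grading-zero summands.

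This gap also undermines your treatment of the diffeomorphism statement. Your reduction to $\pi_0\Diff_\partial([0,1]\times S^3)$ invokes ``the isotopy invariance just proved'', but if $\Phi$ and $\Phi'$ are isotopic diffeomorphisms then $\Phi(F)$ and $\Phi'(F)$ are isotopic in $[0,1]\times S^3$, not a priori in $[0,1]\times\RR^3$, so you are appealing to precisely what is missing. The paper takes the opposite order: it first establishes isotopy invariance in $S^3$ (via the sweep-around check), and then the diffeomorphism statement follows immediately from \Lemma{MWW}, which shows that $F$ and $\Phi(F)$ are always isotopic rel boundary. Your explicit analysis of the rotation $\tau$ is a pleasant direct computation and would be a reasonable alternative endgame once the sweep-around gap is filled, though the identification of $\Diff_\partial([0,1]\times S^3)$ with $\Omega\Diff(S^3)$ requires more argument than ``restricting to a boundary collar''.
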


\begin{proof}
  Since the maps on $\BNcx^+$, $\BNcx^\infty$, and $\wh{\BNcx}$ are induced by
  the map on $\BNcx^-$, it suffices to prove the result for $\BNcx^-$ (compare
  Lemma~\ref{lem:les-natural}).
  For isotopies of oriented cobordisms in $[0,1]\times \RR^3$, this follows from
  Bar-Natan's result~\cite[Theorem 4]{Bar-kh-tangle-cob} since both
  the Lee perturbation and the Bar-Natan perturbation can be obtained
  functorially from Bar-Natan's diagrammatic invariants. His proof
  works equally well for nonorientable cobordisms: any two movies
  representing isotopic nonorientable cobordisms also differ by a
  sequence of Carter-Saito's movie moves~\cite{CS-knot-movie},
  every local movie move (between sequences of tangles) can be given a
  consistent orientation, and the map induced by a movie is
  independent of the choice of orientations. (In particular, we can
  suppress cobordisms of type~\ref{item:EC6} in these movies.) Finally, functoriality
  for starred cobordisms follows easily from
  Lemma~\ref{lem:star-commute}. (See, for
  instance,~\cite[Lemma 2.1 and 4.1]{Sar-ribbon} for more details.)

  To verify invariance under isotopies in $[0,1]\times S^3$ we must also check
  invariance under Morrison-Walker-Wedrich's \emph{sweep-around
    move}~\cite[Formula (1.1)]{MWW-kh-blobs}. Their proof works
  mutatis mutandis for the Lee and Bar-Natan deformations~\cite[Remark
  2.2]{MWW-kh-blobs}. Nevertheless, for the sake of completeness, we
  present their proof adapted to our setting  below. We will mostly use their
  notation, but a slightly different language.

  Consider Morrison-Walker-Wedrich's picture~\cite[Formula (3.1)]{MWW-kh-blobs}. The
  picture shows two ways of moving a strand from top to bottom to get
  from a link diagram $L$ to a link diagram $L'$: in the first method,
  this strand moves in front of the rest of the link, while in the
  second, it moves behind. Let $L^i_+$ (respectively, $L^i_-$) denote
  the link diagram at the $i\th$ stage in the first (respectively,
  second) method. The two sequences of link diagram $L^i_\pm$ produce
  two chain maps $\BNcx^-(L)\to\BNcx^-(L')$ by composing maps
  associated to Reidemeister moves. By choosing the Reidemeister maps
  carefully, we will show that the two maps agree on the nose.

  Recall that for any link diagram, the homological grading of any
  resolution is given by the number of crossings that have been
  resolved as the $1$-resolution minus the total number of negative
  crossings in the diagram. For the link diagrams that appear above,
  call a crossing \emph{external} (respectively,
  \emph{internal}) if it involves (respectively, does not involve) the
  moving horizontal strand, and define the \emph{external grading}
  (respectively, \emph{internal grading}) of any resolution to be the
  number of external (respectively, internal) crossings that have been
  resolved as the $1$ resolution minus the total number of negative
  external (respectively, internal) crossings. The differential
  preserves or increases the external grading.

  For any of the above link diagrams, let $\BNcx^-_0$ denote the
  subgroup of the chain group that lives in external grading
  $0$. Note that $\BNcx^-_0(L)=\BNcx^-(L)$ and $\BNcx^-_0(L')=\BNcx^-(L')$
  since these diagrams have no external crossings. Also note that there is
  a natural isomorphism of groups
  $\BNcx^-_0(L^i_+)\cong\BNcx^-_0(L^i_-)$ since any resolution of
  $L^i_+$ in external grading $0$, when viewed as a
  resolution of $L^i_-$, is also in external grading $0$. (This uses
  the fact that Morrison-Walker-Wedrich start with a braid
  closure.)

  The Reidemeister maps will be chosen in such a way that they will
  preserve or decrease the external grading. Since the composition is
  a map $\BNcx^-_0(L)\to\BNcx^-_0(L')$, it is therefore enough to
  consider the portion of the maps that preserve the external grading,
  namely, the maps
  \[
    \BNcx^-(L)\to\BNcx^-_0(L^0_\pm)\to\dots\to\BNcx^-_0(L^i_\pm)\to\dots\to\BNcx^-_0(L^\ell_\pm)\to\BNcx^-(L').
  \]
  (These individual maps are typically not chain maps; however,
  perhaps surprisingly, that is irrelevant to the proof.)
  Furthermore, these components of the maps will commute with the isomorphisms
  $\BNcx^-_0(L^i_+)\cong\BNcx^-_0(L^i_-)$, that is, the following
  diagram will commute:
  \[
    \begin{tikzpicture}[xscale=1.5,yscale=0.7]
      \node (L) at (-0.5,0) {$\BNcx^-(L)$};

      \node (L0p) at (1,1) {$\BNcx^-_0(L^0_+)$};
      \node (L0m) at (1,-1) {$\BNcx^-_0(L^0_-)$};

      \node (L1p) at (2,1) {$\cdots$};
      \node (L1m) at (2,-1) {$\cdots$};

      \node (L2p) at (3,1) {$\BNcx^-_0(L^i_+)$};
      \node (L2m) at (3,-1) {$\BNcx^-_0(L^i_-)$};

      \node (L3p) at (4,1) {$\cdots$};
      \node (L3m) at (4,-1) {$\cdots$};

      \node (L4p) at (5,1) {$\BNcx^-_0(L^\ell_+)$};
      \node (L4m) at (5,-1) {$\BNcx^-_0(L^\ell_-)$};

      \node (LL) at (6.5,0) {$\BNcx^-(L')$};

      \draw[->] (L)--(L0p);
      \draw[->] (L)--(L0m);

      \draw[<-] (LL)--(L4p);
      \draw[<-] (LL)--(L4m);

      \foreach\i in{0,2,4}{
        \draw[->] (L\i p)--(L\i m) node[pos=0.5,anchor=west] {\tiny $\cong$};
      }

      \foreach\i [count=\j from 1] in {0,1,2,3}{
        \draw[->] (L\i p)--(L\j p);
        \draw[->] (L\i m)--(L\j m);
      }
    \end{tikzpicture}
  \]
  That will establish that the two compositions agree on the nose.

  For the Reidemeister I and II moves at the beginning and end of the
  sequence, the above check is almost
  automatic. The maps preserve the homological grading.  Since all
  the crossings involved are external, the maps also preserve the internal
  grading, and therefore preserve the external grading as well.

  \begin{figure}
    \centering
    \begin{tikzpicture}[scale=1]

      \foreach\mode/\modesym [count=\y from 0] in {over/+,under/-}{
        \foreach\time/\timesym [count=\x from 0] in {before/,after/+1}{

          \begin{scope}[yshift=-300*\y,xshift=200*\x,yscale=1.5,xscale=2]

            \node at (-1.5+3*\x,3) {$=$};
            
            \node (\mode\time main) at (-2+4*\x,3) {%
              \begin{tikzpicture}[scale=0.4]

                \coordinate (leftleft) at (-1.5,0);
                \coordinate (left) at (-1,0.5-\x);
                \coordinate (centerleft) at (-0.5,0.5-\x);
                \coordinate (center) at (0,0.5-\x);
                \coordinate (centerright) at (0.5,0.5-\x);
                \coordinate (right) at (1,0.5-\x);
                \coordinate (rightright) at (1.5,0);

                \coordinate (topleft) at (-0.5,1);
                \coordinate (topright) at (0.5,1);
                \coordinate (bottomleft) at (-0.5,-1);
                \coordinate (bottomright) at (0.5,-1);

                \coordinate (intnw) at (-0.5,0+\x);
                \coordinate (intne) at (0.5,0+\x);
                \coordinate (intsw) at (-0.5,-1+\x);
                \coordinate (intse) at (0.5,-1+\x);

                \ifnum\x=0
                \node at (0,-1) {\tiny 3};
                \else
                \node at (0,1) {\tiny 3};
                \fi

                \ifnum\y=0
                \node at (-0.9,0) {\tiny 1};
                \node at (0.9,0) {\tiny 2};
                \else
                \node at (-0.9,0) {\tiny 2};
                \node at (0.9,0) {\tiny 1};
                \fi
                
                \ifnum\y=1
                \draw[knot] (leftleft) to[out=0,in=180] (left)--(right) to[out=0,in=180] (rightright);
                \fi
                \draw[knot] (topright)--(intne) to[out=-90,in=90] (intsw)--(bottomleft);
                \draw[knot] (topleft)--(intnw) to[out=-90,in=90] (intse)--(bottomright);
                \ifnum\y=0
                \draw[knot] (leftleft) to[out=0,in=180] (left)--(right) to[out=0,in=180] (rightright);
                \fi

              \end{tikzpicture}
              };

              \node[above=0pt of \mode\time main] {$L^{i\timesym}_{\modesym}$};
            
            \foreach\i in {0,1}{
              \foreach\j in {0,1}{
                    \pgfmathtruncatemacro\extgr{\i+\j}
                    \ifnum\extgr=0
                    \def\col{red}
                    \def\llstyle{dashed}
                    \else
                    \ifnum\extgr=1
                    \def\col{blue}
                    \def\llstyle{solid}
                    \else
                    \def\col{green!50!black}
                    \def\llstyle{densely dotted}
                    \fi
                    \fi

                    \ifnum\y=0
                    \pgfmathtruncatemacro\ii{\i}
                    \pgfmathtruncatemacro\jj{\j}
                    \else
                    \pgfmathtruncatemacro\ii{\j}
                    \pgfmathtruncatemacro\jj{\i}
                    \fi
                    
                \foreach\k in {0,1}{

                  \node[inner sep=0,outer sep=0,anchor=center] (\mode\time\i\j\k) at ($\i*(1,1)+\j*(0,2)+\k*(-1,3)$) {%
                    \begin{tikzpicture}[scale=0.3]
                      \draw[\llstyle,\col] (-1.5,-1) rectangle (1.5,1);

                      \coordinate (intnw) at (-0.5,0+\x);
                      \coordinate (intne) at (0.5,0+\x);
                      \coordinate (intsw) at (-0.5,-1+\x);
                      \coordinate (intse) at (0.5,-1+\x);
                      \coordinate (intw) at (-0.2,-0.5+\x);
                      \coordinate (inte) at (0.2,-0.5+\x);

                      \coordinate (leftleft) at (-1.5,0);
                      \coordinate (left) at (-1,0.5-\x);
                      \coordinate (centerleft) at (-0.5,0.5-\x);
                      \coordinate (center) at (0,0.5-\x);
                      \coordinate (centerright) at (0.5,0.5-\x);
                      \coordinate (right) at (1,0.5-\x);
                      \coordinate (rightright) at (1.5,0);

                      \coordinate (left0) at (-0.5,1-\x-\y);
                      \coordinate (left1) at (-0.5,-\x+\y);
                      \coordinate (right1) at (0.5,1-\x-\y);
                      \coordinate (right0) at (0.5,-\x+\y);

                      \ifnum\ii=0
                      \draw[resol] (leftleft) to[out=0,in=180] (left) ..controls(centerleft).. (left0);
                      \draw[resol] (center) ..controls(centerleft).. (left1);
                      \else
                      \draw[resol] (leftleft) to[out=0,in=180] (left) ..controls(centerleft).. (left1);
                      \draw[resol] (center) ..controls(centerleft).. (left0);
                      \fi

                      \ifnum\jj=0
                      \draw[resol] (rightright) to[out=180,in=0] (right) ..controls(centerright).. (right0);
                      \draw[resol] (center) ..controls(centerright).. (right1);
                      \else
                      \draw[resol] (rightright) to[out=180,in=0] (right) ..controls(centerright).. (right1);
                      \draw[resol] (center) ..controls(centerright).. (right0);
                      \fi

                      \ifnum\k=0
                      \draw[resol] (intnw) to[out=-90,in=-90,looseness=1.3] (intne);
                      \draw[resol] (intsw) to[out=90,in=90,looseness=1.3] (intse);
                      \else
                      \draw[resol] (intnw) to[out=-90,in=90] (intw) to[out=-90,in=90] (intsw);
                      \draw[resol] (intne) to[out=-90,in=90] (inte) to[out=-90,in=90] (intse);
                      \fi
                      
                    \end{tikzpicture}
                  };

                  \begin{scope}[zlevel=foreground]
                    \node[above =0pt of \mode\time\i\j\k,inner sep=1pt,outer sep=1pt,fill=white] {\tiny \i\j\k};
                  \end{scope}

                }}}

            \draw[khdiff] (\mode\time000)--(\mode\time001) node[pos=0.5,linelabel] {\tiny $s$};
            \draw[khdiff] (\mode\time000)--(\mode\time010) node[pos=0.3,linelabel] {\tiny $s$};
            \draw[khdiff] (\mode\time000)--(\mode\time100) node[pos=0.5,linelabel] {\tiny $s$};

            \draw[khdiff] (\mode\time001)--(\mode\time011) node[pos=0.3,linelabel] {\tiny $s$};
            \draw[khdiff] (\mode\time001)--(\mode\time101) node[pos=0.3,linelabel] {\tiny $s$};
            \draw[khdiff] (\mode\time010)--(\mode\time011) node[pos=0.3,linelabel] {\tiny $-s$};
            \draw[khdiff] (\mode\time010)--(\mode\time110) node[pos=0.3,linelabel] {\tiny $s$};
            \draw[khdiff] (\mode\time100)--(\mode\time101) node[pos=0.7,linelabel] {\tiny $-s$};
            \draw[khdiff] (\mode\time100)--(\mode\time110) node[pos=0.6,linelabel] {\tiny $-s$};

            \draw[khdiff] (\mode\time011)--(\mode\time111) node[pos=0.5,linelabel] {\tiny $s$};
            \draw[khdiff] (\mode\time101)--(\mode\time111) node[pos=0.7,linelabel] {\tiny $-s$};
            \draw[khdiff] (\mode\time110)--(\mode\time111) node[pos=0.5,linelabel] {\tiny $s$};

          \end{scope}
          
        }

      \draw[preservemap] (\mode before001) to[out=30,in=150] node[pos=0.45,linelabel] {\tiny $\Id$} (\mode after001);

      \draw[preservemap] (\mode before010)--(\mode after100) node[pos=0.4,linelabel] {\tiny $-dssb$};
      \draw[dropmap] (\mode before010)--(\mode after001) node[pos=0.5,linelabel] {\tiny $-ds$};
      \draw[preservemap] (\mode before010)--(\mode after010) node[pos=0.5,linelabel] {\tiny $-ds$};
      \draw[preservemap] (\mode before100)--(\mode after010) node[pos=0.6,linelabel] {\tiny $\Id$};
      \draw[preservemap] (\mode before100)--(\mode after100) node[pos=0.5,linelabel] {\tiny $sb$};

      \draw[dropmap] (\mode before110)--(\mode after011) node[pos=0.7,linelabel] {\tiny $\Id$};
      \draw[preservemap] (\mode before011)--(\mode after011) node[pos=0.5,linelabel] {\tiny $\Id$};
      \draw[preservemap] (\mode before101)--(\mode after101) node[pos=0.4,linelabel] {\tiny $\Id$};
      
      \draw[preservemap] (\mode before111)--(\mode after111) node[pos=0.5,linelabel] {\tiny $\Id$};

      }

    \end{tikzpicture}
    \caption{The Reidemeister III move during the proof of invariance
      under the sweep-around move.}\label{fig:sweeparound-RIII}
    \end{figure}
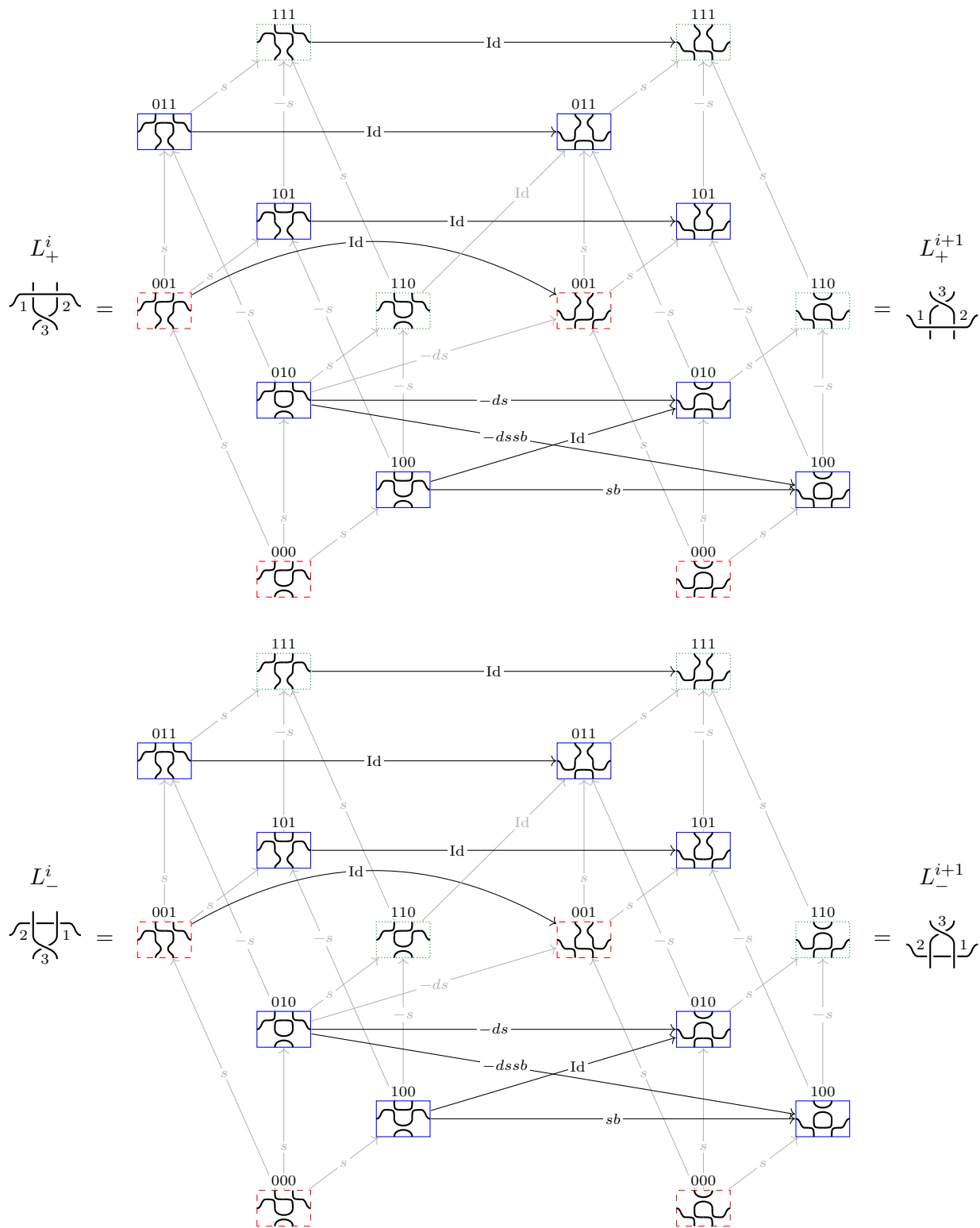

    The Reidemeister III move requires more care. The proof is
    illustrated in Figure~\ref{fig:sweeparound-RIII}. Assume
    $L^{i+1}_\pm$ is obtained from $L^i_\pm$ by moving the horizontal
    strand past an internal crossing, as shown in the figure. The
    other Reidemeister III move is obtained by mirroring all the link
    diagrams, and the proof in that case follows formally from the
    following proof by reversing all arrows.

    The 3-dimensional cubes of resolution for the four link diagrams
    $L^*_\pm$, $*\in\{i,i+1\}$, are shown. The two external crossings
    are numbered 1 and 2---left to right for $L^*_+$ and right to left
    for $L^*_-$---and the internal crossing is numbered 3. The eight
    vertices in each cube of resolutions decompose according to the
    local external grading, which is the sum of the first two
    coordinates of the vertices (up to a shift); this is shown by
    boxing them with a \textcolor{red}{dashed},
    \textcolor{blue}{solid}, or \textcolor{green!50!black}{dotted}
    line. The differentials are shown in light gray.

    The Reidemeister maps go from the cube of resolution of $L^i_\pm$
    to the cube of resolution of $L^{i+1}_\pm$. The maps either
    preserve or decrease the external grading. We are only interested
    in the maps which preserve the external grading, so we have drawn
    them in black, and the other maps in light gray. The maps (and the
    differentials) are decorated with the cobordisms that induce them,
    with $s$, $b$, $d$ being shorthand for saddle, birth, and death,
    respectively.

    The top row (corresponding to $L^*_+$) is essentially a copy
    of Bar-Natan's picture~\cite[Figure 9]{Bar-kh-tangle-cob}; we have merely rotated
    Bar-Natan's tangle so that his vertical over-strand has become our
    horizontal moving strand, and we have reordered the crossings as
    well, so our signs differ from Bar-Natan's. For example, the
    surface highlighted in Bar-Natan's picture corresponds to our map
    from the $010$ vertex of $L^i_+$ to the $100$ vertex of
    $L^{i+1}_-$; it is decorated $-dssb$, so it is the negative of a
    death, followed by two saddles (which are easy to figure out from
    the diagrams), followed by a birth. 

    The bottom row (corresponding to $L^*_-$) is also obtained from
    Bar-Natan's picture~\cite[Figure 9]{Bar-kh-tangle-cob}; this time we have rotated
    Bar-Natan's tangle so that his northwest-to-southeast under-strand
    has become our horizontal moving strand, and once again, we have
    reordered the crossings. The diagram thus obtained is not quite
    the bottom row of our figure: it does not have the map from the
    $100$ vertex of $L^i_-$ to the $100$ vertex of $L^{i+1}_-$, nor
    the map from the $101$ vertex of $L^i_-$ to the $101$ vertex of
    $L^{i+1}_-$, but instead has maps from the $001$ vertex of $L^i_-$
    to the $100$ vertex of $L^{i+1}_-$ and from the $101$ vertex of
    $L^i_-$ to the $110$ vertex of $L^{i+1}_-$. The latter maps
    increase the external gradings, so we modify the chain map by a
    null-homotopy $\bdy f+f\bdy$ to get to our diagram, where $f$ is
    the map from the $101$ vertex of $L^i_-$ to the $100$ vertex of
    $L^{i+1}_-$ corresponding to a birth.

    The natural isomorphism
    $\BNcx^-_0(L^*_+)\stackrel{\cong}{\too}\BNcx^-_0(L^*_-)$ sends the
    \textcolor{green!50!black}{dotted} vertices to the \textcolor{red}{dashed} vertices and vice-versa, and sends the
    \textcolor{blue}{solid} vertices to the corresponding \textcolor{blue}{solid} vertices. These
    isomorphisms commute with the Reidemeister maps that preserve
    external gradings (which are the black arrows in the figure). This
    gives invariance under the sweep-around move.

    Finally, the proof that the map is invariant under diffeomorphisms
    follows from another argument of
    Morrison-Walker-Wedrich~\cite[Section 4.2]{MWW-kh-blobs}; we refer
    the reader there, though a key point in the proof is quoted as
    Lemma~\ref{lem:MWW}, below.
  \end{proof}
  
  \begin{remark}\label{rem:Klein-TQFT}
    The Khovanov Frobenius algebra---the $U=0$ specialization of the
    Lee and Bar-Natan algebras---corresponds to a (1+1)-dimensional
    TQFT. It is natural to ask if this TQFT extends to non-orientable
    cobordisms. TQFTs for oriented 1-manifolds but allowing
    non-orientable cobordisms are called \emph{Klein
      TQFTs}~\cite{AN-top-Klein-tqft} or \emph{unoriented
      TQFTs}~\cite{TT-top-un-tqft}. Unoriented (1+1)-dimensional TQFTs
    correspond to Frobenius algebras $V$ with extra structure: an
    element $\theta\in V$ corresponding to a M\"obius band and an
    involution $\phi$ corresponding to the mapping cylinder of an
    orientation-reversing involution of $S^1$, satisfying the
    conditions that $\phi(m(\theta,v))=m(\theta,v)$ for all $v\in V$
    and $\bigl(m\circ (\phi\otimes\Id)\circ
    \Delta\bigr)(1)=m(\theta,\theta)$~\cite[Proposition 2.9]{TT-top-un-tqft}.
    For the Khovanov TQFT, the fact that
    $\phi$ respects the unit and counit implies that $\phi=\Id$, so
    the second identity implies that $m(\theta,\theta)=2X$ which is
    impossible (cf.~\cite[Section 4.2]{TT-top-un-tqft}). It is
    possible to extend $V$ to a projective unoriented TQFT, by
    defining $\theta=0$, $\phi(1)=1$, and $\phi(X)=-X$; the map $\phi$
    only respects the counit up to sign.
    
    Imitating part of this argument, we can see that it is impossible
    to remedy the sign ambiguity in for non-orientable surfaces
    (without equipping the surfaces with some extra data). There is a
    movie which starts with a 0-crossing unknot, performs a
    Reidemeister I move on half of it, introducing one crossing, then
    performs a Reidemeister I move on the other half eliminating the
    crossing. The induced map $V\to V$ is either
    $(1\mapsto 1,\ X\mapsto -X)$ or $(1\mapsto -1,\ X\mapsto X)$. One
    can compute this directly, but it is also forced by the fact that
    the invariant of a once-punctured Klein bottle is zero (see the
    proof of Corollary~\ref{cor:star-0}): the map associated to a
    once-punctured Klein bottle factors as a birth, then a split, then
    applying the map just described to one of the two circles, and
    then a merge. (This is an embedded version of the proof of the
    relation
    $\bigl(m\circ (\phi\otimes\Id)\circ
    \Delta\bigr)(1)=m(\theta,\theta)$.) However, following the first
    option by a death (counit) gives a cobordism isotopic to a death,
    but sending $X\mapsto -1$ instead of $X\mapsto 1$; and preceding
    the second option by a birth gives a cobordism isotopic to a
    birth, but sending $1\mapsto -1$.
    
    (Note that, in the construction of the Khovanov cube, all the
    surfaces that arise are orientable; in fact, a checkerboard
    coloring of the knot projection induces an orientation of
    them. So, to construct the Khovanov cube one does not need the
    extension of the TQFT to non-orientable surfaces. See
    also~\cite{TT-top-un-tqft} for further discussion.)
    
    Mikhail Khovanov informs us that Greg Kuperberg mentioned to him around 2003
    that Khovanov homology is functorial with respect to nonorientable
    cobordisms in $[0,1]\times\RR^3$, up to a sign, which is part of
    Proposition~\ref{prop:BNh-functorial}, above.
  \end{remark}
  
\section{Admissible cuts}\label{sec:cuts}

Any compact, connected, nonorientable surface $F$ is diffeomorphic to
$(\#^g\RP^2)\#(\#^k \DD^2)$, where $k=|\pi_0(\bdy F)|$ is the number
of boundary components. The number $g=2-\chi(F)-k$ is called the
\emph{crosscap number} of $F$. For any surface $F$ (not necessarily
connected), define its crosscap number to be the sum of the crosscap
numbers of its nonorientable components.

\begin{definition}\label{def:admissible-cut}
  Fix a small $\epsilon>0$. Let $F\subset [0,1]\times S^3$ be a nonorientable cobordism from
  $L_0$ to $L_1$, which is a product near the boundary. An
  \emph{admissible cut} for $F$ consists of the data $(S,V,\phi)$, where:
  \begin{itemize}
  \item $S\subset (0,1)\times S^3$ is a smoothly embedded 3-manifold;
  \item $V\subset (0,1)\times S^3$ is a tubular neighborhood of $S$; and
  \item $\phi\from V\to (\tfrac{1}{2}-\epsilon,\tfrac{1}{2}+\epsilon)\times S^3$ is a
    diffeomorphism, 
  \end{itemize}
  satisfying:
  \begin{enumerate}[label=(AC-\arabic*)]
  \item $\phi$ takes $S$ to $\{\tfrac{1}{2}\}\times S^3$ and $F\cap V$
    to a product cobordism;
  \item the intersection of $F$ with each of the 2 components of
    $([0,1]\times S^3)\setminus S$ is nonorientable; and
  \item\label{item:AC-diffeo} there exists a diffeomorphism $\Phi\co ([0,1]\times S^3,V)\stackrel{\cong}{\longrightarrow}
  ([0,1]\times S^3, (\tfrac{1}{2}-\epsilon,\tfrac{1}{2}+\epsilon)\times S^3)$, which is the
  identity near the boundary and agrees with $\phi$ on $V$.
  \end{enumerate}
  Call a pair of admissible cuts $(S,V,\phi)$ and $(S',V',\phi')$ for
  $F$ \emph{elementary equivalent} if $V\cap V'=\emptyset$ and there
  is a diffeomorphism 
  \begin{equation}\label{eq:admis-cut-equiv}
    \begin{split}
      \bigl([0,1]\times S^3,V,V'\bigr)&\cong \bigl([0,1]\times S^3, (\tfrac{1}{3}-\epsilon,\tfrac{1}{3}+\epsilon)\times
      S^3,(\tfrac{2}{3}-\epsilon,\tfrac{2}{3}+\epsilon)\times S^3\bigr) \text{\qquad or }\\
      \bigl([0,1]\times S^3,V',V\bigr)&\cong \bigl([0,1]\times S^3, (\tfrac{1}{3}-\epsilon,\tfrac{1}{3}+\epsilon)\times
      S^3,(\tfrac{2}{3}-\epsilon,\tfrac{2}{3}+\epsilon)\times S^3\bigr),
    \end{split}
  \end{equation}
  which is the identity near the boundary and which agrees with $\phi$
  and $\phi'$ on $V$ and $V'$, respectively, after post-composition by
  a translation in the first factor.  Call admissible cuts
  $(S,V,\phi)$ and $(S',V',\phi')$ for a pair of surfaces $F$ and $F'$
  \emph{diffeomorphic} if there is a diffeomorphism
  $\Psi\from ([0,1]\times S^3,F,V)\stackrel{\cong}{\longrightarrow}
  ([0,1]\times S^3,F',V')$ which is the identity near the boundary and
  satisfies $\phi'\circ\Psi=\phi$ on $V$.  Call admissible cuts
  $(S,V,\phi)$ and $(S',V',\phi')$ for a pair of surfaces $F$ and $F'$
  \emph{equivalent} if they differ by a sequence of elementary
  equivalences and diffeomorphisms.
\end{definition}

\begin{proposition}\label{prop:admis-cut}
  Suppose $F$ is a cobordism (not necessarily connected) with crosscap
  number $\geq 2$. Then, $F$ has an admissible cut. Further, if $F$ has
  crosscap number $\geq 3$, and if $F'$ is obtained from $F$ by a
  self-diffeomorphism of $[0,1]\times S^3$ which is the identity near the
  boundary, then any admissible cut for $F$ is equivalent to any
  admissible cut for $F'$.
\end{proposition}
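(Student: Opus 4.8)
\emph{Plan.} The plan is to prove existence and uniqueness separately, the second by first stripping off the ambient diffeomorphism and then analysing cuts of a fixed surface by means of one-sided curves.

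\emph{Existence.} First I would reduce to $F$ connected and nonorientable: if $F$ has two nonorientable components, then any level slice $\{t\}\times S^3$ separating two of them, with a thin product neighborhood $V$ and the evident product identifications $\phi,\Phi$, is an admissible cut, with (AC-2) holding because each side contains one component; and if $F$ has a single nonorientable component it carries the entire crosscap number, so I may delete the orientable components. Assuming $F$ connected with crosscap number $g\ge 2$, I would take the cores $c_1,c_2$ of the first two $\RP^2$-summands in a decomposition $F\cong(\#^g\RP^2)\#(\#^k\DD^2)$; these are disjoint one-sided simple closed curves, and since one-sided curves are nonseparating, $F$ minus disjoint M\"obius-band neighborhoods $M_1\supset c_1$ and $M_2\supset c_2$ is connected. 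Picking an arc in $F$ from $\partial M_1$ to $\partial M_2$ otherwise missing $M_1\cup M_2$ and using it to guide an ambient isotopy rel boundary, I would move $M_1$ into a slab $[0,t_-]\times S^3$ and $M_2$ into a slab $[t_+,1]\times S^3$ with $t_-<t_+$. Then for any regular value $t\in(t_-,t_+)$ of the projection to $[0,1]$, the slice $S=\{t\}\times S^3$ (with a thin product neighborhood and product maps $\phi$, $\Phi$, the latter using a diffeomorphism of $[0,1]$ fixing the endpoints and sending $t$ to $\tfrac{1}{2}$) is an admissible cut: $F\cap([0,t]\times S^3)\supset M_1$ and $F\cap([t,1]\times S^3)\supset M_2$ are both nonorientable, and (AC-1) and \ref{item:AC-diffeo} are automatic for an honest level slice. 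Transporting back along the isotopy gives an admissible cut for the original $F$.

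\emph{Reduction of uniqueness.} Writing $F'=\Phi_0(F)$ with $\Phi_0$ the identity near the boundary, I would observe that $(\Phi_0(S),\Phi_0(V),\phi\circ\Phi_0^{-1})$ is an admissible cut for $F'$ whenever $(S,V,\phi)$ is one for $F$ — conditions (AC-1), (AC-2), and \ref{item:AC-diffeo} transport along $\Phi_0$, using that $\Phi_0$ and the diffeomorphism of \ref{item:AC-diffeo} are the identity near the boundary — and that it is diffeomorphic, in the sense of Definition~\ref{def:admissible-cut} with $\Psi=\Phi_0$, to $(S,V,\phi)$. Hence it is enough to prove that any two admissible cuts for a single surface $F$ with crosscap number $\ge 3$ are equivalent.

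\emph{Uniqueness for one surface.} Given admissible cuts $(S_0,V_0,\phi_0)$ and $(S_1,V_1,\phi_1)$ for $F$, I would use \ref{item:AC-diffeo} for $S_0$, together with a boundary-supported correction, to apply a diffeomorphism of the pair $([0,1]\times S^3,F)$ — which only moves the cuts within their equivalence classes — straightening $S_0$ to $\{\tfrac{1}{2}\}\times S^3$ with $F$ a product nearby, so that $F=F^-\cup_L F^+$ with $L=F\cap S_0$ and $F^\pm$ both nonorientable. The easy points are that two admissible level slices lying in an interval where $F$ is a product are diffeomorphic cuts via the rescaling isotopy, and that a level slice just below an isolated non-transversality height is elementary equivalent to one just above it (the two thin product neighborhoods, inside the ambient manifold, form the standard configuration of \eqref{eq:admis-cut-equiv}); hence all admissible level-slice cuts are equivalent, and the remaining task is to connect $S_1$ to one of them. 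Since straightening $S_1$ by \ref{item:AC-diffeo} produces a level-slice cut only after moving $F$, possibly drastically, I would instead argue intrinsically: to each admissible cut associate the collection of one-sided simple closed curves of $F$ that can be isotoped to one of its two sides, check that an elementary equivalence (or a diffeomorphism of the pair) changes this data only by a ``disjointness'' move and by handle slides and cancellations, and then connect the data of $S_1$ to that of $S_0$ using connectivity of the one-sided curve complex of $F$ — noting that $F$ minus M\"obius-band neighborhoods of two disjoint one-sided curves is still nonorientable exactly when the crosscap number is $\ge 3$, which is the range in which that complex is connected. The hard part will be exactly this last step: relating $S_1$ to $S_0$ without ever violating (AC-2), since a careless straightening isotopy can sweep all of the nonorientability of $F^+$ (or $F^-$) past the cut. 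Controlling this forces the rerouting through the one-sided curve complex above, and the need for a ``spare'' crosscap after spending two on a pair of one-sided curves is precisely what fails for crosscap number $2$ and forces the hypothesis crosscap number $\ge 3$.
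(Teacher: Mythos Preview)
Your existence argument is fine (different from the paper's, which builds a cut $S_{\leq\gamma}$ as the boundary of a thickened ``finger'' $C_{\leq\gamma}=\{(t,p):t\le t'\text{ for some }(t',p)\in\gamma\}$, but yours works). The reduction step is also correct and matches the paper.

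The uniqueness argument has real gaps. First, ``hence all admissible level-slice cuts are equivalent'' does not follow from what precedes it: the set of heights $t$ for which $\{t\}\times S^3$ is admissible can be disconnected (imagine all the nonorientability of $F^-$ concentrated near height $0.2$ and all of $F^+$ near $0.8$; then heights in $(0.3,0.7)$ may give an orientable side), and your step-across-a-singularity argument only connects admissible slices within a single component of admissible heights. Second, and more seriously, your ``intrinsic'' program --- associate to a cut the one-sided curves that live on each side, then move through the curve complex --- never supplies a mechanism that turns a move in the curve complex into an elementary equivalence of \emph{cuts}. Knowing two cuts have the same curve-data does not obviously make them equivalent, nor does a disjointness edge in the curve complex obviously produce a pair of disjoint $S^3$'s in $[0,1]\times S^3$. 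The paper solves exactly this by going the other direction: to each one-sided curve $\gamma$ it assigns a concrete cut $S_{\leq\gamma}$ (or $S_{\geq\gamma}$), and the key geometric fact is that when $\gamma$ and $\eta$ are disjoint one-sided curves, $S_{\leq\gamma}$ and $S_{\geq\eta}$ are elementary equivalent by construction. This is what converts connectivity of the restricted one-sided curve complex (Proposition~\ref{prop:cc-connect}, plus the even-length trick of Lemma~\ref{lem:even-length-walk}) into a chain of elementary equivalences. The paper also needs Lemma~\ref{lem:MWW} to replace the ambient diffeomorphism by an isotopy, and then tracks the cuts $S_{\leq\psi_t(\gamma)}$, $S_{\geq\psi_t(\eta)}$ along that isotopy, using one or the other depending on which genericity condition survives (Lemma~\ref{lem:inside-prop}); this alternation is how one bridges the disconnected-admissible-heights problem above. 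Your sketch has the right invariants in mind but is missing this curve-to-cut construction, without which the curve complex cannot be made to act on cuts.
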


We recall some results about the curve complex of nonorientable
surfaces before proving Proposition~\ref{prop:admis-cut}.

Let $F$ be a compact, nonorientable surface. Consider the long
exact sequence for the pair $(F,\bdy F)$,
\[
  \wt{H}^0(\bdy F;\FF_2)\to H^1(F,\bdy F;\FF_2)\to H^1(F;\FF_2)\to H^1(\bdy F;\FF_2).
\]
The first Stiefel-Whitney class $w_1(TF)\in H^1(F;\FF_2)$ maps to zero
in $H^1(\bdy F;\FF_2)$ (since $TF|_{\bdy F}$ is orientable), hence is
in the image of $H^1(F,\bdy F;\FF_2)$. Call a closed curve $\alpha$ in
the interior of $F$ \emph{complement-orientable} if
$\mathrm{PD}([\alpha])\in H^1(F,\bdy F;\FF_2)$ maps to $w_1(TF)$;
assuming $\alpha$ is embedded, this is equivalent to the condition
that $F\setminus\alpha$ is orientable, since for any other curve
$\beta$, $\langle w_1(TF),\beta\rangle=\alpha\cdot\beta\pmod{2}$. Call the other curves
\emph{complement-nonorientable}. Call a closed curve
$\alpha\subset F$ \emph{one-sided} if
$\langle w_1(TF),[\alpha]\rangle=1$; assuming $\alpha$ is embedded,
this is equivalent to $TF|_\alpha$ being a M\"obius band.

\begin{lemma}\label{lem:comp-or-one-side}
  Let $\alpha\subset F$ be a complement-orientable, embedded
  circle. Then, $F$ has a single nonorientable component
  $F_0$. Moreover, $\alpha$ is one-sided if and only if the crosscap
  number of $F_0$ (equivalently, $F$) is odd.
\end{lemma}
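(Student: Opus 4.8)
The plan is to treat the two assertions in turn, the first by a connectedness argument and the second by reducing to a mod-$2$ intersection computation on a closed surface.

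\emph{Uniqueness of the nonorientable component.} Since $\alpha$ is connected it lies in a single component of $F$. If $F$ had two distinct nonorientable components, one of them, say $F_1$, would be disjoint from $\alpha$, hence would be a nonorientable connected component of $F\setminus\alpha$; this contradicts the equivalence (for embedded $\alpha$) between complement-orientability of $\alpha$ and orientability of $F\setminus\alpha$ recalled above. As $F$ is nonorientable it has at least one nonorientable component, so it has exactly one, $F_0$, and the same reasoning forces $\alpha\subset F_0$. In particular $F$ and $F_0$ have the same crosscap number.

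\emph{The parity statement.} I would first cap off each boundary circle of $F_0$ with a disk to obtain a closed nonorientable surface $\hat F_0$; this leaves the crosscap number equal to $g$, and $w_1(T\hat F_0)$ restricts to $w_1(TF_0)$, so $\langle w_1(TF_0),[\alpha]\rangle=\langle w_1(T\hat F_0),[\alpha]\rangle$. Moreover $\hat F_0\setminus\alpha$ is obtained from the orientable surface $F_0\setminus\alpha$ by attaching disks, hence is still orientable, i.e.\ $\alpha$ remains complement-orientable in $\hat F_0$; since Poincar\'e duality is an isomorphism for the closed surface $\hat F_0$, this says exactly $\mathrm{PD}[\alpha]=w_1(T\hat F_0)$ in $H^1(\hat F_0;\FF_2)$. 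Now identify $\hat F_0\cong\#^g\RP^2$ and take the standard $\FF_2$-homology basis $a_1,\dots,a_g$ given by the cores of the crosscaps: the $\FF_2$-intersection form is the identity matrix in this basis, and each $a_i$ is one-sided, so $\langle w_1,a_i\rangle=1$. Pairing $[\alpha]$ against each $a_j$ then forces $[\alpha]=a_1+\dots+a_g$, whence $\langle w_1,[\alpha]\rangle=[\alpha]\cdot[\alpha]=\sum_i a_i\cdot a_i=g\pmod 2$. (Equivalently, $[\alpha]\cdot[\alpha]=\langle w_1^2,[\hat F_0]\rangle=\langle w_2,[\hat F_0]\rangle=\chi(\hat F_0)=2-g$ by Wu's formula.) Thus $\alpha$ is one-sided, i.e.\ $\langle w_1(TF_0),[\alpha]\rangle=1$, exactly when $g$ is odd.

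The only delicate point is the reduction to the closed case: once one checks that capping off preserves complement-orientability and does not change the pairing $\langle w_1,[\alpha]\rangle$, everything else is a short computation with the $\FF_2$-intersection form (or a one-line application of Wu's formula).
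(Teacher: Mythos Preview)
Your proof is correct and follows essentially the same strategy as the paper's. For the first assertion, the paper argues cohomologically (if $F$ has several nonorientable components then $\mathrm{PD}(w_1(TF))$ cannot be represented by a single connected curve), whereas you use the equivalent topological characterization (a nonorientable component disjoint from $\alpha$ would survive in $F\setminus\alpha$); these are two sides of the same observation. For the parity assertion, both arguments compute $\langle w_1,[\alpha]\rangle=\alpha\cdot\alpha\pmod 2$ and identify this with the crosscap number mod $2$; the paper simply writes ``by classification of surfaces and a direct computation,'' while you make that computation explicit by capping off to a closed $\#^g\RP^2$ and using the standard basis of crosscap cores (or Wu's formula). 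Your capping-off step has the mild expository advantage of sidestepping the relative cup product on a surface with boundary, but the underlying computation is the same.
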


\begin{proof}
  By hypothesis, $[\alpha]=\mathrm{PD}(w_1(TF))$. If $F$ has multiple
  nonorientable components, then $[\alpha]$ cannot be represented by a
  single curve.  For the second part, we have to calculate
  $\langle w_1(TF),[\alpha]\rangle=\langle w_1(TF_0)\cup
  w_1(TF_0),[F_0]\rangle=\alpha\cdot\alpha\pmod{2}$. By classification of surfaces and a direct
  computation, this number equals the parity of the crosscap number of
  $F_0$.
\end{proof}

The \emph{one-sided curve complex} of $F$ is the graph with vertices
isotopy classes of embedded, one-sided curves $\alpha$ in the interior
of $F$ and an edge from $\alpha$ to $\beta$ if and only if there are
disjoint representatives of $\alpha$ and $\beta$. The \emph{restricted
  one-sided curve complex} is the full sub-graph spanned by the
complement-nonorientable one-sided curves $\alpha$.  (By
Lemma~\ref{lem:comp-or-one-side}, if $F$ has multiple nonorientable
components or if the crosscap number of $F$ is even, then the
restricted one-sided curve complex is the same as the one-sided curve
complex.)

\begin{proposition}\label{prop:cc-connect}
  Let $F$ be a compact, nonorientable surface (with boundary) of crosscap
  number $\geq 2$. Then, the restricted one-sided curve complex of $F$ is
  connected.
\end{proposition}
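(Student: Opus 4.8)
My plan is to prove connectedness by induction on geometric intersection number, following the standard surgery argument that shows the curve complex of an orientable surface is connected, but adding two pieces of bookkeeping so that the curves produced by surgery stay one-sided \emph{and} complement-nonorientable. (The restricted one-sided curve complex is nonempty here: since the crosscap number of $F$ is at least $2$, the core of a crosscap is a complement-nonorientable one-sided curve, its complement being a nonorientable surface of crosscap number at least $1$.)

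First I would record three elementary facts, all consequences of the classification of surfaces together with the cohomological description of one-sidedness from \Section{cuts}. \textbf{(i)} A one-sided curve $\gamma$ disjoint from \emph{some} one-sided curve is automatically complement-nonorientable: if $\gamma$ were complement-orientable then $F\setminus\nbd(\gamma)$ would be orientable, so every curve disjoint from $\gamma$ would have an annular neighborhood there and hence be two-sided in $F$. \textbf{(ii)} A one-sided curve $\gamma$ meeting a complement-nonorientable one-sided curve exactly once is complement-nonorientable: a regular neighborhood of the union of the two curves is a once-punctured Klein bottle, and cutting it along $\gamma$ yields a twice-punctured $\RP^2$, which is nonorientable and lies inside $F\setminus\nbd(\gamma)$. \textbf{(iii)} If $W\subseteq F$ is a nonorientable subsurface and $\beta$ is a one-sided curve in the interior of $W$, then $\beta$ is one-sided in $F$, and $\beta$ is complement-nonorientable in $F$ provided some nonorientable subsurface of $F$ is disjoint from $\beta$.

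Now I would fix complement-nonorientable one-sided curves $\alpha_0,\alpha_1$ in minimal position, set $n=i(\alpha_0,\alpha_1)$, and induct on $n$. If $n=0$ the curves are disjoint or equal, hence joined by an edge. Suppose $n\geq 2$. Choose an arc $a$ of $\alpha_1\setminus\alpha_0$, let $b_1$ and $b_2$ be the two arcs into which the endpoints of $a$ divide $\alpha_0$, and let $\gamma_j$ be the simple closed curve obtained by smoothing $a\cup b_j$. By the usual surgery estimates, after isotopy $i(\gamma_j,\alpha_0)\leq 1$ (push $b_j$ slightly off $\alpha_0$, which is only possible to do disjointly from $\alpha_0$ if the two ends of $b_j$ lie on matching sides) and $i(\gamma_j,\alpha_1)<n$ (push $a$ slightly off $\alpha_1$; only the $\leq n-2$ interior intersections of $b_j$ with $\alpha_1$ survive). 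Since $[\gamma_1]+[\gamma_2]=[\alpha_0]$ in $H_1(F;\FF_2)$, pairing with $w_1(TF)$ and using $\langle w_1(TF),[\alpha_0]\rangle=1$ shows exactly one of $\gamma_1,\gamma_2$ is one-sided; call it $\gamma$. If $i(\gamma,\alpha_0)=0$ then fact (i) makes $\gamma$ complement-nonorientable; if $i(\gamma,\alpha_0)=1$ then fact (ii) does, since $\alpha_0$ is complement-nonorientable. Either way $\gamma$ is a vertex of the restricted one-sided curve complex with $i(\gamma,\alpha_0)\leq 1<n$ and $i(\gamma,\alpha_1)<n$, so the inductive hypothesis applied to $\{\alpha_0,\gamma\}$ and to $\{\gamma,\alpha_1\}$ connects $\alpha_0$ to $\alpha_1$.

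This leaves the case $n=1$. If the crosscap number of $F$ is at least $3$, then $W=\nbd(\alpha_0\cup\alpha_1)$ is a once-punctured Klein bottle and an Euler-characteristic count shows $F\setminus W$ has crosscap number at least $1$; a one-sided curve $\eta$ in the nonorientable part of $F\setminus W$ is, by fact (iii), a complement-nonorientable one-sided curve disjoint from both $\alpha_0$ and $\alpha_1$, so $\alpha_0-\eta-\alpha_1$ is a path. If $F$ has crosscap number exactly $2$ I would finish with a direct argument (for instance, via the orientation double cover, whose relevant curve complex is that of a genus-one surface with punctures and is known to be connected, or by reducing to connectedness among crosscap cores and using an explicit generating set for the small mapping class group of $F$). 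The step I expect to be the real obstacle is precisely this interface between the surgery and the topology: verifying that the surgered curve can be normalized to meet $\alpha_0$ at most once and $\alpha_1$ fewer than $n$ times \emph{simultaneously}, and disposing cleanly of the crosscap-two base case — rather than any single hard idea. The substantive new inputs beyond the classical argument are the homological identity $[\gamma_1]+[\gamma_2]=[\alpha_0]$ and facts (i)–(ii), which together are what keep both the one-sidedness and the complement-nonorientability conditions alive along the induction.
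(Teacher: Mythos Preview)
Your surgery approach is natural, but there is a concrete error in fact (ii) that breaks the induction. The regular neighborhood of two one-sided simple closed curves meeting once transversally is \emph{not} a once-punctured Klein bottle; it is a twice-punctured $\RP^2$. One way to see this: take the core $c$ of a M\"obius band $M$ and a small isotopic perturbation $c'$ meeting $c$ once. Then $\nbd(c\cup c')\subset M$ has $\chi=-1$, so $M\setminus\nbd(c\cup c')$ has $\chi=1$; if $\nbd(c\cup c')$ had only one boundary circle, the complement would be a surface with two boundary circles and $\chi=1$, which is impossible. So $\partial\nbd(c\cup c')$ has two components and $\nbd(c\cup c')$ is a twice-punctured $\RP^2$. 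Cutting that along the one-sided curve $\gamma$ gives a pair of pants, which is orientable, so your argument that $F\setminus\nbd(\gamma)$ contains a nonorientable piece collapses.

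Worse, the conclusion of (ii) is genuinely false. In $F=N_3\setminus D$ with crosscap cores $c_1,c_2,c_3$, take $\alpha_0=c_1$ (complement-nonorientable, one-sided) and let $\gamma$ be an embedded curve passing once through each crosscap, so $[\gamma]=c_1+c_2+c_3$. Then $\gamma$ is one-sided, meets $\alpha_0$ exactly once, yet $F\setminus\nbd(\gamma)$ is orientable---$\gamma$ is precisely the complement-orientable one-sided class. So in your inductive step, when the surgered curve $\gamma$ satisfies $i(\gamma,\alpha_0)=1$, you cannot conclude it lies in the restricted complex, and the induction does not close. (The same misidentification of $W=\nbd(\alpha_0\cup\alpha_1)$ as a punctured Klein bottle also affects your $n=1$ analysis, though that case is more easily repaired since $F\setminus W=(F\setminus\nbd(\alpha_0))$ cut along an arc, and $\alpha_0$ being complement-nonorientable gives some leverage.) The paper sidesteps all of this by fixing a model curve $\theta$ and checking, using Korkmaz's explicit generators for the punctured-surface mapping class group, that each generator sends $\theta$ to a curve in the same path component---no surgery or intersection-number induction at all.
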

\begin{proof}
  This is essentially due to Pieloch~\cite[Proposition 2.7]{Pie-top-nonor}, and
  we follow his argument.

  If $F$ has more than one nonorientable connected component then the statement
  is obvious. So, it suffices to prove the result when $F$ is a connected
  surface with crosscap number $\geq 2$.

  \begin{figure}
    \centering
    \includegraphics{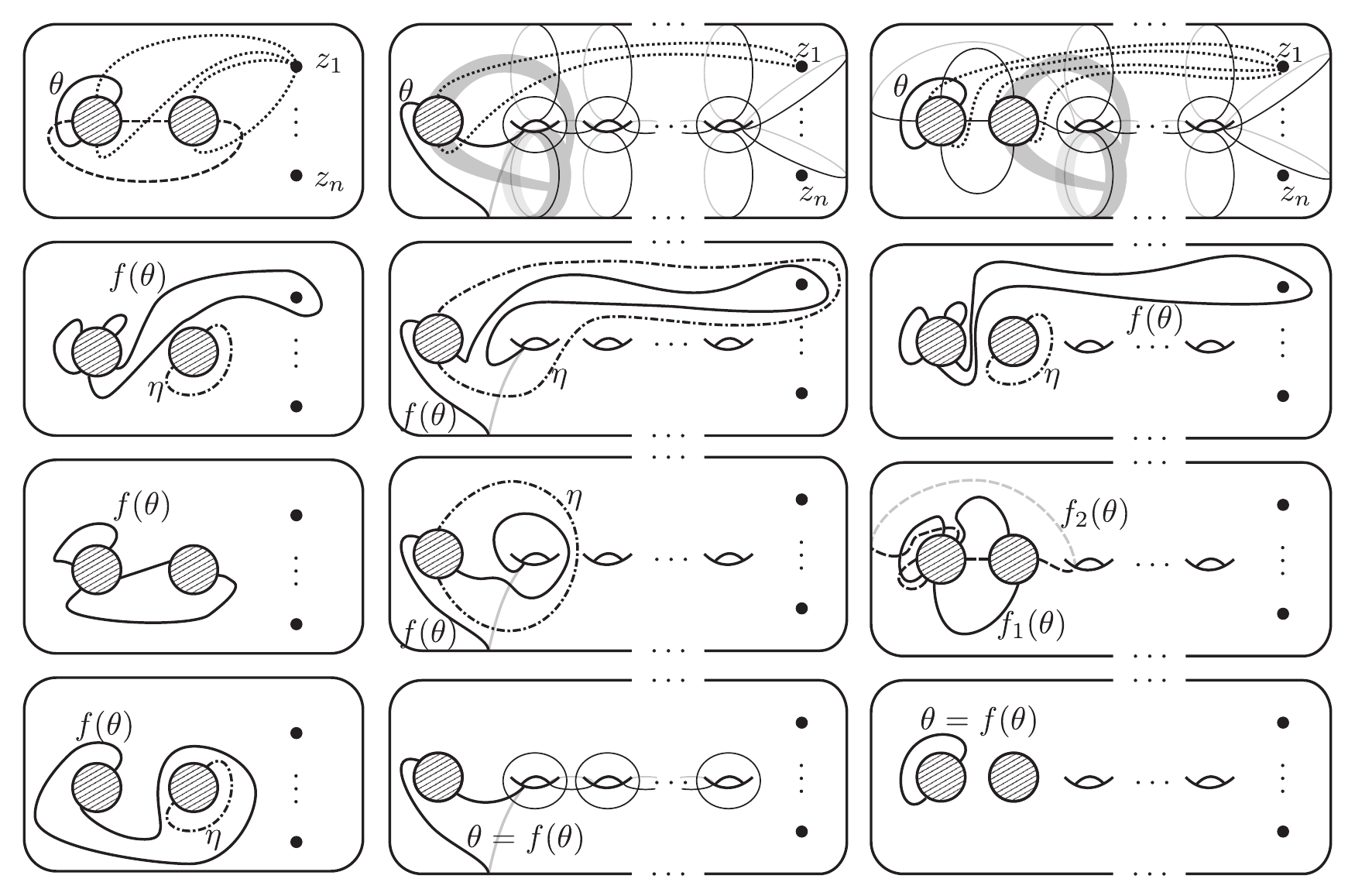}
    \caption{\textbf{Generators for the mapping class group of a
        punctured surface.} The left column is the crosscap number $2$ case, center is the crosscap number $2k+1$ ($k\geq 1$) case, and right is the crosscap number $2k$ ($k>1$) case. Crosscaps are shaded, and hidden lines are gray. Top: the mapping class group is generated by elementary braids in the $z_i$, Dehn twists around the thin curves and the dashed curve, boundary slides (push maps) along the dotted curves, and a crosscap slide. In the left picture, the crosscap slide pulls one crosscap along the dashed curve. In the other pictures, the crosscap slide occurs in the shaded region (a punctured Klein bottle).  Second row: the image of $\theta$ under the boundary slide that moves it. Third row: the images of $\theta$ under the Dehn twist(s) that move it. Bottom row: the image of $\theta$ under the crosscap slide. When $f(\theta)$ is neither disjoint from nor equal to $\theta$, a third curve $\eta$ disjoint from both is shown (dash-dotted).}
    \label{fig:MCG-gens}
  \end{figure}
  
  Let $\theta$ be the one-sided curve shown in Figure~\ref{fig:MCG-gens} and let $\lambda$ be any
  other complement-nonorientable one-sided curve. From the classification of
  surfaces, there is a homeomorphism from $F$ to itself sending $\theta$ to
  $\lambda$. So, it suffices to show that the mapping class group of $F$ takes the
  path component of $\theta$ in the restricted one-sided curve complex to
  itself. For that, it suffices to show that a set of generators for the mapping
  class group take this path component to itself, i.e., take $\theta$ to curves
  which can be connected to $\theta$ in the restricted one-sided curve complex.

  Here, we will not require homeomorphisms to be the identity on $\partial F$ or
  to take boundary components to themselves. In fact, since deleting
  $\partial F$ has no effect on the restricted one-sided curve complex, we can
  view $F$ as a punctured surface and the homeomorphism as an element of the
  mapping class group of the punctured surface $F$. This mapping class group was
  studied by Korkmaz~\cite{Kork-top-mcg}, who denoted it $\mathcal{M}_{g,n}$,
  where $g$ is the crosscap number and $n$ is the number of punctures $z_1,\dots,z_n$.

  In particular, Korkmaz gave a set of generators for this mapping class
  group~\cite[Section 4]{Kork-top-mcg}. There are three cases, depending on the
  crosscap number: crosscap number $2$, $2k+1$ for $k\geq 1$, or $2k$ for
  $k>1$. In each case, the mapping class group is generated by a finite set of
  Dehn twists, braid generators in the $z_i$, one crosscap slide (see~\cite[Section 2]{Kork-top-mcg} and the
  references he gives for the definition), and one or two boundary slides
  (again, see~\cite[Section 2]{Kork-top-mcg}); the generators are shown in
  Figure~\ref{fig:MCG-gens}. In each case, most of the generators fix
  $\theta$. The remaining ones either take $\theta$ to a curve disjoint from
  $\theta$ (up to isotopy) or to a curve $f(\theta)$ so that there is a third
  curve $\eta$ disjoint from both $\theta$ and $f(\theta)$. See
  Figure~\ref{fig:MCG-gens}. So, in all cases, $f(\theta)$ lies in the same path
  component as $\theta$, as desired.
\end{proof}

We note an easier lemma:
\begin{lemma}\label{lem:easy-surfaces}
  If $F$ is a nonorientable surface of crosscap number $>1$ (i.e., is
  not a punctured $\RP^2$ union an orientable surface) then the
  restricted curve complex has at least two points. If $F$ has
  crosscap number $>2$ (i.e., is also neither a punctured Klein bottle
  nor a punctured $\RP^2\amalg\RP^2$, union an orientable surface)
  then the restricted curve complex contains a $3$-cycle.
\end{lemma}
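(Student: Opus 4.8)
The plan is to write down explicit curves using the standard crosscap decomposition of a nonorientable surface; there is no serious obstacle, but one point requires a little care. First I would reduce to the nonorientable components of $F$: any embedded one-sided curve $\alpha\subset F$ satisfies $\langle w_1(TF),[\alpha]\rangle=1$, which forces $\alpha$ to lie in a component of $F$ on which $w_1(TF)\neq 0$, i.e.\ a nonorientable component, so the orientable components of $F$ play no role in the restricted one-sided curve complex. Write $N_1,\dots,N_m$ for the nonorientable components of $F$, with crosscap numbers $g_1,\dots,g_m\geq 1$; the hypotheses are $\sum_j g_j\geq 2$ (resp.\ $\geq 3$).

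For each $j$, realize $N_j$ in the standard way as a compact planar surface $P_j$ some of whose boundary circles are capped off by M\"obius bands $M_{j,1},\dots,M_{j,g_j}$, and let $c_{j,i}\subset M_{j,i}$ be the core circles. Each $c_{j,i}$ is embedded and one-sided, since $TF|_{c_{j,i}}$ is the M\"obius band, and $c_{j,1},\dots,c_{j,g_j}$ are pairwise disjoint. I would then check two things. First, these cores are pairwise non-isotopic: with respect to the $\FF_2$-intersection pairing they have self-intersection $1$ (this is the identity $\langle w_1(TF),\alpha\rangle=\alpha\cdot\alpha\pmod 2$ applied to a one-sided $\alpha$) and vanishing mutual intersections, so their Gram matrix is the identity and they represent linearly independent classes in $H_1(N_j;\FF_2)$; isotopic curves represent the same class, and cores in different components are trivially non-isotopic. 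Second, $F\setminus c_{j,i}$ is nonorientable as soon as it still contains an embedded M\"obius band, and since $c_{j,i}$ is disjoint from every other M\"obius band of $F$, this holds whenever $g_j\geq 2$ or $m\geq 2$; in that case $c_{j,i}$ is complement-nonorientable and hence a vertex of the restricted one-sided curve complex.

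It remains to assemble these into two vertices (resp.\ a $3$-cycle). If $m\geq 2$, then $c_{1,1}$ and $c_{2,1}$ are disjoint, non-isotopic, and each has nonorientable complement (the complement contains the other nonorientable component), giving two vertices joined by an edge; if moreover $\sum_j g_j\geq 3$, then either $m\geq 3$, in which case $c_{1,1},c_{2,1},c_{3,1}$ are three pairwise disjoint, pairwise non-isotopic complement-nonorientable one-sided curves, or $m=2$ and some $g_j\geq 2$, in which case $c_{j,1},c_{j,2},c_{j',1}$ (with $j'\neq j$) are such a triple. If $m=1$, the crosscap number of $N_1$ equals that of $F$ and so is $\geq 2$ (resp.\ $\geq 3$), and $c_{1,1},c_{1,2}$ (resp.\ $c_{1,1},c_{1,2},c_{1,3}$) are disjoint, pairwise non-isotopic, and complement-nonorientable (each complement still contains one of the remaining M\"obius bands since $g_1\geq 2$), giving two vertices and an edge (resp.\ a $3$-cycle). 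The step I expect to require the most attention is the verification of complement-nonorientability of the cores: deleting a core destroys precisely the nonorientability contributed by its own crosscap, so one needs the ambient surface to carry at least one further crosscap --- and this is exactly where the hypotheses crosscap number $>1$ and crosscap number $>2$ enter. Everything else is a routine consequence of the classification of surfaces.
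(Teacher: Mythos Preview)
Your proposal is correct and is precisely the argument the paper has in mind: the paper's own proof reads in its entirety ``This is straightforward from the classification of surfaces, and is left to the reader,'' and your crosscap-core construction is the natural way to make this explicit. Your check that the cores are linearly independent in $H_1(F;\FF_2)$ via the Gram matrix, and that deleting a single core leaves a surface still containing another M\"obius band, are exactly the verifications one would expect.
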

\begin{proof}
  This is straightforward from the classification of surfaces, and is
  left to the reader.
\end{proof}

Proposition~\ref{prop:cc-connect} and Lemma~\ref{lem:easy-surfaces}
together imply the following.
\begin{lemma}\label{lem:even-length-walk}
  Let $F$ be a compact, nonorientable surface with crosscap number
  $>2$. For any complement-nonorientable one-sided embedded curves
  $\alpha,\beta$ in $F$, there exists an even-length sequence
  $\alpha=\gamma_0,\gamma_1,\dots,\gamma_{2n}=\beta$ of
  complement-nonorientable one-sided embedded curves connecting
  $\alpha$ to $\beta$ so that every pair of consecutive curves
  $\gamma_i,\gamma_{i+1}$ are disjoint.
\end{lemma}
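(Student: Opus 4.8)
The plan is to translate the statement entirely into graph theory. By definition, a sequence $\alpha=\gamma_0,\gamma_1,\dots,\gamma_m=\beta$ of complement-nonorientable one-sided embedded curves in $F$ with consecutive terms disjoint is exactly a walk of length $m$ in the restricted one-sided curve complex $\Cx$ of $F$, from the isotopy class of $\alpha$ to the isotopy class of $\beta$. (Here ``disjoint'' for a consecutive pair means they admit disjoint representatives, which is precisely the edge relation in $\Cx$; since only consecutive pairs need be disjoint, no simultaneous general-position argument is required, and vertices/curves are allowed to repeat along a walk.) So the lemma asserts that any two vertices of $\Cx$ are joined by a walk of even length, and it suffices to prove this.

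First I would invoke the two inputs already established. Since $F$ has crosscap number $\geq 2$, Proposition~\ref{prop:cc-connect} says $\Cx$ is connected, so there is at least one walk $P$ from $\alpha$ to $\beta$; say it has length $\ell$. Since $F$ has crosscap number $>2$, Lemma~\ref{lem:easy-surfaces} says $\Cx$ contains a $3$-cycle, i.e.\ there is a closed walk $T$ of length $3$ based at some vertex $a$ of $\Cx$. If $\ell$ is even, $P$ already works. If $\ell$ is odd, I would run the standard ``odd cycle changes parity'' argument: use connectedness again to pick a walk $Q$ from $\beta$ to $a$, of some length $q$, and let $\ol{Q}$ be the reverse walk from $a$ to $\beta$; the concatenation $P\ast Q\ast T\ast\ol{Q}$ is a walk from $\alpha$ to $\beta$ of length $\ell+2q+3$, which is even. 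Either way we obtain an even-length walk in $\Cx$, which unwinds to the desired sequence $\alpha=\gamma_0,\dots,\gamma_{2n}=\beta$ of curves.

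There is essentially no obstacle here beyond the two cited results: the only content is the elementary observation that a connected graph containing an odd cycle admits walks of both parities between any pair of vertices, and the hypothesis crosscap number $>2$ (rather than $\geq 2$) is used precisely to supply that odd cycle.
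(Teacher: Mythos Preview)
Your parity argument is exactly what the paper does for the first half of its proof: use Proposition~\ref{prop:cc-connect} for connectedness and the $3$-cycle from Lemma~\ref{lem:easy-surfaces} to adjust parity. However, you have conflated two different statements. The restricted one-sided curve complex has \emph{isotopy classes} as vertices, so a walk in it produces a sequence of isotopy classes with consecutive classes admitting disjoint representatives. The lemma, by contrast, asks for a sequence of actual embedded curves starting at the specific curve $\alpha$ and ending at the specific curve $\beta$, with consecutive curves literally disjoint as subsets of $F$. (This is how the lemma is used in the proof of Proposition~\ref{prop:admis-cut}, where $\alpha$ and $\psi_1(\gamma)$ are particular embedded curves and one needs $S_{\leq\delta_0}=S_{\leq\alpha}$ and $S_{\leq\delta_{2n}}=S_{\leq\psi_1(\gamma)}$ on the nose.)

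The passage from one to the other is not automatic. Given your walk $[\alpha]=v_0,\dots,v_{2n}=[\beta]$, you can set $\gamma_0=\alpha$ and inductively pick $\gamma_i\in v_i$ disjoint from $\gamma_{i-1}$; but then $\gamma_{2n}$ is only \emph{isotopic} to $\beta$, and there is no reason the particular curve $\beta$ should be disjoint from the particular curve $\gamma_{2n-1}$ you happened to choose. The paper handles this by picking a second curve $\delta$ disjoint from $\gamma_{2m}$ (via the first part of Lemma~\ref{lem:easy-surfaces}), choosing an ambient isotopy $\phi_t$ carrying $\gamma_{2m}$ to $\beta$, and extending the sequence by alternating $\phi_{t}(\gamma_{2m})$ and $\phi_t(\delta)$ at finely spaced times $t$, with a compactness argument ensuring consecutive curves stay disjoint. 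This second half of the argument is genuinely missing from your proposal.
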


\begin{proof}
  Using Proposition~\ref{prop:cc-connect}, we may choose a walk in the
  restricted curve complex connecting $\alpha$ to $\beta$. Using the
  $3$-cycle from the second part of Lemma~\ref{lem:easy-surfaces} if
  needed, we may ensure that the walk has even length. Choose embedded
  curves representing the vertices of the walk to get a sequence
  $\gamma_0,\gamma_1,\dots,\gamma_{2m}$.

  We may choose $\gamma_0=\alpha$ and we may choose $\gamma_i$
  inductively to ensure that it is disjoint from $\gamma_{i-1}$. The
  final curve $\gamma_{2m}$ will be isotopic to $\beta$, but need not
  equal $\beta$. Let $\phi_t$ for $t\in[0,1]$ be an ambient isotopy
  taking $\gamma_{2m}$ to $\beta$.

  Using the first part of Lemma~\ref{lem:easy-surfaces}, choose a
  complement-nonorientable one-sided embedded curve $\delta$ which is
  disjoint from $\gamma_{2m}$.  To finish the proof, we will construct
  a sequence $\gamma_{2m},\gamma_{2m+1},\dots,\allowbreak\gamma_{2n}=\beta$ of
  complement-nonorientable one-sided embedded curves with every
  consecutive pair disjoint, as follows:
  \[
    \gamma_{2i}=\phi_{\frac{i-m}{n-m}}(\gamma_{2m}),\ m\leq i\leq n\qquad\qquad
    \gamma_{2i+1}=\phi_{\frac{i-m}{n-m}}(\delta),\ m\leq i <n.
  \]
  Clearly, $\gamma_{2i+1}$ is disjoint from $\gamma_{2i}$, and by
  compactness, for $n$ large enough, it will be disjoint from
  $\gamma_{2i+2}$ as well. For instance, fix a metric on $F$ so that
  length of $\frac{\del}{\del t}\phi_t$ is bounded above by $1$; let
  $D=\min_{t\in[0,1]}\mathrm{dist}(\phi_t(\gamma_{2m}),\phi_t(\delta))$; then
  $n>m+(1/D)$ suffices.
\end{proof}

Finally we need the following analogue of a result of Morrison-Walker-Wedrich's~\cite[Lemma~4.7]{MWW-kh-blobs}:
\begin{lemma}\label{lem:MWW}
  Let $\Sigma\subset [0,1]\times S^3$ be a link cobordism and
  $f\co [0,1]\times S^3\to[0,1]\times S^3$ a diffeomorphism which is the
  identity near the boundary. Then, $\Sigma$ is isotopic to $f(\Sigma)$,
  and the isotopy may also be assumed to be the identity near the
  boundary.
\end{lemma}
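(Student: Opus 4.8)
The plan is to deduce the lemma from the fact---known, but relying on hard input---that every self-diffeomorphism of $[0,1]\times S^3$ that is the identity near the boundary is smoothly isotopic, rel boundary, to the identity. First I would observe that this suffices: given an isotopy $\{f_t\}_{t\in[0,1]}$ through diffeomorphisms of $[0,1]\times S^3$, each the identity near $\{0,1\}\times S^3$, with $f_0=f$ and $f_1=\Id$, the family $t\mapsto f_t(\Sigma)$ is an ambient isotopy carrying $f(\Sigma)=f_0(\Sigma)$ to $\Sigma=f_1(\Sigma)$; since every $f_t$ is the identity near the boundary, this isotopy is also the identity near the boundary, and in particular fixes $\partial\Sigma$, consistently with the convention that cobordisms are products near $\{0,1\}\times S^3$. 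Thus the content is entirely about the ambient diffeomorphism, and the surface $\Sigma$---orientable or not, connected or not---is simply carried along; this is why the ``analogue'' is literally the same statement as in the orientable case.

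Next I would establish that $f$ is isotopic rel boundary to $\Id$, i.e.\ that the group $\Diff_\partial([0,1]\times S^3)$ of diffeomorphisms that are the identity near the boundary is path-connected. This is exactly the input used by Morrison-Walker-Wedrich in~\cite[Lemma~4.7]{MWW-kh-blobs}. It follows from Hatcher's proof of the Smale conjecture, $\Diff(S^3)\simeq\Ogroup(4)$, together with his analysis of the diffeomorphism and pseudoisotopy spaces of $S^3\times I$: restricting a pseudoisotopy of $S^3$ (a self-diffeomorphism of $[0,1]\times S^3$ that is the identity near $\{0\}\times S^3$) to $\{1\}\times S^3$ gives a fibration onto $\Diff(S^3)$ whose fiber is $\Diff_\partial([0,1]\times S^3)$, and the long exact sequence---using connectedness of the pseudoisotopy space of $S^3$ and that $\Diff^+(S^3)\simeq\SO(4)$ makes the relevant map on $\pi_1$ surjective---forces $\pi_0$ of the fiber to vanish. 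Rather than reproduce this, I would cite the path-connectedness from~\cite{MWW-kh-blobs} and the Smale-conjecture literature. Note that, in contrast to the path-connectedness of $\Diff_\partial(D^4)$, which is not known, this statement for $[0,1]\times S^3$ is established.

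The genuinely hard step is this diffeomorphism-group input; everything else is formal. The one point requiring a little care is collar bookkeeping: one must ensure that ``identity near the boundary'' persists along the isotopy $\{f_t\}$ and that the resulting isotopy of $\Sigma$ is the identity near $\{0,1\}\times S^3$ (not just on the boundary), which is arranged by the standard collar-shrinking manipulations and is not a real obstacle. Since none of this sees the topology of $\Sigma$, I would present the proof as Morrison-Walker-Wedrich's proof of their Lemma~4.7, adding only the reduction above for the reader's convenience.
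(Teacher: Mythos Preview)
There is a genuine gap. Your key input—that $\Diff_\partial([0,1]\times S^3)$ is path-connected—is not known, and would in fact resolve a famous open problem. Any $g\in\Diff_\partial(D^4)$ can be isotoped rel boundary to be the identity near the center of $D^4$, hence supported in a collar $S^3\times[0,1]$; thus path-connectedness of $\Diff_\partial([0,1]\times S^3)$ implies path-connectedness of $\Diff_\partial(D^4)$, which is open. So your remark that ``in contrast to the path-connectedness of $\Diff_\partial(D^4)$, which is not known, this statement for $[0,1]\times S^3$ is established'' has the implication backwards. Your fibration sketch does not escape this: the facts you invoke about the pseudoisotopy space $P(S^3)$ (its connectedness, and surjectivity of $\pi_1(P(S^3))\to\pi_1(\SO(4))$) are statements about a $4$-manifold and are not consequences of Hatcher's $3$-dimensional theorem.

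You also mischaracterize what Morrison--Walker--Wedrich do: they do not prove or use path-connectedness of $\Diff_\partial$. The paper's proof (following theirs) establishes only the weaker conclusion $\Sigma\sim f(\Sigma)$, not $f\sim\Id$. The idea is: pick $p\in S^3$ with $[0,1]\times\{p\}$ disjoint from $\Sigma$, and post-compose $f$ by an ambient isotopy so that $f$ is the identity on $[0,1]\times\{p\}$. Choose a small ball $B\ni p$ so that $[0,1]\times B$ avoids $\Sigma$ (and, after the first step, also $f(\Sigma)$). Possibly post-compose $f$ by a twist $\phi$ supported in $[0,1]\times B$ realizing the nontrivial element of $\pi_1(\SO(3))$, and then a further small isotopy, so that $f$ becomes the identity on $U\cup([0,1]\times B)$, where $U$ is a boundary collar. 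Now take an ambient isotopy $g_t$, identity near the boundary, with $g_1(\Sigma)\subset U\cup([0,1]\times B)$; since $f$ is the identity there, $f(g_1(\Sigma))=g_1(\Sigma)$, so $g_t$ followed by $f\circ g_{1-t}$ carries $\Sigma$ to $f(\Sigma)$. The crucial point is that one never needs to know whether $\phi$ is isotopic to the identity: its support is disjoint from $f(\Sigma)$, so composing by $\phi$ does not move $f(\Sigma)$ at all.
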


\begin{proof}
  This is proved by replacing $\RR^3$ by $S^3$ throughout
  Morrison-Walker-Wedrich's proof~\cite[Lemma~4.7]{MWW-kh-blobs}, but for completeness, we sketch
  the proof below.

  Let $U$ be a collar neighborhood of $\{0,1\}\times S^3$ on which $f$
  is the identity.  Fix a point $p\in S^3$ so that $[0,1]\times\{p\}$
  is disjoint from $\Sigma$. By postcomposing $f$ by an isotopy that
  takes $f([0,1]\times\{p\})$ to $[0,1]\times\{p\}$ (and is the
  identity on $U$), we may assume $f$ is the identity on
  $[0,1]\times\{p\}$. Let $B\subset S^3$ be a ball around $p$ with
  $[0,1]\times B$ disjoint from $\Sigma$. Let $\phi$ be a
  diffeomorphism of $[0,1]\times S^3$ which is the identity on
  $U\cup\big([0,1]\times\big(\{p\}\cup(S^3\setminus B)\big)\big)$, so
  that on the normal bundle of $[0,1]\times\{p\}$ inside
  $[0,1]\times S^3$ (which is a trivial $\RR^3$ bundle in an obvious
  way) $d\phi$ induces the non-trivial element of
  $\pi_1(\mathrm{SO}(3))$. By post-composing $f$ by $\phi$ if necessary, and a
  further small isotopy, we may assume $f$ is the
  identity on $[0,1]\times B$. Now let $g_t$ be an isotopy of
  $[0,1]\times S^3$ which is the identity near the boundary, so that
  $g_0=\Id$ and $g_1(\Sigma)\subset U\cup([0,1]\times B)$. Then, the
  isotopy $g_t$ takes $\Sigma$ to $g_1(\Sigma)$ and the isotopy
  $f\circ g_{1-t}$ takes $g_1(\Sigma)=f(g_1(\Sigma))$ to $f(\Sigma)$.
\end{proof}

\begin{proof}[Proof of Proposition~\ref{prop:admis-cut}]
  Let $\pi\co [0,1]\times S^3\to S^3$ be projection. Given a curve
  $\gamma\subset F$, let
  \begin{align*}
    C_{\leq \gamma}&=\{(t,p)\in [0,1]\times S^3\mid \exists
    t'\in[0,1]\text{ so that } (t',p)\in \gamma\text{ and }t\leq t'\}\\
    C_{\geq \gamma}&=\{(t,p)\in [0,1]\times S^3\mid \exists
    t'\in[0,1]\text{ so that } (t',p)\in \gamma\text{ and }t\geq t'\},
  \end{align*}
  so $\pi^{-1}(\pi(\gamma))=C_{\leq \gamma}\cup C_{\geq \gamma}$ and
  if $\pi|_{\gamma}$ is injective then
  $C_{\leq \gamma}\cap C_{\geq \gamma}=\gamma$.
    
  For the first statement, that $F$ has an admissible cut, choose disjoint one-sided embedded curves
  $\gamma,\eta\subset F$; this is possible by
  Lemma~\ref{lem:easy-surfaces}.  Perturbing $F$ slightly, we may
  assume:
  \begin{enumerate}[label=(G-\arabic*),ref=(G-\arabic*)]
  \item\label{item:generic-gamma-first} $\pi|_{\gamma}$ is injective,
  \item\label{item:generic-gamma-second} $d\pi$ restricted to $TF|_\gamma$ has rank 2 everywhere,
  \item\label{item:generic-gamma-last} $F$ intersects $C_{\leq\gamma}$ transversely away from $\gamma$,
  \item\label{item:generic-eta-first} $\pi|_{\eta}$ is injective,
  \item $d\pi$ restricted to $TF|_\eta$ has rank 2 everywhere,
  \item\label{item:generic-eta-last} $F$ intersects $C_{\geq\eta}$ transversely away from $\eta$, and
  \item\label{item:generic-last} $\pi(\gamma)\cap\pi(\eta)=\emptyset$.
  \end{enumerate}
  Let $U_{\leq\gamma},U_{\geq\eta}$ be tubular neighborhoods
  of $C_{\leq \gamma}$ and $C_{\geq \eta}$ with disjoint closures. Choose $U_{\leq\gamma}$
  small enough that $U_{\leq\gamma}\cap F$ consists of a M\"obius
  band around $\gamma$ and disks around
  the other points in $C_{\leq\gamma}\cap F$. Choose
  $U_{\geq\eta}$ similarly. Choose $\epsilon>0$ small enough that
  \[
    \bigl(([0,\epsilon]\times S^3)\cup \overline{U}_{\leq\gamma}\bigr)\cap\bigl(([1-\epsilon,1]\times S^3)\cup \overline{U}_{\geq\eta}\bigr)=\emptyset.
  \]
  Let
  \[
    S_{\leq\gamma}=\bdy \bigl(([0,\epsilon]\times S^3)\cup
    \overline{U}_{\leq\gamma}\bigr)\setminus (\{0\}\times S^3),\qquad
    S_{\geq\eta}=\bdy \bigl(([1-\epsilon,1]\times S^3)\cup
  \overline{U}_{\geq\eta}\bigr)\setminus (\{1\}\times S^3).
  \]
  Then, after smoothing corners, both $S_{\leq\gamma}$ and $S_{\geq\eta}$
  are admissible cuts for $F$, since either side of either cut is
  nonorientable (containing one of the one-sided curves $\gamma$ or
  $\eta$). The diffeomorphisms $\phi$ from
  Definition~\ref{def:admissible-cut} for $S_{\leq\gamma}$ and
  $S_{\geq\eta}$ are induced from the natural isotopies that take
  $\gamma$ to $\{\epsilon\}\times \pi(\gamma)$ along $C_{\leq\gamma}$ and $\eta$ to
  $\{1-\epsilon\}\times \pi(\eta)$ along $C_{\geq\eta}$.

  For the second statement, fix admissible cuts $S$ and $S'$ for $F$
  and $F'$ and choose diffeomorphisms
  $\Phi\co ([0,1]\times S^3,S)\stackrel{\cong}{\longrightarrow}
  ([0,1]\times S^3, \{\tfrac{1}{2}\}\times S^3)$ and
  $\Phi'\co ([0,1]\times S^3,S')\stackrel{\cong}{\longrightarrow}
  ([0,1]\times S^3, \{\tfrac{1}{2}\}\times S^3)$ as in
  Definition~\ref{def:admissible-cut}. It is enough to show that the admissible cuts
  $\{\tfrac{1}{2}\}\times S^3$ for $\Phi(F)$ and $\Phi'(F')$ are
  equivalent.

  Choose one-sided curves $\gamma,\eta\subset \Phi(F)$ on the left and
  right side of $\{\tfrac{1}{2}\}\times S^3$ and choose a one-sided
  curve $\alpha\subset \Phi'(F')$ on the left side of
  $\{\tfrac{1}{2}\}\times S^3$. Consider the admissible cuts
  $S_{\leq\gamma}$ and $S_{\geq\eta}$ for $\Phi(F)$ as defined
  above. They are both elementary equivalent to the cut
  $\{\tfrac{1}{2}\}\times S^3$ (as well as to each other). Similarly the
  admissible cut $S_{\leq\alpha}$ for $\Phi'(F')$
  is elementary equivalent to the cut $\{\tfrac{1}{2}\}\times S^3$. In particular,
  it is enough to show that the cut $S_{\leq\gamma}$ for $\Phi(F)$
  is equivalent to the cut $S_{\leq\alpha}$ for $\Phi'(F')$.

  Using Lemma~\ref{lem:MWW}, choose an ambient isotopy
  $\psi_t\from[0,1]\times S^3\to[0,1]\times S^3$ which is the identity
  near the boundary, with $\psi_0=\Id$, and
  $\psi_1(\Phi(F))=\Phi'(F')$. We may perturb the isotopy
  to ensure that the genericity
  assumptions~\ref{item:generic-gamma-first}--\ref{item:generic-last}
  hold for the curves
  $\psi_t(\gamma),\psi_t(\eta)\subset \psi_t(\Phi(F))$, except for
  finitely many $t\in(0,1)$ when exactly one of them fails. Choose
  non-exceptional points $0=t_0<t_1<\dots<t_{m-1}<t_m=1$ such that
  each interval $[t_i,t_{i+1}]$ contains at most one such exceptional
  point; call such an interval \emph{$\gamma$-good} (respectively,
  \emph{$\eta$-good}) if the genericity
  conditions~\ref{item:generic-gamma-first}--\ref{item:generic-gamma-last}
  (respectively,~\ref{item:generic-eta-first}--\ref{item:generic-eta-last})
  hold on the interval. Since at most one genericity condition fails
  at each exceptional point, each of these intervals $[t_i,t_{i+1}]$
  is either $\gamma$-good or $\eta$-good (or both, if~\ref{item:generic-last} fails).

  At non-exceptional $t$ (such as $t_0,\dots,t_m$), both
  the cuts $S_{\leq\psi_t(\gamma)}$ and $S_{\geq\psi_t(\eta)}$ for
  $\psi_t(\Phi(F))$ are admissible, and they are elementary equivalent
  to each other. We need the following lemma.
  \begin{lemma}\label{lem:inside-prop}
    For any $\gamma$-good (respectively, $\eta$-good) interval
    $[t_i,t_{i+1}]$, the cuts $S_{\leq\psi_{t_i}(\gamma)}$
    (respectively, $S_{\geq\psi_{t_i}(\eta)}$) for
    $\psi_{t_i}(\Phi(F))$ and $S_{\leq\psi_{t_{i+1}}(\gamma)}$
    (respectively, $S_{\geq\psi_{t_{i+1}}(\eta)}$) for
    $\psi_{t_{i+1}}(\Phi(F))$ are diffeomorphic.
  \end{lemma}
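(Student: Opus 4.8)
The plan is to carry out the construction of $S_{\leq\gamma}$ in a smooth one-parameter family over the $\gamma$-good interval and then apply the isotopy extension theorem with parameters. I treat only the $\gamma$-good case; the $\eta$-good case follows verbatim after reflecting the $[0,1]$-coordinate and replacing $C_{\leq\gamma}$ by $C_{\geq\eta}$.

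Write $F_t:=\psi_t(\Phi(F))$ and $\gamma_t:=\psi_t(\gamma)$. On a $\gamma$-good interval the genericity conditions \ref{item:generic-gamma-first}--\ref{item:generic-gamma-last} hold for $\gamma_t\subset F_t$ at \emph{every} $t\in[t_i,t_{i+1}]$, not merely at the endpoints. Since these conditions are open and $[t_i,t_{i+1}]$ is compact, every choice made in the construction preceding this lemma can be taken to depend smoothly on $t$: the annulus $C_{\leq\gamma_t}$ varies smoothly with $\gamma_t$; by \ref{item:generic-gamma-first}--\ref{item:generic-gamma-last} the finitely many points of $\bigl(C_{\leq\gamma_t}\cap F_t\bigr)\setminus\gamma_t$ form a trivial covering of the interval, so they move smoothly without collisions, and by \ref{item:generic-gamma-first}--\ref{item:generic-gamma-second} (which force $F_t$ to meet $C_{\leq\gamma_t}$ only along $\gamma_t$ in a neighborhood of $\gamma_t$ of size uniform in $t$) they stay uniformly bounded away from $\gamma_t$; one may then choose the tubular neighborhood $U_{\leq\gamma_t}$, a single $\epsilon>0$ valid for all $t$, the cut $S_t:=S_{\leq\gamma_t}$, its tubular neighborhood $V_t$, and the trivialization $\phi_t$ all smoothly in $t$. (Two valid choices of this data for a fixed surface and curve give diffeomorphic admissible cuts, by the same argument applied to a constant isotopy, so there is no loss in taking the cuts in the statement to be the $t=t_i$ and $t=t_{i+1}$ members of such a family.) This produces a smooth family $\{(F_t,S_t,V_t,\phi_t)\}_{t\in[t_i,t_{i+1}]}$ of admissible cuts, all agreeing inside a fixed collar of $\{0,1\}\times S^3$ and with every $V_t$ contained in a fixed compact subset of $(0,1)\times S^3$.

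By condition (AC-1), $\phi_t$ carries $S_t$ to $\{\tfrac12\}\times S^3$ and $F_t\cap V_t$ to a product cobordism, so the decorated object consisting of $F_t$ together with the framed region $(V_t,\phi_t)$ has constant diffeomorphism type along the family. The isotopy extension theorem with parameters then yields a smooth path $\Theta_t$, $t\in[t_i,t_{i+1}]$, of self-diffeomorphisms of $[0,1]\times S^3$ with $\Theta_{t_i}=\Id$, equal to the identity near $\{0,1\}\times S^3$ (the configurations all agree there), and with $\Theta_t(F_{t_i})=F_t$, $\Theta_t(V_{t_i})=V_t$, and $\phi_t\circ\Theta_t=\phi_{t_i}$ on $V_{t_i}$; concretely, one first realizes the framed cuts $(V_t,\phi_t)$ by an isotopy supported near them, after which the surfaces agree inside each $V_t$ and one extends the residual isotopy of $F_t\setminus V_t$ rel boundary. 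Setting $\Psi:=\Theta_{t_{i+1}}$ then gives a diffeomorphism $\bigl([0,1]\times S^3,F_{t_i},V_{t_i}\bigr)\xrightarrow{\ \cong\ }\bigl([0,1]\times S^3,F_{t_{i+1}},V_{t_{i+1}}\bigr)$ which is the identity near the boundary and satisfies $\phi_{t_{i+1}}\circ\Psi=\phi_{t_i}$ on $V_{t_i}$ --- exactly the meaning of the two cuts being diffeomorphic in Definition~\ref{def:admissible-cut}.

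The one step needing care is the first: producing the smooth family of cut data over the \emph{closed} interval. This is where $\gamma$-goodness (the genericity conditions holding at the endpoints too) and compactness of $[0,1]\times S^3$ are used, to keep the auxiliary intersection points of $C_{\leq\gamma_t}$ with $F_t$ from colliding or running into $\gamma_t$ as $t\to t_i$ or $t\to t_{i+1}$. Granting the smooth family, the isotopy extension argument is routine.
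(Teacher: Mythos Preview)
Your approach---packaging the construction of $S_{\leq\gamma_t}$ into a smooth one-parameter family over the $\gamma$-good interval and then invoking parametric isotopy extension---is a legitimate alternative to what the paper does, but the two-step isotopy-extension argument as written has a gap.

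The paper's proof is more hands-on: it builds an ambient isotopy carrying the \emph{skeleton} $F_a\cup C_{\leq\gamma_a}$ to $F_b\cup C_{\leq\gamma_b}$. The isotopies $\theta^F_t$ (coming from $\psi_t$) and $\theta^\gamma_t$ (induced on the annuli $C_{\leq\gamma_t}$) do not agree on the finite transverse intersection $F_t\cap\interior C_{\leq\gamma_t}$, so the paper pre-composes each by an internal isotopy ($\tilde\theta^F_t$, $\tilde\theta^\gamma_t$) tracking those moving points, glues, extends ambiently, and then corrects the behaviour near $\{0,1\}\times S^3$ explicitly. Crucially, the data $(V_b,\phi_b)$ are not chosen in advance: once the diffeomorphism $\theta'_b$ is in hand, one simply \emph{transports} $(V_a,\phi_a)$ along it, so the compatibility $\phi_b\circ\theta'_b=\phi_a$ is automatic.

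Your gap is in the sentence ``after which the surfaces agree inside each $V_t$.'' After an ambient isotopy $\Theta_t$ realizing the framed cuts (so $\phi_t\circ\Theta_t=\phi_{t_i}$ on $V_{t_i}$), one has
\[
\Theta_t(F_{t_i}\cap V_{t_i})=\phi_t^{-1}\bigl(\phi_{t_i}(F_{t_i}\cap V_{t_i})\bigr),
\]
which is $\phi_t^{-1}$ of the product cobordism over the link $L_{t_i}:=\phi_{t_i}(S_{t_i}\cap F_{t_i})$, whereas $F_t\cap V_t$ is $\phi_t^{-1}$ of the product over $L_t:=\phi_t(S_t\cap F_t)$. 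These coincide only if $L_t\equiv L_{t_i}$ as links in $S^3$; choosing $\phi_t$ merely ``smoothly in $t$'' does not arrange this, and without it the second step (extending rel $V_t$) cannot hit $F_t$. The same issue undercuts your parenthetical about two choices of data for a fixed surface and curve. The fix is easy: either (i) choose $\phi_t|_{S_t}$ first, as a smooth family of identifications $S_t\cong S^3$ carrying the varying link $S_t\cap F_t$ to a fixed $L$ (itself an isotopy-extension step inside $S^3$), and only then extend over the tube; or (ii) drop $(V_t,\phi_t)$ from the family altogether, apply isotopy extension to the pair $(F_t,S_t)$, and then \emph{define} $(V_b,\phi_b)$ by pushing forward $(V_a,\phi_a)$---which is exactly the paper's strategy.
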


  \begin{proof}
    We will explain the $\gamma$-good case; the $\eta$-good case is
    similar. For notational convenience, let $a=t_i$, $b=t_{i+1}$,
    $F_a=\psi_{a}(\Phi(F))$, $\gamma_a=\psi_a(\gamma)$,
    $F_b=\psi_b(\Phi(F))$, and $\gamma_b=\psi_b(\gamma)$. Note also
    that, in the definition of $S_{\leq \gamma_a}$ and
    $S_{\leq \gamma_b}$, up to diffeomorphism, we can choose the
    collar neighborhoods of the boundary and the tubular neighborhoods
    of $C_{\leq \gamma_a}$ and $C_{\leq \gamma_b}$ to be as small as we like.
   
    To construct the diffeomorphism
    \[
      ([0,1]\times
      S^3,F_a,S_{\leq\gamma_a})\stackrel{\cong}{\too}([0,1]\times
      S^3,F_b,S_{\leq\gamma_b})
    \]
    we first construct an ambient isotopy $\theta_t$, $t\in[a,b]$, of
    $[0,1]\times S^3$ which carries $F_a\cup C_{\leq\gamma_a}$ to
    $F_b\cup C_{\leq\gamma_b}$.

    The map $\psi_t\circ \psi_a^{-1}$
    restricts to an isotopy $\theta^F_t$ from $F_a$ to $F_b$, and the
    isotopy $(\psi_t\circ\psi_a^{-1})|_{\gamma_a}$ induces an isotopy
    $\theta^\gamma_t$ from $C_{\leq \gamma_a}$ to $C_{\leq
      \gamma_b}$. (This uses conditions~\ref{item:generic-gamma-first}
    and~\ref{item:generic-gamma-second}.) The isotopy
    $\theta^\gamma_t$ will not be the identity on the boundary, but we
    can choose it so that, for all small $s$, it sends
    $C_{\leq \gamma_a}\cap (\{s\}\times S^3)$ to $C_{\leq
      \gamma_t}\cap (\{s\}\times S^3)$ for each $t$.    
    The maps $\theta^F_t$ and $\theta^\gamma_t$ typically will not
    agree on $F_a\cap C_{\leq \gamma_a}$, but by
    Condition~\ref{item:generic-gamma-last}, for any $t\in[a,b]$,
    $\theta^F_t(F_a)$ and the interior of
    $\theta^\gamma_t(C_{\leq\gamma_a})$ intersect transversally in
    finitely many points, say $P_t$.  So, we get one-parameter families
    of points $(\theta^F_t)^{-1}(P_t)$ on $F_a$ and
     $(\theta^\gamma_t)^{-1}(P_t)$ on
    $C_{\leq\gamma_a}$.  Let $\tilde{\theta}^F_t$ be an isotopy of
    $F_a$ which preserves it setwise, is the identity near the boundary
    and near $\gamma_a$, and satisfies
    $\tilde{\theta}^F_t(P_a)=(\theta^F_t)^{-1}(P_t)$.  Similarly, let
    $\tilde{\theta}^\gamma_t$ be an isotopy of $C_{\leq\gamma_a}$
    which preserves it setwise, is the identity near the boundary, and
    satisfies
    $\tilde{\theta}^\gamma_t(P_a)=(\theta^\gamma_t)^{-1}(P_t)$. 
    On $F_a$, set $\theta_t=\theta^F_t\circ\tilde{\theta}^F_t$, and
    on $C_{\leq\gamma_a}$, set
    $\theta_t=\theta^\gamma_t\circ\tilde{\theta}^\gamma_t$.  These two
    isotopies do agree on $F_a\cap C_{\leq \gamma_a}$, so let
    $\theta_t\co F_a\cup C_{\leq\gamma_a}\to [0,1]\times S^3$,
    $t\in[a,b]$, be their union. 
    By the isotopy extension lemma, we can extend $\theta_t$ to a
    smooth ambient isotopy $\theta_t$ of $[0,1]\times S^3$. (This again uses Conditions~\ref{item:generic-gamma-second}
    and~\ref{item:generic-gamma-last}.) We can
    ensure that this extension preserves the slices $\{s\}\times S^3$
    for $s\in [0,2\epsilon]\cup [1-2\epsilon,1]$, for some
    sufficiently small $\epsilon$; shrinking $\epsilon$ if necessary,
    we may assume that $\theta_t$ is the identity on $F_a\cap
    ([0,2\epsilon]\cup[1-2\epsilon,1])\times S^3$.

    The ambient isotopy $\theta_t$ is not the identity on
    the whole boundary, but $\theta_t|_{\{0,1\}\times S^3}$ is, of course,
    isotopic to the identity.  So, we can modify $\theta_t$ to a new
    isotopy $\theta'_t$ so that:
    \begin{itemize}
    \item $\theta'_t$ is the identity on a small collar neighborhood
      $S^3\times ([0,\epsilon]\cup[1-\epsilon,1])$ of the boundary.
    \item $\theta'_t$ preserves $\{s\}\times S^3$ for $s\in
      [0,2\epsilon]\cup[1-2\epsilon,1]$.
    \item $\theta'_t$ agrees with $\theta_t$ on
      $[2\epsilon,1-2\epsilon]\times S^3$.
    \item $\theta'_t$ is the identity on $F_a\cap
      ([0,2\epsilon]\cup[1-2\epsilon,1])\times S^3$. So, in
      particular, $\theta'_b(F_a)=F_b$.
    \end{itemize}
    Choose the collar neighborhoods of the boundary, in the definition
    of $S_{\leq \gamma_a}$ and $S_{\leq \gamma_b}$, to be
    $([0,2\epsilon]\cup[1-2\epsilon,1])\times S^3$.  Then, for
    appropriate choices of tubular neighborhoods of $C_{\leq\gamma_a}$
    and $C_{\leq \gamma_b}$, the map $\theta'_b$ sends $F_a$ to $F_b$
    and $S_{\leq\gamma_a}$ to $S_{\leq \gamma_b}$, as desired.
  \end{proof}    

  We can now conclude that for any interval $[t_i,t_{i+1}]$, the cuts
  $S_{\leq\psi_{t_i}(\gamma)}$ for $\psi_{t_i}(\Phi(F))$ and
  $S_{\leq\psi_{t_{i+1}}(\gamma)}$ for $\psi_{t_{i+1}}(\Phi(F))$ are
  equivalent. If the interval is $\gamma$-good, then this follows from
  Lemma~\ref{lem:inside-prop}. On the other hand, if the interval is $\eta$-good,
  then the cuts $S_{\geq\psi_{t_i}(\eta)}$ for $\psi_{t_i}(\Phi(F))$
  and $S_{\geq\psi_{t_{i+1}}(\eta)}$ for $\psi_{t_{i+1}}(\Phi(F))$ are
  diffeomorphic, again by Lemma~\ref{lem:inside-prop}; however, the former is elementary equivalent to
  $S_{\leq\psi_{t_i}(\gamma)}$, while the latter is elementary
  equivalent to $S_{\leq\psi_{t_{i+1}}(\gamma)}$.  Therefore, we get
  that the cut $S_{\leq\psi_0(\gamma)}=S_{\leq\gamma}$ for
  $\psi_0(\Phi(F))=\Phi(F)$ and the cut $S_{\leq\psi_1(\gamma)}$ for
  $\psi_1(\Phi(F))=\Phi'(F')$ are equivalent. So all that remains is
  to show that the two cuts $S_{\leq\psi_1(\gamma)}$ and
  $S_{\leq\alpha}$ for $\Phi'(F')$ are equivalent.

  Using Lemma~\ref{lem:even-length-walk}, choose an even-length
  sequence $\alpha=\delta_0,\delta_1,\dots,\delta_{2n}=\psi_1(\gamma)$
  of complement-nonorientable one-sided embedded curves on $\Phi'(F')$
  so that every pair of consecutive curves are disjoint. Perturbing
  slightly, we may assume that the genericity
  conditions~\ref{item:generic-gamma-first}--\ref{item:generic-gamma-last}
  hold for each of these curves, and the genericity
  condition~\ref{item:generic-last} holds for each pair of consecutive
  curves. Then,
  \(
    S_{\leq\delta_0},S_{\geq\delta_1},S_{\leq\delta_2},\dots,S_{\leq\delta_{2n}}
  \)
  is a sequence of admissible cuts connecting $S_{\leq\alpha}$ and
  $S_{\leq\psi_1(\gamma)}$ where every consecutive pair is elementary
  equivalent.
\end{proof}

\begin{remark}\label{remark:OSz}
  The proof that all admissible cuts are equivalent is inspired by the
  $b_2^+\geq 3$ case of Ozsv\'ath-Szab\'o's
  argument~\cite[Theorem~8.5]{OSz-hf-4manifolds}. Their argument is
  terse, so for comparison with the proof of
  Proposition~\ref{prop:admis-cut} we expand the $b_2^+\geq 3$ case of
  their argument here.

  Given a compact, oriented, connected 4-dimensional cobordism
  $W\co Y_0\to Y_1$, $Y_i$ connected, an \emph{admissible cut} for $W$
  is a decomposition $W=W_0\cup_NW_1$ along a closed, connected
  3-manifold $N$ so that both $b_2^+(W_0)>0$ and $b_2^+(W_1)>0$ (and
  $Y_i\subset W_i$). Ozsv\'ath-Szab\'o show that any 4-manifold $W$
  with $b_2^+(W)\geq 2$ has an admissible cut, as follows. Choose
  closed, connected, oriented surfaces $\Sigma_0,\Sigma_1\subset W$
  with $[\Sigma_0]^2,[\Sigma_1]^2>0$ and
  $[\Sigma_0]\cdot[\Sigma_1]=0$. One can make $\Sigma_0$ and
  $\Sigma_1$ disjoint by repeatedly performing embedded surgery on
  $\Sigma_0$ to cancel pairs of points $p,q\in \Sigma_0\cap \Sigma_1$
  of opposite sign, along an arc in $\Sigma_1$ from $p$ to $q$. Let
  $W_0$ be a neighborhood of the union of $\Sigma_0$, an arc from
  $\Sigma_0$ to $Y_0$, and $Y_0$. Choose the arc generically and its
  neighborhood small enough to be disjoint from $\Sigma_1$, and let
  $W_1$ be the complement of the interior of $W_0$. Then, $W_0\cup W_1$
  is an admissible cut for $W$.

  \begin{figure}
    \centering
    \includegraphics{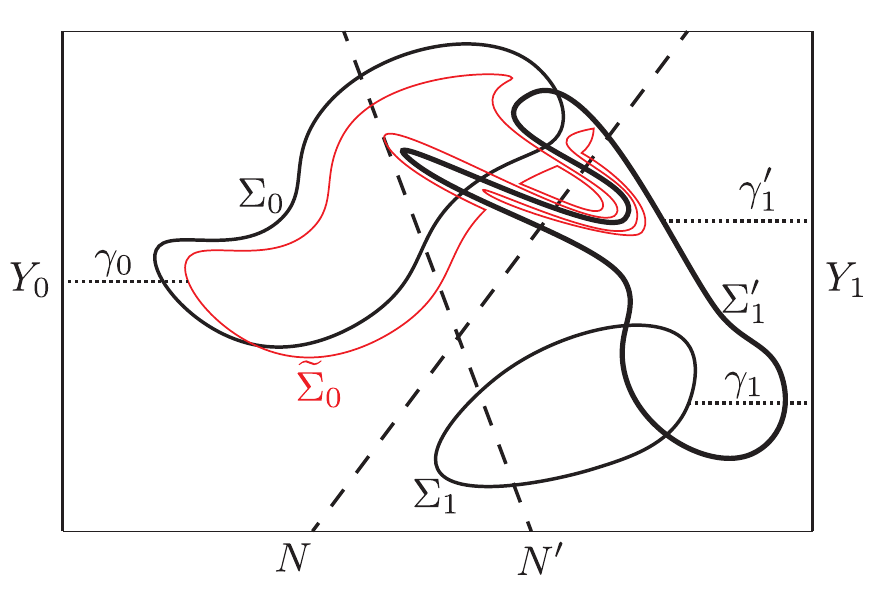}
    \caption{\textbf{Equivalence of admissible cuts in Ozsv\'ath-Szab\'o's
        setting.} This is a schematic illustration of the surfaces and paths in
      Remark~\ref{remark:OSz}. The cuts $N$ and $N'$ are dashed, the paths
      $\gamma$ are dotted, the surface $\Sigma_1'$ is thick, and the surface
      $\wt{\Sigma}_0$ is \textcolor{red}{thin}. A key point is that $\wt{\Sigma}_0$ is disjoint
      from $\Sigma_1$ and $\Sigma'_1$.}
    \label{fig:OSz}
  \end{figure}

  Next, call admissible cuts $W=W_0\cup_NW_1=W'_0\cup_{N'}W'_1$
  \emph{elementary equivalent} if $N\cap N'=\emptyset$, and
  \emph{equivalent} if they differ by a sequence of elementary
  equivalences.  If $b_2^+(W)\geq 3$ then Ozsv\'ath-Szab\'o argue that
  any two admissible cuts for $W$ are equivalent. Fix admissible cuts
  $W=W_0\cup_NW_1=W_0'\cup_{N'}W_1'$. For convenience, assume that
  $b_2^+(W_1')\geq 2$; the other case is symmetric. Choose connected
  surfaces $\Sigma_0\subset \interior{W}_0$ with $[\Sigma_0]^2>0$ and
  $\Sigma'_1\subset \interior{W}'_1$ with $[\Sigma'_1]^2>0$, and so
  that $[\Sigma_0]\cdot[\Sigma'_1]=0$ (this uses the assumption on
  $b_2^+(W'_1)$). (This part of the argument is illustrated schematically in Figure~\ref{fig:OSz}.)  Performing surgery on $\Sigma_0$ along arcs $\Gamma$
  in $\Sigma'_1$ with interiors disjoint from
  $\Sigma_0$ gives a new surface $\wt{\Sigma}_0$ homologous to
  $\Sigma_0$ and disjoint from $\Sigma'_1$.  Let
  $\Sigma_1\subset \interior{W}_1$ be a surface with
  $[\Sigma_1]^2>0$. Since $\Sigma_0\subset \interior{W}_0$ and
  $\Sigma_1\subset \interior{W}_1$,
  $\Sigma_0\cap\Sigma_1=\emptyset$. Perturbing $\Sigma_1$ slightly, we
  can assume that $\Sigma_1$ is also disjoint from the arcs $\Gamma$,
  and hence from $\wt{\Sigma}_0$.

  Choose an arc $\gamma_0\subset W_0$
  connecting $\wt{\Sigma}_0$ to $Y_0$, disjoint from $\Sigma'_1$;
  $\gamma'_1\subset W'_1$ connecting $\Sigma'_1$ to $Y_1$, disjoint
  from $\wt{\Sigma}_0\cup\gamma_0$; and $\gamma_1\subset W_1$ connecting $\Sigma_1$ to
  $Y_1$, disjoint from $\Gamma$. Let $\wt{N}_0$ be the boundary of a neighborhood of
  $Y_0\cup\gamma_0\cup\wt{\Sigma}_0$, $N'_1$ the boundary of a
  neighborhood of $Y_1\cup \gamma'_1\cup \Sigma'_1$, and $N_1$ the
  boundary of a neighborhood of $Y_1\cup\gamma_1\cup\Sigma_1$.
  Observe that $\wt{N}_0$, $N_1$, and $N'_1$ are all admissible
  cuts. Further, $N$ is equivalent to $N_1$, which is equivalent to
  $\wt{N}_0$, which is equivalent to $N'_1$, which is equivalent to
  $N'$, proving the result.
\end{remark}

\section{The mixed invariant}\label{sec:mixed}

Consider the long exact sequence
$\cdots\to
\BNh^-(L)\stackrel{\iota_*}{\lra}\BNh^\infty(L)\stackrel{\pi_*}{\lra}
\BNh^+(L)\stackrel{\partial}{\lra}\BNh^-(L)\to\cdots$ from
Formula~\eqref{eq:les}. Define
$\BNh^{\red}(L)=\ker(\iota_*)\cong \cokernel(\pi_*)$, and give it the
grading it inherits as a submodule of $\BNh^-(L)$; this is $(1,0)$
higher than its grading as a quotient module of $\BNh^+(L)$. (This is
analogous to the reduced Heegaard Floer invariant $\HF_\red$, and is
not immediately related to reduced Khovanov homology.)

\begin{definition}\label{def:compat-with-cut}
  Fix an admissible cut $(S,V,\phi)$ for a cobordism
  $F\subset [0,1]\times S^3$, decomposing $F$ into $F_0$ and
  $F_1$. Let $\Phi$ be a diffeomorphism as in
  Condition~\ref{item:AC-diffeo} of
  Definition~\ref{def:admissible-cut}. Given a movie $M_0$
  representing $\Phi(F_0)\subset [0,1/2]\times S^3$ (identified with
  $[0,1]\times S^3$ in the obvious way) and a movie $M_1$ representing
  $\Phi(F_1)\subset [1/2,1]\times S^3$, we say that the concatenated
  movie $(M_0,M_1)$ is a \emph{movie compatible with the admissible
    cut $(S,V,\phi)$}.
\end{definition}

\begin{lemma}\label{lem:non-or-red}
  Let $F$ be a nonorientable cobordism in $[0,1]\times S^3$ from
  $L_0$ to $L_1$. Then, the image of the induced map
  $\BNh^-(F)\co \BNh^-(L_0)\to \BNh^-(L_1)$ lies in
  $\BNh^{\red}(L_1)\subset \BNh^-(L_1)$, and the map
  $\BNh^+(F)\co \BNh^+(L_0)\to \BNh^+(L_1)$ descends to a map
  $\BNh^{\red}(L_0)\to\BNh^{+}(L_1)$.
\end{lemma}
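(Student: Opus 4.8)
The plan is to derive both claims immediately from the vanishing statement at the end of Proposition~\ref{prop:or-gens-part2} (namely that $\BNh^\infty(F)=0$ when $F$ is nonorientable, in either the Lee or Bar-Natan theory) together with the naturality of the long exact sequences of Formula~\eqref{eq:les} provided by Lemma~\ref{lem:les-natural}. Note that the sign ambiguity in the cobordism maps from Proposition~\ref{prop:BNh-functorial} plays no role here, since ``being zero'' and ``being contained in a given submodule'' are insensitive to an overall sign.

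For the first claim, I would consider the commuting square coming from naturality of the first sequence in Formula~\eqref{eq:les} with respect to $F$:
\[
  \begin{tikzpicture}[xscale=2.2,yscale=1.2,>=To]
    \node (a0) at (0,1) {$\BNh^-(L_0)$};
    \node (b0) at (1,1) {$\BNh^\infty(L_0)$};
    \node (a1) at (0,0) {$\BNh^-(L_1)$};
    \node (b1) at (1,0) {$\BNh^\infty(L_1).$};
    \draw[->] (a0) -- (b0) node[midway,anchor=south]{\tiny $\iota_*$};
    \draw[->] (a1) -- (b1) node[midway,anchor=south]{\tiny $\iota_*$};
    \draw[->] (a0) -- (a1) node[midway,anchor=east]{\tiny $\BNh^-(F)$};
    \draw[->] (b0) -- (b1) node[midway,anchor=west]{\tiny $\BNh^\infty(F)$};
  \end{tikzpicture}
\]
Since $\BNh^\infty(F)=0$, commutativity gives $\iota_*\circ\BNh^-(F)=0$, i.e.\ $\image(\BNh^-(F))\subseteq\ker(\iota_*)=\BNh^{\red}(L_1)$, as claimed.

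For the second claim, recall $\BNh^{\red}(L)\cong\cokernel(\pi_*\co\BNh^\infty(L)\to\BNh^+(L))$, so the assertion that $\BNh^+(F)$ descends to $\BNh^{\red}(L_0)\to\BNh^+(L_1)$ is precisely the statement that $\BNh^+(F)$ vanishes on $\image(\pi_*)\subseteq\BNh^+(L_0)$. I would obtain this from the analogous naturality square for the map $\pi_*$:
\[
  \begin{tikzpicture}[xscale=2.2,yscale=1.2,>=To]
    \node (a0) at (0,1) {$\BNh^\infty(L_0)$};
    \node (b0) at (1,1) {$\BNh^+(L_0)$};
    \node (a1) at (0,0) {$\BNh^\infty(L_1)$};
    \node (b1) at (1,0) {$\BNh^+(L_1).$};
    \draw[->] (a0) -- (b0) node[midway,anchor=south]{\tiny $\pi_*$};
    \draw[->] (a1) -- (b1) node[midway,anchor=south]{\tiny $\pi_*$};
    \draw[->] (a0) -- (a1) node[midway,anchor=east]{\tiny $\BNh^\infty(F)$};
    \draw[->] (b0) -- (b1) node[midway,anchor=west]{\tiny $\BNh^+(F)$};
  \end{tikzpicture}
\]
Again $\BNh^\infty(F)=0$, so $\BNh^+(F)\circ\pi_*=0$, hence $\BNh^+(F)$ factors through $\BNh^+(L_0)/\image(\pi_*)=\cokernel(\pi_*)\cong\BNh^{\red}(L_0)$. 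There is no real obstacle here: the only nontrivial ingredient is Proposition~\ref{prop:or-gens-part2}, which is already established; the rest is a two-diagram chase. If one wishes, one can equally run the first part at the chain level, observing that $\BNcx^-(F)$ composed with $\iota\co\BNcx^-(L_1)\to\BNcx^\infty(L_1)$ is chain homotopic to $\BNcx^\infty(F)\circ\iota$, which is null-homotopic; but the homological statement via Lemma~\ref{lem:les-natural} is cleaner and suffices for the applications.
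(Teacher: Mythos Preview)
Your proof is correct and takes essentially the same approach as the paper: both use Proposition~\ref{prop:or-gens-part2} to get $\BNh^\infty(F)=0$, then invoke naturality of the first long exact sequence in~\eqref{eq:les} via Lemma~\ref{lem:les-natural}. You have simply spelled out the two commuting squares explicitly, whereas the paper leaves the diagram chase to the reader in a single sentence.
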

\begin{proof}
  By Proposition~\ref{prop:or-gens-part2},
  $\BNh^\infty(F)\co \BNh^\infty(L_0)\to \BNh^\infty(L_1)$ vanishes. So, both
  claims follow from the first long exact sequence in Formula~\eqref{eq:les} and
  Lemma~\ref{lem:les-natural}.
\end{proof}

\begin{definition}\label{def:mixed-invt}
  Let $F$ be a nonorientable cobordism from $L_0$ to $L_1$  with crosscap number $\geq 3$. Let $S$ be an admissible
  cut for $F$, decomposing $F$ as $F_1\circ F_0$ and $[0,1]\times S^3$
  as $W_1\circ W_0$. Choose a movie compatible with the admissible
  cut, and let $\BNh^\pm(F_i)$ be the induced maps.
  By Lemma~\ref{lem:non-or-red}, the map
  $\BNh^-(F_0)$ induces a map
  $\BNh(F_0)\co\BNh^-(L_0)\to \BNh^{\red}(L)$ and $\BNh^+(F_1)$
  descends to a map $\BNh(F_1)\co\BNh^{\red}(L)\to\BNh^+(L_1)$. Define
  the mixed invariant $\MixedInvt{F}\co \BNh^-(L_0)\to\BNh^+(L_1)$ to be
  the composition $\BNh(F_1)\circ\BNh(F_0)$. That is, $\MixedInvt{F}$ is
  is the composition of the two dashed arrows in the following diagram:
\begin{equation}\label{eq:mixed-invt-diagram}
\mathcenter{\begin{tikzpicture}[xscale=5,yscale=1.8]
\node (minus-0) at (0,1) {$\BNh^-(L_0)$};
\node (inf-0) at (0,.45) {$\BNh^\infty(L_0)$};

\node (inff-1) at (1,2.55) {$\BNh^\infty(L)$};
\node (plus-1) at (1,2) {$\BNh^+(L)$};
\node (minus-1) at (1,1) {$\BNh^-(L)$};
\node (inf-1) at (1,.45) {$\BNh^\infty(L)$};
\node (red) at (1,1.5) {$\BNh^\red(L)$};

\node (inff-2) at (2,2.55) {$\BNh^\infty(L_1)$};
\node (plus-2) at (2,2) {$\BNh^+(L_1)$};

\draw[->] (minus-0) -- (inf-0);

\draw[->] (inff-1) -- (plus-1);
\draw[->] (plus-1) -- (red);
\draw[->] (red)--(minus-1);
\draw[->] (minus-1) -- (inf-1);

\draw[->] (inff-2) -- (plus-2);

\draw[->] (inf-0) -- (inf-1) node[midway,below] {\tiny $\BNh^\infty(F_0)=0$};
\draw[->] (minus-0) -- (minus-1) node[midway,below] {\tiny $\BNh^-(F_0)$};

\draw[->,dashed] (minus-0) -- (red) node[midway,above] {\tiny $\BNh(F_0)$};
\draw[->,dashed] (red) -- (plus-2) node[midway,below] {\tiny $\BNh(F_1)$};

\draw[->] (inff-1) -- (inff-2) node[midway,above] {\tiny $\BNh^\infty(F_1)=0$};
\draw[->] (plus-1) -- (plus-2) node[midway,above] {\tiny $\BNh^+(F_1)$};
\end{tikzpicture}}
\end{equation}
\end{definition}

\begin{remark}\label{rem:2-crosscaps}
  Definition~\ref{def:mixed-invt} also makes sense if $F$ has crosscap
  number $2$, but the proof that $\MixedInvt{F}$ is independent of the
  choice of admissible cut (Theorem~\ref{thm:invt}) requires crosscap
  number at least $3$. That is, in the case $F\co L_0\to L_1$ has
  crosscap number $2$, there is a map
  $\MixedInvt{F,S}\co \BNh^-(L_0)\to\BNh^+(L_1)$ which, as far as we
  know, depends on both the surface $F$ and the admissible cut $S$.
\end{remark}

\begin{theorem}\label{thm:invt}
  Let $F$ be a cobordism from $L_0$ to $L_1$, with crosscap number
  $\geq 3$. Then, the mixed invariant
  $\MixedInvt{F}\co\BNh^-(L_0)\to\BNh^+(L_1)$ is independent of the
  choices in its construction, up to an overall sign. Further, If $F$
  is isotopic to $F'$ or, more generally, if there is a
  self-diffeomorphism of $[0,1]\times S^3$ which is the identity on
  the boundary and sends $F$ to $F'$ then
  $\MixedInvt{F}=\pm\MixedInvt{F'}\co \BNh^-(L_0)\to\BNh^+(L_1)$.
\end{theorem}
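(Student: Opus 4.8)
The plan is to prove the statement in three steps: that $\MixedInvt{F,S}$ is well-defined up to sign for a \emph{fixed} admissible cut $S$; that it does not depend on the choice of cut; and that a self-diffeomorphism of $[0,1]\times S^3$ carries it to $\pm\MixedInvt{F'}$. Throughout, the two structural inputs are \Proposition{BNh-functorial} (cobordism maps are well-defined up to sign and invariant under diffeomorphisms that are the identity near the boundary) and \Proposition{admis-cut} (existence and equivalence of admissible cuts).

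\emph{Fixed cut.} Fix an admissible cut $(S,V,\phi)$ decomposing $F$ as $F_1\circ F_0$. Since $F_0$ and $F_1$ are nonorientable (condition (AC-2)), $\BNh^\infty(F_0)=\BNh^\infty(F_1)=0$ by \Proposition{or-gens-part2}, so the corestriction $\BNh(F_0)\co\BNh^-(L_0)\to\BNh^{\red}(L)$ and the descent $\BNh(F_1)\co\BNh^{\red}(L)\to\BNh^+(L_1)$ are canonically determined by $\BNh^-(F_0)$ and $\BNh^+(F_1)$. Any two choices of the diffeomorphism $\Phi$ of Condition~\ref{item:AC-diffeo} agree with $\phi$ on $V$ and are the identity near $\{0,1\}\times S^3$, so their composite is a self-diffeomorphism of $[0,1]\times S^3$ fixing neighborhoods of the boundary and of $\phi(V)$; hence it restricts to a self-diffeomorphism of each half-cylinder that is the identity near its boundary, and by \Proposition{BNh-functorial} the maps $\BNcx^\pm(\Phi(F_i))$ are unchanged up to sign. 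Two compatible movies for the same $\Phi(F_i)$ likewise give equal maps up to sign. As each of the two factors of $\MixedInvt{F,S}$ is therefore well-defined up to sign, so is their composition up to an overall sign.

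\emph{Independence of the cut.} By \Proposition{admis-cut}, applied with $F'=F$ and the identity diffeomorphism, any two admissible cuts for $F$ are equivalent, so it suffices to treat a single diffeomorphism of cuts and a single elementary equivalence. For a diffeomorphism $\Psi\co([0,1]\times S^3,F,V)\stackrel{\cong}{\to}([0,1]\times S^3,F',V')$ with $\phi'\circ\Psi=\phi$ on $V$, one checks $\Psi(S)=S'$ and $\Psi(F_i)=F'_i$; then $\Phi\circ\Psi^{-1}$ is a legitimate choice of the diffeomorphism of Condition~\ref{item:AC-diffeo} for $(S',V',\phi')$ with $(\Phi\circ\Psi^{-1})(F'_i)=\Phi(F_i)$, so using the same movies on both sides the two invariants are literally equal. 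For an elementary equivalence, after applying such a diffeomorphism we may assume $S=\{\tfrac13\}\times S^3$ and $S'=\{\tfrac23\}\times S^3$, so $F=G_2\circ G_1\circ G_0$ with $G_0$, $G_2$, $G_1\circ G_0$, and $G_2\circ G_1$ all nonorientable. Choosing movies for $G_0,G_1,G_2$ and concatenating, one has on the nose
\[
\MixedInvt{F,S}=\BNh(G_2\circ G_1)\circ\BNh(G_0), \qquad \MixedInvt{F,S'}=\BNh(G_2)\circ\BNh(G_1\circ G_0),
\]
where $\BNh(G_2\circ G_1)$ is the descent of $\BNh^+(G_2)\circ\BNh^+(G_1)$ and $\BNh(G_1\circ G_0)$ is the corestriction of $\BNh^-(G_1)\circ\BNh^-(G_0)$, both operations being legitimate because $\BNh^\infty(G_2)=0$. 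Given $x\in\BNh^-(L_0)$, write $\BNh^-(G_0)(x)=\partial(u)$, possible since the image lies in $\BNh^{\red}=\image\partial$. Chasing the first expression gives $\MixedInvt{F,S}(x)=\BNh^+(G_2)(\BNh^+(G_1)(u))$; for the second, naturality of the connecting homomorphism (\Lemma{les-natural}) gives $\BNh^-(G_1)(\partial(u))=\partial(\BNh^+(G_1)(u))$, whence $\MixedInvt{F,S'}(x)=\BNh^+(G_2)(\BNh^+(G_1)(u))$ as well. So the two agree for these choices, hence in general up to sign.

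\emph{Diffeomorphisms of $[0,1]\times S^3$.} If $\Phi$ is a self-diffeomorphism that is the identity near the boundary and $F'=\Phi(F)$, then, $F$ having crosscap number $\geq 3$, \Proposition{admis-cut} gives that any admissible cut for $F$ is equivalent to any admissible cut for $F'$; running the argument of the previous paragraph along the connecting sequence of elementary equivalences and (cut-)diffeomorphisms yields $\MixedInvt{F}=\pm\MixedInvt{F'}$. Isotopy rel boundary is the special case in which $\Phi$ is the time-one map of an ambient isotopy. I expect the main obstacle to be precisely the diagram chase in the elementary-equivalence case: it is the assertion that cutting $F$ into three sufficiently-nonorientable blocks produces the same mixed invariant whichever of the two inner cuts is used, and it is exactly here that the vanishing of $\BNh^\infty$ on nonorientable cobordisms and the naturality of the long exact sequence are both indispensable.
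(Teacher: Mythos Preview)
Your proof is correct and follows essentially the same three-step structure as the paper's proof: independence of the movie (and of the auxiliary diffeomorphism $\Phi$) for a fixed cut, invariance under elementary equivalence, and invariance under diffeomorphisms of cuts; the elementary-equivalence step is done in the paper via a small commutative square relating $\BNh^+,\BNh^{\red},\BNh^-$ at the two cut links, whereas you carry out the equivalent element chase directly, but the content is the same. One small slip: you justify both the corestriction of $\BNh^-(G_1\circ G_0)$ and the descent of $\BNh^+(G_2\circ G_1)$ by citing only $\BNh^\infty(G_2)=0$; the corestriction actually uses $\BNh^\infty(G_0)=0$ (or that $G_1\circ G_0$ is nonorientable), which you already stated but should cite there.
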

\begin{proof}
  We will show (in order):
  \begin{enumerate}[label=(\arabic*)]
  \item\label{item:inv-movie} Independence of the choice of movies
    compatible with a fixed admissible cut and, in particular, of
    isotopies of $F_0$ and $F_1$ rel boundary, and of the choice of diffeomorphism $\Phi$ in
    Definition~\ref{def:compat-with-cut}. 
  \item\label{item:inv-cut} Invariance under elementary equivalences
    of admissible cuts.
  \item\label{item:inv-cob} Invariance under diffeomorphisms of
    surfaces and admissible cuts.
  \end{enumerate}
  By Proposition~\ref{prop:admis-cut}, this implies the result.
  
  Throughout the proof, ``equal'' or ``homotopic'' will mean equal or homotopic up to an overall sign.

  For point~\ref{item:inv-movie}, by
  Proposition~\ref{prop:BNh-functorial}, difference choices of movies
  compatible with the same admissible cut give chain homotopic maps
  $\BNcx^-(F_0)$ and $\BNcx^-(F_1)$. (For independence of $\Phi$, this
  uses the last statement in
  Proposition~\ref{prop:BNh-functorial}.) If $\BNcx^-(F_0)\sim \BNcx^-(F_0')$, then
  $\BNh^-(F_0)=\BNh^-(F'_0)$; similarly, if
  $\BNcx^-(F_1)\sim \BNcx^-(F_1')$, then
  $\BNcx^+(F_1)\sim \BNcx^+(F_1')$, and hence
  $\BNh^+(F_1)=\BNh^+(F'_1)$. Notice that the lift
  $\BNh(F_0)\co \BNh^-(L_0)\to\BNh^{\red}(L)$ of $\BNh^-(F_0)$ is
  canonical: $\BNh^{\red}(L)$ is a canonical submodule of
  $\BNh^-(L)$. Similarly, the induced map
  $\BNh(F_1)\co \BNh^{\red}(L)\to \BNh^+(L_1)$ is canonical, since
  $\BNh^{\red}(L)$ is a canonical quotient module of $\BNh^+(L)$. So,
  different choices of movies for $F_0$ and $F_1$ give the same mixed
  invariant.
  
  For point~\ref{item:inv-cut}, if the
  admissible cuts $(S,V,\phi)$ and $(S',V',\phi')$ are elementary
  equivalent, let $L=\phi(S\cap F)$ and $L'=\phi'(S\cap F)$. Assume,
  without loss of generality, that we are in the first case of
  Formula~\eqref{eq:admis-cut-equiv}. Choose a movie for $F$
  compatible with this decomposition (in a sense analogous to
  Definition~\ref{def:compat-with-cut}), so $L$ and $L'$ are
  frames in the movie.
  Then, it follows from commutativity of the diagram
  \[
    \begin{tikzpicture}[xscale=3,yscale=2.4]
      \node (plus-0) at (0,2) {$\BNh^+(L;R)$};
      \node (minus-0) at (0,1) {$\BNh^-(L;R)$};
      \node (red-0) at (0,1.5) {$\BNh^\red(L;R)$};
      
      \node (plus-1) at (1,2) {$\BNh^+(L';R)$};
      \node (minus-1) at (1,1) {$\BNh^-(L';R)$};
      \node (red-1) at (1,1.5) {$\BNh^\red(L';R)$};
      
      \draw[->] (plus-0) -- (red-0);
      \draw[->] (plus-1) -- (red-1);
      \draw[->] (red-0) -- (minus-0);
      \draw[->] (red-1) -- (minus-1);
      \draw[->] (plus-0) -- (plus-1);
      \draw[->] (minus-0) -- (minus-1);
      \draw[->] (red-0) -- (red-1);
    \end{tikzpicture}
  \]
  (and point~\ref{item:inv-movie}) that the mixed invariants with
  respect to $(S,V,\phi)$ and $(S',V',\phi')$ agree.
  
  Finally, for point~\ref{item:inv-cob}, suppose an admissible cut
  $(S,V,\phi)$ for $F$ is diffeomorphic to an admissible cut
  $(S',V',\phi')$ for $F'$, via a diffeomorphism $\Psi$. Fix a
  movie for $F$ compatible with $(S,V,\phi)$, with respect to a
  diffeomorphism $\Phi$ extending $\phi$. Then, the same movie is
  compatible with $(S',V',\phi')$, via the diffeomorphism
  $\Phi\circ\Psi$. By points~\ref{item:inv-movie} and~\ref{item:inv-cut}, we can compute the mixed
  invariants of $F$ and $F'$ using these movies, so the mixed
  invariants agree.
\end{proof}

\section{Properties}\label{sec:properties}
\subsection{First observations}\label{sec:first-obs}
We start by noting the mixed invariant's grading:
\begin{lemma}\label{lem:mixed-grading}
  The mixed invariant $\MixedInvt{F}\co \BNh^-(L_0)\to\BNh^+(L_1)$
  increases the bigrading by $(-1-e/2,\chi-3e/2-2s)$, where $\chi$ is the
  Euler characteristic of $F$, $e$ is the normal Euler number of
  $F$, and $s$ is the number of stars on $F$.
\end{lemma}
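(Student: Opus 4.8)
The plan is to read off the bigrading shift of $\MixedInvt{F}$ from its definition as the composition $\MixedInvt{F}=\BNh(F_1)\circ\BNh(F_0)$ in Definition~\ref{def:mixed-invt}, where $F=F_1\circ F_0$ is the decomposition along the chosen admissible cut, computing the shift of each factor via Corollary~\ref{cor:hq-gr-change} and then invoking additivity of $\chi$, $e$, and $s$.

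First I would treat $\BNh(F_0)\co\BNh^-(L_0)\to\BNh^{\red}(L)$. By construction this is the corestriction of $\BNh^-(F_0)\co\BNh^-(L_0)\to\BNh^-(L)$ to the submodule $\BNh^{\red}(L)\subset\BNh^-(L)$, and $\BNh^{\red}(L)$ carries precisely the grading it inherits from $\BNh^-(L)$; hence $\BNh(F_0)$ has the same bigrading shift as $\BNh^-(F_0)$. Corollary~\ref{cor:hq-gr-change} (whose grading computation is carried out on $\BNcx^-$, and which applies equally to the $\BNcx^+$, $\BNcx^\infty$, and $\wh\BNcx$ maps, these being $\Ring[U]$-linearly induced from the $\BNcx^-$ map) gives this shift as $(-e_0/2,\ \chi_0-3e_0/2-2s_0)$, where $\chi_0$, $e_0$, $s_0$ are the Euler characteristic, normal Euler number, and number of stars of $F_0\co L_0\to L$, the normal Euler number being taken with respect to any Seifert surface for the middle link $L$.

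Next, for $\BNh(F_1)\co\BNh^{\red}(L)\to\BNh^+(L_1)$, which is the descent of $\BNh^+(F_1)\co\BNh^+(L)\to\BNh^+(L_1)$ along the quotient map $\BNh^+(L)\onto\cokernel(\pi_*)=\BNh^{\red}(L)$: with respect to the grading $\BNh^{\red}(L)$ inherits \emph{as a quotient} of $\BNh^+(L)$, the descent has the same shift as $\BNh^+(F_1)$, namely $(-e_1/2,\ \chi_1-3e_1/2-2s_1)$ by Corollary~\ref{cor:hq-gr-change} again. But Definition~\ref{def:mixed-invt} uses on $\BNh^{\red}(L)$ the grading it inherits \emph{as a submodule} of $\BNh^-(L)$, which is $(1,0)$ higher (this is exactly the shift of the connecting map $\partial$ implementing $\cokernel(\pi_*)\cong\ker(\iota_*)$). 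Converting, the shift of $\BNh(F_1)$ in its submodule grading is $(-e_1/2,\ \chi_1-3e_1/2-2s_1)-(1,0)=(-1-e_1/2,\ \chi_1-3e_1/2-2s_1)$.

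Finally I would combine the two: $\chi=\chi_0+\chi_1$ (the middle link has Euler characteristic $0$), $s=s_0+s_1$, and $e=e_0+e_1$; the last follows by choosing a transverse pushoff of $F$ that restricts to Seifert pushoffs of $L_0$ and $L_1$ and to a product pushoff along the cut, so that its self-intersections split between $F_0$ and $F_1$, together with the fact (from the discussion in Section~\ref{sec:behave}) that $e(F_i)$ does not depend on the Seifert surface chosen for $L$. Adding the two shifts then yields $(-1-e/2,\ \chi-3e/2-2s)$, as claimed. The only delicate point is the bookkeeping of the two gradings on $\BNh^{\red}(L)$, which is what produces the $-1$ in the homological coordinate; the additivity of $e$ is the sort of short geometric argument already used in the proof of Lemma~\ref{lem:bigrading-shift-saddle}.
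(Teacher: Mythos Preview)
Your proof is correct and follows essentially the same approach as the paper's. The paper's proof is simply a two-sentence reference to Corollary~\ref{cor:hq-gr-change} together with the remark that the extra $(1,0)$ shift comes from the identification of $\BNh^{\red}(L)$ as a quotient of $\BNh^+(L)$; you have spelled out exactly this argument in full, including the additivity of $e$ under the admissible cut, which the paper leaves implicit.
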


\begin{proof}
  This follows immediately from Corollary~\ref{cor:hq-gr-change}. The
  additional downward grading shift by $(1,0)$ comes from the
  identification of $\BNh^{\red}(L)$ as a quotient module of
  $\BNh^+(L)$.
\end{proof}

There is a simple condition guaranteeing the mixed invariant vanishes:
\begin{lemma}\label{lem:simple-vanishing}
  If $F$ has an admissible cut $S$ so that the link $L\subset S^3$
  corresponding to $S\cap F$ has $\BNh^\red(L)=0$, then the
  mixed invariant $\MixedInvt{F}$ vanishes.
\end{lemma}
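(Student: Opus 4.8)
The plan is to read off the vanishing directly from the construction of $\MixedInvt{F}$, using the already-established independence of the admissible cut. Since $\MixedInvt{F}$ is defined, $F$ has crosscap number $\geq 3$, so Theorem~\ref{thm:invt} applies and we may compute $\MixedInvt{F}$ (up to an overall sign) using any admissible cut we like; we use the given cut $S$. By Definition~\ref{def:mixed-invt}, with respect to $S$ the map $\MixedInvt{F}$ is the composition
\[
  \BNh^-(L_0)\xrightarrow{\ \BNh(F_0)\ }\BNh^\red(L)\xrightarrow{\ \BNh(F_1)\ }\BNh^+(L_1),
\]
where $L\subset S^3$ is the link cut out by $S\cap F$. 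If $\BNh^\red(L)=0$ this composition factors through the zero module, hence is the zero map, and therefore $\MixedInvt{F}=0$.

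In slightly more detail: $\BNh(F_0)$ is the lift of $\BNh^-(F_0)$ through the inclusion $\BNh^\red(L)\into\BNh^-(L)$ furnished by Lemma~\ref{lem:non-or-red}, and $\BNh(F_1)$ is the descent of $\BNh^+(F_1)$ to the quotient $\BNh^+(L)\onto\BNh^\red(L)$ furnished by the same lemma; in both cases $\BNh^\red(L)$ is (respectively) the target and the source, so once $\BNh^\red(L)$ is assumed trivial there is nothing left to check. The only substantive ingredient is Theorem~\ref{thm:invt}, which ensures that the value $0$ obtained from the particular cut $S$ is genuinely the value of $\MixedInvt{F}$ (independent of the cut, up to sign). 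Granting that theorem, there is no real obstacle and the argument is immediate; this lemma is essentially a bookkeeping observation, useful because it reduces many concrete vanishing statements to finding a single cut whose cross-sectional link has trivial reduced invariant.
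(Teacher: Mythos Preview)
Your proof is correct and takes essentially the same approach as the paper, which simply states that the result is immediate from Definition~\ref{def:mixed-invt}. You have spelled out the details (in particular the appeal to Theorem~\ref{thm:invt} to use the given cut $S$), but the underlying argument is identical.
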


\begin{proof}
  This is immediate from Definition~\ref{def:mixed-invt}.
\end{proof}

\begin{remark}
  The analogue of Lemma~\ref{lem:simple-vanishing} for Heegaard Floer
  homology is factoring through an $L$-space. Note, however, that
  $L$-spaces seem to be much more common than links with vanishing
  $\BNh^\red$.
\end{remark}

The mixed invariant behaves simply with respect to (a particular kind
of) mirroring. Let $F\co L_0\to L_1$ be a cobordism, so $F$ is
smoothly embedded in $[0,1]\times S^3$. Applying the orientation-preserving
diffeomorphism $[0,1]\times S^3\to [0,1]\times S^3$,
$(t,x,y,z,w)\mapsto (1-t,-x,y,z,w)$ gives a new cobordism
$m(F)\co m(L_1)\to m(L_0)$, where $m(L_i)$ denotes the mirror of
$L_i$.

The statement is a little simpler for the Lee deformation than the
Bar-Natan deformation, so we separate the two cases. Given an
$\Ring[T]$-module $M$, $\Hom_{\Ring}(M,\Ring)$ inherits the structure
of an $\Ring[T]$-module, as well.
\begin{lemma}\label{lem:mirror-Lee}
  Let $\BNcx^\pm$ denote the Lee deformation. 
  Given a link $L$, there is an isomorphism $\BNcx^+(m(L))\cong
  \Hom_{\Ring}(\BNcx^-(L),\Ring)$ of complexes over $\Ring[T]$ so that for any cobordism $F\co L_0\to
  L_1$,
  \[
    \BNcx^+(m(F))\co \BNcx^+(m(L_1))\cong
    \Hom_{\Ring}(\BNcx^-(L_1),\Ring)\to
    \Hom_{\Ring}(\BNcx^-(L_0),\Ring)\cong\BNcx^+(m(L_0))
  \]
  is the dual to the map $\BNcx^-(F)\co \BNcx^-(L_0)\to\BNcx^-(L_1)$. 

  Further, if $F$ has crosscap number $\geq 3$ and $\Ring$ is a field
  then the mixed invariant $\MixedInvt{m(F)}$ is given by the
  composition
  \[
    \BNh^-(m(L_1))\cong \Hom(\BNh^+(L_1),\Ring)\stackrel{\MixedInvt{F}^*}{\lra}\Hom(\BNh^-(L_0),\Ring)\cong\BNh^+(m(L_0)).
  \]
  In particular, $\MixedInvt{m(F)}$ vanishes if and only if
  $\MixedInvt{F}$ does.
\end{lemma}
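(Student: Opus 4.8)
The plan is to first prove the chain-level mirror duality together with its functoriality, and then deduce the statement about mixed invariants by dualizing the construction in \Definition{mixed-invt} piece by piece. Throughout, all modules are bigraded and $\Hom_\Ring(-,\Ring)$ denotes the bigraded dual, which is a perfect duality on bigraded $\Ring$-modules whose homogeneous pieces are finite-dimensional (this applies to all the complexes and homology groups in sight).

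\emph{The chain-level duality.} The Lee Frobenius algebra $A=\Ring[T,X]/(X^2=T)$ is a Frobenius algebra over $\Ring[T]$ with Frobenius trace the counit $\epsilon$: in the basis $\{1,X\}$ the pairing $\langle a,b\rangle=\epsilon(ab)$ has Gram matrix $\left(\begin{smallmatrix}0&1\\1&0\end{smallmatrix}\right)$, which is invertible over $\Ring[T]$. Hence $A\cong\Hom_{\Ring[T]}(A,\Ring[T])$ as $A$-bimodules, and under this identification $\Delta$ is dual to $m$ and vice versa, the unit is dual to $\epsilon$, and multiplication by $2X$ is self-dual (as $\langle 2Xa,b\rangle=\epsilon(2Xab)=\langle a,2Xb\rangle$). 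Mirroring a link diagram reverses its Kauffman cube of resolutions, exchanging the $0$- and $1$-resolution at each crossing, and so exchanges the roles of $m$ and $\Delta$ in the differential; since the elementary cobordism maps of \Section{behave} are built from Bar-Natan's Reidemeister maps (which dualize correctly, as in his functoriality proof~\cite{Bar-kh-tangle-cob}), births/deaths (unit/counit), saddles ($m$/$\Delta$), and star maps (multiplication by $2X$), this gives $\BNcx^-(m(L))\cong\Hom_{\Ring[T]}(\BNcx^-(L),\Ring[T])$ as complexes over $\Ring[T]$, with $\BNcx^-(m(F))$ the $\Ring[T]$-linear transpose of $\BNcx^-(F)$ for every (possibly nonorientable, possibly starred) cobordism $F$. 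Tensoring over $\Ring[T]$ with $\Ring[T^{-1},T]/\Ring[T]$ and using that $\BNcx^-(L)$ is finitely generated free over $\Ring[T]$ (so $\Hom_{\Ring[T]}(\BNcx^-(L),-)$ commutes with this base change, and $\Hom_{\Ring[T]}(\BNcx^-(L),\Hom_\Ring(\Ring[T],\Ring))=\Hom_\Ring(\BNcx^-(L),\Ring)$) converts this into the asserted isomorphism $\BNcx^+(m(L))\cong\Hom_\Ring(\BNcx^-(L),\Ring)$, with $\BNcx^+(m(F))$ the $\Ring$-linear transpose of $\BNcx^-(F)$; the grading on $\Hom_\Ring(-,\Ring)$ is the usual one negating $\gr_q$.

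\emph{Consequences over a field.} Now assume $\Ring$ is a field. Applying the previous paragraph to $m(L)$ (using $m(m(L))=L$) and dualizing gives a companion isomorphism $\BNcx^-(m(L))\cong\Hom_\Ring(\BNcx^+(L),\Ring)$ with $\BNcx^-(m(F))$ the $\Ring$-linear transpose of $\BNcx^+(F)$. Since over a field $\Hom_\Ring(-,\Ring)$ is exact and commutes with homology, we obtain natural isomorphisms $\BNh^+(m(L))\cong\Hom_\Ring(\BNh^-(L),\Ring)$ and $\BNh^-(m(L))\cong\Hom_\Ring(\BNh^+(L),\Ring)$, under which $\BNh^\pm(m(G))$ is the transpose of $\BNh^\mp(G)$ for every cobordism $G$. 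Moreover the chain-level duality carries the short exact sequence $0\to\BNcx^-(L)\to\BNcx^\infty(L)\to\BNcx^+(L)\to 0$ to the $\Ring$-dual of the corresponding sequence for $m(L)$ (with $\pm$ interchanged), so on homology the defining long exact sequence of $m(L)$ is the transpose of that of $L$; in particular $\iota_*^{m(L)}$ is the transpose of $\pi_*^L$ and $\pi_*^{m(L)}$ is the transpose of $\iota_*^L$. Consequently $\BNh^\red(m(L))=\ker\iota_*^{m(L)}=\ker\bigl((\pi_*^L)^\vee\bigr)\cong\Hom_\Ring(\BNh^\red(L),\Ring)$, and this identification is such that the inclusion $\BNh^\red(m(L))\hookrightarrow\BNh^-(m(L))$ is the transpose of the canonical surjection $\BNh^+(L)\twoheadrightarrow\BNh^\red(L)$, while dually the surjection $\BNh^+(m(L))\twoheadrightarrow\BNh^\red(m(L))$ is the transpose of the canonical inclusion $\BNh^\red(L)\hookrightarrow\BNh^-(L)$.

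\emph{Dualizing the mixed invariant.} Fix an admissible cut $S$ for $F$, decomposing $F$ as $F_1\circ F_0$ with $F_0\co L_0\to L$ and $F_1\co L\to L_1$. Its mirror is an admissible cut for $m(F)$ (both sides are nonorientable since the originals are, and the crosscap number is unchanged), decomposing $m(F)$ as $m(F_0)\circ m(F_1)$ with $m(F_1)\co m(L_1)\to m(L)$ and $m(F_0)\co m(L)\to m(L_0)$; by \Theorem{invt} we may use it to compute $\MixedInvt{m(F)}$. By \Definition{mixed-invt} and \Lemma{non-or-red}, $\MixedInvt{m(F)}=\BNh(m(F_0))\circ\BNh(m(F_1))$, where $\BNh(m(F_1))\co\BNh^-(m(L_1))\to\BNh^\red(m(L))$ is the corestriction of $\BNh^-(m(F_1))$ and $\BNh(m(F_0))\co\BNh^\red(m(L))\to\BNh^+(m(L_0))$ is the descent of $\BNh^+(m(F_0))$. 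Since $\BNh^-(m(F_1))$ is the transpose of $\BNh^+(F_1)$ and the inclusion $\BNh^\red(m(L))\hookrightarrow\BNh^-(m(L))$ is the transpose of $q\co\BNh^+(L)\twoheadrightarrow\BNh^\red(L)$, while $\BNh(F_1)$ is characterized by $\BNh(F_1)\circ q=\BNh^+(F_1)$, a short diagram chase (using injectivity of $q^\vee$) yields $\BNh(m(F_1))=\BNh(F_1)^\vee$; symmetrically, using that $\BNh^+(m(F_0))$ is the transpose of $\BNh^-(F_0)$ and that the surjection $\BNh^+(m(L))\twoheadrightarrow\BNh^\red(m(L))$ is the transpose of the inclusion $\BNh^\red(L)\hookrightarrow\BNh^-(L)$, one gets $\BNh(m(F_0))=\BNh(F_0)^\vee$. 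Therefore
\[
  \MixedInvt{m(F)}=\BNh(F_0)^\vee\circ\BNh(F_1)^\vee=\bigl(\BNh(F_1)\circ\BNh(F_0)\bigr)^\vee=\MixedInvt{F}^{*},
\]
which, under the isomorphisms $\BNh^-(m(L_1))\cong\Hom_\Ring(\BNh^+(L_1),\Ring)$ and $\BNh^+(m(L_0))\cong\Hom_\Ring(\BNh^-(L_0),\Ring)$, is precisely the claimed composition (all up to the overall sign ambiguity of the mixed invariant). Finally, transposition is faithful on morphisms between bigraded vector spaces with finite-dimensional homogeneous pieces, so $\MixedInvt{m(F)}=0$ if and only if $\MixedInvt{F}=0$. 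The step most in need of care is the first one, namely tracking the Frobenius self-duality through the cube of resolutions and checking functoriality for every elementary cobordism (especially the Reidemeister maps); granting that, which is essentially contained in the literature, the mixed-invariant statement follows formally.
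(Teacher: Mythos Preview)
Your proposal is correct and follows essentially the same strategy as the paper: establish the chain-level duality and its functoriality under elementary cobordisms, then dualize the construction of the mixed invariant through an admissible cut. The paper differs only cosmetically---it writes down the explicit isomorphism $\mu([T^n(v,y)]^*)=T^{-n-1}(\vec{1}-v,y^*)$ rather than deriving it from Frobenius self-duality, and for the mixed-invariant part it packages your diagram chase as naturality of the snake lemma applied to an isomorphism of short exact sequences; your acknowledgment that the Reidemeister-move case check is the place requiring care is exactly where the paper also defers to a ``straightforward case check.''
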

(Compare~\cite[Proposition
32]{Kho-kh-categorification},~\cite[pp. 184--185]{Kho-kh-Frobenius},
and~\cite[Proposition 3.1]{CMW-kh-functoriality}.)
\begin{proof}
  The isomorphism $\mu\co \Hom(\BNcx^-(L),\Ring)\to \BNcx^+(m(L))$
  is defined as follows. Given a generator $T^n(v,y)$ of $\BNcx^-(L)$
  (over $\Ring$), let $[T^n(v,y)]^*$ denote the dual generator of
  $\Hom(\BNcx^-(L),\Ring)$. The isomorphism is given by
  $\mu\bigl([T^n(v,y)]^*)=T^{-n-1}(\vec{1}-v,y^*)$ where
  $\vec{1}-v=(1-v_1,\dots,1-v_c)$ and 
  $y^*$ is the result of reversing the label of every circle. (That
  is, if $y$ labels a circle $Z$ by $X$ then $y^*$ labels the
  corresponding circle by $1$, and vice-versa.)  It is straightforward
  to check that this defines a chain isomorphism.

  A movie for $F$ induces a movie for $m(F)$ by mirroring each frame
  and reversing the order of the frames. So, it suffices to check the
  second statement for a single elementary cobordism (pair of adjacent
  frames in a movie). This is a straightforward case
  check.

  For the statement about the mixed invariant, a choice of admissible
  cut for $F$, along some link $L$, and movie compatible with it induce an admissible cut
  for $m(F)$, along $m(L)$, and a movie compatible with it. The map
  $\mu$ induces an isomorphism of short exacts sequences
  \[
   \mathcenter{\begin{tikzpicture}[xscale=1.65]
      \node at (.75,0) (tl0) {$0$};
      \node at (2,0) (Cm) {$\Hom(\BNcx^+(L),\Ring)$};
      \node at (4,0) (Ci) {$\Hom(\BNcx^\infty(L),\Ring)$};
      \node at (6,0) (Cpt) {$\Hom(\BNcx^-(L),\Ring)$};
      \node at (7.25,0) (tr0) {$0$};
      \node at (0.75,-1) (bl0) {$0$};
      \node at (2,-1) (Ch) {$\BNcx^-(m(L))$};
      \node at (4,-1) (Cpb1) {$\BNcx^\infty(m(L))$};
      \node at (6,-1) (Cpb2) {$\BNcx^+(m(L))$};
      \node at (7.25,-1) (br0) {$0$};
      \draw[->] (tl0) to (Cm);
      \draw[->] (Cm) to (Ci);
      \draw[->] (Ci) to (Cpt);
      \draw[->] (Cpt) to (tr0);
      \draw[->] (bl0) to (Ch);
      \draw[->] (Ch) to (Cpb1);
      \draw[->] (Cpb1) to  (Cpb2);
      \draw[->] (Cpb2) to (br0);
      \draw[->] (Cm) to (Ch);
      \draw[->] (Ci) to (Cpb1);
      \draw[->] (Cpt) to (Cpb2);
    \end{tikzpicture}}
   \]
   so naturality of the snake lemma implies that the diagram
   \[
   \mathcenter{\begin{tikzpicture}[xscale=1.75, yscale=1.1]
      \node at (0,0) (Cmdual) {$\Hom(\BNh^-(L),\Ring)$};
      \node at (4,0) (Cpdual) {$\Hom(\BNh^+(L),\Ring)$};
      \node at (0,-3) (Cp) {$\BNh^+(m(L))$};
      \node at (4,-3) (Cm) {$\BNh^-(m(L))$};
      \node at (2,-1) (reddual) {$\Hom(\BNh^{\red}(L),\Ring)$};
      \node at (2,-2) (redu) {$\BNh^{\red}(m(L))$};
      \draw[->] (Cmdual) to node[above]{\lab{\bdy^*}} (Cpdual);
      \draw[->] (Cp) to node[above]{\lab{\bdy}} (Cm);
      \draw[->] (Cmdual) to node[left]{\lab{\cong}} (Cp);
      \draw[->] (Cpdual) to node[right]{\lab{\cong}} (Cm);
      \draw[->] (Cp) to (redu);
      \draw[->] (redu) to (Cm);
      \draw[->, dashed] (reddual) to (redu);
      \draw[->] (Cmdual) to (reddual);
      \draw[->] (reddual) to (Cpdual);
    \end{tikzpicture}}
   \]
   commutes. Combining this with the definition of the mixed invariant
   in Diagram~\eqref{eq:mixed-invt-diagram} gives the result.
\end{proof}

For the analogous results for the Bar-Natan complex, there is an extra
sign. There is a ring automorphism $\sigma\co \Ring[H]\to\Ring[H]$
induced by $\sigma(H)=-H$. Given a module $M$ over $\Ring[H]$, let
$M_\sigma$ be the result of restricting (or extending) scalars by
$\sigma$. That is, $M_\sigma$ is the same as $M$ except $H$ acts on
$M_\sigma$ the way $-H$ acts on $M$. Given another
$\Ring[H]$-module $N$ and a homomorphism $f\co M\to N$, there is an
induced homomorphism $f\co M_\sigma\to N_\sigma$ (which, as a map of
sets, is the same as $f\co M\to N$).

Then, the following is the analogue of Lemma~\ref{lem:mirror-Lee}:
\begin{lemma}
  Let $\BNcx^\pm$ denote the Bar-Natan deformation.  Given a link $L$, there
  is an isomorphism
  $\BNcx^+(m(L))_\sigma\cong \Hom_{\Ring}(\BNcx^-(L),\Ring)$ of complexes
  over $\Ring[H]$ so that for any cobordism $F\co L_0\to L_1$,
  \[
    \BNcx^+(m(F))\co \BNcx^+(m(L_1))_\sigma\cong
    \Hom_{\Ring}(\BNcx^-(L_1),\Ring)\to
    \Hom_{\Ring}(\BNcx^-(L_0),\Ring)\cong\BNcx^+(m(L_0))_\sigma
  \]
  is the dual to the map $\BNcx^-(F)\co \BNcx^-(L_0)\to\BNcx^-(L_1)$.   

  Further, if $F$ has crosscap number $\geq 3$ and $\Ring$ is a field
  then the mixed invariant $\MixedInvt{m(F)}$, viewed as a map of
  modules twisted by $\sigma$, is given by the composition
  \[
    \BNh^-(m(L_1))_\sigma\cong \Hom(\BNh^+(L_1),\Ring)\stackrel{\MixedInvt{F}^*}{\lra}\Hom(\BNh^-(L_0),\Ring)\cong\BNh^+(m(L_0))_\sigma.
  \]
  In particular, $\MixedInvt{m(F)}$ vanishes if and only if
  $\MixedInvt{F}$ does.
\end{lemma}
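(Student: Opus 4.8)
The plan is to follow the proof of Lemma~\ref{lem:mirror-Lee} essentially verbatim, with one extra algebraic input that accounts for the appearance of $\sigma$. Write $A=\Ring[H,X]/(X^2=XH)$ for the Bar-Natan Frobenius algebra over $\Ring[H]$, with counit $\epsilon(1)=0$, $\epsilon(X)=1$, and let $A^{\vee}=\Hom_{\Ring[H]}(A,\Ring[H])$ be its Frobenius-dual (multiplication of $A$ becoming comultiplication of $A^{\vee}$ and vice versa). Whereas the Lee Frobenius algebra is self-dual over $\Ring[T]$, a short direct computation shows that $A^{\vee}$ is isomorphic to $A_\sigma$ (not $A$) as a Frobenius algebra over $\Ring[H]$: the $\Ring[H]$-linear map sending the unit of $A^{\vee}$ (which is the functional $X^{*}$ dual to $X$, since that functional is the counit of $A$) to $1\in A$ and the remaining dual basis vector $1^{*}$ to $X\in A$ is such an isomorphism. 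The only discrepancy with the Lee computation is that the dual comultiplication produces $+H\,1\otimes 1$ where $\Delta(1)$ in $A$ has $-H\,1\otimes 1$, and negating $H$ reconciles them; this is precisely the origin of the twist $\sigma$.

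Next I would package this into a chain isomorphism $\mu\co\Hom_\Ring(\BNcx^-(L),\Ring)\to\BNcx^+(m(L))_\sigma$, exactly as in Lemma~\ref{lem:mirror-Lee}. On generators the formula is the evident Bar-Natan analogue, $\mu\bigl([H^n(v,y)]^{*}\bigr)=H^{-n-1}(\vec 1-v,y^{*})$ (up to sign), where $\vec 1-v$ reflects that mirroring swaps the two resolutions at every crossing, $y^{*}$ reverses the label $1\leftrightarrow X$ on every circle (which is exactly the behavior of the isomorphism $A^{\vee}\cong A_\sigma$ from the previous paragraph, since it sends $1^{*}\mapsto X$ and $X^{*}\mapsto 1$), and the shift $n\mapsto -n-1$ accounting for the passage $\BNcx^-\rightsquigarrow\BNcx^+$ is identical to the Lee case. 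That $\mu$ is a chain map is then the assertion that it is assembled, resolution-circle by resolution-circle, from the Frobenius-algebra isomorphism $A^{\vee}\cong A_\sigma$; this is the same ``straightforward check'' as in the Lee case. For the cobordism-map statement, note that a movie for $F$ mirrors and reverses to a movie for $m(F)$, so it suffices to verify duality for each elementary cobordism~\ref{item:EC1}--\ref{item:EC6}, which is a routine case check: births/deaths and saddles of $A$ are matched against their $A^{\vee}$-counterparts by the isomorphism above, the identity cobordisms of type~\ref{item:EC6} are trivial, and the star cobordism (multiplication by $2X-H$) is self-dual under the isomorphism $A^{\vee}\cong A_\sigma$, again by a direct computation.

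Finally, for the mixed invariant I would run the snake-lemma argument of Lemma~\ref{lem:mirror-Lee} unchanged: an admissible cut for $F$ along a link $L$ induces an admissible cut for $m(F)$ along $m(L)$ together with a compatible movie; the isomorphism $\mu$ intertwines the short exact sequence $0\to\BNcx^-(L)\to\BNcx^\infty(L)\to\BNcx^+(L)\to 0$ with the $\sigma$-twist of the corresponding sequence for $m(L)$, so naturality of the snake lemma yields a commuting hexagon identifying $\BNh^\red(m(L))_\sigma$ with $\Hom(\BNh^\red(L),\Ring)$ compatibly with the connecting maps; comparing with Diagram~\eqref{eq:mixed-invt-diagram} identifies $\MixedInvt{m(F)}$, viewed as a map of $\sigma$-twisted modules, with $\MixedInvt{F}^{*}$, and over a field $\MixedInvt{F}^{*}=0$ if and only if $\MixedInvt{F}=0$. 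The only genuine hazard, and the step I would carry out most carefully, is bookkeeping: keeping the twist $\sigma$ on the correct side of every identification and confirming that the sign corrections in the formula for $\mu$ (forced by the $-H\,1\otimes 1$ term in $\Delta(1)$) never accumulate beyond the overall sign ambiguity already present in the mixed invariant.
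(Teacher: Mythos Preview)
Your approach is essentially the same as the paper's, which simply records the explicit isomorphism and says the rest is a straightforward adaptation of the Lee case. The one point to make precise: the sign in your formula for $\mu$ is not an overall $\pm 1$ but rather $(-1)^n$, i.e.\ $\mu\bigl([H^n(v,y)]^*\bigr)=(-1)^nH^{-n-1}(\vec 1-v,y^*)$; this term-by-term sign is exactly what makes $\mu$ an $\Ring[H]$-module map into the $\sigma$-twisted target, so it should not be absorbed into the global sign ambiguity.
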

\begin{proof}
  The isomorphism sends $[H^n(v,y)]^*$ to
  $(-1)^nH^{-n-1}(\vec{1}-v,y^*)$. The rest of the proof is a
  straightforward adaptation of the Lee case.
\end{proof}

\begin{remark}
  If $\Ring$ is a field, it follows from the classification of modules
  over a PID that there is a (perhaps unnatural) isomorphism over
  $\Ring[H]$ between $\BNh^+(m(L))$ and $\Hom(\BNh^+(L),\Ring)$.
\end{remark}

\begin{remark}
  There is another mirror one might consider: the map
  $(t,x,y,z,w)\mapsto (t,-x,y,z,w)$ which mirrors each frame in the
  movie but does not reverse the order of the frames. Neither
  $\BNh^-(F)$ nor the mixed invariant seems to behave simply with
  respect to this operation, as the example in
  Section~\ref{sec:direct-comp} shows. (Gauge-theoretic invariants of
  the branched double cover also do not behave well with respect to
  this operation.)
\end{remark}

The mixed invariant also respects composition, as follows:
\begin{lemma}\label{lem:composition}
  Let $F_0\co L_0\to L_1$, $F_1\co L_1\to L_2$, and $F_2\co L_2\to L_3$
  be cobordisms, so that $F_1$ has crosscap number $\geq 3$. Then,
  \begin{align}
    \MixedInvt{F_2\circ F_1}&=\BNh^+(F_2)\circ \MixedInvt{F_1}\\
    \MixedInvt{F_1\circ F_0}&=\MixedInvt{F_1}\circ\BNh^-(F_0).
  \end{align}
  The same result holds if $F_1$ has crosscap number $2$, as long as
  either $F_1\circ F_0$ and $F_2\circ F_1$ have crosscap number $\geq
  3$, and we define $\MixedInvt{F_1}$ with respect to any choice of admissible cut for $F_1$
  (compare \ Remark~\ref{rem:2-crosscaps}).
\end{lemma}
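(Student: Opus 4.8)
The plan is to compute both sides of each identity using a single, carefully chosen admissible cut. Fix an admissible cut $(S,V,\phi)$ for $F_1$, decomposing $F_1$ as $F_1^{(1)}\circ F_1^{(0)}$ and $[0,1]\times S^3$ as $W_1\circ W_0$, and write $L=\phi(S\cap F_1)$ for the link in the cutting slice. First I would observe that, after the obvious reparametrizations needed to realize $F_1\circ F_0$ and $F_2\circ F_1$ as embedded surfaces in a single copy of $[0,1]\times S^3$, the cut $S$ (pushed into the appropriate sub-cylinder, with the induced data $V,\phi$) is again an admissible cut, now for $F_1\circ F_0$ and for $F_2\circ F_1$. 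Condition~(AC-1) is immediate; condition~(AC-2) holds because, for $F_1\circ F_0$, one side of the cut contains $F_1^{(0)}$ and the other contains $F_1^{(1)}$, both nonorientable (and symmetrically for $F_2\circ F_1$, whose two sides contain $F_1^{(0)}$ and $F_2\circ F_1^{(1)}$); and condition~(AC-3) follows from the corresponding diffeomorphism for $F_1$, extended by the identity over the adjoined sub-cylinder (where all the relevant maps are the identity near the boundary) and then composed with a self-diffeomorphism of $[0,1]\times S^3$, identity near the boundary, that drags the product slice back to $\{\tfrac12\}\times S^3$.

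With this cut in hand I would run through Definition~\ref{def:mixed-invt}. For $F_1\circ F_0$, decompose it as $F_1^{(1)}\circ(F_1^{(0)}\circ F_0)$. Since $F_1^{(0)}\circ F_0$ is nonorientable, Lemma~\ref{lem:non-or-red} applies, and by functoriality of the cobordism maps (Lemma~\ref{lem:les-natural} and Proposition~\ref{prop:BNh-functorial}) we have $\BNh^-(F_1^{(0)}\circ F_0)=\BNh^-(F_1^{(0)})\circ\BNh^-(F_0)$; because the lift of a nonorientable cobordism map to $\BNh^{\red}(L)\subset\BNh^-(L)$ is canonical, this factors as $\BNh(F_1^{(0)})\circ\BNh^-(F_0)$. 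Composing with $\BNh(F_1^{(1)})$ gives $\MixedInvt{F_1\circ F_0}=\BNh(F_1^{(1)})\circ\BNh(F_1^{(0)})\circ\BNh^-(F_0)=\MixedInvt{F_1}\circ\BNh^-(F_0)$, where $\MixedInvt{F_1}$ is the mixed invariant computed with the cut $S$. Symmetrically, writing $F_2\circ F_1=(F_2\circ F_1^{(1)})\circ F_1^{(0)}$, the nonorientable cobordism $F_2\circ F_1^{(1)}$ induces $\BNh^+(F_2\circ F_1^{(1)})=\BNh^+(F_2)\circ\BNh^+(F_1^{(1)})$, which descends along the canonical quotient $\BNh^+(L)\to\BNh^{\red}(L)$ to $\BNh^+(F_2)\circ\BNh(F_1^{(1)})$; hence $\MixedInvt{F_2\circ F_1}=\BNh^+(F_2)\circ\BNh(F_1^{(1)})\circ\BNh(F_1^{(0)})=\BNh^+(F_2)\circ\MixedInvt{F_1}$. (All equalities here, as in Theorem~\ref{thm:invt}, are up to an overall sign.)

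Finally, when $F_1$ has crosscap number $\geq 3$, Theorem~\ref{thm:invt} shows that each of the three mixed invariants appearing above is independent of the choice of cut, so the two displayed identities hold for the intrinsic invariants. When $F_1$ has crosscap number only $2$ but the relevant composite ($F_1\circ F_0$, resp.\ $F_2\circ F_1$) has crosscap number $\geq 3$, the computation above is unchanged: the composite's mixed invariant is cut-independent by Theorem~\ref{thm:invt}, and the computation expresses it in terms of $\MixedInvt{F_1}$ computed with the specific cut $S$ (as in Remark~\ref{rem:2-crosscaps}), which is exactly the stated assertion. The only genuine work is the verification in the first paragraph that an admissible cut for $F_1$ serves as one for the composites; everything after that is the canonicity of the sub- and quotient-module $\BNh^{\red}$ together with functoriality of $\BNh^\pm$. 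I expect the parametrization bookkeeping---products near the boundary, and sliding the cutting slice to $\tfrac12$ while staying the identity near $\{0,1\}\times S^3$---to be the fussiest point, rather than anything conceptually hard.
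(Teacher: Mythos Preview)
Your proposal is correct and is essentially the same approach as the paper's, which simply says ``This is immediate from the definitions''; you have spelled out in detail exactly the verification the paper leaves implicit (that an admissible cut for $F_1$ serves as one for the composites, together with functoriality of $\BNh^\pm$ and canonicity of the $\BNh^{\red}$ sub/quotient). One minor point: functoriality under composition of cobordisms is built into the movie definition rather than being the content of Proposition~\ref{prop:BNh-functorial} or Lemma~\ref{lem:les-natural}, so you need not cite those there.
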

\begin{proof}
  This is immediate from the definitions.
\end{proof}

There is an easy criterion for the mixed invariant to be
non-vanishing, in terms of the induced map on ordinary Khovanov
homology $\wh{\BNh}$:
\begin{lemma}\label{lem:hat}
  Let $F\co L_0\to L_1$ be a cobordism with crosscap number
  $\geq 3$.  Then, the following diagrams commute:
  \[
    \mathcenter{\begin{tikzpicture}[xscale=3.5,yscale=1.2]
      \node at (0,0) (Hm) {$\BNh^-(L_0)$};
      \node at (1,0) (Hp) {$\BNh^+(L_1)$};
      \node at (0,-1) (Hh0) {$\wh{\BNh}(L_0)$};
      \node at (1,-1) (Hh1) {$\wh{\BNh}(L_1)$};
      \draw[->] (Hm) to node[above]{\lab{\MixedInvt{F}}} (Hp);
      \draw[->] (Hh0) to node[above]{\lab{\widehat{\BNh}(F)}} (Hh1);
      \draw[->] (Hm) to node[left]{\lab{\pi_*}} (Hh0);
      \draw[->] (Hp) to node[right]{\lab{\bdy}} (Hh1);
    \end{tikzpicture}}
  \text{\quad and \quad}
    \mathcenter{\begin{tikzpicture}[xscale=3.5,yscale=1.2]
      \node at (0,0) (Hm) {$\BNh^-(L_0)$};
      \node at (1,0) (Hp) {$\BNh^+(L_1).$};
      \node at (0,1) (Hh0) {$\wh{\BNh}(L_0)$};
      \node at (1,1) (Hh1) {$\wh{\BNh}(L_1)$};
      \draw[->] (Hm) to node[above]{\lab{\MixedInvt{F}}} (Hp);
      \draw[->] (Hh0) to node[above]{\lab{\widehat{\BNh}(F)}} (Hh1);
      \draw[->] (Hh0) to node[left]{\lab{\bdy}} (Hm);
      \draw[->] (Hh1) to node[right]{\lab{\iota_*}} (Hp);
    \end{tikzpicture}}
  \]
  In particular, if $\wh{\BNh}(F)\circ\pi_*$ or
  $\iota_*\circ \wh{\BNh}(F)$ is non-zero then the mixed
  invariant $\MixedInvt{F}$ is also non-zero.
\end{lemma}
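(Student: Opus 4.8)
The plan is to reduce everything to a diagram chase using the factorization $\MixedInvt{F}=\BNh(F_1)\circ\BNh(F_0)$ through an admissible cut (along a link $L$) that was used in Definition~\ref{def:mixed-invt}, and the two ways of viewing $\BNh^{\red}(L)$. Write $j\co\BNh^{\red}(L)=\ker(\iota_*)\hookrightarrow\BNh^-(L)$ for the inclusion and $q\co\BNh^+(L)\twoheadrightarrow\cokernel(\pi_*)=\BNh^{\red}(L)$ for the quotient. Recall from Lemma~\ref{lem:non-or-red} that, since the two pieces $F_0,F_1$ of the cut are nonorientable, $\BNh^\infty(F_0)=\BNh^\infty(F_1)=0$ (Proposition~\ref{prop:or-gens-part2}), so $\iota_{*,L}\circ\BNh^-(F_0)=0$ and $\BNh^+(F_1)\circ\pi_{*,L}=0$; hence $\BNh^-(F_0)$ corestricts to $\BNh(F_0)$ (i.e.\ $j\circ\BNh(F_0)=\BNh^-(F_0)$) and $\BNh^+(F_1)$ descends to $\BNh(F_1)$ (i.e.\ $\BNh(F_1)\circ q=\BNh^+(F_1)$). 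The elementary observation driving the proof is that the connecting map $\bdy\co\BNh^+(L)\to\BNh^-(L)$ of the first long exact sequence in \eqref{eq:les} equals $j\circ q$ on the nose, because $\ker(\bdy)=\image(\pi_*)$ and $\image(\bdy)=\ker(\iota_*)$ is precisely the isomorphism $\cokernel(\pi_*)\cong\ker(\iota_*)$ defining $\BNh^{\red}(L)$.

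For the first diagram I would argue as follows. Since $q$ is surjective and $\BNh(F_1)\circ q=\BNh^+(F_1)$, it suffices to precompose with $q$. Naturality of the third long exact sequence of \eqref{eq:les} with respect to $F_1$ (Lemma~\ref{lem:les-natural}) gives $\bdy_{L_1}\circ\BNh^+(F_1)=\wh{\BNh}(F_1)\circ\bdy_L$, where $\bdy$ now denotes the connecting map $\BNh^+\to\wh{\BNh}$; by the first triangle of \eqref{eq:bdybdy-tri} this connecting map factors as $\pi_*\circ\bdy=\pi_*\circ j\circ q$. Cancelling $q$ yields $\bdy_{L_1}\circ\BNh(F_1)=\wh{\BNh}(F_1)\circ\pi_{*,L}\circ j$, and therefore
\[
  \bdy_{L_1}\circ\MixedInvt{F}=\wh{\BNh}(F_1)\circ\pi_{*,L}\circ j\circ\BNh(F_0)=\wh{\BNh}(F_1)\circ\pi_{*,L}\circ\BNh^-(F_0)=\wh{\BNh}(F_1)\circ\wh{\BNh}(F_0)\circ\pi_{*,L_0}=\wh{\BNh}(F)\circ\pi_{*,L_0},
\]
where the third equality is naturality of $\pi_*$ with respect to $F_0$ and the last is functoriality of $\wh{\BNh}$ applied to the movie compatible with the cut. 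This is exactly commutativity of the first diagram.

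The second diagram is the dual statement and I would prove it the same way with the arrows reversed: using $j\circ\BNh(F_0)=\BNh^-(F_0)$ and injectivity of $j$, naturality of the connecting map $\bdy\co\wh{\BNh}\to\BNh^-$ of the second sequence of \eqref{eq:les} with respect to $F_0$, together with the second triangle of \eqref{eq:bdybdy-tri} (which identifies that connecting map with $j\circ q\circ\iota_*$), gives $\BNh(F_0)\circ\bdy_{L_0}=q\circ\iota_{*,L}\circ\wh{\BNh}(F_0)$; composing with $\BNh(F_1)$, then using $\BNh(F_1)\circ q=\BNh^+(F_1)$ and naturality of $\iota_*$ with respect to $F_1$, produces $\MixedInvt{F}\circ\bdy_{L_0}=\iota_{*,L_1}\circ\wh{\BNh}(F_1)\circ\wh{\BNh}(F_0)=\iota_{*,L_1}\circ\wh{\BNh}(F)$. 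The final assertion is then immediate: if $\wh{\BNh}(F)\circ\pi_*\ne 0$ (resp.\ $\iota_*\circ\wh{\BNh}(F)\ne 0$), then $\bdy\circ\MixedInvt{F}\ne 0$ (resp.\ $\MixedInvt{F}\circ\bdy\ne 0$), so $\MixedInvt{F}\ne 0$.

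I do not expect a genuine obstacle here; the argument is essentially bookkeeping. The two points requiring care are (i) keeping straight which of the several connecting homomorphisms denoted $\bdy$ in \eqref{eq:les} and \eqref{eq:bdybdy-tri} is being invoked at each step, and (ii) the overall sign ambiguity of $\MixedInvt{F}$ and $\wh{\BNh}(F)$: the squares commute only once signs are chosen compatibly (e.g.\ all induced from a single movie decomposition along the cut, as in the proof of Theorem~\ref{thm:invt}), so the statement should be read as commutativity up to an overall sign — harmless for the non-vanishing conclusion. One should also verify that the identification $\bdy=j\circ q$ and the triangles of \eqref{eq:bdybdy-tri} are sign-consistent with the chosen conventions; any sign that appears can be absorbed.
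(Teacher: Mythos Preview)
Your proof is correct and follows essentially the same approach as the paper: both factor through $\BNh^{\red}(L)$ at the admissible cut, use the triangles in \eqref{eq:bdybdy-tri} to relate the various connecting homomorphisms, and exploit surjectivity of $q\co\BNh^+(L)\to\BNh^{\red}(L)$ (respectively injectivity of $j\co\BNh^{\red}(L)\to\BNh^-(L)$) together with naturality of the long exact sequences to chase the diagram. The paper presents the argument via a single large commutative diagram, while you write out the corresponding chain of equalities; the content is the same, and your remark about the sign ambiguity is appropriate.
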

\begin{proof}
  Let $L$ be an admissible cut for $F$, decomposing $F$ as
  $F_1\circ F_0$.
  Define the map $\bdy\co \BNh^{\red}(L)\to \wh{\BNh}(L)$ to be the
  composition
  $\BNh^\red(L)\to\BNh^-(L)\stackrel{\pi_*}{\too} \wh{\BNh}(L)$. Using
  the first commutative triangle in Formula~(\ref{eq:bdybdy-tri}), the
  map $\bdy\from \BNh^+(L)\to\wh{\BNh}(L)$ is the composition
  $\BNh^+(L)\to \BNh^\red(L)\to\BNh^-(L)\stackrel{\pi_*}{\too}
  \wh{\BNh}(L)$, so is also the composition
  $\BNh^+(L)\to \BNh^\red(L)\stackrel{\bdy}{\too} \wh{\BNh}(L)$.
  
  To see that $\bdy\circ \Phi_F=\wh{\BNh}(F)\circ\pi_*$, 
  consider the larger diagram
  \[
    \begin{tikzpicture}
      \node at (1,0) (hatL0) {$\wh{\BNh}(L_0)$};
      \node at (0,2) (mL0) {$\BNh^-(L_0)$};
      \node at (3,1.5) (mL) {$\BNh^-(L)$};
      \node at (4,0) (hatL) {$\wh{\BNh}(L)$};
      \node at (4,3) (Hred) {$\BNh^{\red}(L)$};
      \node at (5,1.5) (pL) {$\BNh^+(L)$};
      \node at (7,0) (hatL1) {$\wh{\BNh}(L_1)$};
      \node at (8,2) (pL1) {$\BNh^+(L_1)$};
      \draw[->] (mL0) to node[left]{\lab{\pi_*}} (hatL0);
      \draw[->] (hatL0) to node[above]{\lab{\wh{\BNh}(F_0)}} (hatL);
      \draw[->] (hatL) to node[above]{\lab{\wh{\BNh}(F_1)}} (hatL1);
      \draw[->,sloped] (mL0) to node[below]{\lab{\BNh^-(F_0)}} (mL);
      \draw[->, dashed] (mL0) to node[above,sloped]{\lab{\BNh(F_0)}} (Hred);
      \draw[->, dashed] (Hred) to node[above,sloped]{\lab{\BNh(F_1)}} (pL1);
      \draw[->] (Hred) to (mL);
      \draw[->] (pL) to (Hred);
      \draw[->] (mL) to node[left]{\lab{\pi_*}} (hatL);
      \draw[->] (pL) to node[right]{\lab{\bdy}} (hatL);
      \draw[->,sloped] (pL) to node[below]{\lab{\BNh^+(F_1)}} (pL1);
      \draw[->] (pL1) to node[right]{\lab{\bdy}} (hatL1);
      \draw[->] (Hred) to node[left]{\lab{\bdy}} (hatL);
    \end{tikzpicture}
  \]
  The middle triangles commute by the discussion above. The outer
  squares commute by naturality of the long exact
  sequences~\eqref{eq:les}, Lemma~\ref{lem:les-natural}. The triangles at the top commute by the
  definition of the dashed lifts. Since the map $\BNh^+(L)\to
  \BNh^{\red}(L)$ is surjective, commutativity of the right square and
  triangles implies that $\bdy\circ \BNh(F_1)=\wh{\BNh}(F_1)\circ
  \bdy\co \BNh^{\red}(L)\to\wh{\BNh}(L_1)$. Commutativity of the
  square and two triangles on the left then implies the result.

  The proof that $\iota_*\circ \wh{\BNh}(F)=\Phi_F\circ\bdy$ is
  similar, but instead uses the commutative diagram
  \[
    \begin{tikzpicture}
      \node at (1,0) (hatL0) {$\wh{\BNh}(L_0)$};
      \node at (0,-2) (mL0) {$\BNh^-(L_0)$};
      \node at (3,-1.5) (mL) {$\BNh^-(L)$};
      \node at (4,0) (hatL) {$\wh{\BNh}(L)$};
      \node at (4,-3) (Hred) {$\BNh^{\red}(L)$};
      \node at (5,-1.5) (pL) {$\BNh^+(L)$};
      \node at (7,0) (hatL1) {$\wh{\BNh}(L_1)$};
      \node at (8,-2) (pL1) {$\BNh^+(L_1)$};
      \draw[->] (hatL0)  to node[left]{\lab{\bdy}} (mL0);
      \draw[->] (hatL0) to node[above]{\lab{\wh{\BNh}(F_0)}} (hatL);
      \draw[->] (hatL) to node[above]{\lab{\wh{\BNh}(F_1)}} (hatL1);
      \draw[->,sloped] (mL0) to node[above]{\lab{\BNh^-(F_0)}} (mL);
      \draw[->, dashed] (mL0) to node[below,sloped]{\lab{\BNh(F_0)}} (Hred);
      \draw[->, dashed] (Hred) to node[below,sloped]{\lab{\BNh(F_1)}} (pL1);
      \draw[->] (Hred) to (mL);
      \draw[->] (pL) to (Hred);
      \draw[->]  (hatL) to node[left]{\lab{\bdy}} (mL);
      \draw[->] (hatL) to node[right]{\lab{\iota_*}} (pL);
      \draw[->,sloped] (pL) to node[above]{\lab{\BNh^+(F_1)}} (pL1);
      \draw[->] (hatL1) to node[right]{\lab{\iota_*}} (pL1);
      \draw[->] (hatL)  to (Hred);
    \end{tikzpicture}
  \]
  and the fact that the map $\BNh^{\red}(L)\to \BNh^-(L)$ is injective.
\end{proof}

\begin{remark}\label{rem:mixed-strong}
  Since $\pi_*\co \BNh^-(\emptyset)\to\wh{\BNh}(\emptyset)$ is
  surjective, it follows from Lemma~\ref{lem:hat} that if $F$ is a cobordism from $\emptyset$ to $L$ and
  $\wh{\BNh}(F)\neq 0$ then $\MixedInvt{F}\neq 0$, as well. Similarly,
  if $F$ is a cobordism from $L$ to $\emptyset$ and
  $\wh{\BNh}(F)\neq 0$ then $\MixedInvt{F}\neq 0$.
\end{remark}

\subsection{Stabilizations}
Next, we turn to the behavior of the mixed invariant under various
local changes to the knot. For example, we will study the behavior
under Baykur-Sunukjian's \emph{stabilizations} (attaching arbitrary
$1$-handles to a surface) and \emph{standard stabilizations} (local
connect sums with a standard $T^2$)~\cite{BS-top-stabilizations},
\emph{crosscap stabilizations} (taking local connected sums with a
standard $\RP^2$ or $\overline{\RP}^2$), \emph{local knotting} (taking
a local connected sum with a knotted $S^2$), and more general local
connected sums. The main results are summarized in
Theorem~\ref{thm:vanishing}, though some technical results along the
way (e.g., Corollaries~\ref{cor:star-0} and~\ref{cor:H-vanish}) may
also be of interest.

Most of the results in this section work for all four versions of
Khovanov homology, $\BNh^-$, $\BNh^+$, $\BNh^\infty$, or $\wh{\BNh}$,
so we will use the symbol $\BNh^\bullet$ to denote any of these
four versions.  The key technical property we will use, as usual for
these kinds of arguments, is a neck-cutting relation.

\begin{proposition}\label{prop:neck-cut}
  Let $F\co L_0\to L_1$ be a cobordism, and let $A$ be an arc in
  $[0,1]\times S^3$ with endpoints on a single component $F_0$ of $F$ and
  interior disjoint from $F$. Let $F^\cap$ be the result of attaching a
  $1$-handle to $F$ along $A$ and $F^\star$ the result of adding a star
  to $F$ at one endpoint of $A$. If $F_0$ is orientable, assume that
  the $1$-handle is attached in such a way that the resulting
  component is still orientable. Then, $\BNh^\bullet(F^\cap)=\BNh^\bullet(F^\star)$.
\end{proposition}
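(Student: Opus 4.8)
The plan is to prove the identity at the level of $\BNcx^-$, up to chain homotopy and an overall sign; since the maps on $\BNcx^+$, $\BNcx^\infty$ and $\wh{\BNcx}$ are induced from the one on $\BNcx^-$ (compare \Lemma{les-natural}), this will suffice. Let $p,q$ be the two endpoints of $A$. I would choose small disjoint disks $D_1,D_2\subset F_0$ around $p$ and $q$ together with a tubular neighborhood $B\cong D^4$ of $A$ meeting $F$ in exactly $D_1\sqcup D_2$, with $c_i=\partial D_i\subset\partial B$; then $F$ is recovered from $F^\circ:=F\setminus(\mathring{D}_1\cup\mathring{D}_2)$ by regluing $D_1\sqcup D_2$, while $F^\cap$ is obtained from $F^\circ$ by gluing in an annular tube $T\subset B$ with $\partial T=c_1\sqcup c_2$. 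Throughout I would work in Bar--Natan's cobordism formalism~\cite{Bar-kh-tangle-cob}, in which $\BNcx^-$ is defined — functorially, up to homotopy — for cobordisms carrying extra internal circles and respects gluing along them.

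The key input is the neck-cutting relation for the tube $T$: cutting $T$ along a meridian circle in its interior and capping off the two resulting boundary circles gives, at the chain level over $\Ring[U]$,
\[
  \BNcx^-(T)\;\simeq\;\sum_i\bigl(\text{a disk on }c_1\text{ decorated by }e^i\bigr)\sqcup\bigl(\text{a disk on }c_2\text{ decorated by }e_i\bigr),
\]
where $\{e_i\}$ and $\{e^i\}$ are bases of the Frobenius algebra dual with respect to the counit $\epsilon$. Gluing this relation into $F^\circ$ — capping $c_i$ by a disk simply refills $D_i$ — exhibits $\BNcx^-(F^\cap)$, up to homotopy and sign, as a sum of maps associated to $F$ carrying dots at $p$ and $q$ only; in particular $\BNcx^-(F^\cap)$ depends only on $F$ and on the component $F_0$, not on the isotopy class of $A$ or on the framing of $T$. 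Using this independence I would reduce to the case in which $A$ is a small arc whose two feet lie on a single strand of $F_0$ in a movie frame missing all crossings, so that in \emph{every} resolution both feet lie on one and the same circle $C$ (possible because a crossing-free sub-arc is uncut by every resolution).

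In this reduced situation the computation is transparent: in each resolution $F^\cap$ is $F$ with $D_1\sqcup D_2$ cut out of $C$'s surface and a small tube glued in, i.e.\ $F$ with a handle attached to $C$'s surface, so $\BNcx^-(F^\cap)$ is $\BNcx^-(F)$ with the handle operator $m\circ\Delta$ inserted on $C$. Since $m(\Delta(1))=2X$ for the Lee algebra and $m(\Delta(1))=2X-H$ for the Bar--Natan algebra, this is precisely multiplication of $C$'s label by $2X$, respectively $2X-H$ — which is by definition the map of the elementary star cobordism~\ref{item:EC4} at that location. By \Lemma{star-commute} the position of a star on $F_0$ is immaterial up to sign, so $\BNcx^-(F^\cap)\simeq\pm\BNcx^-(F^\star)$; passing to homology, and then to the other three versions of the complex, completes the proof.

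The step I expect to be the main obstacle is the reduction carried out in the second paragraph: making the decompositions $F=F^\circ\cup(D_1\sqcup D_2)$ and $F^\cap=F^\circ\cup T$ precise inside a single movie, verifying that the neck-cutting relation holds at the chain level over $\Ring[U]$ and not merely after the specialization $U=0$, and checking that the resulting $A$-independence genuinely lets one slide the attaching region onto a crossing-free strand. A secondary nuisance is the bookkeeping of signs when transporting the star along $F_0$, which is exactly why the identity is — and should be read — only up to an overall sign. The orientability hypothesis on $F_0$ plays no essential role in this argument (every step after the reduction to $\BNcx^-$ is local, and the handle operator is insensitive to how the tube is glued because $m$ is commutative); it is imposed only to pin down the topological type of $F^\cap$.
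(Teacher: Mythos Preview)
Your neck-cutting step is fine and, for the Bar--Natan deformation, indeed gives
\[
  \pm\BNcx^-(F^\cap)\ \simeq\ \BNcx^-(F^{\smbullet,p})+\BNcx^-(F^{\smbullet,q})-H\,\BNcx^-(F),
\]
where $F^{\smbullet,r}$ is $F$ with a dot (multiplication by $X$) at $r$. This does make the map independent of $A$ and of the framing of the tube, for \emph{fixed} endpoints $p,q$. But it does \emph{not} follow that the result depends only on the component $F_0$: a dot at $p$ and a dot at $q$ give different chain maps in general, because moving a dot past a crossing changes the map (Alishahi's relation is $\BNcx^-(F^{\smbullet,r})+\BNcx^-(F^{\smbullet,r'})=H\,\BNcx^-(F)$ for $r,r'$ on opposite sides of a crossing, not $\BNcx^-(F^{\smbullet,r})=\pm\BNcx^-(F^{\smbullet,r'})$). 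So you cannot simply slide $p$ and $q$ onto a common crossing-free arc; what remains is precisely to show $\BNcx^-(F^{\smbullet,p})=\BNcx^-(F^{\smbullet,q})$, and that is the heart of the matter.

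The paper does this by choosing an arc $B\subset F_0$ from $p$ to $q$ with $A\cup B$ two-sided in $F^\cap$, isotoping so that $B$ lies in a single movie frame, and then proving via a checkerboard-coloring argument that $B$ passes through an \emph{even} number of crossings of that frame. Alishahi's relation applied an even number of times then yields $\BNcx^-(F^{\smbullet,p})=\BNcx^-(F^{\smbullet,q})$, and the neck-cutting sum collapses to $2\BNcx^-(F^{\smbullet,p})-H\,\BNcx^-(F)=\BNcx^-(F^\star)$. Your claim that the orientability hypothesis is inessential is also wrong, and for the same reason: the hypothesis is exactly what guarantees such a two-sided $A\cup B$ exists (if $F_0$ is orientable, every $B$ is two-sided on the $F_0$ side, so $A\cup B$ is two-sided iff the handle preserves orientability; if $F_0$ is nonorientable one can always adjust $B$ by a one-sided loop). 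In fact the proposition is \emph{false} without the hypothesis: take $F$ a disk bounding the unknot and attach the handle with a twist, so $F^\cap$ is a M\"obius band; then $\BNh^\bullet(F^\cap)=0$ (see the proof of \Lemma{cc-stab-vanish}) while $\BNh^\bullet(F^\star)\ne 0$.
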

\begin{proof}
  For the Lee deformation, this was essentially shown by
  Levine-Zemke~\cite[Proposition 7]{LV-kh-ribbon}, so we focus on the
  case of the Bar-Natan deformation and comment on the Lee deformation
  at the end.

  Let $B$ be an arc in $F$ connecting the endpoints of $A$, chosen so
  that the loop $A\cup B$ is two-sided, i.e., so that $TF^\cap|_{A\cup B}$
  is trivial. (If $F_0$ is orientable, any arc $B$ has $TF^\cap|_{A\cup B}$
  trivial by the assumption that orientability was preserved; if $F_0$ is nonorientable, given any arc $B$
  connecting the endpoints of $A$ we can
  modify $B$ by taking the connected sum with a one-sided loop to
  achieve this property.) Let $H=F^\cap\setminus F$ denote the new
  $1$-handle. Perform an isotopy to $F^\cap$ so that:
  \begin{itemize}
  \item The projection $[0,1]\times S^3\to [0,1]$ restricts to a Morse
    function $f$ on $F^\cap$, and induces a movie decomposition of $F^\cap$.
  \item The restriction $f|_H$ has two critical points, both of index
    $1$, corresponding to a pair of saddles in the movie.
  \item The two saddles corresponding to $f|_H$ in the movie are
    adjacent, happening at times $t+\epsilon,t+2\epsilon\in (0,1)$, and there
    are no other elementary cobordisms between $t-\epsilon$ and
    $t+3\epsilon$. Further, these frames are obtained by gluing the
    following local model to the identity cobordism of the rest of the
    link:
      \begin{equation}\label{eq:neck-tangle}
    \mathcenter{\begin{tikzpicture}[scale=0.8]
      \draw[dashed] (0,0) circle (1);
      \draw[bend right=45,knot] (45:1) to (.7071,-.7071);
      \draw[bend left=45,knot] (-.7071,.7071) to (-.7071,-.7071);
    \end{tikzpicture}}
  \longrightarrow
  \mathcenter{
    \begin{tikzpicture}[scale=0.8]
      \draw[dashed] (0,0) circle (1);
      \draw[bend left=45,knot] (.7071,.7071) to (-.7071,.7071);
      \draw[bend right=45,knot] (.7071,-.7071) to (-.7071,-.7071);
    \end{tikzpicture}
    }
    \longrightarrow
    \mathcenter{
    \begin{tikzpicture}[scale=0.8]
      \draw[dashed] (0,0) circle (1);
      \draw[bend right=45,knot] (.7071,.7071) to (.7071,-.7071);
      \draw[bend left=45,knot] (-.7071,.7071) to (-.7071,-.7071);
    \end{tikzpicture}
    }
  \end{equation}
  \item The arc $B$ satisfies $f(B)=t$.
  \end{itemize}
  (To arrange this, first isotope $F^\cap$ so that $H$ is small, then make
  $H$ standard with respect to the projection to $[0,1]$, and then
  isotope $B$ to lie in the desired level set and use the isotopy
  extension lemma to push the rest of $F^\cap$ out of the way.) 

  Given a dotted (not starred) cobordism, decomposed as a movie, there
  is a corresponding map of Bar-Natan complexes, where the map associated to
  a dot is multiplication by $X$. Let $F^{\smbullet,0}$ and
  $F^{\smbullet,1}$ be the result of placing a dot on $F$ at each endpoint of $A$.
  Then, an easy local computation shows that
  \begin{equation}\label{eq:neck-cut-helper}
    \pm\BNh^\bullet(F^\cap)=\BNh^\bullet(F^{\smbullet,0})+\BNh^\bullet(F^{\smbullet,1})-H\BNh^\bullet(F).
  \end{equation}
  The surface $F^{\smbullet,0}$ can be transformed into $F^{\smbullet,1}$
  by moving the dot along the arc $B$. Since $B$ is contained in a
  single level, this corresponds to moving the dot along an arc in a
  single link diagram in the movie.

  Let $F^{\smbullet,(i)}$ be the surface after we have moved the dot
  through $i$ crossings.  By Alishahi's lemma~\cite[Lemma 2.2]{Ali-kh-unknotting},
  \[
    \BNh^\bullet(F^{\smbullet,(i)})=H\BNh^\bullet(F)-\BNh^\bullet(F^{\smbullet,(i+1)})
  \]
  So, it suffices to show that the arc $B$ has an even number of
  crossings on it: then
  $\BNh^\bullet(F^{\smbullet,0})=\BNh^\bullet(F^{\smbullet,1})$ so the
  right side of Formula~\eqref{eq:neck-cut-helper} is equal to
  $\BNh^\bullet(F^\star)$.

  This is where the assumption that $A\cup B$ is two-sided is
  used. Orient the arc $B$. This induces an orientation of the arcs in
  the first frame of~\eqref{eq:neck-tangle}. The fact that
  $TF^\cap|_{A\cup B}$ is trivial implies that this orientation is
  compatible with the saddle cobordisms in~\eqref{eq:neck-tangle};
  that is, $B$ connects the bottom-left endpoint to the bottom-right
  one, or the top-left endpoint to the top-right one. Without loss of
  generality, assume $B$ runs from the top-left endpoint to the
  top-right one. Choose a checkerboard coloring of the left link
  diagram so that the region between the arcs shown is black. Then, $B$
  starts with a black region to its right, and ends with a black
  region to its right. Each time $B$ passes through a crossing, the
  black region switches between the left and right side of $B$. So,
  $B$ passes through an even number of crossings, as claimed.

  The proof for the Lee case is the same, except that the analogue of
  Equation~\eqref{eq:neck-cut-helper} does not have the $H\BNh(F)$
  term, and Alishahi's lemma is replaced by Hedden-Ni's \cite[Lemma
  2.3]{HN-kh-detects}.
\end{proof}

The standard $\RP^2$ inside the $4$-ball is the surface represented by
the following movie:
\begin{equation}\label{eq:std-rp2}
  \vcenter{\hbox{\begin{tikzpicture}[xscale=2]
        \foreach \i in {1,6}{
          \node (m\i) at (\i,0) {$\varnothing$};
        }

        \foreach \i in {2,5}{
          \node (m\i) at (\i,0) {\begin{tikzpicture}[scale=0.5]
              \draw[knot] (0,0) circle (1);
            \end{tikzpicture}};
        }

        \node (m3) at (3,0) {\begin{tikzpicture}[scale=0.5]
            \draw[knot] (45:1) arc (45:315:1);
            \draw[knot] (45:1) to [out=-45,in=-90] (-0.7,0);
            \draw[knot] (-45:1) to [out=45,in=90] (-0.7,0);
          \end{tikzpicture}};

        \node (m4) at (4,0) {\begin{tikzpicture}[scale=0.5]
            \draw[knot] (45:1) arc (45:135:1);
            \draw[knot] (-45:1) arc (-45:-135:1);
            \draw[knot] (45:1) to [out=-45,in=135] (225:1);
            \draw[knot] (-45:1) to [out=45,in=-135] (-225:1);
          \end{tikzpicture}};

        \foreach \i/\l in {1/b,2/RI,3/s,4/RI,5/d}{
          \pgfmathtruncatemacro{\j}{\i+1}%
          \node (a\i) at ($(m\i)!0.5!(m\j)$) {$\too$};
          \node [anchor=south] at (a\i) {\tiny $\l$};
        }
          
  \end{tikzpicture}}}
\end{equation}
The standard $\RP^2$ has $e=-2$.  The standard $\overline{\RP}^2$ is
the mirror of the above, and has $e=2$. (Our conventions are chosen to
agree with~\cite{FKV-top-knot-surf}.)
Define a \emph{crosscap stabilization} to be the result of taking the connected sum
with a standard $\RP^2$ or $\overline{\RP}^2$.

\begin{lemma}\label{lem:cc-stab-vanish}
  If $F^\otimes$ is obtained from $F$ by a crosscap stabilization then
  $\BNh^\bullet(F^\otimes)$ vanishes.
\end{lemma}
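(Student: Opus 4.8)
The plan is to realize the crosscap stabilization by a movie in which the Möbius band $M=\RP^2\setminus(\text{disk})$ gets "split off" from the rest of the diagram, and then to observe that the cobordism map of $M$ is null-homotopic by Rasmussen's theorem. First I would set up the movie. Connected-summing $F$ with the standard $\RP^2$ of~\eqref{eq:std-rp2} amounts, after an isotopy, to the following local modification of any movie for $F$: at a time $t_0$ where some strand of $F$ is a product arc $a$ in the link $L=L_{t_0}$, birth a small unknot $\alpha$ in a ball disjoint from $L$, perform the three moves (Reidemeister~I)(saddle)(Reidemeister~I) on $\alpha$ (i.e.\ the part of~\eqref{eq:std-rp2} strictly before the death), reaching a round circle $\alpha'$, and then perform a single saddle merging $\alpha'$ into $a$; the $\overline{\RP}^2$ case is obtained by mirroring and is identical. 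Because the birth and the three subsequent moves happen in a ball disjoint from $L$, under the identification $\BNcx^\bullet(L\sqcup(\text{circle}))\cong\BNcx^\bullet(L)\otimes\BNcx^\bullet(\text{circle})$ this portion of the movie induces the map $\Id_{\BNcx^\bullet(L)}\otimes\BNcx^\bullet(M)$, where $M$ denotes the Möbius band cobordism from $\varnothing$ to the round unknot given by $(\text{birth})(\text{Reidemeister~I})(\text{saddle})(\text{Reidemeister~I})$. Hence, for every $\bullet\in\{+,-,\infty,\widehat{\ }\}$, the map $\BNcx^\bullet(F^\otimes)$ factors (up to the evident identifications) as a composite one of whose factors is $\Id_{\BNcx^\bullet(L)}\otimes\BNcx^\bullet(M)$.

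Next I would show $\BNcx^\bullet(M)\simeq 0$. Since the maps on $\BNcx^+$, $\BNcx^\infty$, and $\wh{\BNcx}$ are induced by the one on $\BNcx^-$, it suffices to treat $\BNcx^-$. The surface $M$ is nonorientable, so by Proposition~\ref{prop:or-gens-part2} the induced map $\BNh^\infty(M)=0$. For the $0$-crossing unknot diagram $\BNcx^-$ has vanishing differential and is free (hence $U$-torsion-free) over $\Ring[U]$, so $\iota_*\co\BNh^-(\text{unknot})\to\BNh^\infty(\text{unknot})$ is injective; combined with naturality of $\iota_*$ (Lemma~\ref{lem:les-natural}) and $\BNh^\infty(M)=0$, this forces $\BNh^-(M)=0$. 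But $\BNcx^-(\varnothing)=\Ring[U]$ is concentrated in a single homological grading and has vanishing differential, so any chain map out of it that is zero on homology is null-homotopic (if $1\mapsto\theta=d\eta$, the map $1\mapsto\eta$ is a null-homotopy); thus $\BNcx^-(M)\simeq 0$. Tensoring a null-homotopic chain map with $\Id_{\BNcx^\bullet(L)}$ again yields a null-homotopic map, so by the factorization above $\BNcx^\bullet(F^\otimes)\simeq 0$, i.e.\ $\BNh^\bullet(F^\otimes)=0$.

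The only genuinely delicate step — the one I would be most careful about — is the bookkeeping in the first paragraph: verifying that connected sum with the standard $\RP^2$ is indeed represented by exactly that local movie (in particular that a single merging saddle suffices), and that moves supported in a ball disjoint from the rest of the link induce maps of the form $\Id\otimes(-)$ under the tensor decomposition of the Khovanov complex of a split diagram. These are routine but carry all the content; once they are in place, the vanishing is forced by Rasmussen's theorem precisely as above.
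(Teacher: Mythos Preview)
Your proof is correct, and your movie factorization is exactly the paper's: take a movie for $F$, disjointly insert the movie~\eqref{eq:std-rp2} (or its mirror) up to but not including the death, and replace the death by a merge saddle into $F$. Where you differ is in how you kill the M\"obius-band piece. The paper simply observes that the composite $s\circ RI\circ b$ (birth, Reidemeister~I, saddle) induces the zero map on $\BNh^\bullet$ by a direct two-line computation in the Frobenius algebra; since the full map factors through this composite, it vanishes. You instead argue abstractly: the punctured $\RP^2$ is nonorientable, so $\BNh^\infty(M)=0$ by Proposition~\ref{prop:or-gens-part2}, and then torsion-freeness of $\BNh^-$ of the $0$-crossing unknot plus naturality of $\iota_*$ forces $\BNh^-(M)=0$, which you promote to a chain-level null-homotopy. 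Both routes are valid; the paper's direct calculation is shorter and self-contained (no appeal to Rasmussen's theorem or the long exact sequence), while yours is computation-free and makes the conceptual reason for vanishing transparent. Your caution about the tensor-product bookkeeping is well placed but, as you say, routine.
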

\begin{proof}
  A movie for $F^\otimes$ is obtained from a movie for $F$ by taking the
  disjoint union with the movie in Formula~\eqref{eq:std-rp2} or its
  mirror, but replacing $d$ with a saddle map between the unknot shown
  and $F$. It is straightforward to see that the map on $\BNh^\bullet$
  induced by $s\circ RI\circ b$ vanishes (as does the map associated
  to the mirror of this movie), so the map associated to the whole
  movie vanishes, as well.
\end{proof}

\begin{corollary}\label{cor:star-0}
  Let $F\co L_0\to L_1$ be a cobordism, and suppose that some
  nonorientable component $F_0$ of $F$ has a star on it. Then,
  $\BNh^\bullet(F)=0$. Similarly, if $F$ is obtained from another surface by
  attaching a $1$-handle to a nonorientable component then $\BNh^\bullet(F)=0$.
\end{corollary}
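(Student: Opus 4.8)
The plan has two parts: reduce the statement about attaching a $1$-handle to the statement about stars, and then prove the star statement by trading the star for a handle and recognizing the outcome as a pair of crosscap stabilizations. For the reduction, suppose $F$ is obtained from a surface $\tilde F$ by attaching a $1$-handle along an arc $A$ with both endpoints on a nonorientable component $\tilde F_0$. As in the proof of Proposition~\ref{prop:neck-cut}, the nonorientability of $\tilde F_0$ lets us pick an arc $B\subset\tilde F_0$ joining the endpoints of $A$ with $A\cup B$ two-sided, so Proposition~\ref{prop:neck-cut} gives $\BNh^\bullet(F)=\BNh^\bullet(\tilde F^\star)$, where $\tilde F^\star$ carries a star on $\tilde F_0$; the star statement then finishes this case.

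So assume $F$ has a star on its nonorientable component $F_0$. Using Lemma~\ref{lem:star-commute} I would first slide the star onto an arc contained in a one-sided curve $\gamma\subset F_0$. Applying Proposition~\ref{prop:neck-cut} in reverse, the star may be traded for a small, unknotted, untwisted $1$-handle $H$ attached to $F_0$ near $\gamma$, so that $\BNh^\bullet(F)=\BNh^\bullet(F\cup H)$ and $F\cup H$ is a standard local connected sum of $F$ with a torus or Klein bottle. Since the normal bundle of $F_0$ restricted to $\gamma$ is a M\"obius band and $H$ sits near $\gamma$, dragging one foot of $H$ once around $\gamma$ returns it with reversed framing; thus $F\cup H$ is ambient isotopic to the result of two successive crosscap stabilizations of $F$ (a connected sum with a standard $\RP^2$, and then another). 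By isotopy invariance of the cobordism maps (Proposition~\ref{prop:BNh-functorial}, together with Lemma~\ref{lem:MWW} if one prefers to realize the slide as a boundary-fixing diffeomorphism) and Lemma~\ref{lem:cc-stab-vanish}, we conclude $\BNh^\bullet(F)=0$.

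It is also worth recording, as in Remark~\ref{rem:Klein-TQFT}, that the vanishing can be seen directly: the manipulation above exhibits $\BNh^\bullet(F)$ as factoring through the map of a once-punctured Klein bottle that caps off a split unknot, and this map is zero. That cobordism decomposes as a birth, a comultiplication $\Delta$, the map $\phi$ induced by a Reidemeister~I move followed by its inverse on one of the two resulting circles, and a multiplication $m$; one computes $\phi(1)=1$ and $\phi(X)=H-X$ in the Bar-Natan case and $\phi(X)=-X$ in the Lee case (these are precisely the maps from the proof of Proposition~\ref{prop:or-gens-part1}, under which $\phi$ interchanges the two idempotents $A$ and $B$), whence $m\circ(\mathrm{Id}\otimes\phi)\circ\Delta=0$ on both generators.

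The algebraic ingredients here are routine; I expect the real work to be the topological step in the second paragraph---verifying carefully that the small $1$-handle produced by neck-cutting can be slid along the one-sided curve $\gamma$ to become a standard $\RP^2\#\RP^2$ summand (equivalently, that a $T^2$-summand and a $K$-summand of a nonorientable embedded surface are ambient isotopic), keeping track of framings and of the normal Euler number throughout.
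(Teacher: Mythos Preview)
Your reduction of the handle case to the star case is exactly what the paper does, and your overall strategy for the star case---trade the star for a handle via Proposition~\ref{prop:neck-cut}, recognize the result as a Klein bottle summand, and invoke Lemma~\ref{lem:cc-stab-vanish}---is the paper's strategy as well. But you take a detour that the paper avoids.

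The paper attaches the $1$-handle \emph{in a locally nonorientable way} from the outset, so that the handle together with a small disk of $F_0$ is already a Klein bottle (with normal Euler number $0$, hence $\RP^2\#\overline{\RP}^2$). Proposition~\ref{prop:neck-cut} still applies: when $F_0$ is nonorientable there is no restriction on how the handle is framed, and the two-sided loop $A\cup B$ required in its proof is obtained by routing $B$ around a one-sided curve in $F_0$. So no sliding is needed, and Lemma~\ref{lem:cc-stab-vanish} finishes immediately.

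Your version instead attaches an untwisted (locally orientable) handle, producing a torus summand, and then slides one foot around a one-sided curve $\gamma$ to convert it to a Klein bottle summand. That slide is a legitimate ambient isotopy, but it is precisely the ``real work'' you flag at the end, and it is unnecessary. Note also that your parenthetical is wrong as stated: the slide preserves the normal Euler number, so the Klein bottle you obtain has $e=0$ and is $\RP^2\#\overline{\RP}^2$, not ``a standard $\RP^2$, and then another'' (which would have $e=-4$). The conclusion survives either way since Lemma~\ref{lem:cc-stab-vanish} covers both crosscap stabilizations, but the geometry you describe does not match what the isotopy actually produces.
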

\begin{proof}
  For the first statement, let $F^\curlywedge$ be the result of taking the
  connected sum of $F$ with a local Klein bottle (with normal Euler
  number $0$) at the star on $F_0$, and forgetting the star. That is,
  $F^\curlywedge$ is obtained from $F$ by attaching a local $1$-handle with both
  feet near the star, in a locally-nonorientable way (and forgetting
  the star). By Proposition~\ref{prop:neck-cut},
  $\BNh^\bullet(F)=\BNh^\bullet(F^\curlywedge)$. On the other hand, $F^\curlywedge$ is also obtained
  from $F$ by taking the connect sum with $\RP^2\#\overline{\RP}^2$
  (and forgetting the star). So, by Lemma~\ref{lem:cc-stab-vanish},
  $\BNh^\bullet(F^\curlywedge)$ vanishes.

  The second statement follows from the first and
  Proposition~\ref{prop:neck-cut}.
\end{proof}

Adding an even number of stars has a predictable effect on the cobordism
maps and the mixed invariant:
\begin{lemma}\label{lem:starstar}
  Let $F\co L_0\to L_1$ be a cobordism, and let $F^{\star\star}$ be the result of adding two stars to the same component of $F$. Then,
  \[
    \BNh^\bullet(F^{\star\star})=
    \begin{cases}
      4T\BNh^\bullet(F) & \text{for the Lee deformation}\\
      H^2\BNh^\bullet(F) & \text{for the Bar-Natan deformation.}
    \end{cases}
  \]
  Further, if the crosscap number of $F$ is at least $3$ then
  \[
    \MixedInvt{F^{\star\star}}=
    \begin{cases}
      4T\MixedInvt{F} & \text{for the Lee deformation}\\
      H^2\MixedInvt{F} & \text{for the Bar-Natan deformation.}
    \end{cases}
  \]
\end{lemma}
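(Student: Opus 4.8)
The plan is to reduce the first statement to an algebraic fact about the two Frobenius algebras: squaring the operator ``multiply a circle label by $2X$'' (Lee) or ``multiply a circle label by $2X-H$'' (Bar--Natan) produces a central scalar. First I would recall from Section~\ref{sec:behave} that the chain map associated to an elementary star cobordism on an arc $A$ multiplies, in each resolution, the label of the circle through $A$ by $2X$ for the Lee deformation and by $2X-H$ for the Bar--Natan deformation. By Lemma~\ref{lem:star-commute} together with the usual sliding argument (cf.~\cite[Lemmas~2.1 and~4.1]{Sar-ribbon}), up to sign and homotopy the map associated to a star depends only on the connected component of $F$ carrying it; since the two stars lie on the same component, I may therefore choose a movie for $F^{\star\star}$ that extends a movie for $F$ by inserting two star frames at one and the same arc of one frame. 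The composite of the two star maps then multiplies that label by $(2X)^2 = 4X^2$ in the Lee case and by $(2X-H)^2 = 4X^2 - 4XH + H^2$ in the Bar--Natan case. Using $X^2 = T$ (Lee) and $X^2 = XH$ (Bar--Natan), these become $4T$ and $4XH - 4XH + H^2 = H^2$; both are central elements of $\Ring[U]$. Hence the composite of the two star maps is multiplication by the scalar $4T$ (resp.\ $H^2$) on all of $\BNcx^-$, and since a central scalar commutes with every elementary cobordism map, $\BNcx^-(F^{\star\star}) = 4T\,\BNcx^-(F)$ (resp.\ $H^2\,\BNcx^-(F)$) up to sign. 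Passing to $\BNcx^+$, $\BNcx^\infty$, $\wh\BNcx$ and taking homology gives $\BNh^\bullet(F^{\star\star}) = 4T\,\BNh^\bullet(F)$ (resp.\ $H^2\,\BNh^\bullet(F)$); the coefficient $4$ makes sense in the Lee case because $2$ is assumed invertible.

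For the mixed invariant, assume $F$ has crosscap number $\geq 3$ and fix an admissible cut $S$ for $F$; since adding stars does not change the topology of $F$, the same $S$ is an admissible cut for $F^{\star\star}$. Write $F = F_1 \circ F_0$ for the induced decomposition, with $F_0 \co L_0 \to L$ and $F_1 \co L \to L_1$. The component of $F$ carrying the two stars meets at least one of the two pieces; without loss of generality it meets $F_0$, and I would pick a compatible movie that places both stars on $F_0$, so $F^{\star\star} = F_1 \circ F_0^{\star\star}$ and $\MixedInvt{F^{\star\star}} = \BNh(F_1) \circ \BNh(F_0^{\star\star})$. By the first part, $\BNh^-(F_0^{\star\star}) = 4T\,\BNh^-(F_0)$ (resp.\ $H^2\,\BNh^-(F_0)$) up to sign, and since multiplication by $4T$ (resp.\ $H^2$) is a $\Ring[U]$-linear grading shift preserving the canonical submodule $\BNh^{\red}(L) \subset \BNh^-(L)$, the canonical lift from Lemma~\ref{lem:non-or-red} and Definition~\ref{def:mixed-invt} satisfies $\BNh(F_0^{\star\star}) = 4T\,\BNh(F_0)$ (resp.\ $H^2\,\BNh(F_0)$). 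Postcomposing with the $\Ring[U]$-linear map $\BNh(F_1)$ and using centrality of the scalar yields $\MixedInvt{F^{\star\star}} = 4T\,\MixedInvt{F}$ (resp.\ $H^2\,\MixedInvt{F}$). If instead the starred component lies entirely in the other piece, the same argument applies with $\BNh^+(F_1^{\star\star})$ and its canonical quotient-module lift in place of $\BNh^-(F_0^{\star\star})$.

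The one step that deserves care is the reduction in the first paragraph: that a star can be slid along its component at the cost of at most a sign, so that both stars can be assumed to sit on a common arc. This is precisely Lemma~\ref{lem:star-commute} plus the cited bookkeeping, and once it is in hand the remainder is a short computation inside the Frobenius algebra together with the observation that a central scalar commutes past everything, including the connecting maps used to build the mixed invariant. As a sanity check, if the doubly-starred component happens to be nonorientable then $\BNh^\bullet(F^{\star\star})$ vanishes by Corollary~\ref{cor:star-0}, and the identity $\BNh^\bullet(F^{\star\star}) = 4T\,\BNh^\bullet(F)$ (resp.\ $H^2\,\BNh^\bullet(F)$), proved independently, then also shows that $4T\,\BNh^\bullet(F)$ (resp.\ $H^2\,\BNh^\bullet(F)$) vanishes in that case; no circularity is involved, since the square-of-a-star identity is internal to the Frobenius algebra and does not invoke Corollary~\ref{cor:star-0}.
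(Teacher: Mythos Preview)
Your proof is correct and follows essentially the same approach as the paper's, which is very terse: the paper simply invokes isotopy invariance of the cobordism maps (Proposition~\ref{prop:BNh-functorial}) to arrange that the two elementary star cobordisms are adjacent in the movie, and then notes that the result is immediate from the definition of the star map. Your version spells out the Frobenius-algebra computation $(2X)^2=4T$ and $(2X-H)^2=H^2$ explicitly and gives the mixed-invariant argument in detail, but the underlying idea is identical; the only cosmetic difference is that you route the ``move the stars together'' step through Lemma~\ref{lem:star-commute} and the sliding bookkeeping, whereas the paper appeals directly to isotopy invariance of starred cobordisms.
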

\begin{proof}
  Since the maps are invariant under isotopy of the cobordisms, we can
  arrange that the two elementary star cobordisms are adjacent. Then,
  the result is immediate from the definition of the map associated to
  an elementary star cobordism.
\end{proof}

\begin{corollary}\label{cor:H-vanish}
  If $F$ is nonorientable then for the Lee deformation,
  $4T\BNh^\bullet(F)=0$, and for the Bar-Natan deformation,
  $H^2\BNh^\bullet(F)=0$. If $F$ has crosscap number at least $3$
  then additionally, $4T\MixedInvt{F}=0$ and
  $H^2\MixedInvt{F}=0$ for the Lee and Bar-Natan deformations,
  respectively.
\end{corollary}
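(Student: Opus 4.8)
The plan is to deduce the corollary directly from Lemma~\ref{lem:starstar} and Corollary~\ref{cor:star-0}. The underlying idea is that adding two stars to a surface multiplies the associated map by $4T$ in the Lee deformation and by $H^2$ in the Bar-Natan deformation, while a single star on a nonorientable component kills the map; so when the surface is nonorientable we can arrange for both phenomena to occur simultaneously.

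First, for the statements about $\BNh^\bullet(F)$: since $F$ is nonorientable it has a nonorientable component $F_0$, and I would let $F^{\star\star}$ be the result of adding two stars, close together, on $F_0$. By Lemma~\ref{lem:starstar}, $\BNh^\bullet(F^{\star\star})$ is $4T\BNh^\bullet(F)$ in the Lee case and $H^2\BNh^\bullet(F)$ in the Bar-Natan case. But $F^{\star\star}$ still has $F_0$ as a nonorientable component, now carrying a star, so $\BNh^\bullet(F^{\star\star})=0$ by Corollary~\ref{cor:star-0}. Comparing the two expressions gives the first two assertions.

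For the mixed invariant, assume $F$ has crosscap number $\geq 3$ and fix an admissible cut (one exists by Proposition~\ref{prop:admis-cut}), decomposing $F$ as $F_1\circ F_0$; by condition~(AC-2) of Definition~\ref{def:admissible-cut}, $F_0$ is nonorientable and hence has a nonorientable component. I would place the two stars near each other on that component, in the interior of $F_0$, producing $F^{\star\star}$, whose underlying surface is still $F$ and so still has crosscap number $\geq 3$, so $\MixedInvt{F^{\star\star}}$ is defined. On one hand, the two stars lie on a single component of $F$, so Lemma~\ref{lem:starstar} gives $\MixedInvt{F^{\star\star}}=4T\MixedInvt{F}$ (Lee) or $H^2\MixedInvt{F}$ (Bar-Natan). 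On the other hand, with respect to the same admissible cut $F^{\star\star}$ decomposes as $F_1\circ F_0^{\star\star}$, where $F_0^{\star\star}$ carries a star on a nonorientable component, so $\BNh^-(F_0^{\star\star})=0$ by Corollary~\ref{cor:star-0}; hence its canonical lift $\BNh(F_0^{\star\star})\co\BNh^-(L_0)\to\BNh^{\red}(L)$ vanishes, and $\MixedInvt{F^{\star\star}}=\BNh(F_1)\circ\BNh(F_0^{\star\star})=0$ by Definition~\ref{def:mixed-invt}. Equating the two expressions for $\MixedInvt{F^{\star\star}}$ finishes the proof.

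I do not expect a serious obstacle here; the only point requiring care is that, in the mixed-invariant case, the two stars must be placed in the interior of the first half $F_0$ of an admissible cut (and close together on a single component) so that Corollary~\ref{cor:star-0} applies to $F_0^{\star\star}$ while Lemma~\ref{lem:starstar} still applies to $F$ globally.
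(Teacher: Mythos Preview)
Your proposal is correct and is exactly the approach the paper takes: the paper's proof reads in its entirety ``This is immediate from Corollary~\ref{cor:star-0} and Lemma~\ref{lem:starstar},'' and you have simply unpacked what ``immediate'' means, including the small extra care in the mixed-invariant case of placing the stars on the nonorientable piece $F_0$ of an admissible cut so that Corollary~\ref{cor:star-0} forces $\BNh^-(F_0^{\star\star})=0$.
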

\begin{proof}
  This is immediate from Corollary~\ref{cor:star-0} and Lemma~\ref{lem:starstar}.
\end{proof}

We note a very mild extension of a result of
Rasmussen~\cite{Rasmussen-kh-closed} and
Tanaka~\cite{Tanaka-kh-closed}:
\begin{lemma}\label{lem:closed-surf}
  If $F\subset [0,1]\times S^3$ is a closed, connected, orientable
  surface of genus $g$ with $s$ stars then $\BNh^\bullet(F)$ is
  \begin{itemize}
  \item $0$ if $g+s$ is even,
  \item $2H^{g+s-1}$ if $g+s$ is odd and we are considering the
    Bar-Natan deformation, and
  \item $2^{g+s}T^{\frac{g+s-1}{2}}$ if $g+s$ is odd and we are
    considering the Lee deformation.
  \end{itemize}
  If $F\subset [0,1]\times S^3$ is a closed, connected, nonorientable
  surface (possibly with stars) then $\BNh^\bullet(F)=0$.
\end{lemma}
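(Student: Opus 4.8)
The plan is to handle the nonorientable and orientable cases by separate arguments, in both cases reducing to the computation of a single element of $\Ring[U]$. Observe first that a closed surface $F\subset[0,1]\times S^3$ is a cobordism from $\varnothing$ to $\varnothing$; since $\BNcx^-(\varnothing)=\Ring[U]$ sits in a single homological degree and carries the zero differential, $\BNcx^-(F)$ is simply multiplication by a homogeneous element $u\in\Ring[U]$, and the maps on $\BNcx^+$, $\BNcx^\infty$, and $\wh{\BNcx}$ are all induced from it; so it suffices to identify $u$, and in particular $u=0$ gives $\BNh^\bullet(F)=0$ for every $\bullet$. The nonorientable case is then quick: if $F$ is closed, connected, and nonorientable, then $F$ is a nonorientable manifold, so $\BNh^\infty(F)=0$ by Proposition~\ref{prop:or-gens-part2}; the localization map $\BNh^-(\varnothing)=\Ring[U]\hookrightarrow\Ring[U^{-1},U]=\BNh^\infty(\varnothing)$ is injective and, by naturality of the long exact sequences (Lemma~\ref{lem:les-natural}), intertwines $\BNh^-(F)$ with $\BNh^\infty(F)$, so $u=\BNh^-(F)=0$.

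For the orientable case, the first step I would take is to trade stars for handles. If $F$ is closed, connected, orientable of genus $g$ with $s\ge 1$ stars, let $G$ be $F$ with one star deleted, so $F$ is isotopic to $G^\star$ (adding a star at some point of $G$). Choosing the arc in Proposition~\ref{prop:neck-cut} so that the attached $1$-handle is a small local one preserving orientability, that proposition gives $\BNh^\bullet(F)=\BNh^\bullet(G^\star)=\BNh^\bullet(G^\cap)$, and $G^\cap$ is closed, connected, orientable of genus $g+1$ with $s-1$ stars. Iterating $s$ times, $\BNh^\bullet(F)=\BNh^\bullet(\Sigma)$, where $\Sigma$ is a closed, connected, orientable surface of genus $n:=g+s$ with no stars (and possibly knotted in $[0,1]\times S^3$).

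It then remains to compute $\BNh^\bullet(\Sigma)$. By Corollary~\ref{cor:hq-gr-change} (with $\chi=2-2n$, $e=0$, $s=0$) the element $u$ is homogeneous of bidegree $(\gr_h,\gr_q)=(0,-2(n-1))$. In the Bar-Natan case this forces $u=c\,H^{n-1}$ for some $c\in\Ring$ (and $u=0$ if $n=0$); in the Lee case it forces $u=c\,T^{(n-1)/2}$ when $n$ is odd and $u=0$ when $n$ is even (since $\Ring[T]$ has no monomial of $\gr_q$-degree $-2(n-1)$ when $n-1$ is odd). To pin down $c$ — and, in the Bar-Natan case, to rule out a nonzero $c$ when $n$ is even, which the grading alone does not — I would invoke the computations of Rasmussen~\cite{Rasmussen-kh-closed} (for the Lee deformation) and Tanaka~\cite{Tanaka-kh-closed} (for ordinary Khovanov homology): the cobordism map of a closed, connected, orientable genus-$n$ surface does not depend on its embedding, hence equals that of the standard genus-$n$ surface, which is read off from a movie consisting of a birth, then $n$ copies of the handle map $m\circ\Delta$, then a death. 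In the Bar-Natan case $m\circ\Delta$ is multiplication by $2X-H$, so $u=\epsilon\bigl((2X-H)^n\cdot 1\bigr)=(1-(-1)^n)H^{n-1}$; in the Lee case $m\circ\Delta$ is multiplication by $2X$, so $u=\epsilon\bigl((2X)^n\cdot 1\bigr)=2^n\epsilon(X^n)$, which is $2^n T^{(n-1)/2}$ for $n$ odd and $0$ for $n$ even. Substituting $n=g+s$ yields the asserted formulas.

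The only genuine ingredient is the Rasmussen--Tanaka fact that the cobordism map of a closed orientable surface depends only on its genus, not on its embedding; I expect the main work to be checking that their argument — via the diagonalization of the Frobenius algebra (Proposition~\ref{prop:or-gens-part1}), the grading shift, and the neck-cutting relation — carries over verbatim to the Bar-Natan deformation and to our $\BNh^+$, $\BNh^\infty$, $\wh{\BNh}$ versions, and that the signs are arranged so that $c=2$ on the nose (consistent with $m\circ\Delta$ being sign-definite, namely $2X-H$ resp.\ $2X$). Everything else — removing the stars, pinning down the power of $U$, and the nonorientable case — is formal, which is why this is only a mild extension of~\cite{Rasmussen-kh-closed,Tanaka-kh-closed}.
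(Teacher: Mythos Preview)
Your proof is correct. The nonorientable case is handled by a genuinely different---and cleaner---argument than the paper's: you use Proposition~\ref{prop:or-gens-part2} (vanishing of $\BNh^\infty(F)$) together with injectivity of $\iota_*\co\BNh^-(\varnothing)\hookrightarrow\BNh^\infty(\varnothing)$, whereas the paper invokes Baykur--Sunukjian to stabilize $F$ to a standard connected sum of $\RP^2$'s and $\overline{\RP}^2$'s and then runs the same injectivity-after-even-stabilization trick as in the orientable case. Your route avoids the external topological input entirely.

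For the orientable case your strategy is the same as the paper's (reduce to a standard model and compute), but you defer the independence-of-embedding step to Rasmussen and Tanaka, noting that their arguments need to be checked for the Bar-Natan deformation over arbitrary $\Ring$. The paper instead supplies exactly this step in a self-contained way, using only tools already in hand: by Baykur--Sunukjian, some even number $2k$ of $1$-handle attachments makes $F$ standardly embedded; by Proposition~\ref{prop:neck-cut} and Lemma~\ref{lem:starstar} this multiplies $\BNh^-(F)\in\Ring[U]$ by $H^{2k}$ (resp.\ $(4T)^k$), which is injective on $\Ring[U]$; hence $\BNh^-(F)$ is determined by the standard model. This is worth knowing, since it closes the gap you flagged without rechecking the cited papers. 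Your model computation and the star-for-handle trade are the same as the paper's (the paper actually does the trade in the opposite direction, handles for stars, but this is cosmetic).
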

(In both cases, the surface may be knotted.)
\begin{proof}
  We start with the orientable case. Any such surface becomes isotopic
  to a standardly-embedded one after attaching some number of
  $1$-handles (see, e.g.,~\cite[Theorem 1]{BS-top-stabilizations}). By
  adding an extra one if necessary, we may assume the number of
  $1$-handles added is even, say $2k$. By
  Proposition~\ref{prop:neck-cut}, adding these $1$-handles has the
  same effect as adding the same number of stars which, by
  Lemma~\ref{lem:starstar}, multiplies $\BNh^\bullet(F)$ by $(4T)^k$
  or $H^{2k}$.  Considering first $\BNh^-(F)$,
  multiplication by $(4T)^k$ or $H^{2k}$ is an injective map
  $\Ring[H]\to \Ring[H]$ or $\Ring[T]\to\Ring[T]$, so the
  element $\BNh^-(F)$ depends only on $g$ and $s$, not the embedding
  of $F$. Thus, the result for $\BNh^-(F)$ follows from an easy model computation for a
  standardly-embedded surface (which can be made even easier by
  applying Proposition~\ref{prop:neck-cut} to trade the genus for
  stars and then applying Lemma~\ref{lem:starstar}). The results for
  the other versions---$\BNh^\infty(F)$, $\BNh^+(F)$, and
  $\wh{\BNh}(F)$---follow formally from the case of $\BNh^-(F)$ and
  the natural long exact sequences~(\ref{eq:les}).

  The proof for the nonorientable case is essentially the same. By a
  result of Baykur-Sunukjian \cite[Theorem 6]{BS-top-stabilizations},
  after a finite number of stabilizations, $F$ becomes isotopic to a
  connected sum of copies of the standard $\RP^2$ and
  $\overline{\RP}^2$. A straightforward model computation, or
  Lemma~\ref{lem:cc-stab-vanish}, shows the map associated to a
  connected sum of copies of the standard $\RP^2$ or
  $\overline{\RP}^2$ vanishes, so by Proposition~\ref{prop:neck-cut}
  the map associated to $F$ vanishes, as well.
\end{proof}

Given a link cobordism $F\co L_0\to L_1$, a closed surface
$E\subset S^4$, and points $p\in E$ and $q\in F$, there is a
\emph{standard connected sum} of $F$ and $E$ at $q$ and $p$; this
is the connected sum of pairs $([0,1]\times S^3,F)\#(S^4,E)$.

\begin{proposition}\label{prop:cyl-sum}
  Let $F\co L_0\to L_1$ be a cobordism and $E\subset S^4$ a
  closed, connected surface with no stars on it. Let
  $F^\#\defeq F\# E$ be a standard connected sum of $F$ and
  $E$, and let $F^\star$ be the result of adding a star to $F$,
  on the component where the connect sum is occurring. Then,
  \begin{enumerate}
  \item\label{item:BNh-or} If $E$ is an orientable surface of genus
    $g>0$ then for the Lee deformation,
    \begin{equation}
      \BNh^\bullet(F^\#)=
      \begin{cases}
        (4T)^{\frac{g}{2}}\BNh^\bullet(F) & \text{$g$ even}\\
        (4T)^{\frac{g-1}{2}}\BNh^\bullet(F^\star) & \text{$g$ odd}
      \end{cases}
    \end{equation}
    while for the Bar-Natan deformation
    \begin{equation}
      \BNh^\bullet(F^\#)=
      \begin{cases}
        H^{g}\BNh^\bullet(F) & \text{$g$ even}\\
        H^{g-1}\BNh^\bullet(F^\star) & \text{$g$ odd}.
      \end{cases}
    \end{equation}
  \item\label{item:BNh-non-or} If $E$ is a nonorientable surface then $\BNh^\bullet(F^\#)=0$.
  \end{enumerate}
\end{proposition}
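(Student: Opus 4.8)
The plan is to reduce the proposition to a single local computation: the evaluation of the punctured surface $E$ as a cobordism from the unknot to $\varnothing$. First isotope $F$ so that the connect-sum point $q$ is a local maximum of the projection $[0,1]\times S^3\to[0,1]$ restricted to $F$, with a standard cap around $q$; in a movie for $F$ this cap appears as the death of a small unknot. Let $E^\circ$ be the result of deleting a small open disk around $p$ from $E$, regarded as a cobordism from the unknot to $\varnothing$ supported in a small ball disjoint from the rest of $F$. Then $F^\#$ is obtained from $F$ by replacing the capping disk at $q$ by $E^\circ$, so by locality and functoriality of the cobordism maps (\Proposition{BNh-functorial}) the complex $\BNcx^\bullet(F^\#)$ agrees, up to chain homotopy and sign, with the composite defining $\BNcx^\bullet(F)$ but with the counit $\epsilon$ on the relevant unknot tensor factor replaced by $\BNcx^\bullet(E^\circ)$. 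Since the unknot has no crossings, its complex has trivial differential, so this substitution is harmless and the proposition reduces to computing $\BNcx^\bullet(E^\circ)$. I claim that, up to sign, $\BNcx^\bullet(E^\circ)=(4T)^{g/2}\epsilon$ when $E$ is orientable of even genus $g$, $\BNcx^\bullet(E^\circ)=(4T)^{(g-1)/2}(\epsilon\circ\text{star})$ when $g$ is odd, and $\BNcx^\bullet(E^\circ)=0$ when $E$ is nonorientable; the Bar-Natan analogues replace $(4T)$ by $H$ throughout. (Here ``$\text{star}$'' denotes the elementary-star map, multiplication of the unknot label by $2X$ for Lee and $2X-H$ for Bar-Natan.) Granting this, part~(\ref{item:BNh-non-or}) is immediate, and for part~(\ref{item:BNh-or}) one notes that precomposing $\epsilon$ with the star map at $q$ amounts to adding a star to $F$ on the component $F_0$ through $q$, i.e.\ replacing $F$ by $F^\star$; substituting then yields the two displayed formulas.

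To compute $\BNcx^\bullet(E^\circ)$, I would follow the proof of \Lemma{closed-surf}. By Baykur--Sunukjian, after attaching some number of $1$-handles to $E$ away from the puncture (and, if $E$ is orientable, attaching them orientation-preservingly), and possibly adding one more handle so that the number $m$ of handles is \emph{even}, $E\cup(m\ \text{handles})$ becomes standard: the standard genus-$(g+m)$ surface if $E$ is orientable, or a connected sum of standard copies of $\RP^2$ and $\overline{\RP}^2$ if $E$ is nonorientable. Hence $E^\circ\cup(m\ \text{handles})$ is a standard punctured surface, namely a disk-cap with $g+m$ ordinary $1$-handles (orientable case) or a disk-cap followed by at least one crosscap stabilization (nonorientable case). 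On one side, \Proposition{neck-cut} lets us trade the $m$ handles on $E^\circ$ for $m$ stars, which by \Lemma{starstar} multiplies $\BNh^\bullet(E^\circ)$ by $(4T)^{m/2}$ (resp.\ $H^m$). On the other side, applying \Proposition{neck-cut} and \Lemma{starstar} to the standard orientable model (trading its $g+m$ handles for $g+m$ stars), or \Lemma{cc-stab-vanish} to the standard nonorientable model, evaluates $\BNh^\bullet(E^\circ\cup(m\ \text{handles}))$ to $(4T)^{(g+m)/2}\epsilon$ or $(4T)^{(g+m-1)/2}(\epsilon\circ\text{star})$ in the orientable case and to $0$ in the nonorientable case. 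Equating the two computations shows that $(4T)^{m/2}$ (resp.\ $H^m$) kills the difference between $\BNh^\bullet(E^\circ)$ and the asserted value. It suffices to treat $\BNh^-$, the other three versions then following by naturality of the long exact sequences \eqref{eq:les}; and for $\BNh^-$ the target is $\BNh^-(\varnothing)=\Ring[U]$, so $\BNh^-(E^\circ)$ lies in a module of homomorphisms into a ring that is $(4T)$- and $H$-torsion-free (using, for Lee, that $2$ is a unit). Multiplication by $(4T)^{m/2}$ (resp.\ $H^m$) is therefore injective there, so we may cancel it and obtain the claimed formula for $\BNcx^\bullet(E^\circ)$; rewriting $(4T)^{(g+m)/2}=(4T)^{m/2}(4T)^{g/2}$ and $(4T)^{(g+m-1)/2}=(4T)^{m/2}(4T)^{(g-1)/2}$ then completes the reduction.

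I expect the main obstacle to be the bookkeeping in the first paragraph: checking rigorously that the connected sum at $q$ really is the substitution of $\BNcx^\bullet(E^\circ)$ for $\epsilon$ — which requires arranging $q$ as a local maximum with $E^\circ$ supported in a small disjoint ball and invoking locality of the cobordism maps — and keeping straight the parity of $g$ so that an odd leftover handle is absorbed into a star on $F$ (producing $F^\star$) rather than into a further power of $4T$ or $H$. Once the substitution principle and the value of $\BNcx^\bullet(E^\circ)$ are established, part~(\ref{item:BNh-non-or}) follows by substituting the zero map and part~(\ref{item:BNh-or}) by pure rewriting.
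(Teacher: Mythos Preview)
Your proposal is correct and shares the paper's overall architecture: both reduce $F^\#$ to $F$ with a local substitution (replace a small disk by the punctured surface $E^\circ$), and then the whole proposition comes down to evaluating $\BNh^\bullet(E^\circ)$ as a map on the unknot. The difference is only in how that evaluation is carried out.

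The paper's computation of $\BNh^\bullet(E\setminus D)$ is shorter than yours because it uses \Lemma{closed-surf} as a black box rather than unfolding its proof. Viewing $E\setminus D$ as a cobordism $\varnothing\to U$, its $\BNh^-$-image is an element $p(U)\,1+q(U)\,X$ of the free rank-two module $\BNh^-(U)$. Capping off with a death recovers $\BNh^-(E)=q(U)$, and capping off with a star followed by a death recovers $\BNh^-(E^\star)$, which determines $p(U)$; both of these are already known by \Lemma{closed-surf}. This pins down $\BNh^-(E\setminus D)$ exactly, and the other flavors follow since $\BNh^-(U)$ is free over $\BNh^-(\varnothing)$. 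Your route instead re-runs the Baykur--Sunukjian stabilization and neck-cutting argument directly on the punctured surface and then cancels the extra factor of $(4T)^{m/2}$ (or $H^m$) using torsion-freeness of $\Hom(\BNh^-(U),\Ring[U])$. That is perfectly valid, and the injectivity step is correct, but it duplicates work already packaged in \Lemma{closed-surf}; the paper's ``cap and read off coefficients'' trick is the more economical way to extract the same information.
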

\begin{proof}
  Let $D$ be a small disk on $E$, so $E\setminus D$ is a cobordism
  from the empty set to the unknot $U$. We will show that
  $\BNh^\bullet(E\setminus D)$ is the same as the invariant of a disk with
  $g$ stars in the orientable case, and vanishes in the nonorientable
  case.  The result then follows from Lemma~\ref{lem:starstar} and
  functoriality, since $F^\#$ is obtained from $F$ by replacing a
  small disk by $E\setminus D$.

  Consider first the version $\BNh^-$ for the Lee deformation, for the
  case that $E$ is orientable. Let $E^\star$ be the result of adding
  a star to $E$. Write
  \[
    \BNh^-(E\setminus D)=p(T)1+q(T)X\in\BNh^-(U)=\Ring[T]\langle 1,X\rangle.
  \]
  By Lemma~\ref{lem:closed-surf} applied to $E$, $q(T)=0$ if $g$ is
  even and $q(T)=2^{g}T^{\frac{g-1}{2}}$ if $g$ is odd. Also,
  \[
    \BNh^-(E^\star\setminus D)=2p(T)X+2q(T),
  \]
  so by Lemma~\ref{lem:closed-surf} applied to $E^\star$, $p(T)=0$
  if $g$ is odd and $p(T)=2^{g}T^{\frac{g}{2}}$ if $g$ is even. So,
  $\BNh^-(E\setminus D)$ is $2^{g}T^{\frac{g}{2}}$ times the
  invariant of a disk if $g$ is even, and $2^{g-1}T^{\frac{g-1}{2}}$
  times the invariant of a disk with a star if $g$ is odd. The results for
  the other versions---$\BNh^\infty$, $\BNh^+$, and
  $\wh{\BNh}$---follow formally from the case of $\BNh^-$ since
  $\BNh^-(U)$ is free over $\BNh^-(\varnothing)$.

  The proof for the Bar-Natan deformation in the orientable case is
  similar.

  Now, suppose $E$ is nonorientable and again, for definiteness,
  consider the Lee deformation. We can again write
  $\BNh^-(E\setminus D)=p(T)1+q(T)X$, but now
  $\BNh^-(E)=\BNh^-(E^\star)=0$. Consequently, $p(T)=q(T)=0$.
\end{proof}

To summarize, both the map $\BNh^\bullet(F)$ and the mixed invariant
$\MixedInvt{F}$ obstruct surfaces being stabilizations and crosscap
stabilizations, and are independent of local knotting.
\begin{theorem}\label{thm:vanishing}
  Let $F\co L_0\to L_1$ be a cobordism.
  \begin{enumerate}[label=(\arabic*)]
  \item\label{item:van-star} If $F$ has at least one star on some
    nonorientable component, then $\BNh^\bullet(F)=0$; and if in addition
    $F$ has crosscap number $\geq3$ then $\MixedInvt{F}=0$, as well.
  \item\label{item:van-stab} If $F$ is a (possibly nonstandard)
    stabilization, obtained from another cobordism $F'$ by attaching a
    handle to some nonorientable component of $F'$, then
    $\BNh^\bullet(F)=0$.
  \item\label{item:van-sum-nonor} If $F$ is obtained from another
    cobordism $F'$ by taking a standard connected sum with a closed,
    nonorientable surface then $\BNh^\bullet(F)=0$; if $F'$ has
    crosscap number $\geq 2$ then $\MixedInvt{F}=0$, as well. In
    particular, this applies if $F$ is a crosscap stabilization of a
    surface (with crosscap number $\geq 2$ in the case of $\MixedInvt{F}=0$).
  \item\label{item:van-sum-or} If $F$ is obtained from another
    cobordism $F'$ by taking a standard connected sum of some
    nonorientable component of $F'$ with a closed, orientable surface
    of genus $g>0$, then $\BNh^\bullet(F)=0$; if $F'$ has crosscap
    number $\geq 2$ then $\MixedInvt{F}=0$, as well. In particular, this
    applies if $F$ is a standard stabilization of a surface (with
    crosscap number $\geq 2$ in the case of $\MixedInvt{F}$) at some nonorientable component.
  \item\label{item:van-knot} If $F$ is obtained from another cobordism
    $F'$ by taking a standard connected sum with a knotted $2$-sphere
    then $\BNh^\bullet(F)=\BNh^\bullet(F')$; if in addition $F$ has crosscap number $\geq 3$
    then $\MixedInvt{F}=\MixedInvt{F'}$.
  \end{enumerate}
\end{theorem}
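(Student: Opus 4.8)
The plan is to reduce each clause to the vanishing results already proved in this section, deducing the assertions about $\MixedInvt{F}$ from those about $\BNh^\bullet$ via the definition of the mixed invariant. The statements about $\BNh^\bullet$ are almost immediate: Part~\ref{item:van-star} is precisely the first sentence of Corollary~\ref{cor:star-0} and Part~\ref{item:van-stab} its second sentence; Part~\ref{item:van-sum-nonor} is Proposition~\ref{prop:cyl-sum}(\ref{item:BNh-non-or}), a crosscap stabilization being by definition a standard connected sum with a closed $\RP^2$ or $\overline{\RP}^2$. For Part~\ref{item:van-sum-or}, Proposition~\ref{prop:cyl-sum}(\ref{item:BNh-or}) together with Lemma~\ref{lem:starstar} shows that $\BNh^\bullet(F)$ equals $\BNh^\bullet$ of the cobordism obtained from $F'$ by placing $g$ stars at the connect-sum point; since that point lies on a nonorientable component and $g\geq 1$, this vanishes by Corollary~\ref{cor:star-0}. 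Finally, Part~\ref{item:van-knot} is the $g=0$ case of Proposition~\ref{prop:cyl-sum}(\ref{item:BNh-or}): though stated there for $g>0$, the computation in its proof — applying Lemma~\ref{lem:closed-surf} to $S^2$ and to $S^2$ with one star — shows that for a (possibly knotted) $2$-sphere $E$ the element $\BNh^\bullet(E\setminus D)$ agrees with the invariant of a disk, whence $\BNh^\bullet(F^\#)=\BNh^\bullet(F')$.

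For the statements about $\MixedInvt{F}$, note first that $F$ has crosscap number $\geq 3$ in each relevant case (for Parts~\ref{item:van-sum-nonor} and~\ref{item:van-sum-or} this follows from the hypothesis on $F'$, since connected sum with a closed nonorientable surface raises the crosscap number by at least $1$ and with an orientable genus-$g$ surface by $2g$), so $\MixedInvt{F}$ is defined. The local modification is supported in a small ball $B\subset[0,1]\times S^3$; I would isotope $F$ rel boundary (legitimate by Theorem~\ref{thm:invt}) so that $B$ lies near a boundary component, and then choose an admissible cut $(S,V,\phi)$ for $F$ with $V$ disjoint from $B$ — possible because the construction in the proof of Proposition~\ref{prop:admis-cut} only requires a choice of two disjoint one-sided curves on $F$, which may be taken in the complement of $B$. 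Then $B$ lies in one of the two pieces, say $W_i$, so that piece $F_i$ of $F$ carries the local modification while $F_{1-i}$ is unchanged. Since $\BNh(F_0)$ is a lift of $\BNh^-(F_0)$ to the submodule $\BNh^\red(L)\subset\BNh^-(L)$, and $\BNh(F_1)$ is the map induced by $\BNh^+(F_1)$ on the quotient $\BNh^\red(L)$ of $\BNh^+(L)$, whenever $\BNh^\bullet(F_i)=0$ the corresponding factor of $\MixedInvt{F}=\BNh(F_1)\circ\BNh(F_0)$ vanishes, and hence $\MixedInvt{F}=0$. For Part~\ref{item:van-sum-nonor} this is immediate, since $F_i=F'_i\#E$ with $E$ closed nonorientable and Proposition~\ref{prop:cyl-sum}(\ref{item:BNh-non-or}) needs no hypothesis on $F'_i$. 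For Part~\ref{item:van-sum-or}, $F_i$ is nonorientable (being one side of an admissible cut) and $E$ is orientable, which forces $F'_i$ to be nonorientable; then combining Proposition~\ref{prop:cyl-sum}(\ref{item:BNh-or}), Lemma~\ref{lem:starstar}, and Corollary~\ref{cor:star-0} as above — having arranged that the connect-sum point lands on a nonorientable component of $F'_i$ — gives $\BNh^\bullet(F_i)=0$. Part~\ref{item:van-star} is handled the same way, cutting $F$ so that the starred point lies on a nonorientable component of the piece containing it. For Part~\ref{item:van-knot} there is no vanishing: the same cut serves for $F'$ (which also has crosscap number $\geq 3$), so $F_0=F'_0\#E$ and $F_1=F'_1$; the $g=0$ analysis above gives $\BNh^-(F_0)=\BNh^-(F'_0)$, hence $\BNh(F_0)=\BNh(F'_0)$, and therefore $\MixedInvt{F}=\MixedInvt{F'}$.

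The step I expect to be the main obstacle is the coordination needed in the star cases (Parts~\ref{item:van-star} and~\ref{item:van-sum-or}): it is not enough that $B$ lie in one piece of the admissible cut — that piece must also meet a one-sided curve through (an isotopic copy of) the modification point, so that the extra star sits on a nonorientable component and Corollary~\ref{cor:star-0} applies. Verifying that such a cut exists means working inside the construction of Proposition~\ref{prop:admis-cut} rather than treating it as a black box: one chooses one of the two one-sided curves on the component carrying the modification and isotopes $F$ so that the remaining nonorientable pieces lie on the two sides as needed, the crosscap-number hypotheses providing the room. Everything else is bookkeeping with the long exact sequences of Formula~\eqref{eq:les} (via Lemma~\ref{lem:les-natural}) and the definition of $\MixedInvt{F}$ in Definition~\ref{def:mixed-invt}.
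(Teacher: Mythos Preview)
Your proposal is correct and follows essentially the same route as the paper: both reduce the $\BNh^\bullet$ statements to Corollary~\ref{cor:star-0} and Proposition~\ref{prop:cyl-sum}, and both handle the $\MixedInvt{F}$ statements by isotoping the local modification to one side of an admissible cut built from the one-sided curves of Proposition~\ref{prop:admis-cut}. The only cosmetic differences are that the paper cites Corollary~\ref{cor:H-vanish} rather than unpacking the ``$g$ stars'' statement from the proof of Proposition~\ref{prop:cyl-sum} for Part~\ref{item:van-sum-or}, and that for Part~\ref{item:van-star} the paper states the cut explicitly (choose $\eta$ through the star, use $S_{\leq\gamma}$), which is exactly the fix you anticipate in your last paragraph.
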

\begin{proof}
  For $\BNh^\bullet(F)$, Points~\ref{item:van-star}
  and~\ref{item:van-stab} are Corollary~\ref{cor:star-0},
  Points~\ref{item:van-sum-nonor} and~\ref{item:van-knot} are
  Proposition~\ref{prop:cyl-sum}, while Point~\ref{item:van-sum-or} is
  Proposition~\ref{prop:cyl-sum} together with
  Corollaries~\ref{cor:star-0} and~\ref{cor:H-vanish}.

  For $\MixedInvt{F}$, Points~\ref{item:van-sum-nonor},
  \ref{item:van-sum-or}, and~\ref{item:van-knot} follow by the same
  methods, after isotoping the surface so the connect sum happens
  entirely on one side of the admissible cut. For
  Point~\ref{item:van-star}, choose disjoint one-sided embedded curves
  $\gamma,\eta\subset F$ so that $\eta$ contains a star, and then
  consider the admissible cut $S_{\leq\gamma}$, as in the proof of
  Proposition~\ref{prop:admis-cut}; then one side of the admissible
  cut contains a nonorientable component with a star, and so
  $\MixedInvt{F}$ vanishes by Corollary~\ref{cor:star-0}.
\end{proof}

We conclude the section by singling out one consequence of
Point~\ref{item:van-knot}.  Miller-Powell introduced the notion of the
\emph{generalized stabilization distance}~\cite{MP-top-stab} (see also~\cite{Miy-86-stab,JZ-hf-stab}). In
particular, surfaces $F$ and $F'$ have generalized stabilization
distance $0$ if and only if they are related by taking the connected
sums with embedded $2$-spheres. (See also~\cite{SS-kh-surf}.) While
they work in the topological category, we will continue to assume all
surfaces are smoothly embedded.

\begin{corollary}
  If $F$ and $F'$ are cobordisms with $\BNh^\bullet(F)\neq \BNh^\bullet(F')$ or
  $\MixedInvt{F}\neq \MixedInvt{F'}$ (and the cobordisms have crosscap
  number $\geq 3$) then $F$ and $F'$ have generalized stabilization
  distance $>0$.
\end{corollary}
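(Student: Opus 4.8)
The plan is to deduce this immediately from Point~\ref{item:van-knot} of Theorem~\ref{thm:vanishing}, arguing by contraposition. Suppose $F$ and $F'$ have generalized stabilization distance $0$; by the characterization of distance $0$ recalled above, there is then a finite sequence $F=F_0,F_1,\dots,F_n=F'$ of cobordisms in which, for each $i$, either $F_{i+1}$ is obtained from $F_i$ or $F_i$ is obtained from $F_{i+1}$ by a standard connected sum with an embedded (possibly knotted) $2$-sphere. I would show that each such elementary move leaves $\BNh^\bullet$ unchanged and $\MixedInvt{\cdot}$ unchanged up to sign, and then chain these equalities along the sequence to contradict the hypothesis.

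For a single move, I would first record that a standard connected sum with a $2$-sphere $E$ fixes the boundary links $L_0$ and $L_1$, preserves the orientability of every component, and preserves the Euler characteristic (since $\chi(E)=2$), hence preserves the crosscap number; in particular every $F_i$ still has crosscap number $\geq 3$, so $\MixedInvt{F_i}$ is defined throughout. If $E$ is unknotted the connected sum produces an isotopic cobordism, so both invariants are unchanged by Proposition~\ref{prop:BNh-functorial} and Theorem~\ref{thm:invt}. If $E$ is knotted, then $\BNh^\bullet(F_i)=\BNh^\bullet(F_{i+1})$ and $\MixedInvt{F_i}=\pm\MixedInvt{F_{i+1}}$ by Point~\ref{item:van-knot} of Theorem~\ref{thm:vanishing}, applied in whichever direction the connected sum occurs. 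Concatenating these identities gives $\BNh^\bullet(F)=\BNh^\bullet(F')$ and $\MixedInvt{F}=\pm\MixedInvt{F'}$, so if either invariant distinguishes $F$ from $F'$ the distance must be positive.

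Since the substance is already contained in Point~\ref{item:van-knot} of Theorem~\ref{thm:vanishing}, there is no real obstacle here; the only points needing care are the two pieces of bookkeeping just mentioned---that connect-summing with a $2$-sphere preserves the crosscap number (so that the mixed invariant remains defined along the whole chain) and that $\MixedInvt{\cdot}$ is only defined up to an overall sign, which is why the conclusion about the mixed invariant should be read as $\MixedInvt{F}\neq\pm\MixedInvt{F'}$.
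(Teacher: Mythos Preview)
Your proposal is correct and matches the paper's approach: the paper does not give an explicit proof, instead presenting the corollary as an immediate consequence of Point~\ref{item:van-knot} of Theorem~\ref{thm:vanishing}, and your contraposition argument spells out precisely that deduction. Your additional bookkeeping remarks---that connect-summing with a $2$-sphere preserves the crosscap number (so $\MixedInvt{\cdot}$ stays defined along the chain) and that equality of mixed invariants is only up to sign---are appropriate clarifications that the paper leaves implicit.
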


\begin{remark}
  There is also an obstruction to destabilizing orientable surfaces
  from Heegaard Floer homology~\cite[Proposition 5.5]{JZ-hf-clasp}.
\end{remark}

\subsection{Closed surfaces}\label{sec:closed}
The following shows that the mixed invariant is often zero for closed
surfaces (and is always zero for connected closed surfaces). By
contrast, in Section~\ref{sec:comps} we will see that for surfaces
with boundary the mixed invariant does contain interesting
information.

\begin{theorem}\label{thm:cc-3-vanish}
  Let $F$ be a closed surface with crosscap number $\geq 3$,
  normal Euler number $e(F)$, Euler characteristic $\chi(F)$,
  $s_o(F)$ stars on orientable components, and $s_n(F)$ stars on
  nonorientable components. If its mixed invariant $\MixedInvt{F}$ is
  non-zero then $e(F)=-2$, $s_n(F)=0$, and $\chi(F)=1+2s_o(F)$.
\end{theorem}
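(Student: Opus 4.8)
The plan is to read off all three conclusions from the grading of $\MixedInvt{F}$ (Lemma~\ref{lem:mixed-grading}) together with the vanishing statements already established in this section. Since $F$ is closed, $L_0=L_1=\varnothing$, so $\MixedInvt{F}$ is a map $\BNh^-(\varnothing)\to\BNh^+(\varnothing)$; recall $\BNh^-(\varnothing)\cong\Ring[U]$ (generated by $1$), $\BNh^+(\varnothing)\cong\Ring[U^{-1},U]/\Ring[U]$, and that both of these are concentrated in homological grading $0$. By Lemma~\ref{lem:mixed-grading}, $\MixedInvt{F}$ raises the homological grading by $-1-e(F)/2$, so if $\MixedInvt{F}\neq0$ then $e(F)=-2$. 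Moreover $F$ is nonorientable (crosscap number $\geq3$), so a star on a nonorientable component would force $\MixedInvt{F}=0$ by the first part of Theorem~\ref{thm:vanishing}; hence $s_n(F)=0$ and the total number of stars is $s=s_o(F)$.

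It then remains to determine $\chi(F)$. With $e(F)=-2$ and $s=s_o(F)$, Lemma~\ref{lem:mixed-grading} says $\MixedInvt{F}$ raises the quantum grading by $\chi(F)-3e(F)/2-2s=\chi(F)+3-2s_o(F)$, so a nonzero $\MixedInvt{F}(1)$ is a homogeneous element of $\BNh^+(\varnothing)$ of that quantum grading; and by Corollary~\ref{cor:H-vanish} it is annihilated by $4T$ (Lee) or by $H^2$ (Bar-Natan). In the Lee theory $\BNh^+(\varnothing)\cong\Ring[T^{-1},T]/\Ring[T]$ lives in quantum gradings $4,8,12,\dots$, and since $2$ is invertible the submodule killed by $4T$ is the submodule killed by $T$, namely $\Ring\cdot T^{-1}$, concentrated in quantum grading $4$. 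Hence $\chi(F)+3-2s_o(F)=4$, i.e.\ $\chi(F)=1+2s_o(F)$, which settles the Lee case.

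In the Bar-Natan theory $\BNh^+(\varnothing)\cong\Ring[H^{-1},H]/\Ring[H]$ lives in quantum gradings $2,4,6,\dots$, and the submodule killed by $H^2$ is $\Ring\cdot H^{-1}\oplus\Ring\cdot H^{-2}$, concentrated in gradings $2$ and $4$. So the grading argument only gives $\chi(F)+3-2s_o(F)\in\{2,4\}$, and the remaining (and, I expect, main) point is to rule out the value $2$, equivalently to show $\MixedInvt{F}(1)\notin\ker\bigl(H\co\BNh^+(\varnothing)\to\BNh^+(\varnothing)\bigr)$. My approach would be: writing $\MixedInvt{F}=\BNh(F_1)\circ\BNh(F_0)$ for an admissible cut with cut link $L$ and $\zeta=\BNh(F_0)(1)\in\BNh^{\red}(L)$ (Definition~\ref{def:mixed-invt}, Lemma~\ref{lem:non-or-red}), choose the cut (as in the proof of Proposition~\ref{prop:admis-cut}) so that some circle $Z\subset L$ lies on a nonorientable component of $F_1$. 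Putting a star on that component of $F_1$ adjacent to the cut yields a cobordism with vanishing invariant (Corollary~\ref{cor:star-0}); sliding the star onto the cut (Lemma~\ref{lem:star-commute}) this reads $\BNh(F_1)\circ[\mu_Z]=0$, where $\mu_Z$ multiplies the label of $Z$ by $2X-H$. Since $X^2=XH$ we get $(2X-H)^2=H^2$ and the operator identity $H\cdot\Id=[\mu_Z]+2[(H-X)_Z]$, whence
\[
H\cdot\MixedInvt{F}(1)=\BNh(F_1)(H\zeta)=2\,\BNh(F_1)\bigl((H-X)_Z\zeta\bigr).
\]
One is then reduced to showing this is nonzero whenever $\MixedInvt{F}(1)\neq0$. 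I expect this to be the hard part, and the place where crosscap number $\geq 3$ (rather than $\geq 2$) is genuinely used; I would attack it by combining the orientation-class decomposition of $\BNh^\infty(L)$ (Proposition~\ref{prop:or-gens-part1}) with the analogous star relation on the $F_0$-side of the cut and playing the two relations against each other. Granting this, $\MixedInvt{F}(1)\notin\ker(H)$, so $\chi(F)+3-2s_o(F)=4$ and $\chi(F)=1+2s_o(F)$.
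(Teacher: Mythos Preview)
Your argument for $e(F)=-2$, $s_n(F)=0$, and the Lee case $\chi(F)=1+2s_o(F)$ is correct and matches the paper exactly. The issue is the Bar-Natan case, where you correctly isolate the remaining problem (ruling out quantum grading $2$, i.e.\ $\chi-2s_o=-1$) but do not solve it. Your operator identity $H\cdot\Id=[\mu_Z]+2[(H-X)_Z]$ is valid, but the resulting equation $H\cdot\MixedInvt{F}(1)=2\,\BNh(F_1)\bigl((H-X)_Z\zeta\bigr)$ does not buy anything: showing the right side is nonzero when $\MixedInvt{F}(1)\neq0$ is \emph{exactly} the statement $\MixedInvt{F}(1)\notin\ker(H)$ you started with, so no reduction has occurred. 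The sketched plan (``playing the two star relations against each other'' via the orientation-class decomposition) is too vague to evaluate, and you yourself flag it as conditional (``Granting this'').

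The paper closes this case by a completely different route. Having already shown the Lee mixed invariant vanishes in this grading over any $\Ring$ (with $2$ invertible), it transfers the vanishing to the Bar-Natan theory: adjoin $\sqrt{T}$ to the Lee Frobenius algebra, observe that over $\QQ$ the resulting algebra $\QQ[\sqrt{T},X]/(X^2=T)$ is \emph{twist-equivalent} (in Khovanov's sense) to the Bar-Natan algebra $\QQ[H,X]/(X^2=HX)$ via $\sqrt{T}\mapsto H$, $X\mapsto 2X-H$, so the Bar-Natan mixed invariant over $\QQ$ differs from the (vanishing) Lee one only by a power of $2$ and hence is zero. Finally, a chain-level tensoring argument shows that the Bar-Natan mixed invariant over $\ZZ$ (viewed as an integer in the $\gr_q=2$ summand) maps to the one over $\QQ$, hence is zero, and then tensoring with any $\Ring$ gives vanishing over $\Ring$. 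This twist-equivalence trick is the missing idea; nothing further about crosscap number $\geq3$ is used beyond well-definedness of $\MixedInvt{F}$.
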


\begin{corollary}\label{cor:closed-vanish}
  If $F$ is a closed, connected surface with crosscap number $\geq 3$
  then its mixed invariant $\MixedInvt{F}$ vanishes.
\end{corollary}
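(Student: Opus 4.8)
The plan is to deduce this immediately from Theorem~\ref{thm:cc-3-vanish}. First I would record the bookkeeping: a closed, connected surface $F$ with crosscap number $\geq 3$ is nonorientable, and since it is connected it has no orientable component, so in the notation of Theorem~\ref{thm:cc-3-vanish} we have $s_o(F)=0$ (every star on $F$ necessarily sits on the unique, nonorientable component, so $s_n(F)=s(F)$, but this value will play no role).

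Next I would argue by contradiction. Suppose $\MixedInvt{F}\neq 0$. Then Theorem~\ref{thm:cc-3-vanish} forces $\chi(F)=1+2s_o(F)=1$. On the other hand, by the classification of surfaces a closed connected nonorientable surface of crosscap number $g$ is $\#^g\RP^2$, with Euler characteristic $\chi=2-g$; since $g\geq 3$ this gives $\chi(F)=2-g\leq -1$. These two statements are incompatible, so $\MixedInvt{F}=0$.

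I do not expect any genuine obstacle here: all of the real content is in Theorem~\ref{thm:cc-3-vanish}, and this corollary is just the observation that its numerical conclusion ($\chi(F)=1+2s_o(F)$, with $s_o(F)=0$ in the connected nonorientable case) cannot be satisfied by a closed connected surface of crosscap number at least $3$. The only thing to be careful about is making sure the sign and normalization conventions for $\chi$ and for the crosscap number are the same ones used in Theorem~\ref{thm:cc-3-vanish}, which they are, since both are set up in Section~\ref{sec:cuts}.
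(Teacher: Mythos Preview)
Your proposal is correct and matches the paper's intended argument: the paper does not give a separate proof of the corollary, presenting it as an immediate consequence of Theorem~\ref{thm:cc-3-vanish}, and you have spelled out precisely the intended deduction (a connected nonorientable surface has $s_o(F)=0$, so nonvanishing would force $\chi(F)=1$, contradicting $\chi(F)=2-g\leq -1$).
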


\begin{proof}[Proof of Theorem~\ref{thm:cc-3-vanish}]
  By Theorem~\ref{thm:vanishing}, $s_n(F)=0$.
  
  Since the mixed invariant $\MixedInvt{F}$ is an $\Ring[U]$-module homomorphism
  $\Ring[U]=\BNh^-(\emptyset)\to
  \BNh^+(\emptyset)=\Ring[U^{-1},U]/\Ring[U]$, $\MixedInvt{F}$ may
  be viewed as an element of $\Ring[U^{-1},U]/\Ring[U]$ (the image of $1$). By
  Lemma~\ref{lem:mixed-grading}, $\MixedInvt{F}$ is in bigrading
  $(-1-e/2,\chi-3e/2-2s_o)$. Since $\BNh^+(\emptyset)$ is supported in homological grading $0$, this forces $e(F)=-2$.

  In the Bar-Natan theory, by Corollary~\ref{cor:H-vanish},
  $\MixedInvt{F}(1)\in\ker(H^2)\subset\Ring[H^{-1},H]/\Ring[H]$, which
  is $\Ring\langle H^{-1},H^{-2}\rangle$, supported in bigradings
  $(0,2)$ and $(0,4)$, which forces $(\chi-2s_o)\in\{-1,1\}$. In the Lee
  theory, again by Corollary~\ref{cor:H-vanish},
  $\MixedInvt{F}(1)\in\ker(4T)\subset \Ring[T^{-1},T]/\Ring[T]$, which
  is $\Ring\langle T^{-1}\rangle$ (recall that $2$ is invertible in
  $\Ring$), supported in bigrading $(0,4)$, forcing $\chi-2s_o=1$.

  Thus, the only case remaining to exclude is $\chi-2s_o=-1$. So, for the
  rest of the proof assume $e(F)=-2$, $\chi(F)-2s_o(F)=-1$, and $s_n(F)=0$.
  We need to show $\MixedInvt{F}=0$. To settle this case, we will
  need to study the mixed invariants over various Frobenius algebras,
  so we will use superscripts to denote the various Frobenius algebras
  that we are working over. We have already observed that the mixed
  invariant in the Lee theory, $\MixedInvt{F}^{\Ring[T,X]/(X^2=T)}$,
  vanishes for grading reasons over any ring $\Ring$ (with $2$
  invertible).

  Consider the Frobenius algebra $\Ring[\sqrt{T},X]/(X^2=T)$ obtained
  from the Lee Frobenius algebra by adjoining a formal square root of
  $T$, as in Proposition~\ref{prop:or-gens-part1}. Since
  $\Ring[\sqrt{T}]$ is free over $\Ring[T]$, all versions of the
  Khovanov chain complexes and homologies over the Frobenius algebra
  $\Ring[\sqrt{T},X]/(X^2=T)$ can be obtained from the corresponding
  versions over the Frobenius algebra $\Ring[T,X]/(X^2=T)$ by
  tensoring with $\Ring[\sqrt{T}]$ over $\Ring[T]$; similarly, the
  maps over the Frobenius algebra $\Ring[\sqrt{T},X]/(X^2=T)$ can be
  obtained from the maps over the Frobenius algebra
  $\Ring[T,X]/(X^2=T)$ by tensoring with $\Ring[\sqrt{T}]$ over
  $\Ring[T]$. Therefore, the mixed invariant over this new Frobenius
  algebra, $\MixedInvt{F}^{\Ring[\sqrt{T},X]/(X^2=T)}$, vanishes over any
  ring $\Ring$ (with $2$ invertible).  In particular, with
  $\Ring=\QQ$, we get $\MixedInvt{F}^{\QQ[\sqrt{T},X]/(X^2=T)}=0$.

  Now consider the Bar-Natan Frobenius algebra over the rationals,
  $\QQ[H,X]/(X^2=HX)$. This is \emph{twist-equivalent} to the above
  Frobenius algebra $\QQ[\sqrt{T},X]/(X^2=T)$, in the sense of
  Khovanov~\cite{Kho-kh-Frobenius}. Specifically, we have an
  isomorphism
  \[
    \begin{gathered}
      \phi\from \QQ[\sqrt{T},X]/(X^2=T)\to\QQ[H,X]/(X^2=HX)\\
      \phi(1)=1,\qquad\phi(X)=2X-H,\qquad\phi(\sqrt{T})=H
    \end{gathered}
  \]
  which preserves the algebra structure, and twists the counit $\eta$ and
  comultiplication $\Delta$ by the invertible element $2\in\QQ$:
  \[
    \eta(\phi(a))=2\phi(\eta(a))\qquad\Delta(\phi(a))=\tfrac{1}{2}\phi(\Delta(a))\qquad\forall
    a\in\QQ[\sqrt{T},X]/(X^2=T).
  \]
  Khovanov's proof of invariance under twist
  equivalence~\cite[Proposition~3]{Kho-kh-Frobenius} works also for
  $\BNh^-$ (respectively, $\BNh^+$), and shows that the $\BNh^-$
  (respectively, $\BNh^+$) Khovanov homologies over
  $\QQ[\sqrt{T},X]/(X^2=T)$ and $\QQ[H,X]/(X^2=HX)$ are
  isomorphic. Moreover, the proof can be modified to see that for both 
  versions, the maps induced by cobordisms agree over
  $\QQ[\sqrt{T},X]/(X^2=T)$ and $\QQ[H,X]/(X^2=HX)$ up to
  multiplication by (possibly negative) powers of $2$. Therefore,
  $\MixedInvt{F}^{\QQ[H,X]/(X^2=HX)}=2^k\MixedInvt{F}^{\QQ[\sqrt{T},X]/(X^2=T)}$
  for some integer $k$, and hence is zero.

  Finally, consider the Bar-Natan mixed invariant over the integers,
  $\MixedInvt{F}^{\ZZ[H,X]/(X^2=HX)}$. Recall its definition at the
  chain level. We choose an admissible cut and decompose the surface
  $F$ into two cobordisms $F_0\from\varnothing\to L$ and
  $F_1\from L\to\varnothing$, and choose movies $M_0$ and $M_1$
  describing $F_0$ and $F_1$. Consider the generator
  $1\in\BNcx^-(\varnothing)=\ZZ[H]$, and its image
  $\BNcx^-(F_0)(1)\in \BNcx^-(L)$ induced by the movie $M_0$. This is
  a boundary when viewed as a cycle in $\BNcx^\infty(L)$; choose a
  chain $c\in \BNcx^\infty(L)$ with $\bdy c=\BNcx^-(F_0)(1)$. Let
  $\bar{c}$ be the image of $c$ in $\BNcx^+(L)$ obtained by removing
  all the terms with non-negative powers of $H$, and consider its
  image
  $\BNcx^+(F_1)(\bar{c})\in \BNcx^+(\varnothing)=\ZZ[H^{-1},H]/\ZZ[H]$
  induced by the movie $M_1$. Since we assumed
  $e=-2$, $\chi-2s_o=-1$, and $s_n=0$, $\BNcx^+(F_1)(\bar{c})$ lies in bigrading
  $(0,2)$, and so gives an element of $\ZZ$ (which is the $\gr_q=2$
  part of $\ZZ[H^{-1},H]/\ZZ[H]$); this is the mixed invariant
  $\MixedInvt{F}^{\ZZ[H,X]/(X^2=HX)}(1)$.

  For any ring $\Ring$, if we tensor each step in the above chain-level description
  of the definition of $\MixedInvt{F}^{\ZZ[H,X]/(X^2=HX)}(1)$ with $\Ring$, we get
  a chain-level description of the definition of
  $\MixedInvt{F}^{\Ring[H,X]/(X^2=HX)}(1)$. So, the Bar-Natan mixed invariant
  $\MixedInvt{F}^{\Ring[H,X]/(X^2=HX)}(1)$, viewed as an element of
  $\Ring$ (which is the $\gr_q=2$ part of
  $\BNcx^+(\varnothing)=\Ring[H^{-1},H]/\Ring[H]$), can be obtained from
  the above element by tensoring with $\Ring$ over $\ZZ$. Since
  $\MixedInvt{F}^{\QQ[H,X]/(X^2=HX)}(1)=0\in\QQ$, we get
  $\MixedInvt{F}^{\ZZ[H,X]/(X^2=HX)}(1)=0\in\ZZ$, and therefore,
  $\MixedInvt{F}^{\Ring[H,X]/(X^2=HX)}(1)=0\in\Ring$ for all rings
  $\Ring$.
\end{proof}

\section{Computations, applications, and questions}\label{sec:comps}
Theorems~\ref{thm:vanishing} and~\ref{thm:cc-3-vanish} give many examples where the mixed
invariant vanishes. In this section, we use Lemma~\ref{lem:hat} to
give some examples where it does not vanish, and note some
corollaries.

\subsection{A first direct computation}\label{sec:direct-comp}
Let $M$ denote the obvious M\"obius band in $S^3$ with boundary the
(right-handed) trefoil $3_1$. View the boundary sum
$M\natural M\natural M$ as a cobordism from the empty link to
$3_1\#3_1\#3_1$; explicitly, $M\natural M\natural M$ is given by the
following movie:
\begin{center}
  \includegraphics[scale=.3333]{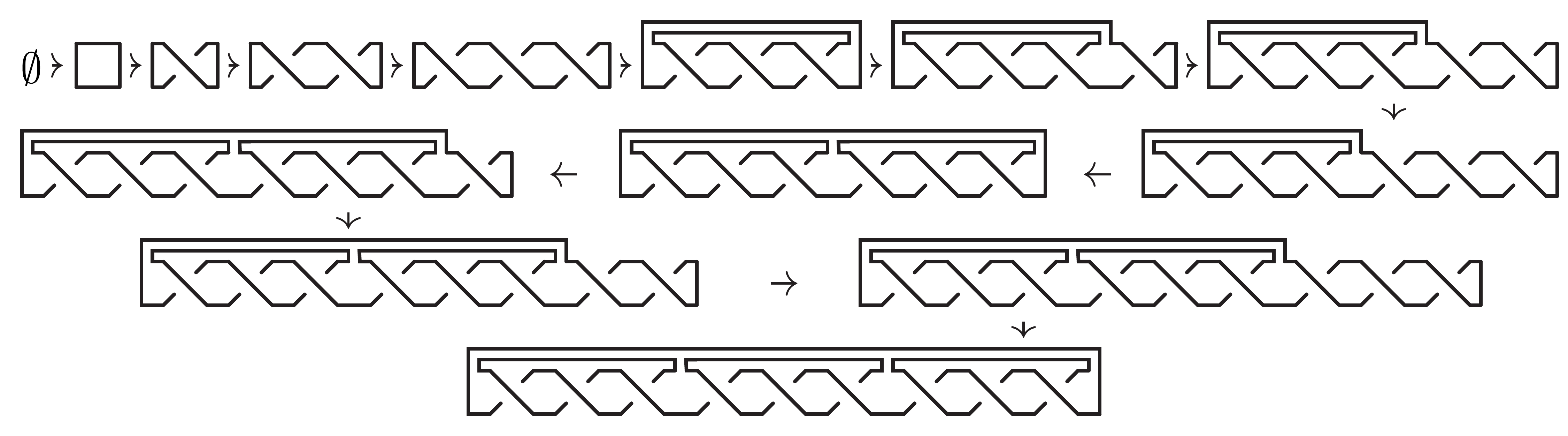}.
\end{center}

This movie corresponds to a cobordism with crosscap number $3$ and
normal Euler number $-18$ (see the proof of
Lemma~\ref{lem:bigrading-shift-saddle}, and perform one saddle at a
time). 
We compute directly that the mixed invariant, with respect to the Bar-Natan
deformation, is non-vanishing, and then observe that this also follows
from Lemma~\ref{lem:hat}. The frame $3_1$ in the movie above is an
admissible cut, decomposing the cobordism as $F_1\circ F_0$. The
normal Euler number of $F_1$ is $-6$, so the map
$\BNh^-(F_0)\co \ZZ[H]=\BNh^-(\emptyset)\to \BNh^-(3_1)$ shifts the
$(\gr_h,\gr_q)$-bigrading by $(3,9)$. The image of $\BNh^-(\emptyset)$
at each stage of the movie $F_0$ lies in the all-1 resolution, and a
generator labels each circle $1$:
\begin{equation}\label{eq:tref-eg-1}
  \mathcenter{\includegraphics[scale=.3]{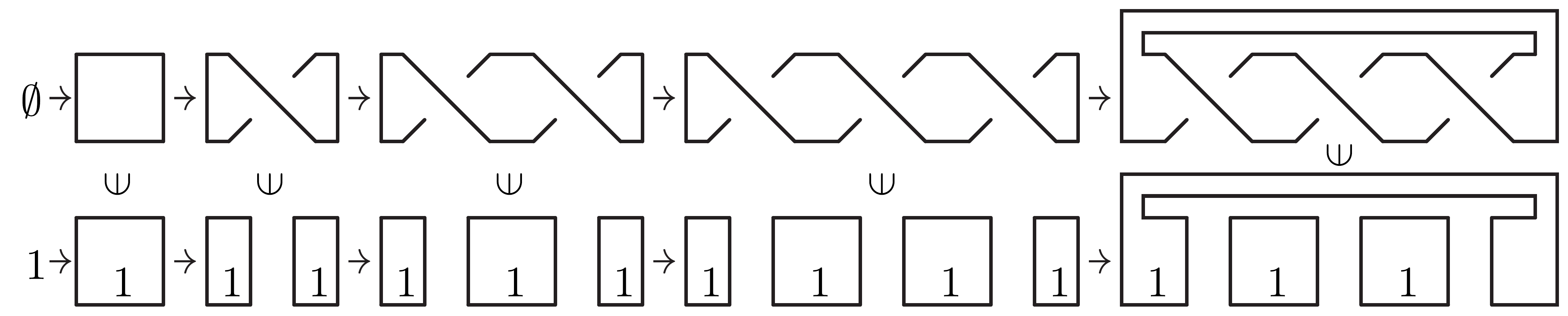}.}
\end{equation}

The element $\BNh^-(F_0)(1)\in\BNh^-(3_1)$ is non-zero, but its image
in $\BNh^\infty$ is zero: the element $\BNcx^-(F_0)(1)$ is the
boundary of the following element:
\begin{center}
  \includegraphics[scale=.3333]{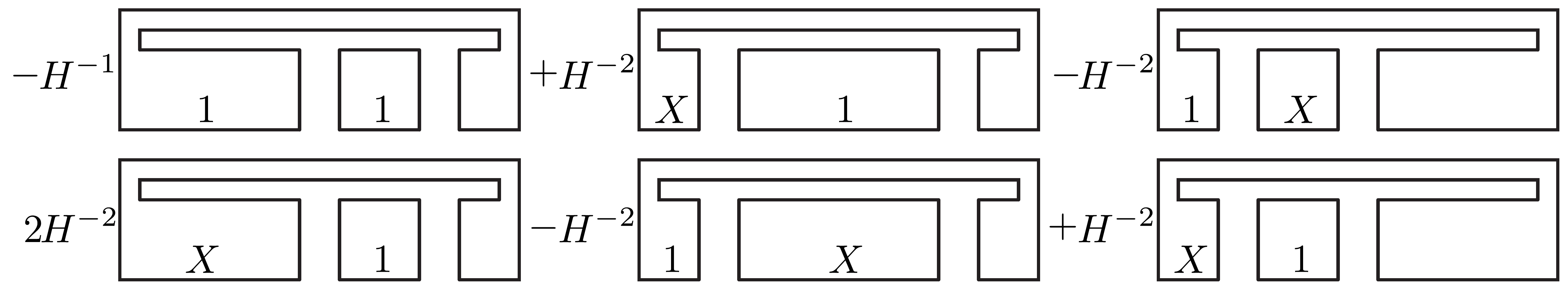}.
\end{center}
(For computing the signs, we have ordered the crossings from
left to right.)
In particular, $\BNh^-(F_0)(1)$ is an element of $\BNh^{\red}(3_1)$,
as expected. The element of $\BNcx^\infty(L)$ shown with boundary
$\BNcx^-(F_0)(1)$ lies in $\BNcx^+(L)$, so to compute the mixed
invariant, we apply $\BNh^+(F_1)$ to this element. The result is:
\begin{center}
  \includegraphics[scale=.3333]{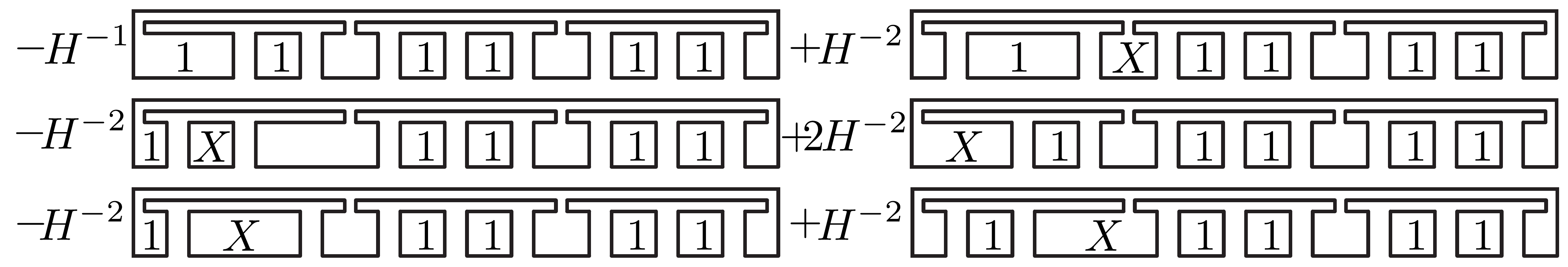}
\end{center}
We could compute directly that this is a nontrivial element of
$\BNh^+(L_1)$, but it is slightly easier to apply the connecting
homomorphism to $\wh{\BNh}(L_1)$. The image under the connecting
homomorphism has a term
\begin{equation}\label{eq:tref-eg-last}
  \mathcenter{\includegraphics[scale=.3333]{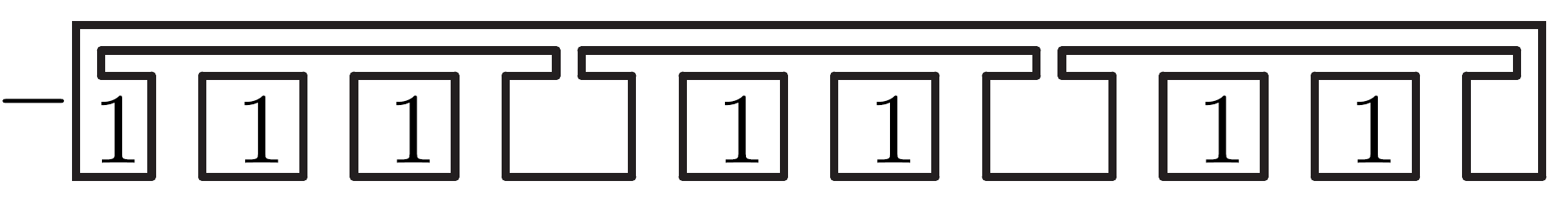}}
\end{equation}
which is does not appear in the boundary of any element of
$\wh{\BNcx}(L_1)$. So, the mixed invariant, in $\BNh^+(L_1)$, is
nontrivial.

We can obtain the same result slightly more easily using
Lemma~\ref{lem:hat}. By that lemma, it suffices to show that the image
of the class $1\in\wh{\BNh}(L_0)$, under $\wh{\BNh}(F)$, is
non-zero. A similar computation to Formula~(\ref{eq:tref-eg-1}) shows
that $\wh{\BNh}(F)(1)$ is the element in the all-1 resolution where
every circle is labeled $1$, i.e., the element shown in
Formula~(\ref{eq:tref-eg-last}). Since all maps into this resolution
are split maps, this is a nontrivial element of $\wh{\BNh}(L_1)$.

In particular, by Theorem~\ref{thm:vanishing}, this cobordism is not
obtained by taking the connected sum of a crosscap-number $2$
cobordism with $\RP^2$ or $\overline{\RP}^2$. The fact that the
cobordism does not split off a copy of $\overline{\RP}^2$ also follows
from the Gordon-Litherland formula~\cite{GL-top-signature}: if
$F=F'\#\overline{\RP}^2$ then $F'$ is a surface with $b_1(F')=2$,
$e(F')=-20$, and boundary $3_1\#3_1\#3_1$, so $\sigma(K)-e(F')/2=4$
but such a surface violates the inequality
$|\sigma(K)-e(F')/2|\leq b_1(F')$. This inequality seems not to
obstruct splitting off a copy of $\RP^2$. This computation also
provides a little more evidence that the 4-dimensional crosscap number
of $3_1\#3_1\#3_1$ is $3$, a conjecture which appears to be open.

By contrast, a similar direct computation to the above shows that the
mixed invariant associated to the mirror of this cobordism, from
$\emptyset$ to $m(3_1)\#m(3_1)\#m(3_1)$, vanishes. (Here, we mean a
different mirror from Section~\ref{sec:first-obs}: the map $(t,x,y,z,w)\mapsto (t,-x,y,z,w)$.)

\subsection{A more interesting example}\label{sec:SunSwann}
Sundberg-Swann showed that the map on Khovanov homology distinguishes
two slice disks for the knot $9_{46}$. As we will see, their proof actually gives
somewhat more: two distinct punctured $\RP^2\#\RP^2\#\RP^2$s
with boundary $3_1\# m(3_1)$ and normal Euler number $-6$.

\begin{figure}
  \centering
  \includegraphics[scale=.5]{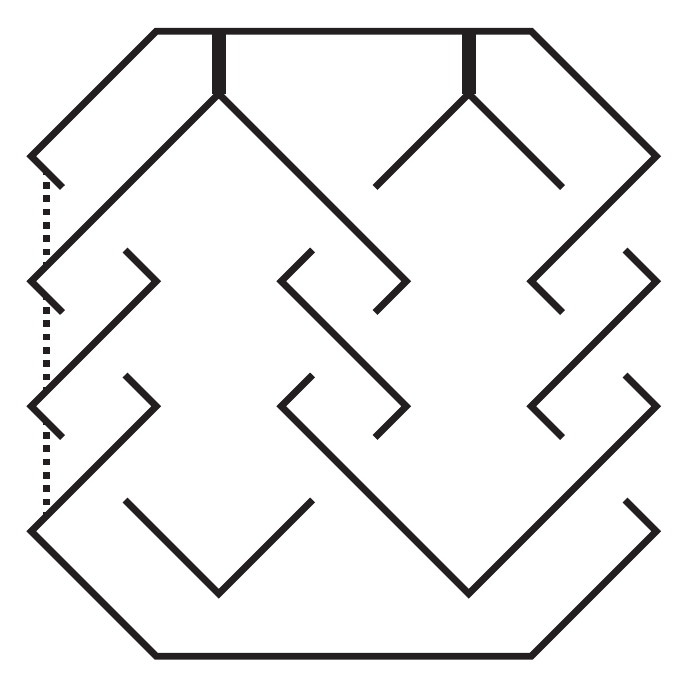}
  \caption{\textbf{The knot $9_{46}$.} The slice disks $\Sigma_L$ and
    $\Sigma_R$ are obtained by attaching a saddle at the
    left and right thick lines, respectively, to obtain a 2-component
    unlink. The cobordism to $3_1\#m(3_1)$ comes from attaching
    saddles at the three dotted lines.}
  \label{fig:946}
\end{figure}

Recall that the knot $9_{46}$ has two slice disks, corresponding to
attaching saddles at two of the handles shown in Figure~\ref{fig:946};
we will refer to these as the left and right slice disks $\Sigma_L$ and
$\Sigma_R$, respectively. We will view $\Sigma_L$ and $\Sigma_R$ as cobordisms from
$\emptyset$ to $9_{46}$. There is also a cobordism $C$ from $9_{46}$ to
$3_1\#m(3_1)$ with crosscap number $3$ and normal Euler number $-6$,
obtained by attaching three saddles to $9_{46}$; again, see
Figure~\ref{fig:946}. (Attaching just one of these saddles gives
$8_{20}$, and attaching two gives $6_1$.)

Sundberg-Swann call each of these three saddles a \emph{trim
  cobordism}. They show, by direct computation, that the map
$\widehat{\BNh}(C\circ \Sigma_L)=0$ while $\widehat{\BNh}(C\circ \Sigma_R)$
sends the generator $1\in\wh{\BNh}(\emptyset)$ to the class in
$\wh{\BNh}(3_1\#m(3_1))$ shown in Figure~\ref{fig:ss-image}
\cite[Proof of Theorem 6.3]{SS-kh-surf}. In particular, by
Proposition~\ref{prop:BNh-functorial}, the surfaces $C\circ \Sigma_L$ and
$C\circ \Sigma_R$ are not smoothly isotopic.

\begin{figure}
  \centering
  \includegraphics{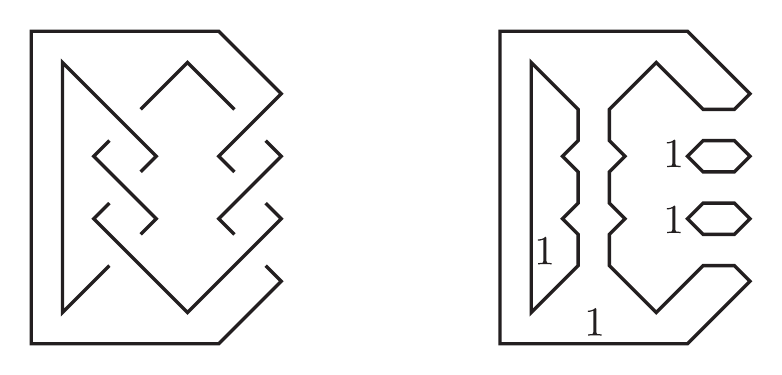}
  \caption{\textbf{The image of $\wh{\BNh}(C\circ \Sigma_R)$.} Left:
    the diagram for $3_1\#m(3_1)$ obtained by performing three
    saddle moves to $9_{46}$. Right: the element $\wh{\BNh}(C\circ
    \Sigma_R)$ lies in the all-$1$ resolution, and labels every circle
    by $1$.}
  \label{fig:ss-image}
\end{figure}

By Lemma~\ref{lem:hat}, the mixed invariant $\MixedInvt{C\circ \Sigma_R}$
is non-zero, as is $\bdy\circ\MixedInvt{C\circ \Sigma_R}$. By contrast,
$\bdy\circ\MixedInvt{C\circ \Sigma_L}=0$. The element $\MixedInvt{C\circ
  \Sigma_L}$ lies in bigrading $(2,7)$, and the generator in this
bigrading is not in the image of multiplication by $U$; see Figure~\ref{fig:compute}. So, from the
long exact sequence~(\ref{eq:les}), $\bdy$ is injective in this
bigrading, so $\MixedInvt{C\circ\Sigma_L}=0$, as well.

\begin{figure}
  \centering
\begin{tikzpicture}[xscale=.6, yscale=.6,every node/.style={inner sep=0,outer sep=0}]
  \draw[step=1, very thin] (0,0) grid (3,10);
  \draw[xstep=2, ystep=1, xshift=1cm, very thin] (2,0) grid (4,10);
  \draw[step=1, very thin] (5,0) grid (8,10);
  \draw (0.5,-.2) node[below] {$-3$};
  \draw (1.5,-.2) node[below] {$-2$};
  \draw (2.5,-.2) node[below] {$-1$};
  \draw (4,-.2) node[below] {$0$};
  \draw (5.5,-.2) node[below] {$1$};
  \draw (6.5,-.2) node[below] {$2$};
  \draw (7.5,-.2) node[below] {$3$};
  \draw (-.2,0.5) node[left] {$-7$};
  \draw (-.2,1.5) node[left] {$-5$};
  \draw (-.2,2.5) node[left] {$-3$};
  \draw (-.2,3.5) node[left] {$-1$};
  \draw (-.2,4.5) node[left] {$1$};
  \draw (-.2,5.5) node[left] {$3$};
  \draw (-.2,6.5) node[left] {$5$};
  \draw (-.2,7.5) node[left] {$7$};
  \draw (-.2,8.5) node[left] {$9$};
  \draw (-.2,9.5) node[left] {$11$};
  \node at (0.5, 0.5) (a) {$a$};
  \node at (1.5, 2.5) (b) {$b$};
  \node at (2.5, 2.5) (c) {$c$};
  \node at (3.5, 3.5) (d) {$d$};
  \node at (4.5,3.5) (e) {$\vphantom{d}e$};
  \node at (3.5, 4.5) (f) {$f\vphantom{g}$};
  \node at (4.5, 4.5) (g) {$\vphantom{f}g$};
  \node at (5.5, 5.5) (h) {$h$};
  \node at (6.5, 5.5) (i) {$i$};
  \node at (7.5, 7.5) (j) {$j$};
  \draw[->, thick] (a) to (b);
  \draw[->, thick] (c) to (f);
  \draw[->, thick] (e) to (h);
  \draw[->, thick] (i) to (j);
\end{tikzpicture}\qquad
\begin{tikzpicture}[xscale=1.1, yscale=.6, every node/.style={inner sep=0,outer sep=0}]
  \draw[step=1, very thin] (0,0) grid (3,10);
  \draw[xstep=2, ystep=1, xshift=1cm, very thin] (2,0) grid (4,10);
  \draw[step=1, very thin] (5,0) grid (8,10);
  \draw (0.5,-.2) node[below] {$-3$};
  \draw (1.5,-.2) node[below] {$-2$};
  \draw (2.5,-.2) node[below] {$-1$};
  \draw (4,-.2) node[below] {$0$};
  \draw (5.5,-.2) node[below] {$1$};
  \draw (6.5,-.2) node[below] {$2$};
  \draw (7.5,-.2) node[below] {$3$};
  \draw (-.2,0.5) node[left] {$-7$};
  \draw (-.2,1.5) node[left] {$-5$};
  \draw (-.2,2.5) node[left] {$-3$};
  \draw (-.2,3.5) node[left] {$-1$};
  \draw (-.2,4.5) node[left] {$1$};
  \draw (-.2,5.5) node[left] {$3$};
  \draw (-.2,6.5) node[left] {$5$};
  \draw (-.2,7.5) node[left] {$7$};
  \draw (-.2,8.5) node[left] {$9$};
  \draw (-.2,9.5) node[left] {$11$};
  \node at (0.5, 0.5) (a) {$a$};
  \node at (0.5, 2.5) (am1) {$T^{-1}\!a$};
  \node at (0.5, 4.5) (am2) {$T^{-2}\!a$};
  \node at (0.5, 6.5) (am3) {$T^{-3}\!a$};
  \node at (0.5, 8.5) (am4) {$T^{-4}\!a$};
  \node at (1.5, 2.5) (b) {$b$};
  \node at (1.5, 0.5) (b1) {$Tb$};
  \node at (1.5, 4.5) (bm1) {$T^{-1}\!b$};
  \node at (1.5, 6.5) (bm2) {$T^{-2}\!b$};
    \node at (1.5, 8.5) (bm3) {$T^{-3}\!b$};
    \node at (2.5, 2.5) (c) {$c$};
    \node at (2.5, 0.5) (c1) {$Tc$};
    \node at (2.5, 4.5) (cm1) {$T^{-1}\!c$};
    \node at (2.5, 6.5) (cm2) {$T^{-2}\!c$};
    \node at (2.5, 8.5) (cm3) {$T^{-3}\!c$};
    \node at (3.5, 3.5) (d) {$d$};
    \node at (3.5, 1.5) (d1) {$Td$};
    \node at (3.5, 5.5) (dm1) {$T^{-1}\!d$};
    \node at (3.5, 7.5) (dm2) {$T^{-2}\!d$};
    \node at (3.5, 9.5) (dm3) {$T^{-3}\!d$};
    \node at (4.5,3.5) (e) {$\vphantom{d}e$};
    \node at (4.5,1.5) (e1) {$Te$};
    \node at (4.5,5.5) (em1) {$T^{-1}\!e$};
    \node at (4.5,7.5) (em2) {$T^{-2}\!e$};
    \node at (4.5,9.5) (em3) {$T^{-3}\!e$};
    \node at (3.5, 4.5) (f) {$f\vphantom{g}$};
    \node at (3.5, 2.5) (f1) {$Tf$};
    \node at (3.5, 0.5) (f2) {$T^2\!f$};
    \node at (3.5, 6.5) (fm1) {$T^{-1}\!f$};
    \node at (3.5, 8.5) (fm2) {$T^{-2}\!f$};
    \node at (4.5, 4.5) (g) {$\vphantom{f}g$};
    \node at (4.5, 2.5) (g1) {$Tg$};
    \node at (4.5, 0.5) (g2) {$T^2\!g$};
    \node at (4.5, 6.5) (gm1) {$T^{-1}g$};
    \node at (4.5, 8.5) (gm2) {$T^{-2}g$};
    \node at (5.5, 5.5) (h) {$h$};
    \node at (5.5, 3.5) (h1) {$Th$};
    \node at (5.5, 1.5) (h2) {$T^2\!h$};
    \node at (5.5, 7.5) (hm1) {$T^{-1}\!h$};
    \node at (5.5, 9.5) (hm2) {$T^{-2}\!h$};
    \node at (6.5, 5.5) (i) {$i$};
    \node at (6.5, 3.5) (i1) {$Ti$};
    \node at (6.5, 1.5) (i2) {$T^2\!i$};
    \node at (6.5, 7.5) (im1) {$\mathbf{T^{-1}\!i}$};
    \node at (6.5, 9.5) (im2) {$T^{-2}\!i$};
    \node at (7.5, 7.5) (j) {$j$};
    \node at (7.5, 5.5) (j1) {$Tj$};
    \node at (7.5, 3.5) (j2) {$T^{2}\!j$};
    \node at (7.5, 1.5) (j3) {$T^3\!j$};
    \node at (7.5, 9.5) (jm1) {$T^{-1}\!j$};
    \draw[->] (a) to (b1);
    \draw[->] (am1) to (b);
    \draw[->] (am2) to (bm1);
    \draw[->] (am3) to (bm2);
    \draw[->] (am4) to (bm3);
    \draw[->] (c) to (f1);
    \draw[->] (c1) to (f2);
    \draw[->] (cm1) to (f);
    \draw[->] (cm2) to (fm1);
    \draw[->] (cm3) to (fm2);
    \draw[->] (e) to (h1);
    \draw[->] (e1) to (h2);
    \draw[->] (em1) to (h);
    \draw[->] (em2) to (hm1);
    \draw[->] (em3) to (hm2);
    \draw[->] (i) to (j1);
    \draw[->] (i1) to (j2);
    \draw[->] (i2) to (j3);
    \draw[->] (im1) to (j);
    \draw[->] (im2) to (jm1);
    \draw[very thick] (0,1)--(1,1)--(1,3)--(3,3)--(3,5)--(5,5)--(5,6)--(7,6)--(7,8)--(8,8);
\end{tikzpicture}
  \caption{\textbf{Computing $\BNh^+(3_1\# m(3_1))$.} Left:
    $\wh{\BNh}$ as computed by knotkit~\cite{KKI-kh-knotkit}, with
    $\QQ$ coefficients. Each letter is a basis element over $\QQ$. (This computation can also be deduced from
    $\BNh(3_1)$, with a little work.) The arrows are the differentials
    in the Lee spectral sequence, which one can deduce from knowing
    that the Lee homology is $\QQ\oplus\QQ$ in bidegrees $(0,\pm1)$,
    since $s(3_1\#m(3_1))=0$. Right: the differentials on
    $\BNh^\infty(3_1\#m(3_1))$, which one can read off from the
    top-left computation. The subcomplex $\BNcx^-$ lies below the thick
    steps, and the quotient complex $\BNcx^+$ lies above the thick steps. The generator of $\BNh^+$ in bigrading
    $(2,7)$ is in bold; $U^{-1}$ times this generator is not a cycle
    in $\BNcx^+$. The analogous computation for the Bar-Natan
    deformation is similar, but the differential on the left has
    bi-degree $(1,2)$ instead of $(1,4)$, and the variable $H$ has
    bi-degree $(0,-2)$.}
  \label{fig:compute}
\end{figure}

In conclusion, both the map $\wh{\BNh}$ and the the mixed invariant
distinguish this pair of surfaces. By Theorem~\ref{thm:vanishing},
this implies that $C\circ\Sigma_L$ and $C\circ\Sigma_R$ do not differ
by taking a connected sum with a smoothly embedded
$2$-sphere. Further, non-vanishing of $\wh{\BNh}(C\circ \Sigma_R)$
implies that $C\circ \Sigma_R$ is not obtained from another connected
surface by attaching a $1$-handle, and is not a crosscap
stabilization. Hence, we have proved Theorem~\ref{thm:intro}.

\begin{remark}
  Using one of the three dotted saddles in Figure~\ref{fig:946} gives
  a pair of M\"obius bands with boundary $8_{20}$ distinguished by
  Khovanov homology, and using two of them gives a pair of punctured
  Klein bottles with boundary $6_1$ distinguished by Khovanov
  homology.
\end{remark}

\subsection{An exotic pair of surfaces}\label{sec:exotic}
Recall that a pair of surfaces $F,F'\subset B^4$ with boundary
$K\subset S^3$ is \emph{exotic} if there is a homeomorphism
$\phi\co B^4\to B^4$ so that $\phi|_{S^3}=\Id$ and $\phi(F)=F'$, but
no diffeomorphism with these properties. (See also
Remark~\ref{rem:exotic-exotic}.)  Hayden-Sundberg give a family of
exotic pairs of surfaces~\cite{HS-kh-exotic}. The simplest of their
pairs is the pair of slice disks shown in Figure~\ref{fig:exotic} for
the knot $J$. The slice disk $D$ (respectively $D'$) is obtained by
attaching a saddle along the arc $b$ (respectively $b'$) shown there,
and then capping the resulting 2-component unlink with disks. The fact
that these surfaces are distinct is witnessed by the map on Khovanov
homology: for the element $\phi$ of $\wh{\BNh}(J)$ shown in
Figure~\ref{fig:exotic}, $\wh{\BNh}(D')(\phi)=0$ (obvious) but
$\wh{\BNh}(D)(\phi)=1\in\ZZ=\wh{\BNh}(\emptyset)$~\cite[Figure
4]{HS-kh-exotic}.

\begin{figure}
  \centering
  \includegraphics{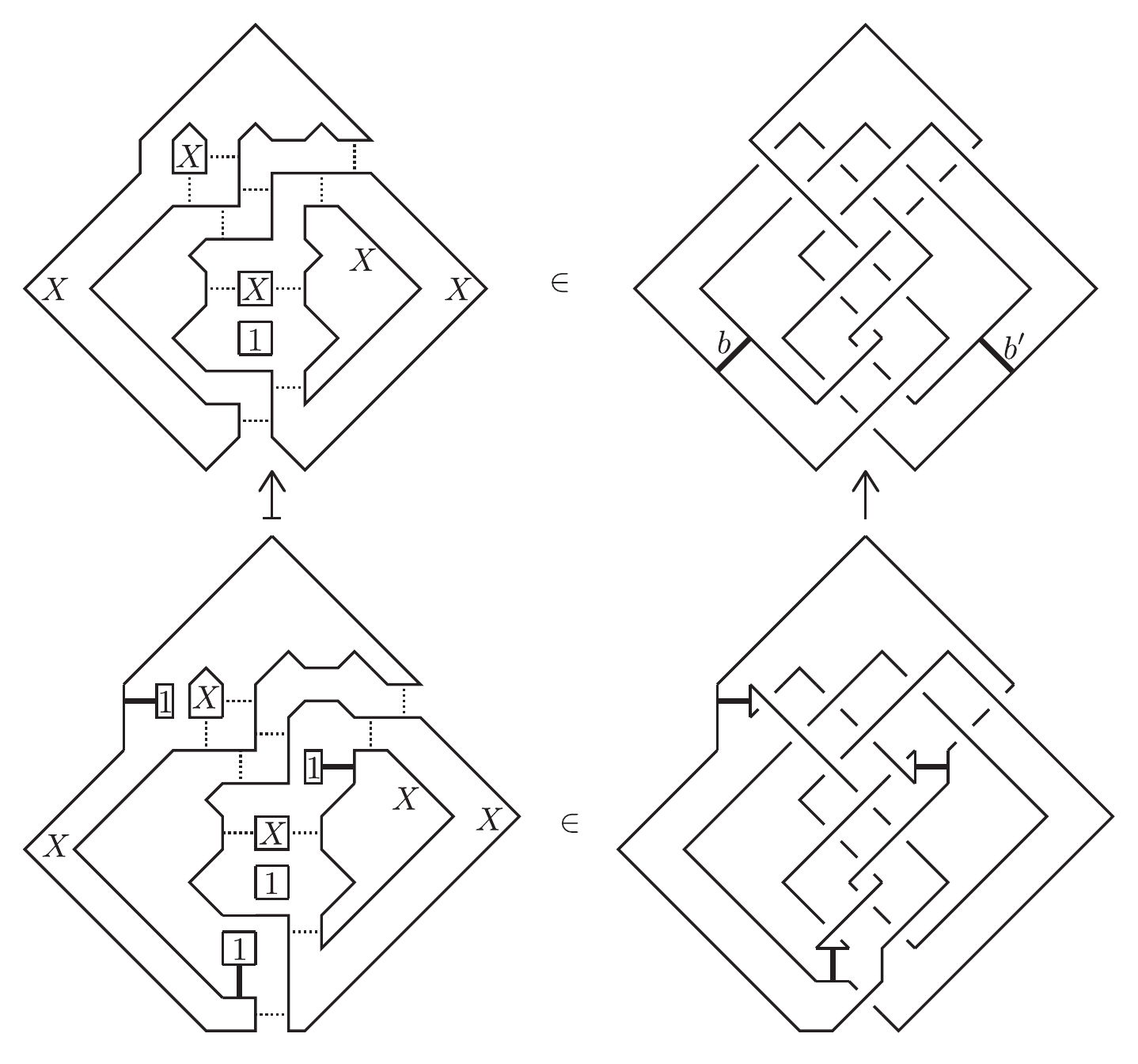}
  \caption{\textbf{An exotic pair.} Top right: Hayden-Sundberg's knot $J$ and
    the two saddles $b$ and $b'$ giving an exotic pair of distinct slice disks
    for $J$, drawn as thick line segments. Top-left: Hayden-Sundberg's
    cycle $\phi$ in $\wh{\BNcx}(J)$ witnessing the
    fact that these slice disks are distinct. Bottom-right: a diagram for
    $12^n_{309}$ and three saddles giving a nonorientable cobordism to
    $J$. Bottom-left: a cycle $\psi$ in $\wh{\BNcx}(12^n_{309})$ which maps to
    Hayden-Sundberg's cycle. As in Hayden-Sundberg's figure, dotted lines
    indicate which crossings had $0$-resolutions.}
  \label{fig:exotic}
\end{figure}

There is a cobordism $C$ with crosscap number $3$ and normal Euler
number $-6$ from the knot $12^n_{309}$ to $J$, so that
$\phi=\wh{\BNh}(\psi)$ for an appropriate class
$\psi\in\wh{\BNh}(12^n_{309})$: see Figure~\ref{fig:exotic}. So,
$\wh{\BNh}(D\circ C)(\psi)=1$ while $\wh{\BNh}(D'\circ
C)(\psi)=0$. Thus, $D\circ C$ is not diffeomorphic to $D'\circ C$ rel
boundary. On the other hand, since $D$ is homeomorphic to $D'$ rel
boundary, $D\circ C$ is homeomorphic to $D'\circ C$ rel
boundary. Thus, we have proved Theorem~\ref{thm:intro-2}.

By the second case of Lemma~\ref{lem:hat}, the mixed invariant also
distinguishes $D\circ C$ and $D'\circ C$ (compare
Remark~\ref{rem:mixed-strong}). The fact that the pair
$D\circ C$ and $D'\circ C$ are not diffeomorphic is, of course, slightly stronger than
the statement that $D$ and $D'$ are not diffeomorphic.

\begin{remark}
  Hayden-Sundberg's example also immediately gives an exotic pair of
  crosscap number $3$ surfaces with boundary $12^n_{404}$, as well as
  exotic pairs of crosscap number $3$ surfaces with boundary on
  several links, by an easy adaptation of Figure~\ref{fig:exotic}.
\end{remark}

\begin{remark}\label{rem:exotic-exotic}
  Hayden-Sundberg take a slightly different definition of
  \emph{exotic} than we have: they define a pair of surfaces
  $F,F'\subset B^4$ to be exotic if there is an ambient isotopy
  through homeomorphisms taking $F$ to $F'$ but no ambient isotopy
  through diffeomorphisms (which are, in both cases, the identity on
  $S^3$). Since their surfaces are distinguished by the map on
  Khovanov homology, however, by
  Proposition~\ref{prop:BNh-functorial}, their computation shows that
  there is no diffeomorphism from $B^4$ to itself which is the
  identity on the boundary and takes $F$ to $F'$ (even one which is
  not isotopic to the identity). That is, their pairs of surfaces
  really are exotic in the sense described above.
\end{remark}

\subsection{Some questions}
To put the results above in context, and in particular to acknowledge
the cases they do not cover, we conclude with some open questions.

In Corollary~\ref{cor:closed-vanish}, we showed that the mixed invariant
does not distinguish closed, connected surfaces. By
Proposition~\ref{prop:or-gens-part2} for the nonorientable case and
work of Gujral-Levine~\cite{LG-kh-split} for the orientable case, the
map on $\BNh^\bullet$ also does not distinguish disconnected surfaces.
\begin{question}
  Is there a pair $F,F'\subset S^4$ of closed, disconnected surfaces
  with the same topology and componentwise normal Euler numbers so
  that $\MixedInvt{F}\neq \MixedInvt{F'}$?
\end{question}
If we are only considering surfaces without stars, by
Theorem~\ref{thm:cc-3-vanish}, these surfaces would have to have
(total) normal Euler number $-2$ and Euler characteristic
$1$. For example, perhaps $F$ could be a knotted copy of
$\RP^2\amalg (\RP^2\# \overline{\RP}^2)$ with total normal Euler
number $-2$ and non-vanishing mixed invariant, and $F'$ the standard
$\RP^2\amalg (\RP^2\# \overline{\RP}^2)$, which has vanishing mixed
invariant.

The Seiberg-Witten invariant 
is not just defined when $b_2^+\geq 3$, but also when $b_2^+=2$.
As noted in Remark~\ref{rem:2-crosscaps}, we can define a
Khovanov mixed invariant when the crosscap number is $2$, but we do
not know if it is well-defined.
\begin{question}
  If $F,F'$ are isotopic surfaces (rel boundary) with crosscap number
  $2$ and
  admissible cuts $(S,V,\phi)$ and $(S',V',\phi')$, respectively, is
  the mixed invariant of $F$ with respect to $(S,V,\phi)$ equal to the
  mixed invariant of $F'$ with respect to $(S',V',\phi')$?
\end{question}

In the examples in Sections~\ref{sec:SunSwann} and~\ref{sec:exotic}
of surfaces distinguished by their mixed invariants, the surfaces were
also distinguished by the induced maps on ordinary Khovanov homology
$\wh{\BNh}$. By Remark~\ref{rem:mixed-strong}, for surfaces with
connected boundary, the mixed invariant is at least as strong as the
map on $\wh{\BNh}$.
\begin{question}
  Is there a pair of surfaces with crosscap number $\geq 3$ and the
  same topology and normal Euler number which are distinguished by the
  mixed invariant but not by the map on $\wh{\BNh}$? Is there such a
  pair not distinguished by the homotopy class of maps on $\BNcx^-$? Is there an exotic
  pair of surfaces with this property?
\end{question}
Lemma~\ref{lem:hat} and its proof give restrictions on what the
Khovanov homology of such a pair must look like, as does
Corollary~\ref{cor:H-vanish}.

On a related point, by Theorem~\ref{thm:vanishing}, the map on
$\BNh^\bullet$ vanishes for stabilizations of surfaces, so never
distinguishes them.  There is one case in which the mixed invariant
could potentially distinguish stabilized surfaces:
\begin{question}
  Is there an exotic pair of M\"obius bands $F,F'\subset B^4$ so that
  the mixed invariant distinguishes their stabilizations? That is, if
  $F\# T^2$ denotes a standard stabilization of $F$, is there an exotic pair
  of M\"obius bands $F,F'$ with boundary some knot $K$ so that
  $\MixedInvt{F\#T^2}\neq \MixedInvt{F'\#T^2}\in\BNh^+(K)$?
\end{question}

The examples of nonorientable surfaces in Sections~\ref{sec:SunSwann}
and~\ref{sec:exotic} came from pairs of slice disks. Indeed, the
nonorientable surfaces were apparent from the slice disks and class
in Khovanov homology. Perhaps this phenomenon is general:
\begin{question}
  Is it true that for every exotic pair of slice disks $D,D'$, for any
  knot $K$, there is a nonorientable cobordism $F$ from $K$ to
  another knot $K'$ so that $F\circ D$ and $F\circ D'$ are also an
  exotic pair? Can $F$ be chosen to have crosscap number $\geq 3$?
\end{question}
One can ask the same question, but for exotic pairs detected by
Khovanov homology:
\begin{question}
  Is it true that for every pair of slice disks $D,D'$, for a knot $K$
  such that $\wh{\BNh}(D)\neq \wh{\BNh}(D')$, there is a
  nonorientable cobordism $F$ from $K$ to another knot $K'$ so that
  $\wh{\BNh}(F\circ D)\neq \wh{\BNh}(F\circ D')$?
\end{question}
One could also replace $\wh{\BNh}$ by $\BNh^\bullet$ in the question, or
require that $F$ have crosscap number $\geq 3$ and ask if
$\MixedInvt{F\circ D}\neq \MixedInvt{F\circ D'}$.

As noted in the introduction, our mixed invariant is inspired by
Ozsv\'ath-Szab\'o's mixed invariant in Heegaard Floer homology.
\begin{question}
  Is there a precise relationship between the Khovanov mixed invariant
  of a surface $F$ and the Heegaard Floer mixed invariant of the
  branched double cover of $F$?
\end{question}
Note that having crosscap number $\geq 3$ does not give an inequality
for $b_2^+$
(consider the standard
$\overline{\RP}^2\#\overline{\RP}^2\#\overline{\RP}^2$), nor does
having $b_2^+(\Sigma(F))\geq 2$ imply crosscap number $\geq 3$ (consider
$\RP^2\#\RP^2$ or, for that matter, an orientable surface of genus
$g\geq 2$), so the two invariants are not
defined in exactly the same cases; perhaps this argues against a
direct relationship.

\vspace{-0.3cm}
\bibliographystyle{myalpha}
\bibliography{newbibfile}
\vspace{1cm}
\end{document}